\newtheorem{theorem}{Theorem}[section]
\newtheorem{lemma}[theorem]{Lemma}
\newtheorem{corollary}[theorem]{Corollary}
\newtheorem{claim}[theorem]{Claim}
\theoremstyle{definition}
\newtheorem{remark}[theorem]{Remark}
\newtheorem{convention}[theorem]{Convention}
\newtheorem{notation}[theorem]{Notation}
\renewcommand{\leq}{\leqslant}
\renewcommand{\geq}{\geqslant}
\newcommand{\eps}{\epsilon}
\newcommand{\ca}{\mathcal}
\newcommand{\wt}{\widetilde}
\def\C{\text C}
\def\N{\mathrm N}
\def\L{\text L}
\def\F{\mathbf F}
\def\dim{\text{dim}\:}
\def\dimT{\emph{dim}\:}
\def\Z{\text Z}
\def\GL{\text{GL}}
\def\GLT{\emph{GL}}
\def\Gal{\text{Gal}}
\def\SU{\text{SU}}
\def\PSU{\text{PSU}}
\def\SUT{\emph{SU}}
\def\GU{\text{GU}}
\def\GUT{\emph{GU}}
\def\Sp{\text{Sp}}
\def\PSp{\mathrm{PSp}}
\def\SpT{\emph{Sp}}
\def\SL{\mathrm{SL}}
\def\PSL{\mathrm{PSL}}
\def\SLT{\emph{SL}}
\def\O{\mathrm{O}}
\def\OT{\emph{O}}
\def\SO{\text{SO}}
\def\SOT{\emph{SO}}
\def\GO{\mathrm{O}}
\def\P{\mathbf P}
\def\C{\text C}
\def\Om{\Omega}
\def\POm{\mathrm P\Omega}
\let\emptyset\varnothing
\def\wt{\widetilde}
\newcommand{\gen}[1]{\ensuremath{\langle #1\rangle}}
\newcommand\thickbar[1]{\accentset{\rule{.4em}{.8pt}}{#1}}
\def\blfootnote{\xdef\@thefnmark{}\@footnotetext}
\newcommand{\eilidh}[1]{\todo[color=red!40]{#1}{}}
\newcommand{\daniele}[1]{\todo[color=green!40]{#1}{}}
\begin{document}
%\nocite{Sha}

\title{On the probability of generating invariably a finite simple group}

\author{Daniele Garzoni}
\address{School of Mathematical Sciences, Tel Aviv University, Tel Aviv 69978,
Israel}
\email{danieleg@mail.tau.ac.il}

\author{Eilidh McKemmie}
\address{Department of Mathematics, Rutgers University, Piscataway, NJ, 08854, USA}
\email{emck@rutgers.edu}

\begin{abstract}
%Given a finite group $G$ and a subset $A=\{x_1, \ldots, x_t\}$ of $G$, we say that $A$ invariably generates $G$ if $\gen{x_1^{g_1}, \ldots, x_t^{g_t}}=G$ for every choice of $g_1, \ldots, g_t \in G$. In this paper We prove various results regarding the probability of generating invariably a finite simple group. For instance, we prove that two randomly chosen elements of a finite simple group of Lie type of bounded rank invariably generate with probability bounded away from zero.
Let $G$ be a finite simple group. In this paper we consider the existence of small subsets $A$ of $G$ with the property that, if $y \in G$ is chosen uniformly at random, then with high probability $y$ invariably generates $G$ together with some element of $A$. We prove various results in this direction, both positive and negative. As a corollary, we prove that two randomly chosen elements of a finite simple group of Lie type of bounded rank invariably generate with probability bounded away from zero. Our method is based on the positive solution of the Boston--Shalev conjecture by Fulman and Guralnick, as well as on certain connections between the properties of invariable generation of a group of Lie type and the structure of its Weyl group.
\end{abstract}

\maketitle

\section{Introduction}
\label{section_intro}

\blfootnote{The first author was partially supported by a grant of the Israel Science Foundation No. 702/19, and has received funding from the European Research Council (ERC) under the European Union’s Horizon 2020 research and innovation programme (grant agreement No. 850956). The second author was partially supported by the NSF grant DMS 1901595.}

Let $G$ be a finite group. Given a subset $A=\{x_1, \ldots, x_t\}$ of $G$, we say that $A$ \textit{invariably generates} $G$ if $\gen{x_1^{g_1}, \ldots, x_t^{g_t}}=G$ for every choice of $g_1, \ldots, g_t \in G$. We write in this case $\gen{x_1, \ldots, x_t}_I=G$. This concept was introduced by Dixon with motivations from computational Galois theory; see \cite{Dix92} for details. 

For a subset $A$ of $G$, denote by $\P_{\text{inv}}(G,A)$ the probability that, if $y \in G$ is chosen uniformly at random, there exists $x \in A$ such that $\gen{x,y}_I=G$. In case $A=\{x\}$, we will write $\P_{\text{inv}}(G,x)$ instead of $\P_{\text{inv}}(G,\{x\})$.

We state our first result, which in most cases will be asymptotically superseded by subsequent theorems. In Subsection \ref{subsection_context} we will provide more context for these theorems, also in relation to ``classical'' generation.

\begin{theorem}
\label{main_theorem_bounded_away_zero}
%Let $G$ be a finite simple group. There exist an absolute constant $\epsilon >0$ and an element $x\in G$ such that $\P_{\emph{inv}}(G,x) \geq \epsilon$.
There exists an absolute constant $\epsilon >0$ such that every nonabelian finite simple group $G$ contains an element $x\in G$ satisfying $\P_{\emph{inv}}(G,x) \geq \epsilon$.
\end{theorem}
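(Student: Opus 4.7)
The starting point is the standard observation that, for a pair $\{x,y\} \subseteq G$, we have $\gen{x,y}_I = G$ if and only if no maximal subgroup of $G$ simultaneously meets both conjugacy classes $x^G$ and $y^G$. Writing $\ca M(x)$ for the set of $G$-conjugacy classes of maximal subgroups $M < G$ with $x^G \cap M \neq \emptyset$, a union bound then gives
\[
1 - \P_{\text{inv}}(G,x) \;\leq\; \sum_{[M] \in \ca M(x)} \frac{|M^G|}{|G|},
\]
where $M^G := \bigcup_{g \in G} gMg^{-1}$. The Boston--Shalev theorem of Fulman--Guralnick provides an absolute constant $c>0$ with $|M^G|/|G| \leq 1-c$ for every proper $M < G$. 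In the favorable case $|\ca M(x)| = 1$ this already yields $\P_{\text{inv}}(G,x) \geq c$, so the broader plan is to find, in each simple group, an element $x$ for which $\ca M(x)$ collapses to essentially a single class, with the remaining contributions uniformly summable.

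For $G$ a finite simple group of Lie type with Weyl group $W$, I would exploit the correspondence between $G$-conjugacy classes of maximal tori and conjugacy classes of $W$. Take $x$ to be a regular semisimple element whose centralizer is a Coxeter-type torus $T_w$ (the analog of a Singer cycle, with $w \in W$ a Coxeter element). Classical classification results on the overgroups of Singer-type elements---Kantor, Guralnick--Penttila--Praeger--Saxl, and, for exceptional types, Guralnick--Malle---show that the only ``large'' class in $\ca M(x)$ is $[N_G(T_w)]$, the remaining classes forming a short explicit list. I would then bound the $N_G(T_w)$ contribution by $1-c$ via Fulman--Guralnick, and bound each remaining contribution using Liebeck--Shalev-type fixed point ratio estimates that decay polynomially in $q$ or the rank.

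The alternating groups $A_n$ are handled by taking $x$ to be a cycle of large support of appropriate parity, for instance an $n$-cycle when $n$ is odd, and an $(n-1)$-cycle fixing a point when $n$ is even. The primitive overgroups of $\gen{x}$ in $S_n$ are then severely restricted by the Jordan--Marggraf classification of primitive groups containing a long cycle, while the intransitive and imprimitive overgroups of $x$ in $A_n$ have $|M^G|/|G|$ tending to zero as $n \to \infty$. The sporadic groups and any finite list of small-rank/small-$q$ Lie type exceptions are then cleared by direct inspection.

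The main obstacle is uniformity in the Lie type case, especially handling unbounded rank at small $q$, where the number of ``small'' overgroup classes of $x$ is \emph{a priori} unbounded. This is precisely where the promised connection to the Weyl group should be decisive: the maximal overgroups of a regular semisimple element of Coxeter type correspond to sub-structures in $W$ attached to the Coxeter class, and bounding the number of such structures is what keeps $|\ca M(x)|$ and the sum $\sum_{[M]} |M^G|/|G|$ under uniform control away from $1$.
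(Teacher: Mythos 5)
Your overall skeleton -- the reduction to $\mathscr M(x)$, the union bound, the alternating-group element (the $(n-1,1)$-element does work: its only intransitive maximal overgroups are point stabilizers, it preserves no block system, and primitive overgroups are negligible), the large-rank classical case via Singer/Guralnick--Kantor type elements, and the disposal of groups of bounded order by the invariable $2$-generation results of Guralnick--Malle and Kantor--Lubotzky--Shalev -- is sound and runs parallel to the paper's treatment of those cases. The genuine gap is in the bounded-rank Lie type case, at the step ``bound each remaining contribution using \ldots fixed point ratio estimates that decay polynomially in $q$ or the rank.'' For fixed rank this is false: the non-torus maximal overgroups of a Coxeter-torus element are typically subgroups of maximal rank (extension-field subgroups $\GL_{n/b}(q^b)\cap G$ in classical groups, subsystem subgroups such as $\SU_3(q).2<G_2(q)$ or $^3\!D_4(q).3<F_4(q)$), and for such an $M$ the quantity $|\bigcup_g M^g|/|G|$ does not decay in $q$ at all: by the torus--Weyl group correspondence (Theorem \ref{correspondence_tori_weyl_group}) it converges to the measure of the set of Weyl classes whose tori embed in $M$, a constant like $1/4$ or $1/2$. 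In fact the roles are reversed from what you assert: $\N_G(T_w)$ is the \emph{small} contribution (about the reciprocal of the Coxeter number), while these maximal-rank classes are the large ones, and Boston--Shalev's $1-c$ for a single class leaves no room to absorb several further contributions of constant size.

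Concretely, with several such classes your sum can exceed $1$ and the argument yields nothing: in $G_2(q)$ with $3\mid q$ the element of order $q^2-q+1$ lies in both conjugacy classes of subgroups $\SU_3(q).2$, and each class has $|\bigcup_g M^g|/|G|=1/2+O(1/q)$, so the sum is already $\geq 1$, although the true union is $3/4+O(1/q)$ and $\P_{\text{inv}}(G,x)$ is in fact about $1/4$. So in bounded rank one must control the \emph{union}, not the sum, i.e.\ show that the maximal tori occurring inside the overgroups of $x$ (equivalently, inside the overgroups of a random partner $y$) miss some Weyl class of measure bounded below; this is exactly the mechanism of Theorems \ref{correspondence_tori_weyl_group} and \ref{only_maximal_tori_matter} and it requires the type-by-type verification carried out in Sections \ref{subsection_exceptional_groups}--\ref{subsection_classical_groups}. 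Note also that the paper's bounded-rank proof of Theorem \ref{main_theorem_bounded_away_zero} (via Theorem \ref{main_theorem_strengthening_isogeny_types}) is structurally the opposite of a union bound: it \emph{exhibits} an explicit set of partners for $x$, namely the regular semisimple elements of a torus class avoided by all overgroups in play, giving $\P_{\text{inv}}(G,x)\geq 1/|W|+O(1/q)$ directly. Your closing suggestion that the difficulty lies in unbounded rank at small $q$ points at the wrong regime: there the relevant extra classes are only the hyperplane-type subgroups of $\Sp_{2m}(2^a)$ and $\Omega_{2m+1}(q)$, handled by the explicit Fulman--Guralnick bounds, whereas the place where your bounding mechanism breaks is fixed rank, and ``bounding the number of structures in $W$'' does not repair it, since even two classes of constant density defeat the sum.
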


%For us, Theorem \ref{main_theorem_bounded_away_zero} is an asymptotic statement. Indeed,
Guralnick--Malle \cite{GM} and Kantor--Lubotzky--Shalev \cite{KLS} independently proved that every finite simple group is invariably generated by two elements. Theorem \ref{main_theorem_bounded_away_zero} is a strengthening of this result. 

In view of the aforementioned results in \cite{GM} and \cite{KLS}, we may assume that $|G|$ is large in order to prove Theorem \ref{main_theorem_bounded_away_zero}.

%Therefore, we will only need to prove Theorem \ref{main_theorem_bounded_away_zero} for sufficiently large finite simple groups. Of course, Theorem \ref{main_theorem_bounded_away_zero} is a strengthening of the aforementioned result.

%In view of Theorem \ref{main_theorem_bounded_away_zero}, one may ask how many elements $y\in G$ are such that $\P_{\text{inv}}(G,y)$ is bounded away from zero. For groups of Lie type of bounded rank different from $G_2(3^a)$, we show that almost all elements have this property.
In light of Theorem \ref{main_theorem_bounded_away_zero}, one would like to find many elements $x\in G$ with the property that $\P_{\text{inv}}(G,x)$ is bounded away from zero uniformly. For groups of Lie type of bounded rank different from $G_2(3^a)$, one can take almost all elements. In the following statement, the \textit{untwisted rank} of $G$ denotes the rank of the ambient simple algebraic group.

% Before stating this result, we briefly recall some terminology, and we refer to Section \ref{section_bounded rank} more details. Let $G$ be a finite simple group of Lie type, obtained as the fixed points of a Steinberg endomorphism $\sigma$ of a linear algebraic group $X$. The \textit{untwisted rank} of $G$ is the rank of $X$. We recall that the eigenvalues of $\sigma$ on the complexification of the character group have all the absolute value.  equal to $q$. For example, if $G=\PSL_n(q)$ then $r=n-1$ and  

%For a positive integer $r$, define
%\[
        %a=a(r)=
        %\begin{cases}
        %1 & \text{for } r\geq 2\\
        %1/2 & \text{for } r=1
        %\end{cases}
%\]
%See Remark \ref{remark_value_a} for one comment regarding our choice to introduce this parameter.

\begin{theorem}
\label{main_theorem_strengthening_bounded_rank}
% There exist $\delta(n)>0$, and a function $\ell_n(q)$ of $n$ and $q$, which tends to zero when $n$ is fixed and $q\rightarrow \infty$, such that the following hold.
Let $G$ be a finite simple group of Lie type of untwisted rank $r$ defined over $\F_q$, or over $\F_{q^2}$ if $G$ is a Suzuki or Ree group, %\footnote{Concretely, ``$q$'' is the parameter appearing in Table \ref{main_table}. Note that $q$ is an integer unless $G$ is a Suzuki or Ree group, in which case $q$ is not an integer but $q^2$ is an integer; see also Remark \ref{r_notation}(iii). In fact, ``$q$'' admits a conceptual definition, which we will recall in the initial discussion of Section \ref{section_bounded rank}. %: it is not an integer in case of Suzuki and Ree groups.}
and assume $G\not\cong G_2(q)$ when $3\mid q$. Then,
\[
\P_{\emph{inv}}(G,x)\geq c/r+O(r^r/q)
\]
for an absolute constant $c>0$ and for a proportion of elements $x \in G$ of the form $1-O(r^r/q^a)$, where $a=1/2$ if $G\cong \PSL_2(q)$ and $a=1$ otherwise.
%Then, there exists $B\subseteq G$ such that 
%\begin{itemize}
   % \item[(i)] $|B|/|G|\rightarrow 1$ as $|G|\rightarrow \infty$ of rank $n$ and $n$ is fixed, and %\footnote{We stress that $|B|/|G|$ tends to $1$ provided $n$ is fixed.} and
    %\item[(ii)] for every $y \in B$, $\P_{\emph{inv}}(G,y)\geq \delta(n)$.
%\end{itemize}
%What is more, if $G\neq G_2(3^a)$, then $B$ can be chosen so that $|B|/|G|\rightarrow 1$ as $|G|\rightarrow \infty$ (of fixed rank $n$).
\end{theorem}

\iffalse
\begin{theorem}
\label{main_theorem_strengthening_bounded_rank}
Let $n$ be a positive integer. There exists $\delta(n)>0$ such that if $G$ is a finite simple group of Lie type of rank $n$, $G\neq G_2(3^a)$, then almost all elements $y\in G$ satisfy  $\P_{\emph{inv}}(G,y)\geq \delta(n)$.\footnote{With ``almost all elements'', we mean for a subset $B$ of $G$ such that $|B|/|G|\rightarrow 1$ provided $|G|\rightarrow \infty$ of fixed rank.}
%Then, there exists $B\subseteq G$ such that 
%\begin{itemize}
   % \item[(i)] $|B|/|G|\rightarrow 1$ as $|G|\rightarrow \infty$ of rank $n$ and $n$ is fixed, and %\footnote{We stress that $|B|/|G|$ tends to $1$ provided $n$ is fixed.} and
    %\item[(ii)] for every $y \in B$, $\P_{\emph{inv}}(G,y)\geq \delta(n)$.
%\end{itemize}
%What is more, if $G\neq G_2(3^a)$, then $B$ can be chosen so that $|B|/|G|\rightarrow 1$ as $|G|\rightarrow \infty$ (of fixed rank $n$).
\end{theorem}
\fi

In order to avoid confusion, we point out that with $u=O(z)$ we mean that $|u|\leq Cz$ for some constant $C$ (so there is no assertion on the sign of $u$).

\begin{remark}
\label{r: suzuki}
    The parameter ``$q$'' of Theorem \ref{main_theorem_strengthening_bounded_rank} is the one appearing in Table \ref{main_table}. In fact, this parameter admits a conceptual definition in terms of algebraic groups and Steinberg endomorphisms; see Section \ref{section_bounded rank} for details. We stress that $q$ is an integer, except for Suzuki and Ree groups, where $q$ is not an integer but $q^2$ is an integer. See Remark \ref{r_notation}(iii) for  further comments regarding our notation for Suzuki and Ree groups.
\end{remark}

In Theorem \ref{main_theorem_strengthening_isogeny_types} we will give an explicit value for the constant $c$ appearing in Theorem \ref{main_theorem_strengthening_bounded_rank}. The groups $G=G_2(3^a)$ are genuine exceptions; we will see that $\P_{\text{inv}}(G,x)=0$ for roughly half of the elements $x\in G$, but we can show that $\P_{\text{inv}}(G,x)\geq 1/6+O(1/q)$ for the remaining  elements (see Theorem \ref{theoremg2order}).
%it satisfies the statement for a proportion of elements $y\in G_2(3^a)$ bounded away from zero absolutely.
By considering separately the error term in the case  $G\cong \PSL_2(q)$, we get the following immediate consequence.

\begin{theorem}
\label{main_theorem_two_random_elements_bounded_rank}
Let $G$ be a finite simple group of Lie type of untwisted rank $r$ defined over $\F_q$, or over $\F_{q^2}$ if $G$ is a Suzuki or Ree group. Let $x_1,x_2\in G$ be chosen uniformly at random. Then,
\[
\P(\gen{x_1,x_2}_I=G) \geq c/r+O(r^r/q)
\]
for an absolute constant $c>0$.
\end{theorem}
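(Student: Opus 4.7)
The plan is to derive this theorem by averaging Theorem~\ref{main_theorem_strengthening_bounded_rank} over $y$. Writing the probability as an iterated expectation and swapping orders of summation,
$$
\P(\gen{x_1,x_2}_I=G) = \frac{1}{|G|}\sum_{y\in G}\P_{\text{inv}}(G,y).
$$
Let $B\subseteq G$ denote the ``good'' set provided by Theorem~\ref{main_theorem_strengthening_bounded_rank}, so that $\P_{\text{inv}}(G,y)\geq c/r+O(r^r/q)$ for $y\in B$, while $|B|/|G|\geq 1-O(r^r/q^{1/2})$ (and in fact $1-O(r^r/q)$ outside the case $G\cong\PSL_2(q)$, by the parenthetical remark following Theorem~\ref{main_theorem_strengthening_bounded_rank}).

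For $G\not\cong\PSL_2(q)$ and $G\not\cong G_2(3^a)$ I would simply compute
$$
\P(\gen{x_1,x_2}_I=G) \;\geq\; \frac{|B|}{|G|}\bigl(c/r+O(r^r/q)\bigr) \;\geq\; \bigl(1-O(r^r/q)\bigr)\bigl(c/r+O(r^r/q)\bigr) \;=\; c/r+O(r^r/q),
$$
since the cross term $O(r^{r-1}/q)$ is absorbed into $O(r^r/q)$. When $G\cong G_2(3^a)$ (which is excluded from Theorem~\ref{main_theorem_strengthening_bounded_rank}), I would instead invoke Theorem~\ref{theoremg2order}: since $\P_{\text{inv}}(G,y)\geq 1/6+O(1/q)$ holds on roughly half of $G$ (and $\P_{\text{inv}}\geq 0$ elsewhere), the average is at least $1/12+O(1/q)$, which matches the target with $r=2$.

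The main obstacle is $G\cong\PSL_2(q)$. Here $r=1$ so the target error is $O(1/q)$, whereas the naive bound on the complement of $B$ is only $O(q^{-1/2})$, which would contribute a loss of order $c/q^{1/2}$ to the averaging and overwhelm the desired error. To bypass this I would perform a direct analysis of $\PSL_2(q)$ based on Dickson's classification of its maximal subgroups: the elements lying outside $B$ number $O(q^{3/2})$ and essentially correspond to subfield subgroups $\PSL_2(q^{1/2})$ together with the finitely many bounded-order maximal subgroups $A_4, S_4, A_5$. For each such $y$, I would show that $\P_{\text{inv}}(G,y)$ is still bounded below by a positive constant---most conjugacy classes of $\PSL_2(q)$ avoid any prescribed proper subgroup of subfield or bounded type, and even for $y$ inside such a subgroup, the proper overgroups of $y$ in $\PSL_2(q)$ are few and small---and inserting these positive contributions into the averaging eliminates the $O(q^{-1/2})$ loss, yielding the required error $O(1/q)$.
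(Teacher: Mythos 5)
Your overall route coincides with the paper's: Theorem \ref{main_theorem_two_random_elements_bounded_rank} is obtained by averaging Theorem \ref{main_theorem_strengthening_bounded_rank} over $y$ (using the improved error $O(r^r/q)$ valid for $G\not\cong\PSL_2(q)$), with $G_2(3^a)$ handled through Theorem \ref{theoremg2order}, exactly as in Section \ref{section_proof_theorem_1.2}. The only point of divergence is $\PSL_2(q)$, which the paper settles inside the proof of Theorem \ref{main_psl_2} by exhibiting pairs rather than per-element bounds: with $S_\pm$ the elements of order $>5$ dividing $(q\pm1)/d$ and $F$ the elements in subfield subgroups, every pair in $\bigl((S_+\times S_-)\cup(S_-\times S_+)\bigr)\setminus F^2$ invariably generates, so only pairs with \emph{both} coordinates in $F$ are lost, and that loss has proportion $O(1/q)$ because $|F|/|G|=O(q^{-1/2})$. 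Your per-element formulation (elements of subfield subgroups still invariably generate with a positive proportion of $G$) rests on the same underlying fact and does work, but three details in your sketch need correcting. First, the bad set has size $O(q^{5/2})$, i.e.\ proportion $O(q^{-1/2})$ (Remark \ref{remark_value_a}), not $O(q^{3/2})$. Second, the claim that \emph{every} bad $y$ has $\P_{\text{inv}}(G,y)$ bounded below fails: an involution lies in dihedral subgroups of both types, so $\mathscr M(y)$ covers all semisimple elements and $\P_{\text{inv}}(G,y)=O(1/q)$; such elements of order at most $5$, together with unipotents, must simply be set aside as a set of proportion $O(1/q)$ whose contribution to the average is nonnegative. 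Third, ``the proper overgroups of $y$ are few and small'' is not accurate -- the Borel and dihedral overgroups of a semisimple $y$ have conjugate-unions of proportion roughly $1/2$; what actually saves the bound is that their union still misses the semisimple elements of the opposite type, a set of proportion about $1/2$, so $\P_{\text{inv}}(G,y)\geq 1/2-O(q^{-1/2})$ for $y\in F$ of order $>5$, which suffices by Lemma \ref{equivalence_inv_gen_tildas}.
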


Of course, in the previous two theorems we are thinking of $r$ fixed, and $q\rightarrow \infty$. We will review the history of Theorem \ref{main_theorem_two_random_elements_bounded_rank} in Subsection \ref{subsection_bounded_rank}. An interesting goal is to seek sharp bounds in Theorem \ref{main_theorem_two_random_elements_bounded_rank}. Although in this paper we do not pursue this goal, %; indeed, deducing Theorem \ref{main_theorem_two_random_elements_bounded_rank} from Theorem \ref{main_theorem_strengthening_bounded_rank} does not lead to sharp bounds.
in Theorem \ref{theorem_elaboration} we will obtain a formula of the type
\[
\P(\gen{x_1,x_2}_I=G)=f(r)+d(r)/q.
\]
The main term $f(r)$ is very explicit, depending only on the Weyl group of $G$, and can be computed essentially in an algorithmic way; it should be possible to compute it precisely for all exceptional groups.

%By results of Eberhard--Ford--Green \cite{EFG} and the second author \cite{McK}, Theorem \ref{main_theorem_two_random_elements_bounded_rank} does not hold for alternating groups and for groups of Lie type of large rank over large fields: $\P(\gen{x,y}_I=G)$ tends to zero in these cases.

Eberhard--Ford--Green \cite{EFG} and the second author \cite{McK} showed that, for alternating groups of large degree and for groups of Lie type of large rank over large fields, $\P(\gen{x_1,x_2,x_3}_I=G)$ tends to zero. (Conjecturally, in groups of Lie type there should be no restriction on the field size.) Therefore, it is not possible to extend Theorem \ref{main_theorem_strengthening_bounded_rank} to the other families of finite simple groups -- not even for a proportion of elements bounded away from zero. %In this sense, we can say that the element $x$ of Theorem \ref{main_theorem_bounded_away_zero} is ``special'', unless $G$ is a finite simple group of Lie type whose field of definition is much larger than the rank.

%After work of Pemantle--Peres--Rivin \cite{PPR}, Eberhard--Ford--Green \cite{EFG} and McKemmie \cite{McK}, this nearly finishes the problem of invariable generation of finite simple groups by randomly chosen elements (only the case of classical groups over finitely many small fields remains open).
%In Remark \ref{remark_direct_proof_two_elements_invariably_generate} we present a more direct proof of Theorem \ref{main_theorem_two_random_elements_bounded_rank}, which does not rely on Theorem \ref{main_theorem_strengthening_bounded_rank}. 

%\gen{x,y}_I = G) \geq c/r +O(r^7/q)$ %$\P_{G, \text{inv}}(2)$

In Theorems \ref{main_theorem_bounded_away_zero} and \ref{main_theorem_strengthening_bounded_rank} we have bounded $\P_{\text{inv}}(G,x)$ away from zero. Next, we would like to get probabilities approaching $1$. With this purpose, we consider more elements simultaneously. In most cases, only a few elements are needed, which is in stark contrast to the remaining cases, where the asymptotic statement does not hold even if we take the whole group.

%We provide two statements. The first statement is weaker and follows from the second; however it is less technical and it already tells us a lot. Theorem \ref{main_theorem_absolute_constant_2} will also clarify any doubt possibly arising from the loose use of language in Theorem \ref{main_theorem_absolute_constant}.

\begin{theorem}
\label{main_theorem_absolute_constant_2}
Let $G$ be a nonabelian finite simple group.
\begin{enumerate}
    \item Assume $G$ is of Lie type and $G\not\cong G_2(q)$ when $3\mid q$. Then, there exists a subset $A_b$ of $G$ such that $|A_b|$ and $\P_{\emph{inv}}(G,A_b)$ satisfy the bounds in Table \ref{main_table}.
    \item Assume $G$ is alternating or classical. Then, there exists a subset $A_\ell$ of $G$ such that $|A_\ell|$ and $\P_{\emph{inv}}(G,A_\ell)$ satisfy the bounds in Table \ref{main_table}.
    \item Assume $G=G_2(q)$ with $3\mid q$, or $G=\emph{PSp}_{2m}(q)$ with $q$ even and $m$ sufficiently large, or $G=\emph P\Omega_{2m+1}(q)$ with $q$ odd and $m$ sufficiently large. Then $\P_{\emph{inv}}(G,G)$ satisfy the bounds in Table \ref{main_table}.
\end{enumerate}
\end{theorem}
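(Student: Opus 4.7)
The plan handles each of the three bullets separately, with a common engine: $\gen{x,y}_I = G$ fails precisely when some $G$-conjugate of $x$ lies in a maximal subgroup containing $y$, and by the Fulman--Guralnick theorem cited in the abstract, each single maximal subgroup covers at most $(1-\delta)|G|$ elements of $G$ for an absolute constant $\delta > 0$.

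For the first bullet I would bootstrap Theorem \ref{main_theorem_strengthening_bounded_rank}. That result provides, for almost every $y$, a single-element witness with probability $\geq c/r + O(r^r/q)$. To obtain $\P_{\text{inv}}(G, A_b) \to 1$ with $|A_b|$ bounded in terms of $r$, I would take $A_b = \{x_1,\ldots,x_k\}$ to consist of representatives of regular semisimple classes whose centralizer tori belong to pairwise non-conjugate $W$-classes. The Weyl-group analysis underlying Theorem \ref{main_theorem_strengthening_bounded_rank} should classify, for each $x_i$, which maximal subgroups of $G$ meet $x_i^G$; choosing the $x_i$ so that these families are transverse---a generic $y$ fails to lie in any of them---yields the required bound by a union bound, with an excluded set of size $O(r^r/q^{1/2})$ inherited from Theorem \ref{main_theorem_strengthening_bounded_rank}.

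For the second bullet I would stratify $G$ by a structural invariant: cycle type for alternating groups, and rational canonical form for classical groups. On each stratum $S$, the list of maximal overgroups of a generic $y \in S$ is short and explicit---reducible stabilizers, field-extension subgroups, subfield subgroups, imprimitive wreath products, and so on---and for each $S$ one produces a witness $x_S$ whose $G$-conjugates avoid these maximal subgroups on all but a vanishing fraction of $S$. Setting $A_\ell$ to be the union of the $x_S$ gives $\P_{\text{inv}}(G, A_\ell) \to 1$ via a union bound across strata; the cardinality of $A_\ell$ grows with the rank (polynomially for classical groups, and with the number of relevant partition patterns for alternating groups), consistent with the ``large rank'' subscript.

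For the third bullet we need a structural obstruction: a positive proportion of $y \in G$ for which no $x$ satisfies $\gen{x,y}_I = G$. I would exhibit a short list of $G$-conjugacy classes of maximal subgroups $M_1,\ldots,M_s$ with $\bigcup_i M_i^G$ of density $\geq \delta > 0$ in $G$, and such that for every $y$ in this union one already has $\bigcup_{i:\, y \in M_i^G} M_i^G = G$. For $G_2(q)$ with $3\mid q$ the relevant pair consists of the two $G$-classes of maximal subgroups of types $\SL_3$ and $\SU_3$, whose fusion under the characteristic-$3$ exceptional isogeny should account for the ``half the elements have probability $0$'' phenomenon noted after Theorem \ref{main_theorem_strengthening_bounded_rank}. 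For $\PSp_{2m}(q)$ in even characteristic and $\POm_{2m+1}(q)$ in odd characteristic, the obstruction is provided by the exceptional isogeny between $B_m$ and $C_m$ in the respective characteristics, together with the $\mathrm{O}^+/\mathrm{O}^-$ dichotomy among the quadratic-form-stabilizing maximal subgroups. The principal obstacle in the whole argument is the bootstrapping in the first bullet: converting the $c/r$ lower bound to $1-o(1)$ with a bounded witness set requires genuinely controlling how the bad-maximal-subgroup families of different $x_i$ interact, which is the heart of the Weyl-group input.
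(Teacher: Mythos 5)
There are genuine gaps, and one point where your construction contradicts the statement you are proving. For the second bullet, Table \ref{main_table} requires $|A_\ell|\in\{1,2\}$, but your stratification by cycle type / rational canonical form produces a witness set whose size grows with the rank, so it does not prove the theorem. The paper gets $|A_\ell|=1$ (or $2$ for $\Sp_{2m}(q)$, $q$ even, and $\Omega_{2m+1}(q)$, $q$ odd) by choosing a \emph{single} element whose maximal overgroups form boundedly many explicit classes ($n$-cycle or $(n/2,n/2)$ in $A_n$; the Guralnick--Kantor element of \cite[Table II]{GK} in classical groups) and then quoting \cite{EFK}, \cite{EFG2}, \cite{FG3,FG4} to show the union of conjugates of those few classes has density $o(1)$; no union bound over strata is needed or possible. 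For the first bullet, bootstrapping Theorem \ref{main_theorem_strengthening_bounded_rank} is circular relative to the paper's logic (that theorem is \emph{deduced} from the sets $A_b$ of Theorem \ref{theorem_1.5_restatement}), and your per-subgroup Boston--Shalev bound cannot yield probabilities tending to $1$ in bounded rank: every conjugacy class of maximal-rank subgroups has conjugate-union of density bounded below (proportional to a Weyl-group class), which is exactly why no single $x$ works (Lemma \ref{classical_bounded_rank_must_have_at_least_two_elements}). What is needed is a bound on the \emph{intersection} $\bigcap_{x\in A_b}\mathscr M(x)$, which the paper translates, via regular semisimple elements and the torus--Weyl group dictionary (Theorems \ref{almost_all_connected_components_maximal_rank}, \ref{correspondence_tori_weyl_group}, \ref{only_maximal_tori_matter}), into the condition $\bigcap_{x\in A_b}\ca T(x)=\emptyset$. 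Verifying that condition is the heart of the proof and occupies Sections \ref{subsection_exceptional_groups}--\ref{subsection_classical_groups}: it uses the Liebeck--Saxl--Seitz classification, Lemmas \ref{bijection_conjugacy_classes_W_Psi}--\ref{reductive_overgroups_of_fixed_torus}, the signed-partition description of tori, and delicate case analysis ($F_4(2^a)$ needs six elements, $E_7$ needs a computation in $W(E_7)$, $\Sp$ and $\Omega^+_{2m}$ need three or four elements). Your proposal asserts ``transversality'' can be arranged but supplies no mechanism for it, and that assertion is precisely the content of the theorem.

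For the third bullet your structural criterion is essentially correct (if the classes of maximal subgroups meeting $y^G$ have conjugate-unions covering $G$, then $\P_{\text{inv}}(G,y)=0$), but the proof requires two quantitative inputs you do not supply. First, a covering statement: for $\Omega_{2m+1}(q)$ one uses that every element has eigenvalue $1$, so $G=R^+\cup R^-\cup P_1$; for $\Sp_{2m}(q)$, $q$ even, Dye's theorem $G=R^+\cup R^-$ via $\Sp_{2m}(q)\cong\SO_{2m+1}(q)$; for $G_2(3^a)$ one must show every element lies both in a maximal subgroup containing a conjugate of $T_1$ and in one containing a conjugate of $T_2$, which needs all four classes $P$, $\SL_3(q).2$, $\SU_3(q).2$ and the involution centralizer, not just the $\SL_3/\SU_3$ pair. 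Second, a lower bound, necessarily depending on $q$, for the proportion of $y$ lying simultaneously in conjugates of \emph{all} the covering classes: elements acting as $-1$ on a plus-type nondegenerate $2$-space (proportion $\geq 1/6q$, Theorem \ref{odd_dimension_orthogonal_-1_on_a_two_space}), elements centralizing a nondegenerate $2$-space (proportion $\geq 1/4q^3$, Theorem \ref{symplectic_q_even_1_on_a_two_space}), and the elements of $\wt{T_1}\cup\wt{T_2}$ in $G_2(3^a)$ (proportion $1/2+O(1/q)$). Without these counts you get neither the explicit bounds of Table \ref{main_table} nor an explanation of why the obstruction disappears when $q\to\infty$, which is why these families are exceptional only for $q$ fixed.
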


We make some remarks on the statement of Theorem \ref{main_theorem_absolute_constant_2} and on the notation and conventions in Table \ref{main_table}.

\begin{remark}
\label{r_notation}
\begin{itemize}
    \item[(i)] In Table \ref{main_table}, ``$r$'' denotes the untwisted rank of $G$. Moreover, it is understood that every nonempty entry in the fourth column of the table applies to all subsequent empty lines.
    
    \item[(ii)] If $G=\PSU_n(q)$ we assume $n\geq 3$; if $G=\PSp_{2m}(q)$ we assume $m\geq 2$; if $G=\POm^\pm_{2m}(q)$ we assume $m\geq 4$; if $G=\POm_{2m+1}(q)$ we assume $m\geq 3$. These assumptions are valid due to the existence of exceptional isomorphisms between certain low rank classical groups. Moreover, in Table \ref{main_table}, for each $G$ we further exclude the finitely many values of $(n,q)$ or $(m,q)$ for which $G$ is not simple; see, e.g. \cite[Proposition 2.9.1]{KL} (this is not crucial, since the statements have no content for bounded $|G|$).
    
    \item[(iii)] If $G=^2\hspace{-0.4em}B_2(q^2)$ or $G=^2\hspace{-0.4em}F_4(q^2)$ then $q^2=2^{2a+1}$ with $a\geq 1$. If $G=^2\hspace{-0.4em}G_2(q^2)$ then $q^2=3^{2a+1}$ with $a\geq 1$.    A common notation in the literature is $^2\!B_2(q)$ with $q=2^{2a+1}$ and $a\geq 1$, etc. Of course, one notation is obtained from the other by performing a change of variables, and $^2\!B_2(8)$, say, denotes the same group in both cases. Our choice is convenient since groups of Lie type behave uniformly with respect to the parameter ``$q$'', which is not an integer for Suzuki and Ree groups; see Section \ref{section_bounded rank} for more details. We note that the same convention is adopted, for instance, in \cite{MT}; see \cite[Table 22.1, p. 193]{MT}.
    
    \item[(iv)] In $A_b$, ``$b$'' stands for ``bounded'', and in $A_\ell$, ``$\ell$'' stands for ``large''. This refers to the rank of the groups; the reason of this choice should be clarified by looking at the bounds in Table \ref{main_table}.

\end{itemize}
\end{remark}

Theorem \ref{main_theorem_absolute_constant_2} presents a strong dichotomy; it is worth stating this separately. Consider the following groups: 
\begin{equation}
\label{eq_cane}
G=\begin{cases}
    G_2(q) & \text{with $3|q$}\\
    \PSp_{2m}(q) & \text{with $q$ even}\\
    \POm_{2m+1}(q)  & \text{with $q$ odd}.
\end{cases}
\end{equation}

\begin{corollary}
\label{main_theorem_absolute_constant}
Let $G$ be a nonabelian finite simple group with $|G|\to \infty$.
\begin{itemize}
    \item[(i)] Assume $G\not\cong G_2(q)$ when $3|q$, and assume that if $G=\PSp_{2m}(q)$ with $q$ even or $G=\POm_{2m+1}(q)$ with $q$ odd then $q\to \infty$. Then, there exists $A \subseteq G$ of size at most $6$ such that $\P_{\emph{inv}}(G,A)$ tends to $1$ as $|G| \rightarrow \infty$.
    \item[(ii)] Assume $G$ is as in \eqref{eq_cane}, and assume that if $G=\PSp_{2m}(q)$ with $q$ even or $G=\POm_{2m+1}(q)$ with $q$ odd then $q$ is bounded. Then, $\P_{\emph{inv}}(G,G)$ remains bounded away from $1$ as $|G|\to \infty$. (Equivalently, $\P_{\emph{inv}}(G,y)=0$ for a proportion of elements $y\in G$ bounded away from zero.)
\end{itemize}
\end{corollary}

% \begin{corollary}
% \label{main_theorem_absolute_constant}
% Let $(G_i)_{i\in \N}$ be a sequence of nonabelian finite simple groups with $|G_i|\to \infty$.
% \begin{itemize}
%     \item[(i)] Assume that $G_i\not\cong G_2(q)$ when $3|q$, and assume that the symplectic or orthogonal groups as in \eqref{eq_cane} are defined over fields whose cardinality goes to infinity. Then, there exists $A_i \subseteq G_i$ of size at most $6$ such that $\P_{\emph{inv}}(G_i,A_i)$ tends to $1$ as $i\to \infty$.
%     \item[(ii)] Assume that each $G_i$ is a group as in \eqref{eq_cane}, where all symplectic and orthogonal groups are defined over fields of bounded cardinality. Then $\P_{\emph{inv}}(G_i,G_i)$ is bounded away from $1$ uniformly over $i$. (Equivalently, $\P_{\emph{inv}}(G_i,y)=0$ for a proportion of elements $y\in G$ bounded away from zero.)
% \end{itemize}
% \end{corollary}

\begin{table}
\small
\centering
% table caption is above the table
\caption{Bounds given in the statement of Theorem \ref{main_theorem_absolute_constant_2}. See Remark \ref{r_notation} for notation and conventions used in the table.
%Whenever the entry in the fourth column is empty, it is understood that the
}
\label{main_table}       % Give a unique label
% For LaTeX tables use
\begin{tabular}{llll}
\hline\noalign{\smallskip}
$G$ & Conditions & Size of $A_b$ or $A_\ell$  & Bounds \\
\noalign{\smallskip}\hline\noalign{\smallskip}
&  & $|A_b|$ &\\
$^2\!B_2(q^2)$  &  &  $2$ & $\P_{\text{inv}}(G,A_b)\geq1-O(r^r/q)$\\
$^2\!G_2(q^2)$  &  & $2$ & \\
$G_2(q)$ & $3\nmid q$  & $2$ & \\
$^3\!D_4(q)$ &   & $2$ &\\
%$\Sp_{2m}(q)$, $m=n/2\geq 3$ odd, $q$ odd & $2m$  & $(2m-4)\perp 4$ \\
$^2\!F_4(q^2)$ &   & $2$ &\\
$F_4(q)$ & $q$ odd  & $2$ &\\
$F_4(q)$ & $q$ even  & $6$ &\\
$E_6(q)$ &  & $2$ &\\
$^2\!E_6(q)$ &  & $2$ &\\
$E_7(q)$ &  &  2 &\\
$E_8(q)$ &  & $2$ &\\
$\PSL_n(q)$ & &  $2$ &\\
$\PSU_n(q)$ & & $2$ &\\
$\PSp_{2m}(q)$ & $m$ even, $q$ odd & $2$ &\\
$\PSp_{2m}(q)$ & $m$ odd, $q$ odd & $3$ &\\
$\PSp_{2m}(q)$ & $q$ even & $4$ &\\
$\POm_{2m+1}(q)$ & $q$ odd & $2$ &\\
$\POm_{2m}^-(q)$ & & $2$ &\\
$\POm_{2m}^+(q)$ & $m$ odd & $2$ &\\
%$\Omega_{2m}^+(q)$, $m$ even, $q$ odd & $2m^+$ & $J_3\perp 1 \perp (2m-4)^-$ & $(2m-2)^-\perp 2^-$ \\
%$\Omega_{2m}^+(q)$, $m$ even, $q$ even & $(2m-2)^-\perp 2^-$ & $(2m-4)^-\perp 2^-\perp 2^+$ & $(2m-4)^- \perp 4^-$  & $2m^+$ \\
$\POm_{2m}^+(q)$ & $m$ even & $4$ &\\
\noalign{\smallskip}\hline
 &  & $|A_\ell|$ &\\
$A_n$ & & $1$ & $\P_{\text{inv}}(G,A_\ell)\geq 1 - O(n^{-0.08})$\\
$\PSL_n(q)$ & &  $1$ & $\P_{\text{inv}}(G,A_\ell)\geq 1 - O(r^{-0.005})$\\
$\PSU_n(q)$ & & $1$ &\\
$\PSp_{2m}(q)$ & $q$ odd & $1$ &\\
$\POm_{2m}^-(q)$ & & $1$ &\\
$\POm_{2m}^+(q)$ &  & $1$ &\\
$\PSp_{2m}(q)$ & $q$ even & $2$ & $\P_{\text{inv}}(G,A_\ell)\geq 1 - 6/q+O(r^{-0.005})$\\
%$\PSp_{2m}(q)$ & $q$ even fixed &  \\
$\POm_{2m+1}(q)$ & $q$ odd & $2$ & \\
%$\POm_{2m+1}(q)$ & $q$ odd fixed & \\
\noalign{\smallskip}\hline\noalign{\smallskip}
%\noalign{\smallskip}
\hline\noalign{\smallskip}
$G_2(q)$ & $3 \mid q$ & & $\P_{\text{inv}}(G,G)=1/2+O(1/q)$\\
$\PSp_{2m}(q)$ & $q$ even, $m$ large &  & $\P_{\text{inv}}(G,G)\leq 1 - 1/4q^3$\\
%$\PSp_{2m}(q)$ & $q$ even fixed &  \\
$\POm_{2m+1}(q)$ & $q$ odd, $m$ large &  & $\P_{\text{inv}}(G,G)\leq 1 - 1/6q$\\
\noalign{\smallskip}\hline\noalign{\smallskip}
\end{tabular}
\end{table}

% \begin{table}
% \small
% \centering
% % table caption is above the table
% \caption{}
% \label{table_exceptions}       % Give a unique label
% % For LaTeX tables use
% \begin{tabular}{ll}
% \hline\noalign{\smallskip}
% $G$ & Conditions \\
% \noalign{\smallskip}\hline\noalign{\smallskip}
% $G_2(q)$ & $3\mid q$  \\
% $\PSp_{2m}(q)$ & $q$ even fixed \\
% $\POm_{2m+1}(q)$  &  $q$ odd fixed \\
% \noalign{\smallskip}\hline\noalign{\smallskip}
% \end{tabular}
% \end{table}

Corollary \ref{main_theorem_absolute_constant} follows from Theorem \ref{main_theorem_absolute_constant_2} by setting $A=A_\ell$ for alternating groups, $A=A_b$ for exceptional groups, and $A=A_b\cup A_\ell$ for classical groups.

%We feel it is worth highlighting Theorem \ref{main_theorem_absolute_constant_2}, as it points out the differences occurring in the various cases (notably groups of Lie type of bounded rank and large rank).
Of course, every case in which the size of $A_\ell$ is equal to $1$ represents a strengthening of Theorem \ref{main_theorem_bounded_away_zero}. Combining this with Theorem \ref{main_theorem_strengthening_bounded_rank}, we see that in most cases asymptotically we can do better than Theorem \ref{main_theorem_bounded_away_zero}. The improvement is complementary: while there is no sensible analogue of Theorem \ref{main_theorem_strengthening_bounded_rank} in large rank, here we cannot have $|A_b|=1$ in bounded rank (see Lemma \ref{classical_bounded_rank_must_have_at_least_two_elements}). We note finally that the size of $|A_\ell|$ is sharp in every case (Lemmas \ref{orthogonal_odd_|A|_at_least_2} and \ref{symplectic_even_|A|_at_least_2}), and that there are cases in which we need $|A_b|\geq 4$ (see Lemma \ref{four_elements_might_be_needed}; note that $|A_b|\leq 4$ unless $G=F_4(2^a)$). Here, with ``sharp'' we mean that if we choose a set $Y$ of smaller size, then $\P_{\text{inv}}(G,Y)$ remains bounded away from $1$ as the relevant parameters grow.

\subsection{Context: Theorem \ref{main_theorem_two_random_elements_bounded_rank}}
\label{subsection_bounded_rank}

The first result of the flavour of Theorem \ref{main_theorem_two_random_elements_bounded_rank} was obtained by Dixon \cite{Dix92}. Dixon showed that $O((\log n)^{1/2})$ random elements of $S_n$ invariably generate $S_n$ with probability tending to $1$ as $n\rightarrow \infty$. Luczak--Pyber \cite{LP} %gave a fundamental contribution, showing
showed that $O(1)$ random elements of $S_n$ invariably generate with probability bounded away from zero. (One cannot approach $1$ with a bounded number of elements, since a random permutation has a fixed point with probability approaching $1-1/e$.) The exact value of $O(1)$ turned out to be four: Pemantle--Peres--Rivin \cite{PPR} proved that four elements are enough, while Eberhard--Ford--Green \cite{EFG} showed that three are not. The same results hold for alternating groups.
%the "three are not" part follows from $S_n$; indeed, if with positive probability three elements of $A_n$ invariably generate, then with positive probability they do not fix a common set size. Then also three elements of $S_n$ with positive probability do not fix a common set size --- but this contradicts Eberhard-Ford-Green; indeed they exactly the opposite thing. The "four are enough" part should follow with similar arguments as the case of $S_n$.
The second author \cite{McK} extended these results to classical groups of large rank, proving that four random elements of a finite simple classical group of rank $r$ defined over $\F_q$ invariably generate with probability bounded away from zero if $r$ and $q$ are large enough, and three random elements invariably generate with probability tending to zero as $r,q\to \infty$.

Theorem \ref{main_theorem_two_random_elements_bounded_rank} addresses the case of groups of Lie type of bounded rank, which therefore nearly finishes the problem of invariable generation of finite simple groups by randomly chosen elements -- only the case of classical groups of large rank over small fields remains open. (As in the case of symmetric groups, using a bounded number of elements the probability cannot approach $1$; this follows from results by Fulman--Guralnick \cite{FG}, and it is summarized for instance in \cite[Corollary 5.7]{KLS}.)

Probabilistic invariable generation has been studied also for general finite groups. Confirming a conjecture of Kowalski and Zywina \cite{KZ}, Lucchini \cite{Luc} proved that, for every finite group $G$, picking $O(|G|^{1/2})$ random elements is sufficient, on average, in order to generate $G$ invariably. Lucchini--Tracey \cite{LT} showed that the above bound can be improved to $(1+\eps)|G|^{1/2}+ O_\eps(1)$ for every $\eps >0$.

\subsection{Context: Theorems \ref{main_theorem_bounded_away_zero}, \ref{main_theorem_strengthening_bounded_rank} and \ref{main_theorem_absolute_constant_2}}
\label{subsection_context}
%Let us provide some context to our results. We already contextualized Theorem \ref{main_theorem_two_random_elements_bounded_rank}. For the other theorems,
It is convenient to visualize things as follows. The \textit{generating graph} $\Gamma(G)$ of $G$ is the graph whose vertices are the nonidentity elements of $G$, with $x$ and $y$ adjacent if $\gen{x,y}=G$. The first author \cite[Subsection 1.3]{Gar} defined a similar graph $\Lambda_e(G)$: the vertex set is the same, and $x$ and $y$ are adjacent if $\gen{x,y}_I=G$. In this language, Theorem \ref{main_theorem_bounded_away_zero} says that $\Lambda_e(G)$ contains large stars when $G$ is simple.

For $x \in G$, let $\P(G,x)$ denote the probability that, if $y \in G$ is random, then $\gen{x,y}=G$: this is the ``classical'' version of our $\P_{\text{inv}}(G,x)$. Set then $P^-(G)=\text{min}\{\P(G,x) : 1 \neq x \in G\}$. Guralnick and Kantor \cite{GK} showed that $P^-(G)>0$ for every finite simple group $G$, i.e., $\Gamma(G)$ has no isolated vertices. Moreover, in \cite{GLSS}, following results from \cite{GK2} and \cite{LSh2}, the behaviour of $P^-(G)$, where $G$ is simple and $|G|\rightarrow \infty$, was determined.

It is easy to see that the Guralnick--Kantor result fails for invariable generation: $\Lambda_e(G)$ can have isolated vertices (for instance, a $3$-cycle in $A_n$ with $n$ even). It seems to us that Theorems \ref{main_theorem_bounded_away_zero}, \ref{main_theorem_strengthening_bounded_rank} and \ref{main_theorem_absolute_constant_2} are essentially the best one can hope for in the invariable setting. Moreover, our results are among the first probabilistic statements concerning invariable generation of finite simple groups by \textit{two} elements. We are aware only of Shalev \cite[Theorem 4.2]{Sha}, which is Theorem \ref{main_theorem_absolute_constant_2} in case $G=\PSL_n(q)$ and $n$ large. %The case of $A_n$ is also essentially known: see Section \ref{section_alternating_groups}.

Corollary \ref{main_theorem_absolute_constant} can be clearly stated in terms of $\Lambda_e(G)$ as follows.

\begin{corollary}
\label{c_graph}
Let $G$ be a nonabelian finite simple group with $|G|\to \infty$.
\begin{itemize}
    \item[(i)] Assume $G\not\cong G_2(q)$ when $3|q$, and assume that if $G=\PSp_{2m}(q)$ with $q$ even or $G=\POm_{2m+1}(q)$ with $q$ odd then $q\to \infty$. Then, the proportion of isolated vertices of $\Lambda_e(G)$ tends to zero as $|G|\to \infty$. More precisely, if we remove an asymptotically  negligible proportion of vertices from $\Lambda_e(G)$, and if we remove some further edges, we obtain a graph which is the union of at most $6$ stars.
    
    \item[(ii)] Assume $G$ is as in \eqref{eq_cane}, and assume that if $G=\PSp_{2m}(q)$ with $q$ even or $G=\POm_{2m+1}(q)$ with $q$ odd then $q$ is bounded. Then, the proportion of isolated vertices of $\Lambda_e(G)$ remains bounded away from zero as $|G|\to \infty$.
\end{itemize}
\end{corollary}

Corollary \ref{c_graph}(ii) reveals a surprising and sharp contrast with respect to the case of classical generation. Theorem \ref{main_theorem_absolute_constant_2}  and Corollary \ref{main_theorem_absolute_constant} can be seen also as a sort of ``invariable'' version of a concept introduced recently by Burness and Harper. In \cite{BH}, the \textit{total domination number} of a finite simple group $G$ is defined as the total domination number of $\Gamma(G)$, i.e., the minimal size of a subset $A$ of $G$ such that, if $1 \neq y \in G$, there exists $x \in A$ such that $\gen{x,y}=G$. Corollary \ref{main_theorem_absolute_constant} can be thought of as an analogue for invariable generation -- although again it is necessary to ignore a small proportion of elements.

\subsection{Methods} Here we briefly outline some of
the methods we use in the proofs of our main results.

Let $G$ be a finite group, and let $x \in G$. We define $\mathscr M(x)$ as the union of all conjugates of maximal subgroups of $G$ containing $x$. %(this notation is used in \cite{GK} with a different meaning). We then define $\wt{\mathscr M}(x)$ as the union of all conjugates of all members of $\mathscr M(x)$. Equivalently, $\wt{\mathscr M}(x)$ coincides with the union of all conjugacy classes of elements intersecting some member of $\mathscr M(x)$.
Equivalently, $\mathscr M(x)$ coincides with the union of all conjugacy classes of elements intersecting some maximal overgroup of $x$.

\begin{lemma}
\label{equivalence_inv_gen_tildas}
Let $A$ be a subset of $G$. Then,
\[
    1-\P_{\emph{inv}}(G,A)=\frac{|\bigcap_{x \in A} \mathscr M(x)|}{|G|}.
\]
\end{lemma}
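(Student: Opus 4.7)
The plan is to unpack the definition of invariable generation in terms of maximal overgroups, establish a pointwise characterization of the bad event for a single $x$, and then intersect over $A$.

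First, I would observe that conjugating the defining condition by $g_1^{-1}$ shows
\[
\gen{x,y}_I = G \iff \gen{x, y^g} = G \text{ for every } g \in G.
\]
Indeed, the original condition says $\gen{x^{g_1}, y^{g_2}}=G$ for all $g_1,g_2$; conjugating by $g_1^{-1}$ (which preserves generation) collapses the two parameters to the single parameter $g=g_2 g_1^{-1}$. Since subgroups are proper iff contained in a maximal subgroup,
\[
\gen{x, y^g} \neq G \iff \exists\, M \text{ maximal in } G \text{ with } x \in M \text{ and } y^g \in M.
\]

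Next, I would translate the condition $y^g \in M$ into a statement about $y$: it is equivalent to $y \in M^{g^{-1}}$. Thus $\gen{x,y}_I \neq G$ if and only if $y$ lies in some conjugate of some maximal subgroup containing $x$, which is precisely $\mathscr M(x)$. (The two descriptions of $\mathscr M(x)$ given in the excerpt are equivalent: $y \in M^{g^{-1}}$ with $x \in M$ is the same as saying the conjugacy class of $y$ meets the maximal overgroup $M$ of $x$.)

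Intersecting this characterization over $x \in A$, the event ``there is no $x \in A$ with $\gen{x,y}_I = G$'' is exactly the event $y \in \bigcap_{x \in A} \mathscr M(x)$. Dividing by $|G|$ yields
\[
1 - \P_{\text{inv}}(G,A) = \frac{|\bigcap_{x \in A} \mathscr M(x)|}{|G|},
\]
as claimed. There is no real obstacle here; the only point requiring care is the bookkeeping of conjugation in the first step, ensuring that the two-parameter invariance condition really does reduce to a one-parameter condition on $y^g$ with $x$ fixed.
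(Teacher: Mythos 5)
Your proof is correct and is essentially the paper's argument: the paper's own proof is the one-line observation that $\{x,y\}$ invariably generates $G$ if and only if $y \notin \mathscr M(x)$, which is exactly the pointwise characterization you establish (with the conjugation bookkeeping spelled out) before intersecting over $A$. Nothing further is needed.
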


\begin{proof}
Given $y \in G$, $\{x,y\}$ invariably generates $G$ if and only if $y \notin \mathscr M(x)$. The statement follows.
\end{proof}

Therefore our business is to find elements $x$ such that $\mathscr M(x)$ is small. This depends on two facts: 
\begin{itemize}

    \item[(I)] existence of elements lying in few maximal subgroups, and
    
    \item[(II)] existence of maximal subgroups $M$ of $G$ such that $\bigcup_{g\in G}M^g$ is small.
    
\end{itemize}
We must note that, in fact, (I) and (II) perform only part of the job. Indeed, taking the intersection of the sets $\mathscr M(x)$ is rather more delicate, and will require much more work. Moreover, of course the proof of the upper bound on $\P_{\text{inv}}(G,G)$ in Theorem \ref{main_theorem_absolute_constant_2} goes in the opposite direction.

Item (I) is a well studied topic (see, e.g.,  \cite{GK, Wei, BH}). Often, for us, applying results from these papers will be convenient, rather than essential. One reason is that, %Indeed, we are interested only in overgroups up to conjugation, which simplifies the situation. Moreover,
in our probabilistic approach, we can ignore the overgroups which are small (e.g., certain almost simple subgroups in groups of Lie type). 

For what concerns item (II), we will make essential use of deep results. We will use results of {\L}uczak--Pyber \cite{LP} for alternating groups (subsequently improved in Eberhard--Ford--Green \cite{EFG2} and Eberhard--Ford--Koukoulopoulos \cite{EFK}), and results of Fulman--Guralnick \cite{FG, FG3, FG2, FG4} for groups of Lie type. In these four papers, Fulman and Guralnick completed the proof of the so-called \textit{Boston--Shalev conjecture}, which asserts that for every finite simple group $G$ and for every proper subgroup $M$ of $G$, the proportion of elements belonging to $\cup_{g\in G}M^g$ is bounded away from $1$ absolutely (equivalently, the proportion of derangements in the action of $G$ on the cosets of $M$ is bounded away from zero absolutely). They proved  much stronger asymptotic results in many cases. %We will apply several results from Fulman and Guralnick's papers.

%We will apply the results by Fulman and Guralnick in several ways. 
%For instance, the methods developed in \cite{FG} and \cite{FG2} imply that there is 
One of our main tools %--- which again is related to Fulman and Guralnick's work ---
is the intimate connection between the properties of invariable generation of a group of Lie type and the structure of its Weyl group. We will make this precise in Section \ref{section_bounded rank}. In bounded rank, this will allow us to translate the main theorems in terms of maximal tori (see e.g. Theorem \ref{theorem_1.5_restatement}). We will exploit the connection also in large rank, where the asymptotic properties of the Weyl groups will be relevant.

%As this suggests, we will mainly use the approach from linear algebraic groups in order to study finite groups of Lie type, especially in bounded rank.

We remark again that it is enough to prove Theorem \ref{main_theorem_bounded_away_zero} for sufficiently large finite simple groups, since in \cite{GM} and \cite{KLS} it was proved that every finite simple group is invariably generated by two elements. What is more, for groups of Lie type we can divide the proof of Theorem \ref{main_theorem_bounded_away_zero} in two steps: first produce an element $x_1$ for groups of sufficiently large rank, and then produce an element $x_2$ for groups of bounded rank and sufficiently large fields. 

Finally, we note that we are free to define the subsets $A_b$ and $A_\ell$ from Theorem \ref{main_theorem_absolute_constant_2} only for sufficiently large finite simple groups. In fact, in groups of Lie type, for the set $A_\ell$ we may assume that $r$ is sufficiently large, and for the set $A_b$ we may assume (say) that $q \geq Cr^r$ for some large constant $C$. % (otherwise the bounds given in Theorem \ref{main_theorem_absolute_constant_2} are totally useless). 
Clearly, the proof of Theorem \ref{main_theorem_absolute_constant_2} also splits naturally into the bounded rank
and large rank cases.

%We consider alternating groups in Section \ref{section_alternating_groups}, groups of Lie type of bounded rank in Section \ref{section_bounded rank}, and groups of Lie type of large rank in Section \ref{section_large_rank}. For alternating groups, the theorems follow quickly from {\L}uczak--Pyber \cite{LP}, therefore most of the work concerns groups of Lie type. We apply the methods that Fulman--Guralnick (especially \cite{FG} and \cite{FG2}) developed in order to prove the Boston--Shalev conjecture, stating that the proportion of derangements in a finite simple transitive permutation group is absolutely bounded away from zero. One of the main tools is 

%\subsection{Quasisimple groups} A finite group $G$ is called quasisimple if $G$ is perfect and $G/\Z(G)$ is simple. In Sections \ref{section_bounded rank} and \ref{section_large_rank}, we will prove the theorems for the quasisimple classical groups, rather than for their projective versions. This is equivalent, since $\Z(G)$ is contained in every maximal subgroup of $G$, hence every property concerning generation is the same for $G$ and $G/\Z(G)$.

\section{Alternating groups}
\label{section_alternating_groups}
In this section we prove Theorems \ref{main_theorem_bounded_away_zero} and \ref{main_theorem_absolute_constant_2} for alternating groups. Conceptually, the proof follows from \cite{LP}. We will make use of \cite{EFG2} and \cite{EFK} in order to obtain better bounds.

\begin{theorem}\label{alternating_theorems_1_5}
The conclusions to Theorems~\ref{main_theorem_bounded_away_zero} and \ref{main_theorem_absolute_constant_2} hold in the case $G=A_n$.
\end{theorem}
\begin{proof}
If $n$ is odd, choose $x\in G=A_n$ to be an $n$-cycle. If $n$ is even, choose $x\in G$ to have cycle type $(n/2, n/2)$. Then set $A_\ell=\{x\}$. In the first case, the overgroups of $x$ are transitive subgroups, while in the second case the overgroups of $x$ are either transitive, or fix a set of size $n/2$.

By \cite[Theorem 1.1]{EFK}, the proportion of elements of $A_n$ lying in proper transitive subgroups of $A_n$ is $O(n^{-0.08})$. By \cite[Theorem 1.1]{EFG2}, the same bound holds for the proportion of elements fixing a set of size $n/2$. Therefore
\[
\frac{|\mathscr M(x)|}{|G|}=O(n^{-0.08}).
\]
Then $\P_{\text{inv}}(G,A_\ell)=1-O(n^{-0.08})$ by Lemma \ref{equivalence_inv_gen_tildas}. (We mention that, in \cite{EFK} and \cite{EFG2}, much more precise estimates are proved than those used here.)

We have proved Theorem~\ref{main_theorem_absolute_constant_2} for alternating groups, and since $|A_\ell|=1$, this implies Theorem \ref{main_theorem_bounded_away_zero} in this case.
%By \cite[Theorem 1, and Lemma p. 4]{LP}, with probability tending to $1$ a random element of $A_n$ does not belong to proper transitive subgroups of $A_n$, and does not fix any set of size $n/2$. It follows that $|\wt{\mathscr M}(x)|/|A_n|$ tends to zero as $n \rightarrow \infty$. Now Theorem \ref{main_theorem_absolute_constant_2} follows from Lemma \ref{equivalence_inv_gen_tildas}.
\end{proof}

\section{Groups of Lie type of bounded rank: preliminaries}
\label{section_bounded rank}

In this section we introduce all the machinery that will lead us to the proof of the main theorems for groups of Lie type of bounded rank. We will prove Theorem \ref{main_theorem_absolute_constant_2}(1), as well as Theorem \ref{main_theorem_absolute_constant_2}(3) for $G=G_2(3^a)$, in Sections \ref{subsection_exceptional_groups} and \ref{subsection_classical_groups}. We will deduce Theorems \ref{main_theorem_strengthening_bounded_rank} and \ref{main_theorem_two_random_elements_bounded_rank} in Section \ref{section_proof_theorem_1.2}. %Dealing separately with the group $G_2(3^a)$, Theorems \ref{main_theorem_bounded_away_zero} and  will follow.

We single out a special case.

\begin{theorem}
\label{main_psl_2}
Theorems \ref{main_theorem_strengthening_bounded_rank}, \ref{main_theorem_two_random_elements_bounded_rank}, \ref{main_theorem_absolute_constant_2} hold in case $G\cong \emph{PSL}_2(q)$.
\end{theorem}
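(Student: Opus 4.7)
The plan is to combine Lemma~\ref{equivalence_inv_gen_tildas} with Dickson's classification of the maximal subgroups of $G=\PSL_2(q)$. For $q$ sufficiently large, every maximal subgroup of $G$ is conjugate to: a Borel subgroup $B$; the dihedral normalizer $N_G(T_s)$ (resp.\ $N_G(T_n)$) of a split (resp.\ non-split) maximal torus, of order $2(q-1)/d$ (resp.\ $2(q+1)/d$) where $d=\gcd(2,q-1)$; a subfield subgroup of type $\PSL_2(q_0)$ or $\mathrm{PGL}_2(q_0)$ with $q_0\mid q$; or one of a bounded number of almost-simple subgroups drawn from $\{A_4,S_4,A_5\}$. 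The whole analysis reduces to identifying which of these conjugacy classes of maximal subgroups arise as overgroups of a given $y\in G$.

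For Theorem~\ref{main_theorem_absolute_constant_2} I would set $A_b=\{x_1,x_2\}$, with $x_1$ a generator of $T_s$ and $x_2$ a generator of $T_n$. For $q$ large enough, the orders $(q-1)/d$ and $(q+1)/d$ exceed $60$ and do not divide the order of any element of a proper subfield subgroup, so the only maximal overgroups of $x_i$ are conjugates of $N_G(T_s)$ or $N_G(T_n)$; hence $\mathscr M(x_i)=\bigcup_g N_G(T_i)^g$. Since distinct maximal tori of $\PSL_2(q)$ intersect trivially, $\mathscr M(x_1)\cap\mathscr M(x_2)$ is contained in $\{1\}$ together with the involutions of $G$ (which appear in both unions as the reflections in the two families of dihedral normalizers), a set of cardinality $O(q^2)$. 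Since $|G|\asymp q^3$, Lemma~\ref{equivalence_inv_gen_tildas} yields $\mathbf P_{\mathrm{inv}}(G,A_b)=1-O(1/q)$, matching the entry in Table~\ref{main_table} with $r=1$.

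For Theorem~\ref{main_theorem_strengthening_bounded_rank} I would call $y\in G$ \emph{generic} if its order exceeds $60$ and does not divide the order of any element of a proper subfield subgroup of $G$. For such $y$ the only maximal overgroups are conjugates of $N_G(T)$ (and, in the split case, of a Borel $B\supset T_s$), where $T$ is the unique maximal torus containing $y$; a direct count gives $|\mathscr M(y)|=|G|/2+O(q^2)$, so $\mathbf P_{\mathrm{inv}}(G,y)\geq 1/2-O(1/q)$. The non-generic $y$ either lie in a bounded-order maximal subgroup, are non-semisimple (a set of size $O(q^2)$), or lie in a proper subfield subgroup; the last family dominates, since $\bigcup_g\mathrm{PGL}_2(q^{1/2})^g$ contains only elements of order at most $q^{1/2}+1$ and so has cardinality $O(q^{5/2})$, giving a bad proportion of $O(q^{-1/2})$. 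Theorem~\ref{main_theorem_two_random_elements_bounded_rank} then follows by averaging the pointwise lower bound over $y\in G$.

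The main obstacle is the $q^{-1/2}$ error term. For every other family of simple groups of Lie type of bounded rank the analogous loss will be $O(r^r/q)$, and the worse exponent here is forced by the maximal subfield subgroup $\mathrm{PGL}_2(q^{1/2})$, whose union of conjugates has relative size $\asymp q^{-1/2}$ in $G$ rather than $\asymp q^{-1}$. Matching this exponent exactly, while simultaneously handling the split and non-split generic cases in a unified way, is the delicate technical part.
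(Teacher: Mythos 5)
Your handling of Theorems \ref{main_theorem_absolute_constant_2} and \ref{main_theorem_strengthening_bounded_rank} is essentially the paper's argument: Dickson's list, the two torus generators as $A_b$, order considerations to eliminate the bounded subgroups, and an $O(q^{-1/2})$ loss coming from subfield subgroups. Two harmless slips: the split generator $x_1$ also lies in Borel subgroups, which you omit when asserting that its only maximal overgroups are torus normalizers (your intersection bound survives, since any element common to a Borel or split normalizer and to a non-split dihedral normalizer has order at most $2$); and when $q=p$ is prime your notion of ``generic'' admits unipotent elements of order $p>60$, for which the unique-torus description fails, though $\mathscr M(y)=\wt B$ still has density $1/2+O(1/q)$, so nothing breaks.

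The genuine gap concerns Theorem \ref{main_theorem_two_random_elements_bounded_rank}. Averaging your pointwise bound over $y$ gives only $\P(\gen{x_1,x_2}_I=G)\geq 1/2-O(q^{-1/2})$, whereas the theorem (with $r=1$) requires the error term $O(1/q)$ -- and obtaining this sharper error is precisely why the paper treats $\PSL_2(q)$ separately (see Remark \ref{remark_value_a}). Contrary to your closing remark, the $q^{-1/2}$ exponent is not forced at the level of pairs: let $S_+$ (resp.\ $S_-$) be the elements of order $>5$ dividing $(q+1)/d$ (resp.\ $(q-1)/d$), and let $F$ be the union of the subfield subgroups. If $y_1\in S_+$ and $y_2\in S_-$, then a maximal subgroup containing conjugates of both cannot be a Borel, a torus normalizer, or one of $A_4,S_4,A_5$, by order considerations; it must be a subfield subgroup, which forces \emph{both} $y_1$ and $y_2$ into $F$. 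Hence every pair in $\bigl((S_+\times S_-)\cup(S_-\times S_+)\bigr)\setminus (F\times F)$ invariably generates, the excluded pairs have density $|F|^2/|G|^2=O(q^{-1/2})^2=O(1/q)$, and the good pairs have density $1/2+O(1/q)$, giving $\P(\gen{x_1,x_2}_I=G)\geq 1/2+O(1/q)$. This is exactly the paper's step (``$B\setminus F^2$ consists of invariable generating pairs''), and it is the piece missing from your proposal: the $q^{-1/2}$ loss is unavoidable only in the pointwise statement of Theorem \ref{main_theorem_strengthening_bounded_rank}, not in Theorem \ref{main_theorem_two_random_elements_bounded_rank}.
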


%In particular also Theorem \ref{main_theorem_bounded_away_zero} holds.
The subgroup structure of $\PSL_2(q)$ is very well known and it is easy to prove Theorem \ref{main_psl_2}. We will do this at the beginning of Section \ref{subsection_classical_groups}.

The reason why we separate out this case is minor. Indeed, we will give an argument which works in general, but which gives error terms in $q$ of type $O(1/q^{1/2})$ if $G\cong \PSL_2(q)$, and of type $O(1/q)$ otherwise (see Remark \ref{remark_value_a}). We then prefer to consider $\PSL_2(q)$ separately, and deal with the other cases uniformly.

 Let $X$ be a simple linear algebraic group (of any isogeny type) over an algebraic closure $k$ of a finite field of characteristic $p$ (for all this theory, our reference is \cite{MT}). Let $\sigma$ be an endomorphism of $X$ such that the set $X_{\sigma}$ of fixed points of $\sigma$ is a finite group, and such that the derived subgroup $[X_\sigma, X_\sigma]=(X_\sigma)'$ is a perfect group. Let $T$ be a $\sigma$-stable maximal torus of $X$. Then $\sigma$ acts naturally on the character group $\text{Hom}(T,\GL_1)$. It turns out that the eigenvalues of $\sigma$ on $\text{Hom}(T,\GL_1)\otimes_{\mathbf Z} \mathbf C$ have all the same absolute value, which we denote by $q$, and which is a fractional power of $p$ (see \cite[Lemma 22.1 and Proposition 22.2]{MT}). We will write $X_\sigma=X_q$.

% except in case $X_{\sigma}$ is a Suzuki or a Ree group, in which case $q$ is not an integer but $q^2$ is an integer, and we will write $X_{\sigma}=X(q^2)$. However, when we present general arguments which apply to all groups, we will write $X_q$, without highlighting the difference for Suzuki and Ree groups.  This notation conflicts slightly with Table \ref{main_table}, where $X_q$ denotes a finite simple group, while here $X_q$ need not be perfect. This will cause no confusion.

\begin{remark}
\label{rem:xq}
    The notation $X_q$ does not determine uniquely a finite group, since $q$ does not determine uniquely $\sigma$. For instance, both the groups $\SL_n(q)$ and $\SU_n(q)$ can be written as $X_q$ with $X=\SL_n$. 

    Recall, moreover, that $q$ is an integer, except for Suzuki and Ree groups, where $q$ is not an integer but $q^2$ is an integer (cf. Remark \ref{r_notation}(iii)).
\end{remark}

%, and the relevant considerations will be made in Section \ref{subsection_exceptional_groups}.

In Sections \ref{section_bounded rank}--%, \ref{subsection_exceptional_groups}, \ref{subsection_classical_groups}, \ref{section_proof_theorem_1.2}, 
\ref{section_lower_bound_|A|}, we fix $X$, and we let $\sigma$ vary -- concretely, for classical groups we are fixing the rank and we are letting $q$ go to infinity, and moreover we are dealing with the exceptional groups.

%This justifies the following convention. Throughout Sections \ref{section_bounded rank}, \ref{subsection_exceptional_groups}, \ref{subsection_classical_groups}, \ref{section_proof_theorem_1.2}, \ref{section_lower_bound_|A|}, unless otherwise stated every asymptotic statement or symbol must be interpreted in the regime $X$ fixed and $q\rightarrow \infty$. So $o(1)$ denotes a quantity that goes to zero when $X$ is fixed and $q\rightarrow \infty$. Similarly, an expression of the type ``bounded away from zero'' means ``greater than a positive number depending on $X$ but not on $q$''.

\subsection{Subgroups of maximal rank}
\label{subsection_maximal_rank}
We begin by recalling a well known fact.

\begin{lemma}
\label{semisimple_in_stable_torus}
Assume $H\leq X$ is closed, connected and $\sigma$-stable, and assume $s\in H_\sigma$ is semisimple. Then, $s$ belongs to a $\sigma$-stable maximal torus of $H$.
\end{lemma}
%it's not strictly necessary to prove this result, since we always work with regular semisimple elements, which belong to a unique maximal torus $C_X(s)^\circ$, which is $\sigma$-stable.

\begin{proof}
We have that $s$ is contained in a maximal torus $S$ of $H$. Then $s\in S\leq \C_H(s)^\circ$. Since $s$ is central in $\C_H(s)^\circ$, $s$ is contained in every maximal torus of $\C_H(s)^\circ$ (this is also a maximal torus of $H$).
%For every maximal torus $S$ of $H$, we have $s\in S$ if and only if $S\leq \C_H(s)^\circ$ (cf. \cite[Proposition 3.5.2]{Car}).
Now $\C_H(s)^\circ$ is $\sigma$-stable and connected, hence by Lang--Steinberg it contains a $\sigma$-stable maximal torus $L$ (see \cite[Theorem 21.11]{MT}). %since $s$ lies in a maximal torus $L$ of $H$, hence $L\leq \C_H(s)$, hence $L\leq \C_H(s)^\circ$ --- in particular a maximal torus of $\C_H(s)^\circ$ is a maximal torus of $H$.
We have $s\in L$ and we are done.
\end{proof}

A proper closed subgroup $K$ of $X$ is said to be \textit{of maximal rank} if it contains a maximal torus of $X$. A subgroup of $X_{\sigma}$ is said to be \textit{of maximal rank} if it is of the form $K_{\sigma}$, where $K$ is a $\sigma$-stable subgroup of maximal rank (by Lang--Steinberg, $K$ contains a $\sigma$-stable maximal torus).

We list some notation that we will keep throughout Sections \ref{section_bounded rank}--\ref{section_lower_bound_|A|}. 
We advise the reader to consult this list whenever they find an unknown symbol, rather than to read all the items now. We prefer to amass this notation here, since we will use it several times in several different places.

\begin{notation}\label{notation}
%This strategy should work well; indeed, every notation which does not appear here will be generally used only once, or only in a ce

\begin{itemize}
\item[(i)] We write $X_\sigma=X_q$, where ``$q$'' was defined in the paragraph preceding Remark \ref{rem:xq}.
    \item[(ii)] For a subgroup $H$ of $X_\sigma$, set
    \[
    \wt H=\bigcup_{g\in X_\sigma} H^g.
    \]
    %(This notation is used for instance in \cite{KLS}.)
    \item[(iii)] $p$ denotes the characteristic of the field $k$.
    \item[(iv)] $r$ denotes the rank of $X$ (i.e, the dimension of a maximal torus). By Theorem \ref{main_psl_2}, we may assume $r\geq 2$, but we will make the requirement explicit.
    \item[(v)] $\ca M=\ca M(X_{\sigma})$ denotes the set of maximal subgroups of $X_{\sigma}$ of the form $K_{\sigma}$, where $K$ is a maximal $\sigma$-stable subgroup of $X$ of maximal rank.
    \item[(vi)] $\mathcal M_{\text{con}}=\mathcal M_{\text{con}}(X_{\sigma})$ denotes the set of subgroups of $X_{\sigma}$ of the form $(K^{\circ})_{\sigma}$, where $K$ is a maximal $\sigma$-stable subgroup of $X$ of maximal rank and $K^\circ$ denotes its connected component.
    \item[(vii)] For $x \in X_{\sigma}$, $\ca M(x)$ denotes the set of all conjugates of overgroups of $x$ belonging to $\ca M$.
    \item[(viii)] For $x\in X_\sigma$, $\ca T(x)$ denotes the set of maximal tori of $X_\sigma$ contained in some $K^\circ_\sigma$, where $K_\sigma \in \ca M(x)$ (note that $K^\circ_\sigma$ need not contain $x$).
    \item[(ix)] For a subset $A\subseteq X_\sigma$, $\P^*_{\text{inv}}(X_\sigma, A)$ denotes the probability that, if $y \in X_\sigma$ is chosen uniformly at random, there exists $x \in A$ such that for every $g_1, g_2 \in X_\sigma$, every maximal overgroup of $\gen{x^{g_1}, y^{g_2}}$ in $X_\sigma$ contains $(X_\sigma)'$. (In particular $\P^*_{\text{inv}}(X_\sigma, A)= \P_{\text{inv}}(X_\sigma, A)$ if $X_\sigma$ is perfect.)
    %\item For a subset $A$ of $X_\sigma$, we set
%\[
%\Gamma(A)=\bigcup_{\substack{x \in A \\ K_\sigma \in \ca M(x)}} (K_\sigma \setminus %K^\circ).
%\]
%\item For a subset $A$ of $X_\sigma$, $Y_A$ denotes the set of elements of $X_\sigma$ which are regular semisimple, which do not lie in $\Gamma(A)$, and which belong only to subgroups of $X_\sigma$ which are either of maximal rank, or contain $(X_\sigma)'$. 
\item[(x)] $\Delta=\Delta(X_\sigma)$ denotes the set of elements $y$ of $X_\sigma$ which are regular semisimple, and such that if $y$ belongs to a maximal subgroup $M$ of $X_\sigma$, then either $(X_\sigma)' \leq M$, or $M=K_\sigma \in \ca M$ and $y\in K^\circ_\sigma$.
\item[(xi)] For a maximal torus $S$ of $X_\sigma$, set $\Delta_S=\wt S \cap \Delta$.
\end{itemize}
\end{notation}
We recall a theorem which is essential for our purposes. Note that the proportion in the statement is independent of $X$, hence the result can be applied to groups of growing Lie rank (indeed we will use it in Section \ref{section_large_rank}).

\begin{theorem} \cite[Theorem 1.1]{GL}
\label{almost_all_elements_regular_semisimple}
The proportion of regular semisimple elements of $X_q$ is $1-O(1/q)$. %, where the implied constant is absolute.
\end{theorem}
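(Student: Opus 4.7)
The plan is to bound the proportion of elements outside $X_\sigma^{\text{rs}}$, which splits as the disjoint union of the non-semisimple elements (those with nontrivial unipotent part in the Jordan decomposition) and the non-regular semisimple elements. I would treat each piece separately.

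\smallskip

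\emph{Non-semisimple elements.} Writing $g=su$ via Jordan decomposition, $g$ is non-semisimple iff $u\neq1$ is a nontrivial unipotent element of $H := C_X(s)^{\circ}$. By the Steinberg formula, $H_{\sigma}$ contains of the order of $q^{\dim H-\operatorname{rk} H}$ unipotent elements, while $|H_{\sigma}|$ has order $q^{\dim H}$, so a $1-O(1/q)$ proportion of $H_{\sigma}$ is unipotent-free. Summing over fibres of the Jordan map gives the proportion of non-semisimple elements in $X_\sigma$ as $O(1/q)$.

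\smallskip

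\emph{Non-regular semisimple elements.} Every regular semisimple $g$ determines uniquely a $\sigma$-stable maximal torus $T=C_X(g)^{\circ}$ containing it (cf.\ Theorem \ref{semisimple_in_stable_torus}), so
\[
|X_\sigma^{\text{rs}}| \;=\; \sum_{[T]}\frac{|X_\sigma|}{|N_{X_\sigma}(T_\sigma)|}\cdot|T_\sigma^{\text{reg}}|,
\]
where $[T]$ runs over $X_\sigma$-conjugacy classes of maximal tori. Combined with the analogous formula for the total semisimple count (which, after the correction already absorbed into the non-semisimple case, equals $|X_\sigma|$), the problem reduces to estimating $|T_\sigma\setminus T_\sigma^{\text{reg}}|$ for each $T$. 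Non-regular elements of $T_\sigma$ lie in $\bigcup_{\alpha\in\Phi}\ker(\alpha)_\sigma$, a union of codimension-one $\sigma$-stable subtori, each of cardinality at most $C\cdot|T_\sigma|/(q-1)$ for an absolute constant $C$.

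\smallskip

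The main obstacle is uniformity in the rank: a naive application of the bound above yields $O(|\Phi|\cdot|X_\sigma|/q)$, which is quadratic in $r$, whereas the statement demands an absolute $O(1/q)$ (as needed for the applications to groups of growing rank in Section \ref{section_large_rank}). To achieve this, I would appeal to Steinberg's theorem that the number of semisimple conjugacy classes of $X_\sigma$ equals $q^r$ in the simply connected case, so that $|X_\sigma^{\text{rs}}|$ admits a polynomial expansion in $q$ whose subleading coefficient can be controlled uniformly, rather than running the root-by-root estimate inside each torus. Turning this into a rank-independent $O(1/q)$ constant, as done in Guralnick--Liebeck, is the core technical point of the proof.
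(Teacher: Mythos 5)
The paper does not prove this statement at all: it is imported verbatim as Theorem 1.1 of Guralnick--L\"ubeck \cite{GL}, and the only observation the paper adds is that the implied constant is independent of $X$, which is exactly what lets the result be applied to groups of growing rank in Section \ref{section_large_rank}. Your proposal instead tries to reprove it, and as written it has a genuine gap in both halves, which you yourself half-acknowledge at the end. For the non-semisimple elements, the step ``summing over fibres of the Jordan map'' is not a proof: the per-fibre estimate says that in a fixed connected reductive centralizer $H$ the unipotent elements form a proportion about $q^{-\mathrm{rk}\,H}$ of $H_\sigma$, but what you must sum is $\bigl(\#\{\text{unipotents of }C_{X_\sigma}(s)\}-1\bigr)/|C_{X_\sigma}(s)|$ over all semisimple classes $s$, and controlling that sum by an absolute constant times $1/q$ — uniformly in the rank and over all centralizer types — is essentially the theorem itself applied to reductive groups; as stated the argument is circular. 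For the non-regular semisimple elements you candidly note that the root-kernel bound only gives $O(|\Phi|/q)=O(r^2/q)$, which is weaker than the assertion, and the proposed remedy via Steinberg's count of $q^r$ semisimple classes does not repair it: that formula counts classes, not elements, and knowing the number of classes gives no control on the sub-leading coefficient of $|X_\sigma^{\mathrm{rs}}|$ or on class sizes without precisely the centralizer estimates you are trying to avoid.

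In effect your sketch carries out the two easy reductions and then defers the actual content — the rank-uniform $O(1/q)$ — to Guralnick--L\"ubeck, whose proof is a substantial type-by-type analysis (with explicit bounds such as those quoted elsewhere in the paper, e.g. in Theorem \ref{orthogonal_odd_dimension_hyperplanes}) rather than a formal consequence of these reductions. Since the paper treats the theorem as a black box, the two honest options are to cite \cite{GL} outright, as the paper does, or to carry out the uniform estimates in full; the intermediate sketch does not stand on its own as a proof.
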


For Suzuki and Ree groups the proportion is in fact $1-O(1/q^2)$, but we will not use this. The proof of the following theorem is largely contained in \cite{FG}, although at various places we need more explicit bounds.

\begin{theorem}
\label{almost_all_connected_components_maximal_rank}
%Assume $r\leq q$. Then
Assume $r\geq 2$. We have
\[
\frac{|\Delta|}{|X_q|}=1-\frac{O(r^r)}{q}.
\]
\end{theorem}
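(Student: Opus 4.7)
The plan is to decompose the complement of $\Delta$ in $X(q)$ according to the conditions defining $\Delta$, and to bound each piece. Concretely, I would write
\[
X(q)\setminus\Delta \subseteq \Sigma_1\cup\Sigma_2\cup\Sigma_3,
\]
where $\Sigma_1$ is the set of elements of $X(q)$ that are not regular semisimple; $\Sigma_2$ is the set of regular semisimple elements $y$ lying in some maximal subgroup $M$ of $X_\sigma$ with $X'_\sigma\not\le M$ and $M\not\in\mathcal M$; and $\Sigma_3$ is the set of regular semisimple elements $y$ lying in $K_\sigma\setminus K^\circ_\sigma$ for some $K_\sigma\in\mathcal M$. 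Each complement is then handled via a result already in the literature.

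For $\Sigma_1$, Theorem \ref{almost_all_elements_regular_semisimple} gives $|\Sigma_1|/|X(q)|=O(1/q)$ immediately. For $\Sigma_2$, the bound $|\Sigma_2|/|X(q)|=O(1/q)$ is \cite[Theorem 3.2]{FG}, and for $\Sigma_3$, for each fixed $X_\sigma$-conjugacy class of $K_\sigma\in\mathcal M$, \cite[Theorem 4.2]{FG} bounds the proportion of elements in $\widetilde{K_\sigma\setminus K^\circ_\sigma}$ by $O(1/q)$. Neither of these last two references is quoted as a black box: at each step I would trace through the proof to verify that the implicit constants can be taken of size $O(r^r)$ in the Lie rank.

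The extra ingredient needed to make the final bound explicit is a count of the $X_\sigma$-conjugacy classes of subgroups in $\mathcal M$. By the Borel–Tits theorem, a maximal $\sigma$-stable subgroup $H$ of maximal rank is either parabolic or has reductive connected component $H^\circ$; in either case the number of $X$-conjugacy classes of such $H$ depends only on the root system of $X$ and is at most $O(r^r)$. Inside a single $X$-conjugacy class, Lang–Steinberg applied to the conjugation action of $X$ on $H^X$ (combined with the maximality identity $H=N_X(H)$) shows that the number of $X_\sigma$-orbits on $\sigma$-stable members is bounded by $|H:H^\circ|\le |W|=O(r!)$. Thus the total number of $X_\sigma$-conjugacy classes in $\mathcal M$ is $O(r^r)$. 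Summing the $\Sigma_3$ contribution over all such classes gives $|\Sigma_3|/|X(q)|=O(r^r/q)$, and combining this with the bounds on $\Sigma_1$ and $\Sigma_2$ proves the theorem.

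The main obstacle is purely one of bookkeeping: a careful re-examination of the proofs of \cite[Theorems 3.2 and 4.2]{FG} is required to make sure that the constants hidden in their $O(1/q)$ bounds are at most polynomial in $|W|$, so that the final estimate comes out as $O(r^r)/q$ rather than an inexplicit $O_r(1/q)$. Once this tracking is done, the decomposition above assembles the result mechanically, and no further structural input about $X$ is needed.
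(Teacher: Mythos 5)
Your decomposition of $X(q)\setminus\Delta$ into $\Sigma_1\cup\Sigma_2\cup\Sigma_3$ is exactly the one used in the paper, and the $\Sigma_1$ bound via Theorem \ref{almost_all_elements_regular_semisimple} is fine. The gap is in your claim that the rest is ``purely bookkeeping'' on \cite[Theorems 3.2 and 4.2]{FG}. Those results are fixed-$X$, $q\to\infty$ statements, and making them uniform with an explicit $O(r^r)$ dependence is precisely where the content of the theorem lies; it does not come out of constant-tracking alone. For $\Sigma_2$ one needs a quantitative bound on the number of conjugacy classes of maximal subgroups that are \emph{not} of maximal rank: the paper bounds the contribution of a single such subgroup by combining the $O(q^{r-1})$ bound on its number of classes with the $O(|X(q)|)/(q-1)^r$ bound on regular semisimple class sizes, and then invokes \cite[Theorem 1.3]{LMSh} and, crucially for classical groups of unbounded rank, \cite[Theorem 1.2]{GLT} to bound the number of such classes by $O(r^6)$; subfield subgroups must be excluded from this count and treated separately (via the argument of \cite[Lemma 3.7]{FG}, giving $O(r/q^{r/2})$, which is where the hypothesis $r\geq 2$ and the exclusion of $\PSL_2(q)$ enter). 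None of this is supplied by your proposal, and it is not extractable from \cite[Theorem 3.2]{FG} by bookkeeping.

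Your quantitative assembly of $\Sigma_3$ also does not close as written. You bound the number of $X$-classes of maximal $\sigma$-stable subgroups of maximal rank by $O(r^r)$ and the splitting of each into $X_\sigma$-classes by $|H:H^\circ|\leq |W|=O(2^r r!)$, yet conclude that the total number of classes is $O(r^r)$; the product of your two bounds is roughly $r^{2r}$, not $r^r$, and once you multiply by the per-class constant (which, as in the paper, is of size $|K_\sigma:K^\circ_\sigma|$, up to $(r+1)!$ for decomposition stabilizers) the estimate degrades further. The same issue affects your closing remark: a constant ``polynomial in $|W|$'' is not $O(r^r)$, since already $|W|^2\gg r^r$. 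What the paper actually uses is the sharper structural input from \cite{LSS} and \cite{LSe}: the number of $X(q)$-classes of maximal subgroups of maximal rank is \emph{linear} in $r$, and $|K_\sigma:K^\circ_\sigma|=O((r+1)!)$, so that the product is $O(r\cdot(r+1)!)=O(r^r)$. So the architecture of your argument matches the paper's, but to prove the stated bound you must replace the crude class counts and the appeal to constant-tracking in \cite{FG} by these explicit inputs (GLT/LMSh for the non-maximal-rank classes, the subfield argument, and LSS/LSe for the maximal-rank classes and their component groups).
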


\begin{proof}
%I've checked subgroups of Suzuki and Ree groups --- i.e., $^2B_2(q^2)$, $^2G_2(q^2)$, $^2F_4(q^2)$ --- and they are either of maximal rank, or subfield subgroups. As observed regular semisimple have proportion $1-O(1^q^2)$, hence one might hope to get an error term with $q^2$ at denominator. By what said --- with notation as below --- A_1 is not a problem, and in A_2 you only need to take care of subfield subgroups. But the argument given in  \cite[Lemma 3.7]{FG}, however, when r=2 you do get something like O(1/q). In any case in the argument at denominator you have q^r, so for $r=2$ you can't really hope to get $O(1/q^2)$... Instead for the group $^2F_4(q^2)$ I think you can get 1/q^2$ everywhere --- hence also in the main results. But it doesn't seem worth pointing out the difference.
Clearly we can assume $r\leq q$, otherwise the statement is empty. %indeed, for $r=1$, $r\leq q$ is obvious. In the other cases, assume $r\leq q$ is not true; in particular $r^r\geq q$. But then the statement says that the proportion is $1-f$ for some $f$ such that $|f|\leq cr^r/q$; in particular $|f|\leq c$ is admissible. then just formally set f := 1-the proportion of $\Delta$, which is clearly at most $1$.
Let $A_1$ be the set of elements in $X_q$ which are not regular semisimple. Let $A_2$ be the set of elements which belong to maximal subgroups of $X_q$ which do not contain any maximal torus and which do not contain $(X_q)'$. Let $A_3$ be the set of elements which belong to $K_\sigma \setminus K^\circ$ for some maximal $\sigma$-stable subgroup $K$ of $X$ of maximal rank. We need to prove $|A_1\cup A_2\cup A_3|/|X_q| =O(r^r/q)$.

We have $|A_1|/|X_q|=O(1/q)$ by Theorem \ref{almost_all_elements_regular_semisimple}. 

Next we deal with $A_2$. Let $\Omega$ be the set of maximal subgroups of $X_q$ which do not contain $(X_q)'$, which do not contain any maximal torus of $X_q$, and which are not subfield subgroups (cf. \cite[Section 3]{FG}). If $M\in \Omega$ then $M$ has $O(q^{r-1})$ conjugacy classes (see \cite{FG} and the proof of \cite[Theorem 7.3]{FG3}). %refer to theorem 1.1 instead of corollary 1.2 in \cite[Theorem 7.3]{FG3}. since X_q surjects onto an almost simple with central kernel of order at most $r$, if this estimate is true only for almost simple, perhaps you need to multiply it by $r$ in order to hold for $X_q$. For the moment I'm not including this; note that the final estimate $O(r^r)$ would still be true.

Assume now $x\in X_q$ is regular semisimple. Then the $X_q$-class of $x$ has size $O(|X_q|)/(q-1)^r$. %since the centralizer is roughly the maximal torus, which in case of classical group has size at least $(q-1)^r$ --- you see it easily using structure of tori --- and for exceptional groups it is asymptotic to $q^r$. I say roughly because one should be able to see that the index of the maximal torus in the centralizer is bounded absolutely --- I would say at most 2 or 3. recall that very often the centralizer is connected, i.e., it is the maximal torus itself.
%Moreover, if $x\in X_q$ then $\C_{X_q}(x) \gg q^r/r\log r$. This follows from \cite[Theorem 6.15]{FG3}, where however we use $\text{min}\{q,r\}\leq r$, rather than $\text{min}\{q,r\}\leq q$. (Note that, if $X_{\text{ad}}$ denotes the group of adjoint type, then we have a surjection $\pi:X_q\rightarrow G$ for some $X_{\text{ad}}(q)' \leq G \leq X_{\text{ad}}(q)$; %see Malle-Test prop 24.21
%therefore $|\C_{X_q}(x)|\geq |\C_G(x^\pi)|$.) %to see this in general: if $N$ is normal in $G$ then $|C_{G/N}(xN)|\leq |C_G(x)|$. write $K/N=C_{G/N}(xN)$. we need to show $|K|/|N| \leq |C_G(x)|=|C_K(x)|$. Note that $x\in K$ is central modulo $N$. Then the $K$-conjugacy class of $x$ is contained in the coset $xN$, hence it has size at most $|N|$, whence $|C_K(x)|\leq |K|/|N|$, which finishes the proof. 
By Theorem \ref{almost_all_elements_regular_semisimple}, we see that if $M\in \Omega$ then
\[
\frac{|\wt M|}{|X_q|} = \frac{O(q^{r-1})}{(q-1)^r} +\frac{O(1)}{q} = \frac{O(1)}{q},
\]
where in the last equality we used $r\leq q$ (in fact $r=O(q)$ is enough). It is known (cf. \cite[Theorem 1.3]{LMSh}) that the number of conjugacy classes of subgroups in $\Omega$ is bounded by a function of $r$. (Note that $X_q$ surjects, with central kernel, onto an almost simple group generated by inner-diagonal automorphisms.) %in particular you have a bijection between maximal subgroups of $X_q$ not containing $(X_q)'$ and maximal subgroups of the almost simple group not containing the derived subgroup (i.e. the socle). To see this you use that a maximal subgroup not containing $(X_q)'$ necessarily contains the central kernel, and moreover that the derived subgroups is mapped onto the derived subgroup
In case $X$ is classical, by \cite[Theorem 1.2]{GLT} we can take this function to be $O(r^6)$.

Next we deal with subfield subgroups. The argument given in \cite[Lemma 3.7]{FG} shows that the proportion of elements belonging to subfield subgroups is $O(r/q^{r/2})+O(1/q)$, which is $O(r/q)$ since $r\geq 2$. Therefore $|A_2|/|X_q|=O(r^6/q)$. %[note the the argument given below deals only with the subgroup of type $X(q^{1/b})$ of $X_q$ --- e.g. $SL_n(q^{1/r})$ inside $SL_n(q)$. This you can do; indeed the index of this subgroup inside the maximal subgroup is $O(r)$ (see kleidman-liebeck, or bray-holt-roney dougal  table 2.8 p.70). therefore the number of conjugacy classes of the relevant maximal subgroup is $O(r)$ times the number of conjugacy classes of $X(q^{1/b})$ (lemma 2.1 in fulman-guralnick c_4...c_9). you can count only the regular semisimple ones: in $X(q^{1/b})$ there are at most $q^{r/b}$; then the argument below applies. in principle in the end you have to pay a further factor $r$, but I don't think this is actually necessary.]
%indeed, Suzuki and Ree are treated separately. in the other cases, say $q=p^a$, in the argument you exclude nonregular semisimple, i.e., $O(1/q)$, then you have basically O(r) times the sum over prime divisors $b$ of $a$ of $q^{r/b}$, all divided by $q^r$. Now you write this sum as $q^{r/2}+ \log\log q \cdot q^{r/3}$. When you divide by $q^r$ you have $q^{-r/2}+\log \log q \cdot q^{-2r/3} =O(q^{-r/2})$
%(If $q$ is a square, the proportion of elements of $\SL_2(q)$ belonging to a conjugate of $\SL_2(q^{1/2})$ is indeed, up to a constant, $1/q^{1/2}$.) We deduce that $|A_2|/|X_q|=O(r^6/q^a)$.

Finally we deal with $A_3$. Let $K$ be a maximal $\sigma$-stable closed subgroup of $X$ of maximal rank. We claim that
\[    
\frac{|\bigcup_{g\in X_q}(K_\sigma \setminus K^\circ)^g|}{|X_q|}=\frac{O(|K_\sigma : K^\circ_\sigma|)}{q}. 
\]
We go through the proof of \cite[Proposition 4.2]{FG}: we use the same arguments and the same computations, except that we bound the size of a regular semisimple class by $O(|X_q|)/(q-1)^r$, and moreover we use $r=O(q)$, so that $((q+1)/(q-1))^{r-1}$ is bounded.

At this point we can deduce that $|A_3|/|X_q|=O(r^r/q)$. Indeed, it is known that $|K_\sigma : K^\circ_\sigma|=O((r+1)!)$, and moreover the number of $X_q$-conjugacy classes of maximal subgroups of maximal rank is linear in $r$, from which $|A_3|/|X_q|=O(r^r/q)$. (These facts are known in a very precise way, cf. \cite{LSS} and \cite{LSe}. We will recall them in Sections \ref{subsection_exceptional_groups} and \ref{subsection_classical_groups}. The term $(r+1)!$ can occur for stabilizers of direct sum decompositions in classical groups.)

Putting together the bounds given for $A_1$, $A_2$ and $A_3$, we get the result.
\end{proof}

\begin{remark}
\label{remark_value_a}
%We make one remark concerning the value of $a=a(r)$.
%In this paper, obtaining sharp bounds is not our primary goal. For instance, the term $r^r$ in Theorems \ref{main_theorem_strengthening_bounded_rank}, \ref{main_theorem_two_random_elements_bounded_rank}, \ref{main_theorem_absolute_constant_2} can certainly be improved considerably. Therefore, the reader may wonder why we decided to introduce the parameter $a=a(r)$ --- we could just insert a factor $1/q^{1/2}$ in the error term in every case. In bounded rank, an essential part of our method is to focus on regular semisimple elements. By work of Guralnick--L\"{u}beck \cite{GL} and Fulman--Neumann--Praeger \cite{FNP}, it is known that, except for Suzuki and Ree groups, the proportion of elements of $X_q$ which are not regular semisimple is comparable to $1/q$ (up to constants). For this reason, we decided to point out the fact that, with one exception, we are able to get an error bound of this type, which cannot be improved with our methods.
By the same argument as in the previous proof, the proportion of elements of $\SL_2(q)$ belonging to subfield subgroups is $O(1/q^{1/2})$. If $q$ is a square, the proportion of elements inside a conjugate of $\SL_2(q^{1/2})$ is indeed of this form. This is the only reason for which we have considered this case separately.
    
We also note that, in bounded rank, an essential part of our method is to focus on regular semisimple elements. By work of Guralnick--L\"{u}beck \cite{GL} and Fulman--Neumann--Praeger \cite{FNP}, it is known that, except for Suzuki and Ree groups, the proportion of elements of $X_q$ which are not regular semisimple is comparable to $1/q$ (up to constants). Therefore, with our method we cannot get error terms in $q$ which are better than $O(1/q)$.
\end{remark}

%\begin{proof}
%This is proved in \cite[Theorem 1.1]{GL}. Since here we do not need effective bounds, as explained in \cite[p. 2]{GL} one can also use Lang--Weyl estimates to prove this statement.
%\end{proof}
\subsection{Maximal tori and Weyl group}
\label{subsection_weyl_group_tori}
There is a well known and fundamental connection between the maximal tori of $X_\sigma$ and the Weyl group of $X$, which now we recall (see \cite[Section 25]{MT} for the general theory). Together with Theorems \ref{almost_all_elements_regular_semisimple} and \ref{almost_all_connected_components_maximal_rank}, this will enable us to translate the main theorems in terms of maximal tori. %For the following discussion, see also \cite{FG}.
%We now explain this connection in some details; we refer the reader to \cite[Section 25]{MT} for the general theory. We refer also to \cite{FG}, where these methods are explained and used in an essential way.

Throughout this subsection, we fix a $\sigma$-stable maximal torus $T$, and let $W=\N_X(T)/T$ be the Weyl group of $X$ with respect to $T$. Then $\sigma$ acts on $W$. There is a bijection between $X_{\sigma}$-conjugacy classes of $\sigma$-stable maximal tori of $X$ and $W$-conjugacy classes contained in the coset $\sigma W$ of the group $\gen{\sigma}\ltimes W$ (if $\sigma$ acts trivially on $W$, these can be identified with the conjugacy classes of $W$). If $w \in W$, we denote by $T_w$ any representative of the conjugacy class of maximal tori corresponding to the $W$-class of $\sigma w$. We have $T_w=T^g$, where $g\in X$ is such that $g^\sigma g^{-1}$ maps to $w\in W$. Moreover $\N_{X_{\sigma}}(T_w)/(T_w)_{\sigma} \cong \C_W(\sigma w)$. %A maximal torus $S_{\sigma}$ of $X_{\sigma}$ is called nondegenerate if $\C_X(S_{\sigma})^\circ=S$; this in particular implies $\N_{X_{\sigma}}(S)=\N_{X_{\sigma}}(S_{\sigma})$.
%It is known that for $q$ large enough all maximal tori of $X_{\sigma}=X_q$ are nondegenerate: see \cite[Proposition 3.6.6]{Car}.\footnote{In general this is not true. It might even happen that $T'_{\sigma}=1$: think to diagonal matrices in $\SL_n(2)$.}
%This is certainly true if $S_{\sigma}$ contains regular semisimple elements: if $g\in S_{\sigma}$ is regular semisimple then $\C_X(S_\sigma)^\circ \leq \C_X(g)^\circ= S$.

Let $\Psi\subseteq W$ be such that $\{\sigma w : w\in \Psi\}$ is a set of representatives for the $W$-classes in the coset $\sigma W$. Let $\Omega$ be a subset of $\Psi$. %such that, for distinct elements $w_1$ and $w_2$ of $\Omega$, $\sigma w_1$ and $\sigma w_2$ are not $W$-conjugate.
Denote by $\P(W,\sigma, \Omega)$ the probability that a random element of $\sigma W$ is $W$-conjugate to $\sigma w$ for some $w\in \Omega$. In case $\Omega=\{w\}$, we will write $\P(W,\sigma, w)$ instead of $\P(W,\sigma, \{w\})$. Using that $\N_{X_{\sigma}}(T_w)\leq \N_{X_{\sigma}}((T_w)_{\sigma})$, %(here we do not need nondegeneracy),
by a trivial union bound we get
\begin{equation}
\label{equation_inequality_tori}
    \frac{|\bigcup_{w \in \Omega}\wt{(T_w)_{\sigma}}|}{|X_{\sigma}|}\leq \sum_{w \in \Omega}\frac{1}{|\C_W(\sigma w)|}=\P(W,\sigma, \Omega).
\end{equation}
Despite being trivial, for $q$ large this bound is accurate.

\begin{theorem}
\label{correspondence_tori_weyl_group}
\begin{equation}
\label{equation_maximal_tori_classes}
    \frac{|\bigcup_{w \in \Omega}\wt{(T_w)_{\sigma}}|}{|X_q|}=\P(W,\sigma, \Omega)+\frac{O(1)}{q}.
\end{equation}
%As $q\rightarrow \infty$, uniformly on $X$, the proportion of elements of $X_q$ belonging to a conjugate of $T_w$ for some $w \in S$ is equal to the proportion of elements of $W\sigma$ which are $W$-conjugate to $w\sigma$ for some $w \in S$.
\end{theorem}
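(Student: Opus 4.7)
The upper bound is already given by \eqref{equation_inequality_tori}, so the task is to show a matching lower bound up to error $O(1/q)$. The plan is to restrict attention to regular semisimple elements, where everything is controlled by the fact that a regular semisimple element of $X$ lies in a \emph{unique} maximal torus (its connected centralizer). Consequently, for $w \ne w'$ in $\Psi$, the subsets $\wt{(T_w)_\sigma} \cap \Delta^{\mathrm{rss}}$ and $\wt{(T_{w'})_\sigma} \cap \Delta^{\mathrm{rss}}$ of regular semisimple elements are disjoint, and for a fixed $w$ the count of regular semisimple elements in $\wt{(T_w)_\sigma}$ is obtained by multiplying the number of $X_\sigma$-conjugates of $T_w$ (as an algebraic subgroup) by the number of regular semisimple elements in $(T_w)_\sigma$.

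Concretely, first I would use Theorem \ref{almost_all_elements_regular_semisimple} to discard non-regular-semisimple elements at a cost of $O(1/q)$. Next, I would count regular semisimple elements in a single $\wt{(T_w)_\sigma}$: the number of $X_\sigma$-conjugates of the algebraic torus $T_w$ is $|X_\sigma|/|\N_{X_\sigma}(T_w)|$, and using $|\N_{X_\sigma}(T_w)/(T_w)_\sigma|=|\C_W(\sigma w)|$ this equals $|X_\sigma|/(|\C_W(\sigma w)|\cdot|(T_w)_\sigma|)$. Since $|(T_w)_\sigma|=q^r + O(q^{r-1})$ and the non-regular-semisimple elements of $(T_w)_\sigma$ lie in the union of the kernels of the (boundedly many) roots, each of size $O(q^{r-1})$, we get
\[
\frac{|\wt{(T_w)_\sigma}\cap\{\mathrm{rss}\}|}{|X_\sigma|}=\frac{1}{|\C_W(\sigma w)|}\bigl(1+O(1/q)\bigr).
\]
Because the regular semisimple parts for distinct $w\in\Psi$ are disjoint, summing over $w\in\Omega$ yields
\[
\frac{|\bigcup_{w\in\Omega}\wt{(T_w)_\sigma}\cap\{\mathrm{rss}\}|}{|X_\sigma|}=\P(W,\sigma,\Omega)+\frac{O(1)}{q}\sum_{w\in\Psi}\frac{1}{|\C_W(\sigma w)|},
\]
and the last sum equals $1$ because $\{\sigma w:w\in\Psi\}$ is a transversal for the $W$-conjugacy classes in $\sigma W$. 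Combining with the $O(1/q)$ bound for the non-regular-semisimple contribution from Theorem \ref{almost_all_elements_regular_semisimple} delivers \eqref{equation_maximal_tori_classes}.

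The main technical nuisance I anticipate is the potential discrepancy between $\N_{X_\sigma}(T_w)$ and $\N_{X_\sigma}((T_w)_\sigma)$, which would affect counting conjugates of the finite torus. Restricting to regular semisimple elements neatly bypasses this: for such $x$, containment $x\in(T_w)_\sigma^g$ already forces $gT_wg^{-1}=T_w$ (since $T_w$ is the unique maximal torus containing $x$), so the number of finite conjugates containing $x$ is exactly one and the count is driven by $|\N_{X_\sigma}(T_w)|$ rather than by the \emph{a priori} larger normaliser of $(T_w)_\sigma$. With this reduction in place the rest of the argument is bookkeeping and relies only on standard estimates for $|(T_w)_\sigma|$ and the root kernels in a torus of bounded rank.
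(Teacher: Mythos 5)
Your proposal is correct in the setting where $X$ is fixed, but it takes a genuinely different route from the paper, and the difference is worth noting. The paper does no torus-level counting at all: it applies the trivial union bound \eqref{equation_inequality_tori} to both $\Omega$ and $\Psi\setminus\Omega$, uses Theorems \ref{semisimple_in_stable_torus} and \ref{almost_all_elements_regular_semisimple} to see that the two union proportions sum to $1-O(1)/q$, and then, since $\P(W,\sigma,\Omega)+\P(W,\sigma,\Psi\setminus\Omega)=1$, concludes that the two nonnegative deficits sum to $O(1)/q$, hence each is $O(1)/q$. What that complementation trick buys is an error constant that is absolute -- it is controlled only by the proportion of non-(regular-)semisimple elements -- and the paper explicitly highlights this and relies on it when Theorem \ref{correspondence_tori_weyl_group} is invoked for groups of growing rank in Section \ref{section_large_rank} (Lemmas \ref{orthogonal_odd_|A|_at_least_2} and \ref{symplectic_even_|A|_at_least_2}). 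Your route -- uniqueness of the maximal torus through a regular semisimple element, orbit--stabilizer together with $\N_{X_\sigma}(T_w)/(T_w)_\sigma\cong \C_W(\sigma w)$, and the estimate that all but $O(q^{r-1})$ elements of $(T_w)_\sigma$ are regular -- is a legitimate direct proof, but its implied constants are inherently rank-dependent: both $|(T_w)_\sigma|=q^r+O(q^{r-1})$ and the union-over-root-kernels bound carry factors growing with the number of roots (for instance, in the split torus of $\GL_n(q)$ the non-regular proportion is of order $n^2/q$), so it yields the displayed estimate under the section's fixed-$X$ convention but not the rank-uniform version the paper later needs. One small point in your argument also deserves a justification: $\ker\alpha$ need not be $\sigma$-stable, so ``each kernel has size $O(q^{r-1})$'' should be read as a bound on the kernel of the restriction $\alpha|_{(T_w)_\sigma}$, obtained by showing its image in $k^\times$ has order comparable to $q$; this is standard but not automatic. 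In short, your proof is fine and more explicit (it computes the main term rather than squeezing it), while the paper's argument is shorter, needs only the global semisimple density plus the trivial bound, and gives the rank-independence that the later applications require.
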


\begin{proof}
%By Theorem \ref{almost_all_elements_regular_semisimple} the proportion of regular semisimple is of the form $1-O(1/q)$. A regular semisimple element belongs to a unique maximal torus of $X_q$, hence the intersections in the above union are $O(1/q)$. What is more, for $q$ large all maximal tori are nondegenerate. %this is proved in \cite[Proposition 3.6.6]{Car} in fact it is true for $q\geq 3$ or something, by a question on mathoverflow by nick gill answered by jay taylor.
By Lemma \ref{semisimple_in_stable_torus} and Theorem \ref{almost_all_elements_regular_semisimple} we have 
\[
\beta:=\frac{|\bigcup_{w \in \Omega}\wt{(T_w)_{\sigma}}|}{|X_q|}+\frac{|\bigcup_{w \in \Psi\setminus\Omega}\wt{(T_w)_{\sigma}}|}{|X_q|} = 1-\frac{O(1)}{q}.
\]
(Note that $\beta\leq 1$ by \eqref{equation_inequality_tori}.) Moreover
\begin{align*}
    \frac{O(1)}{q} =1-\beta &=\left(\P(W,\sigma,\Omega) - \frac{|\bigcup_{w \in \Omega}\wt{(T_w)_{\sigma}}|}{|X_q|}\right)\\ &+ \left(\P(W,\sigma,\Psi\setminus \Omega) - \frac{|\bigcup_{w \in \Psi\setminus \Omega}\wt{(T_w)_{\sigma}}|}{|X_q|}\right),
\end{align*}
where by \eqref{equation_inequality_tori} both summands are nonnegative. In particular they are both $O(1/q)$, and the proof is concluded.
%note that in the proof we have only used that the proportion of semisimple elements is $1-O(1/q)$. As a consequence, we get that the proportion of elements lying in two distinct maximal tori is $1-O(1/q)$. Note that these elements are precisely the elements which are semisimple but not regular. This is a funny and strange deduction. In fact we get much more; we get that you can replace the union of all conjugates of the tori by the sum and you only add a $O(1/q)$.
\end{proof}

\begin{remark}
In fact, the previous proof shows that
\[
\P(W,\sigma,\Omega) -  \frac{|\bigcup_{w \in \Omega}\wt{(T_w)_{\sigma}}|}{|X_q|} 
\]
is at most the proportion of elements of $X_q$ which are not semisimple. Therefore, we are really using that the proportion of \textit{semisimple} elements of $X_q$ is $1-O(1/q)$, rather than Theorem \ref{almost_all_elements_regular_semisimple}.
\end{remark}

Theorem \ref{correspondence_tori_weyl_group} was used in \cite{FG, FG2} (although we have not found the above proof in the literature). Note, once again, that the error term is independent of $r$, hence the result can be applied to groups of growing Lie rank. %, though the estimate will be useful only if $\P(W,\sigma,\Omega)\cdot q\rightarrow \infty$. %Note, once again, that the error term is independent of $r$ -- indeed this theorem is crucial also in large rank. %and in \cite{FG2}.

In this section, the key assumption is that $X$ is fixed. Then $W$ is fixed, and if $\Omega$ is nonempty the expression on the right-hand side of \eqref{equation_maximal_tori_classes} is always bounded away from zero: it is at least $1/|W|+O(1/q)$.

For a subset $A$ of $X_q$, let $T_1, \ldots, T_\ell$ be a set of representatives of the $X_q$-conjugacy classes of members of $\cap_{x\in A}\ca T(x)$ (possibly $\ell=0$). Write $T_i=(T_{w_i})_{\sigma}$, where $T_{w_i}$ is a $\sigma$-stable maximal torus of $X$ and $w_i\in W=\N_X(T)/T$. Set $\Omega=\{w_1, \ldots, w_\ell\}$. %note that if all tori are nondegenerate, conjugacy classes of maximal tori of X_q are in bijection with conjugacy classes of $\sigma$-stable tori of $X$ (see e.g. carter, prop 3.6.3). For us this is not a problem; indeed, by jay taylor, mathoverflow, all maximal tori are nondegenerate for $q\gg 1$. But in any case, here, at purely logical level we do not need to observe this; we only need the obvious fact that if $S$ and $S'$ of $X$ are $X_q$-conjugate, then so are $S_\sigma$ and $S'_\sigma$.

\begin{theorem}
\label{only_maximal_tori_matter}
Assume $r\geq 2$. We have
\begin{equation}
\label{equation_only_maximal_tori_matter}
1-\P^*_{\emph{inv}}(X_q,A) = \P(W,\sigma,\Omega)+\frac{O(r^r)}{q}.
\end{equation}
%In particular,
%\begin{enumerate}
    %\item If $\cap_{x\in A}\ca T(x)$ does not contain all maximal tori of $X_q$, then $\P^*_{\emph{inv}}(X_q,A)$ is bounded away from zero.
    %\item If $\cap_{x\in A}\ca T(x)=\emptyset$, then $\P^*_{\emph{inv}}(X_q,A)=1-o(1)$.
    %\item If $\cap_{x\in A}\ca T(x)\neq \emptyset$, then %\P^*_{\emph{inv}}(X_q,A)$ is bounded away from $1$.
%\end{enumerate}
\end{theorem}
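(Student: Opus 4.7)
The plan is to reduce the left-hand side of \eqref{equation_only_maximal_tori_matter} to the density of $\bigcup_{i=1}^\ell \wt{(T_{w_i})_\sigma}$ in $X(q)$, and then to invoke Theorem \ref{correspondence_tori_weyl_group}. The reduction proceeds by restricting attention to the ``good'' set $\Delta$, which by Theorem \ref{almost_all_connected_components_maximal_rank} contains all but an $O(r^r/q)$ proportion of $X(q)$; the error term $O(r^r/q)$ in the statement comes essentially from this step (the error $O(1/q)$ from Theorem \ref{correspondence_tori_weyl_group} being absorbed, as $r\geq 2$).

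First I would reformulate $1-\P^*_{\emph{inv}}(X(q),A)$ in analogy with Lemma \ref{equivalence_inv_gen_tildas}: it is the probability that $y \in X(q)$ satisfies, for every $x \in A$, that some $X(q)$-conjugate of $y$ lies in a maximal overgroup of a conjugate of $x$ not containing $X'_\sigma$. By the very definition of $\Delta$, whenever $y \in \Delta$ every such overgroup must in fact lie in $\ca M$, say equal $K_\sigma$, with the relevant conjugate of $y$ contained in $K^\circ_\sigma$. Thus, up to replacing $X(q)$ by $\Delta$ and losing $O(r^r/q)$, it suffices to analyse, for $y \in \Delta$, the condition that for each $x \in A$ there is a maximal $\sigma$-stable subgroup $K$ of maximal rank with $K_\sigma$ containing a conjugate of $x$ and $K^\circ_\sigma$ containing a conjugate of $y$.

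The key step is that, for $y \in \Delta$, this condition is equivalent to $y \in \bigcup_{i=1}^\ell \wt{(T_{w_i})_\sigma}$. Indeed, $y$ is regular semisimple, hence contained in a unique maximal torus of $X$, namely $T_y := \C_X(y)^\circ$, which is automatically $\sigma$-stable. Applying Theorem \ref{semisimple_in_stable_torus} inside $K^\circ$ forces any conjugate of $y$ lying in $K^\circ_\sigma$ to sit in a $\sigma$-stable maximal torus of $K^\circ$ that must coincide with the appropriate $X$-conjugate of $T_y$. So the condition indexed by a given $x$ becomes precisely that $(T_y)_\sigma$ is $X(q)$-conjugate to a maximal torus in $\ca T(x)$, and imposing it for all $x \in A$ says $(T_y)_\sigma$ is $X(q)$-conjugate to a torus in $\bigcap_{x\in A} \ca T(x)$, i.e., $y \in \bigcup_i \wt{T_i}$.

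Combining the two previous paragraphs yields
\[
1-\P^*_{\emph{inv}}(X(q),A) \;=\; \frac{|\bigcup_{i=1}^\ell \wt{(T_{w_i})_\sigma}|}{|X(q)|} + \frac{O(r^r)}{q},
\]
and Theorem \ref{correspondence_tori_weyl_group} applied to $\Omega = \{w_1,\ldots,w_\ell\}$ rewrites the main term as $\P(W,\sigma,\Omega) + O(1/q)$, which completes the proof. The main obstacle is the bookkeeping in the third paragraph: one must check that the existential quantifiers over conjugating elements in the definition of $\P^*_{\emph{inv}}$ translate exactly into ``$(T_y)_\sigma$ is conjugate to a torus in $\ca T(x)$''. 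This rests on the fact that $\Delta$ and each $\ca M(x),\,\ca T(x)$ are stable under $X(q)$-conjugation, so one may freely push the conjugating elements from $y$ onto $x$ and vice-versa.
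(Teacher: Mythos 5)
Your proposal is correct and follows essentially the same route as the paper: restrict to the set $\Delta$ via Theorem \ref{almost_all_connected_components_maximal_rank} (accounting for the $O(r^r/q)$ error), use regular semisimplicity together with Theorem \ref{semisimple_in_stable_torus} inside $K^\circ$ to identify the unique maximal torus of $y$ with a torus in $\bigcap_{x\in A}\ca T(x)$, and then convert the density of $\bigcup_i \wt{(T_{w_i})_\sigma}$ into $\P(W,\sigma,\Omega)+O(1/q)$ via Theorem \ref{correspondence_tori_weyl_group}. The conjugation bookkeeping you flag is handled exactly as you say, since $\Delta$, $\ca M(x)$ and $\ca T(x)$ are stable under $X(q)$-conjugation, which is also implicit in the paper's argument.
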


\begin{proof}
%Clearly we can assume $r\leq q$, otherwise the statement is empty.
Reasoning as in Lemma \ref{equivalence_inv_gen_tildas}, and using Theorem \ref{almost_all_connected_components_maximal_rank}, we have
\[
    1-\P^*_{\text{inv}}(X_q,A)=\frac{|\bigcap_{x \in A} \bigcup_{M\in \ca M(x)}M|}{|X_q|} + \frac{O(r^r)}{q}.
    \]
%(We cannot restrict to overgroups of $x$ in $\ca M_{\text{con}}$, since $x$ might lie outside the connected component.)
Now we look at the right-hand side of the above equation.
%there is a slight ambiguity here. Namely, we are assuming that a subgroup of maximal rank does not contain $(X_q)'$. This is true by the general theory of subgroups of maximal rank. If the referee complains, we will add this observation. of course you could also assume in the definition of $\ca M$ that the subgroups do not contain $(X_q)'$, in order to avoid any doubt.
Assume $y\in \Delta$. Then $y$ is regular semisimple; let $S=\C_X(y)^{\circ}$ be its maximal torus in $X$. Assume $y\in K_\sigma$ for some $K_\sigma \in \ca M(x)$ and some $x \in A$ (and $K$ is $\sigma$-stable of maximal rank). By definition of $\Delta$ we have $y\in K^\circ$. By Lemma \ref{semisimple_in_stable_torus}, $y$ lies in some $\sigma$-stable maximal torus of $K^{\circ}$, which is also a maximal torus of $X$, hence must coincide with $S$. In particular, if $y$ lies in some member of $\ca M(x)$ for every $x \in A$, then $S$ belongs to $\ca T(x)$ for every $x \in A$. Using Theorem \ref{almost_all_connected_components_maximal_rank}, this shows that
%Let $y \in X_q$. By Theorem \ref{almost_all_elements_regular_semisimple}, with probability $1-O(1/q)$ $y$ is regular re In this case, $y$ lies in a unique maximal torus of $X$, namely $\C_X(y)^{\circ}$. Assume $y\in M$ for some $M \in \ca M(x)$ and some $x \in A$. Then $M=K_{\sigma}$ where $K$ is $\sigma$-stable of maximal rank. By Theorem \ref{almost_all_connected_components_maximal_rank}, almost surely $y\in K_{\sigma}^{\circ}$. If this is the case, then $y$ lies in some $\sigma$-stable maximal torus of $K^{\circ}$, which is also a maximal torus of $X$, hence must coincide with  $\C_X(y)^{\circ}$. In particular, we deduce that if an element lies in some member of $\ca M(x)$ for every $x \in A$, then almost surely it lies in a maximal torus of $X_q$ which belongs to $\ca T(x)$ for every $x \in A$. This shows that
\begin{equation}
\label{equation_proof}
1-\P^*_{\text{inv}}(X_q,A) = \frac{|\bigcup_{i=1}^\ell \wt{(T_{w_i})_{\sigma}}|}{|X_q|}+\frac{O(r^r)}{q}.
\end{equation}
Finally, the right-hand side of \eqref{equation_proof} is equal to the right-hand side of \eqref{equation_only_maximal_tori_matter} by Theorem \ref{correspondence_tori_weyl_group}. %The last part of the statement (``In particular...'') follows simply from the fact that $W$ is fixed.
\end{proof}

We record a consequence of the previous proof. %which will be important for us.

\iffalse
\begin{theorem} Let $T_1$ and $T_2$ be maximal tori of $X_q$ such that no conjugates of $T_1$ and $T_2$ admit a common overgroup in $\ca M_{\text{con}}$. Then, for every $x_1\in \Delta_{T_1}$ and for every $x_2\in \Delta_{T_2}$, $x_1$ contributes to $\P^*_{\emph{inv}}(X_q,x_2)$. In other words, $\gen{x_1^{g_1}, x_2^{g_2}}$ is contained in no maximal subgroup of $X_q$ not containing $(X_q)'$ for every $g_1, g_2 \in X_q$.
\end{theorem}

\begin{proof}
ll
\end{proof}
\fi

\begin{remark}
\label{theorem_consequence_proof}
Assume $\cap_{x\in A}\ca T(x)=\emptyset$. Then the set $\Delta$ contributes to \\ $\P^*_{\text{inv}}(X_q,A)$. In other words, for every $y\in \Delta$, there exists $x\in A$ such that, for every $g_1, g_2 \in X_q$, every maximal overgroup of $\gen{x^{g_1}, y^{g_2}}$ in $X_\sigma$ contains $(X_\sigma)'$.
\end{remark}

\subsection{From $X_\sigma$ to $(X_\sigma)'$}
\label{subsection_isogeny}
All the discussion above is about $X_\sigma$, which need not be perfect. 
 Here we establish the connection to the
corresponding finite simple groups, which we need in order to prove our main results. In addition,
we will show that the isogeny type of $X$ is not relevant. Let $X_{\text{sc}}$ be the group of simply connected type, and let $\pi:X_{\text{sc}}\rightarrow X$ be the natural isogeny. Then $\sigma$ lifts to a morphism $X_{\text{sc}}\rightarrow X_{\text{sc}}$ (see \cite[Proposition 22.7]{MT}), which for convenience we still denote by $\sigma$. Write as usual $X_\sigma=X_q$ and $(X_{\text{sc}})_\sigma = (X_{\text{sc}})_q$.

\iffalse
\begin{lemma}
\label{isogeny_types}
Let $A$ be a subset of $X_{\emph{sc}}_q$. The following are equivalent:
\begin{enumerate}
    \item $\P^*_{\emph{inv}}(X_q,A^\pi)$ tends to $1$ (resp. is bounded away from zero).
    \item $\P_{\emph{inv}}((X_q)',A^\pi)$ tends to $1$ (resp. is bounded away from zero).
    \item $\P_{\emph{inv}}(X_{\emph{sc}}_q,A)$ tends to $1$ (resp. is bounded away from zero).
\end{enumerate}
\end{lemma}

\begin{proof}
(2) $\Leftrightarrow$ (3). Let $Z$ be the kernel of $\pi$. Then $X_{\text{sc}}(q)/Z_{\sigma_{\text{sc}}} \cong X_{\text{sc}}(q)^\pi = (X_q)'$ (\cite[Proposition 24.21]{MT}). Moreover $Z_{\sigma_{\text{sc}}}$ is contained in the Frattini subgroup of $X_{\text{sc}}(q)$, hence all properties concerning generation of $X_{\text{sc}}(q)$ and $(X_q)'$ are the same. In particular (2) $\Leftrightarrow$ (3) follows.

(1) $\Leftrightarrow$ (3). Note first that $\P_{\text{inv}}(X_{\text{sc}}(q),A)=\P^*_{\text{inv}}(X_{\text{sc}}(q),A)$, since $X_{\text{sc}}(q)$ is perfect. Since $Z$ is contained in every maximal torus of $X_{\text{sc}}$, $\pi$ induces a bijection between $\sigma$-stable subgroups of maximal rank of $X_{\text{sc}}$ and of $X$. We then conclude by Theorem \ref{only_maximal_tori_matter}.
\end{proof}
\fi

\begin{lemma}
\label{isogeny_types}
Let $A$ be a subset of $(X_{\emph{sc}})_q$. Then
\[
    \P_{\emph{inv}}((X_q)',A^\pi)=\P_{\emph{inv}}((X_{\emph{sc}})_q,A)=\P^*_{\emph{inv}}(X_q,A^\pi)+\frac{O(r^r)}{q}.
\]
\end{lemma}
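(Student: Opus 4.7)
The plan is to derive both equalities by reducing each side to a common Weyl-group probability. Write $Z=\ker\pi$; by \cite[Proposition 24.21]{MT} we have $X_{\text{sc}}(q)/Z_\sigma\cong X(q)'$ with $Z_\sigma$ central in $X_{\text{sc}}(q)$, hence contained in its Frattini subgroup. For the first equality $\P_{\text{inv}}(X(q)',A^\pi)=\P_{\text{inv}}(X_{\text{sc}}(q),A)$, observe that $\pi|_{X_{\text{sc}}(q)}$ has constant fiber size $|Z_\sigma|$ onto $X(q)'$, so a uniformly random $y\in X_{\text{sc}}(q)$ projects to a uniformly random $\pi(y)\in X(q)'$; and any $h\in X(q)'$ lifts to some $\tilde h\in X_{\text{sc}}(q)$ with $\pi(x^{\tilde h})=\pi(x)^h$. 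Since quotienting by a central (Frattini) subgroup preserves (invariable) generation, $\{x,y\}$ invariably generates $X_{\text{sc}}(q)$ if and only if $\{\pi(x),\pi(y)\}$ invariably generates $X(q)'$, and so the two probabilities coincide.

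For the second equality I would apply Theorem \ref{only_maximal_tori_matter} separately to the pairs $(X,A^\pi)$ and $(X_{\text{sc}},A)$. The key input is that $Z$ lies in every maximal torus of $X_{\text{sc}}$, so $\pi$ induces an inclusion-preserving bijection between $\sigma$-stable closed subgroups of maximal rank of $X_{\text{sc}}$ and of $X$; in particular the families $\ca M_{\text{con}}$ and $\ca M$ correspond, and so do the sets $\ca T(x)\leftrightarrow\ca T(\pi(x))$ for $x\in A$. The Weyl groups of $X$ and $X_{\text{sc}}$, taken with respect to corresponding tori, are canonically identified with matching $\sigma$-action, so the very same subset $\Omega\subseteq W$ labels the common-torus classes on both sides. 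Theorem \ref{only_maximal_tori_matter} then gives
\[
1-\P^*_{\text{inv}}(X(q),A^\pi)=\P(W,\sigma,\Omega)+\tfrac{O(r^r)}{q}=1-\P^*_{\text{inv}}(X_{\text{sc}}(q),A).
\]
Since we may assume $q$ is large enough that $X_{\text{sc}}(q)$ is perfect, $\P^*_{\text{inv}}(X_{\text{sc}}(q),A)=\P_{\text{inv}}(X_{\text{sc}}(q),A)$, and combining with the first paragraph yields the claimed chain.

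The main obstacle I foresee is bookkeeping rather than anything deep: one must verify carefully that the Weyl-group labeling $\Omega$ really transfers across $\pi$, and that the implicit constants in the two $O(r^r/q)$ terms arise from the same combinatorial data. Since $X$ and $X_{\text{sc}}$ share a common root system and Weyl group and $\pi$ is $\sigma$-equivariant, this is straightforward, but it is the pivot that makes the whole argument tick.
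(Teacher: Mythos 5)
Your proposal is correct and follows essentially the same route as the paper: the first equality via the central (Frattini) kernel $Z_\sigma$, and the second by noting that $Z$ lies in every maximal torus, so $\pi$ matches up the $\sigma$-stable maximal-rank overgroups and Theorem \ref{only_maximal_tori_matter} yields the same Weyl-group main term $\P(W,\sigma,\Omega)$ on both sides, with perfectness of $X_{\emph{sc}}(q)$ giving $\P^*_{\emph{inv}}=\P_{\emph{inv}}$ there. Your extra bookkeeping about transferring the labeling set $\Omega$ is exactly the (implicit) content of the paper's remark that the bijection sends overgroups of $y\in A$ to overgroups of $y^\pi\in A^\pi$, so nothing is missing.
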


\begin{proof}
Let $Z$ be the kernel of $\pi$. Then $(X_{\text{sc}})_\sigma/Z_\sigma \cong ((X_{\text{sc}})_\sigma)^\pi = (X_\sigma)'$ (\cite[Proposition 24.21]{MT}). Moreover $Z_\sigma$ is contained in every maximal subgroup of $(X_{\text{sc}})_\sigma$, hence %all properties concerning generation of $(X_{\text{sc}})_\sigma$ and $(X_\sigma)'$ are the same.
the first equality of the statement holds.

Now note that $\P_{\text{inv}}((X_{\text{sc}})_q,A)=\P^*_{\text{inv}}((X_{\text{sc}})_q,A)$, since $(X_{\text{sc}})_q$ is perfect. Since $Z$ is contained in every maximal torus of $X_{\text{sc}}$, %indeed, assume $s\in Z\leq Z(X_sc)$. want: $s\in T$ for every $T$. know: $s$ is contained in a maximal torus $T^g$; then $s=s^{g^{-1}}$ is contained in $T$.
$\pi$ induces a bijection between $\sigma$-stable subgroups of maximal rank of $X_{\text{sc}}$ and of $X$, which maps overgroups of $y\in A$ to overgroups of $y^\pi \in A^\pi$. Then the second equality follows from Theorem \ref{only_maximal_tori_matter}. %one also needs to observe that connected component is sent to connected component, which is certainly true. moreover, you should fix a torus $T$ of $X_sc$, and correspondingly you fix $T^\pi$ of $X$. then you check easily that $T_w$ is sent to a conjugate of $(T^\pi)_w$. indeed $T_w=T^g$ with $g^\sigma g^{-1}\in \N_X(T)$. then $(T^g)^\pi=(T^\pi)^{g^\pi}, and note finally --- with $g^\pi=x$ --- that $x^\sigma x^{-1}=(g^\sigma g^{-1})^\pi$. Therefore you can really use Theorem \ref{only_maximal_tori_matter}.
\end{proof}
In order to prove Theorem \ref{main_theorem_absolute_constant_2}(1), in view of Theorem \ref{only_maximal_tori_matter} and Lemma \ref{isogeny_types} it is sufficient to choose $X$ of some isogeny type, and prove the following statement.

\begin{theorem}
\label{theorem_1.5_restatement}
Assume $r\geq 2$ and $q\geq Cr^r$ for some large constant $C$, and assume $X_q\not\cong G_2(q)$ when $3\mid q$. Then, there exists $A_b\subseteq (X_q)'$ of size as in Table \ref{main_table} such that $\cap_{x\in A_b}\ca T(x)=\emptyset$.
\end{theorem}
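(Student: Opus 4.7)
The plan is to combine Theorem~\ref{only_maximal_tori_matter} with Lemma~\ref{isogeny_types}: once we fix an isogeny type of $X$, the assertion $\bigcap_{x\in A_b}\ca T(x)=\emptyset$ becomes a purely combinatorial task in the Weyl group~$W$. Attach to each $K_\sigma\in\ca M$ the set $\Omega_K$ of $W$-classes in $\sigma W$ parametrising the $X(q)$-classes of maximal tori of~$K$. A regular semisimple element $x\in T_w$ then satisfies $\ca T(x)=\bigcup_{K:\, w\in\Omega_K}\Omega_K$ (identifying $w$ with its $W$-class). The task is to choose representatives $w_1,\ldots,w_k$ of $W$-classes in $\sigma W$, with $k=|A_b|$ as in Table~\ref{main_table}, so that no single class belongs to $\bigcup_{K:\, w_i\in\Omega_K}\Omega_K$ for every $i$. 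For $q\geq Cr^r$ we can always select regular semisimple representatives $x_i\in T_{w_i}\cap X(q)'$, so the algebraic conclusion is automatic once the Weyl-theoretic choice is made.

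For each family in Table~\ref{main_table} I would use the classification of maximal $\sigma$-stable subgroups of maximal rank from \cite{LSS,LSe} (parabolic and subsystem subgroups, via Borel--Tits) and tabulate the $\Omega_K$'s. The canonical choice takes $T_{w_1}$ to be a Coxeter torus, which is contained in no proper subsystem subgroup other than its normaliser; then $\bigcup_{K:\, w_1\in\Omega_K}\Omega_K$ is very small, and a second class $w_2$ complementary to it produces $|A_b|=2$. The exceptional values $|A_b|=3,4,6$ arise for $\PSp_{2m}$ ($m$ odd, odd characteristic), $\PSp_{2m}$ and $\POm^+_{2m}$ ($m$ even) in certain characteristics, and $F_4(2^a)$: here stabilisers of orthogonal or isotropic decompositions, together with the extra maximal subgroup of type $B_4$ inside $F_4$ in characteristic~$2$, create large overlaps of $\Omega_K$'s, and further tori must be adjoined to disjointify. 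For exceptional types the verification is a finite check in the relevant Weyl group; for classical types, $W$-classes correspond to (signed) partitions and subsystem subgroups correspond to orthogonal decompositions, so choosing $w_i$ of ``irreducible'' cycle type (plus, when needed, carefully chosen secondary types to handle the exceptional parities) suffices to force disjointness.

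The hardest case is $G_2(q)$ with $3\mid q$. Here the exceptional isogeny interchanging long and short roots yields additional $X(q)$-classes of maximal subgroups of maximal rank beyond those present in other characteristics (the long/short symmetric copies of $\SL_3$, $\SU_3$, and the $A_1\times A_1$-subsystem subgroup). I would enumerate the six conjugacy classes of the dihedral group $W(G_2)$ of order~$12$ and compute, for each class $[w]$, the union $\bigcup_{K:\, w\in\Omega_K}\Omega_K$. The long/short duality in characteristic~$3$ forces exactly three of the six classes to be ``covering'' (their union equals every $W$-class), and these three classes account for exactly half of $|W|$ under the uniform measure. Applying Theorem~\ref{only_maximal_tori_matter} with $A=G_2(q)$ gives
\[
1-\P_{\text{inv}}(G_2(q),G_2(q))=\P(W,\sigma,\Omega_0)+O(1/q)=\tfrac12+O(1/q),
\]
where $\Omega_0$ is the set of three covering classes. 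The main obstacle is correctly enumerating the new subsystem subgroups arising only in characteristic~$3$ and verifying the long/short duality that yields probability exactly~$1/2$.
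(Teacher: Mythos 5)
Your framework is the right one, and it matches the paper's: after fixing an isogeny type, Theorem \ref{only_maximal_tori_matter} and Lemma \ref{isogeny_types} reduce everything to deciding which conjugacy classes of maximal tori can lie inside connected components of common maximal-rank overgroups. But two of your reductions have genuine gaps, and the part of the theorem that actually carries the content — the explicit choice of $A_b$ and the case-by-case verification of $\cap_{x\in A_b}\ca T(x)=\emptyset$ — is deferred to ``a finite check'' rather than carried out. First, the identity $\ca T(x)=\bigcup_{K:\,w\in\Omega_K}\Omega_K$ for a regular semisimple $x\in T_w$ is only an inclusion $\supseteq$ a priori: $\ca T(x)$ is built from \emph{all} $K_\sigma\in\ca M(x)$, and $x$ may lie in $K_\sigma\setminus K^\circ_\sigma$, in which case $K$ contributes $\Omega_K$ to $\ca T(x)$ even though $[w]\notin\Omega_K$. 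Ruling this out for the chosen elements is a real step: for classical groups the paper does it by showing that a suitable power $x^f$ (with $f$ bounded by the index $|K_\sigma:K^\circ_\sigma|$) is still separable, hence regular semisimple, forcing $T_w\leq K^\circ_\sigma$ (Lemmas \ref{order_x_to_the_f} and \ref{classical_groups_we can_use_m_con}); for exceptional groups it uses largeness of $q$ and explicit lists of maximal overgroups of the specific elements. Without this your verification would be run against a possibly too-small set $\ca T(x)$. Second, for the classical and $F_4(2^a)$ cases the relevant overgroups are not only ``orthogonal decompositions'': extension-field subgroups ($\GL_{n/b}(q^b)$, $\GU_m(q)$ inside symplectic/orthogonal groups), the subgroups $\SO^{\pm}_{2m}(q)<\Sp_{2m}(q)$ in characteristic $2$, and in $F_4(2^a)$ the two graph-automorphism-fused classes of $B_4$, $^3\!D_4$, $D_4$ and parabolic subgroups, are exactly what force $|A_b|=3,4,6$ in the exceptional lines of Table \ref{main_table}; determining which torus class lies in which member of each fused pair is the delicate part of the paper's argument and is absent from your sketch. (Also, the size-$4$ set for $\Sp_{2m}(q)$ is needed for all $m$ when $q$ is even, not only $m$ even.)

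The $G_2(q)$, $3\mid q$, statement is where your plan would actually fail as written. Computing $\P_{\text{inv}}(G,G)$ via Theorem \ref{only_maximal_tori_matter} with $A=G$ requires knowing $\cap_{x\in G}\ca T(x)$, and the intersection runs over \emph{all} $x\in G$, including unipotent and mixed elements; this is not a Weyl-group-only computation. The paper's proof of the key claim $(\star)$ — every $x\in G$ lies in a maximal subgroup containing a conjugate of $T_1$ and in one containing a conjugate of $T_2$ — handles unipotent elements via the two classes of parabolics and mixed elements $x=su$ via inspection of the centralizers $\C_G(s)$ in Kleidman's tables (most contain a central involution, hence lie in $(\SL_2(q)\circ\SL_2(q)).2$). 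Your count is also off: the classes responsible for $\P_{\text{inv}}(G,y)=0$ are the \emph{two} reflection classes of $W(G_2)\cong D_{12}$ (the tori of order $q^2-1$), which together have proportion $1/2$; a set of three ``covering'' classes of total measure $1/2$ would have to include the identity or a rotation class, and elements of the corresponding tori do invariably generate (e.g.\ $\Delta_3$ with $\Delta_6$, $\Delta_5$ with $\Delta_6$), as the paper uses for the matching lower bound $\P_{\text{inv}}(G,y)\geq 1/6+O(1/q)$ on the complementary half.
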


%note that G_2 is simply connected.

\iffalse
In Sections \ref{subsection_exceptional_groups} and \ref{subsection_classical_groups} we will choose $X$ (of some isogeny type) and, using the methods introduced in this section, prove the following result. %(see the comment before Theorem \ref{almost_all_connected_components_maximal_rank} for the explanation of the asymptotic language). 

\begin{theorem}
\label{main_theorem_isogeny_types}
If $X_q\neq G_2(3^a)$, there exists $A_b\subseteq (X_q)'$ of size as in Table \ref{main_table} such that $\P^*_{\emph{inv}}(X_q,A_b)=1-o(1)$. On the other hand if $X_q=G_2(3^a)$ then $\P^*_{\emph{inv}}(X_q,X_q)$ is bounded away from $1$ absolutely.
\end{theorem}

Note that in case $X_q=G_2(3^a)$ we have $X_q=(X_q)'$, hence the statement is unchanged. In view of Lemma \ref{isogeny_types}, Theorem \ref{main_theorem_absolute_constant_2} will follow for groups of bounded rank. Again by Lemma \ref{isogeny_types}, the same result will hold for the other isogeny types of groups (those which admit the relevant Steinberg morphism $\sigma$; cf. \cite[Example 22.8]{MT}). 
\fi

We will prove Theorem \ref{theorem_1.5_restatement} in Sections \ref{subsection_exceptional_groups} and \ref{subsection_classical_groups}.
%Our business is now to understand subgroups of maximal rank in groups of Lie type.

\subsection{Reductive subgroups of maximal rank}
\label{subsection_reductive_max_rank}
By the Borel--Tits theorem (see \cite[Corollaire 3.9]{BT}), a maximal $\sigma$-stable subgroup of $X$ of maximal rank is either parabolic, or its connected component is reductive. In this subsection we make some general observations regarding the second case.

%The material here is taken from \cite{LSS} and \cite{Wei}.
%As usual, $X$ denotes a connected simple linear algebraic group over an algebraically closed field $k$ of positive characteristic, and $\sigma$ denotes a Steinberg morphism on $X$.
%We fix a pair $(T,B)$, where $T$ is a $\sigma$-stable maximal torus of $X$, and $B$ is a $\sigma$-stable Borel subgroup of $X$ containing $T$.
Let $T$ be a $\sigma$-stable maximal torus of $X$, let $\Phi$ be the root system with respect to $T$, and denote by $U_\alpha$, $\alpha \in \Phi$, the corresponding root subgroups. Let $W=\N_X(T)/T$ be the Weyl group of $X$ with respect to $T$. %For the purposes of Lemma \ref{bijection_conjugacy_classes_W_Psi}, we make the following

%%\begin{assumption}
%$\sigma$ acts trivially on $\Phi$.
%\end{assumption}

Assume that $\sigma$ acts trivially on $\Phi$. This is not essential, but it makes the statement of Lemma \ref{bijection_conjugacy_classes_W_Psi}, below, easier (and, of course, we will need the lemma only under this assumption).

The following discussion is taken from \cite{LSS}. Let $K$ be a closed connected reductive subgroup of $X$ containing $T$. Then, $K=\gen{T, U_\alpha, \alpha \in \Psi}$ for a $p$-closed subset $\Psi$ of $\Phi$ (see \cite[Section 13]{MT} for this notion). Let $W(\Psi)$ be the Weyl group of $K$, i.e., the subgroup of $W$ generated by the reflections in roots of $\Psi$. We have $\N_X(K)/K\cong \N_W(W(\Psi))/W(\Psi) =: W_\Psi$. Note that $K$ is $\sigma$-stable, since $\sigma$ acts trivially on $\Phi$.

Assume now $H$ is a $\sigma$-stable conjugate of $K$. In particular, there exists $g\in X$ such that $H=K^g$ and $T^g$ is $\sigma$-stable. %indeed, let $g$ be such that $H=K^g$. Then $H$ contains a $\sigma$-stable torus, which is $H$-conjugate of $T^g$ and therefore can be written as $T^{gh}$ with $h\in H$. Then $H=K^{gh}$ which contains $T^{gh}$: good.
Then $g^\sigma g^{-1}\in \N_X(T)\cap \N_X(K)$, which maps to an element of $W_\Psi$ that we denote by $\rho(K^g)$. 

%Let $H\leq X$ be a $\sigma$-stable closed connected reductive subgroup of maximal rank. Then $H$ contains a $\sigma$-stable maximal torus $S$; fix $g\in G$ such that $S=T^g$. Set moreover $K:=H^{g^{-1}}$. Then $K$ contains $T$, and in particular $K=\gen{T, U_\alpha, \alpha \in \Psi}$ for a $p$-closed subset $\Psi$ of $\Phi$ (see \cite[Section 13]{MT} for this notion). Let $W(\Psi)$ be the Weyl group of $K$, i.e., the subgroup of $W$ generated by the reflections in roots of $\Psi$. We have $\N_X(K)/K\cong \N_W(W(\Psi))/W(\Psi) =: W_\Psi$. Note that $K$ is $\sigma$-stable, since $\sigma$ acts trivially on $\Phi$. Then $g^\sigma g^{-1}\in \N_X(T)\cap \N_X(K)$, which maps to an element of $W_\Psi$ that we denote by $\rho(K^g)$. Then

\begin{lemma}
\label{bijection_conjugacy_classes_W_Psi}
Assume that $\sigma$ acts trivially on $\Phi$. The map $\rho$ defined above induces a well-defined bijection between $\{X_\sigma$-orbits on the $\sigma$-stable conjugates of $K\}$ and \{conjugacy classes in $W_\Psi\}$.
\end{lemma}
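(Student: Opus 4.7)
The plan is to apply the Lang--Steinberg theorem at three separate points. First I will verify that $\rho$ descends to a well-defined map from $X_\sigma$-orbits on $\sigma$-stable conjugates of $K$ to conjugacy classes in $W_\Psi$. To produce a valid $g$ for a given $\sigma$-stable $H$: since $H$ is connected reductive, Lang--Steinberg produces a $\sigma$-stable maximal torus of $H$; writing $H = K^g$ and adjusting $g$ on the right by an element of $H$ brings $T^g$ onto this torus, so $T^g$ is $\sigma$-stable. Any other valid $g'$ satisfies $g' = mg$ for some $m \in N_X(K)$, and a direct computation gives $c(g') = \sigma(m)\, c(g)\, m^{-1}$, where $c(g):= g^\sigma g^{-1}$. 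Passing to $W_\Psi = N_X(K)/K$ and using that the trivial action of $\sigma$ on $\Phi$ forces the trivial action on $W$, hence on $W_\Psi$, yields $\bar c(g') = \bar m\, \bar c(g)\, \bar m^{-1}$, so the $W_\Psi$-conjugacy class of $\bar c(g)$ is intrinsic. Invariance under the $X_\sigma$-action is immediate: if $H' = H^y$ with $y \in X_\sigma$, take $g' = gy$ and note $c(gy) = g^\sigma y^\sigma y^{-1} g^{-1} = c(g)$.

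For surjectivity, given $w \in W_\Psi$ I lift to $\dot w \in N_X(T) \cap N_X(K)$; this is possible because $\Psi$ is symmetric, so elements of $N_W(W(\Psi))$ permute $\Psi$ and therefore admit such a lift. Lang--Steinberg applied to $X$ produces $g$ with $g^\sigma g^{-1} = \dot w$; then both $K^g$ and $T^g$ are $\sigma$-stable, as $\dot w$ normalizes both $K$ and $T$, and $\rho(K^g) = [w]$.

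For injectivity, suppose $\rho(H_1) = \rho(H_2)$, i.e.\ $\bar c(g_1)$ and $\bar c(g_2)$ are $W_\Psi$-conjugate, say $\bar c(g_2) = \bar n\, \bar c(g_1)\, \bar n^{-1}$ for some $n \in N_X(K)$. Replacing $g_1$ by $ng_1$ (which still represents $H_1$) makes $\bar c(g_1) = \bar c(g_2)$, so that $c(g_1)\, c(g_2)^{-1}$ lies in the kernel of $N_X(T) \cap N_X(K) \to W_\Psi$; since $N_K(T)/T = W(\Psi)$, this kernel is $N_K(T) \subset K$. To eliminate the residual discrepancy I apply Lang--Steinberg inside $K$ itself with the twisted endomorphism $\tau(k) := c(g_2)^{-1}\, \sigma(k)\, c(g_2)$, which is a Steinberg endomorphism of the connected reductive $K$ because $c(g_2) \in N_X(K)$ acts algebraically on $K$. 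Solving the Lang equation $\tau(k)\, k^{-1} = c(g_2)^{-1} c(g_1)$, whose right-hand side lies in $K$ by the previous step, yields $k \in K$ such that $g_2' := kg_2$ satisfies $c(g_2') = \sigma(k)\, c(g_2)\, k^{-1} = c(g_1)$. Since $c(g_2') \in N_X(T)$, the torus $T^{g_2'}$ is automatically $\sigma$-stable, and then $y := g_1^{-1} g_2'$ lies in $X_\sigma$ (because $c(g_1) = c(g_2')$) and conjugates $H_1$ to $H_2$, giving injectivity.

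The main obstacle is orchestrating this last Lang--Steinberg application: one needs to (i) identify the kernel of $N_X(T) \cap N_X(K) \to W_\Psi$ with $N_K(T)$, so that the residual discrepancy really lives inside $K$, and (ii) verify that $\tau$ is a bona fide Steinberg endomorphism of $K$. Both rest on the $\sigma$-equivariant identification $N_X(K)/K \cong W_\Psi$ and on the fact that conjugation by any element of $N_X(K)$ preserves the closed connected subgroup $K$.
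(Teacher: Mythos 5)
Your proof is correct, but it is worth noting that the paper does not prove this lemma at all: it simply cites \cite[Propositions 1 and 2]{Car2}, where the general statement (with $\sigma$-conjugacy classes in $\N_X(K)/K$, which become ordinary conjugacy classes when $\sigma$ acts trivially) is established. What you have written is essentially a self-contained reconstruction of that cited argument: the classification of $\sigma$-stable $X$-conjugates of $K$ up to $X_\sigma$-conjugacy by twisting the Lang map $g\mapsto g^\sigma g^{-1}$ through $\N_X(K)$, with one application of Lang--Steinberg to normalize the representative (existence of a $\sigma$-stable maximal torus in $H$), one for surjectivity, and one, for the twisted endomorphism $\tau=\mathrm{Int}(c(g_2)^{-1})\circ\sigma$ of $K$, for injectivity. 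So the route is not genuinely different from the source the paper relies on; what your write-up buys is independence from \cite{Car2} in the special case needed here, at the cost of having to verify the standard facts you invoke ($\sigma$ acts trivially on $W$, hence on $\N_X(K)/K$ via the $\sigma$-equivariant identification with $W_\Psi$; and that $\mathrm{Int}(x)\circ\sigma$ is again a Steinberg endomorphism, which follows by writing $x=a^{-1}\sigma(a)$ and conjugating).

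Two small points should be patched, though neither is a genuine gap. First, in the surjectivity step, the existence of a lift of $w\in W_\Psi$ inside $\N_X(T)\cap \N_X(K)$ is better justified by the decomposition $\N_X(K)=K\,(\N_X(K)\cap \N_X(T))$ (all maximal tori of $K$ are $K$-conjugate), rather than by the claim that every element of $\N_W(W(\Psi))$ permutes $\Psi$, which as stated only gives that it permutes $\{\beta:s_\beta\in W(\Psi)\}$. Second, in the injectivity step, after replacing $g_1$ by $ng_1$ with $n\in \N_X(K)$ the element $c(ng_1)$ need not normalize $T$, so the kernel you should invoke is that of $\N_X(K)\to \N_X(K)/K\cong W_\Psi$, namely $K$ itself (or else choose the representative $n$ inside $\N_X(K)\cap \N_X(T)$, after which your identification of the kernel with $\N_K(T)$ is fine); either repair is immediate and all that is used later is that the right-hand side of the Lang equation lies in $K$.
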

\begin{proof}
This is \cite[Propositions 1 and 2]{Car2} in case $\sigma$ acts trivially on $\Phi$.
\end{proof}
In \cite[Propositions 1 and 2]{Car2} the general case, in which $\sigma$ does not necessarily act trivially, is considered. This is more technical to state.
%Note the following useful fact. Assume $K^g$ is $\sigma$-stable, and let $w=g^\sigma g^{-1}$; write $w$ also for the corresponding element of $W$. Then $(\sigma w)^g=\sigma$, $(X_{\sigma w})^g=X_\sigma$, $(T_{\sigma w})^g=T'_\sigma$, $(K_{\sigma w})^g=H_\sigma$. In particular, once $w$ is fixed we may replace $\sigma$ by $\sigma w$.

Next, given a $\sigma$-stable maximal torus $S$, we want to determine its closed connected reductive overgroups in $X$. We do not need to assume that $\sigma$ acts trivially, since the argument is the same. Fix $g$ such that $S=T^g$ and let $w$ be the image of $g^\sigma g^{-1}$ in $W$.

%Recall from \cite[Section 13]{MT} the notion of $\text{char}(k)$-closed subsets $\Psi$ of $\Phi$; this is equivalent to the fact that $\gen{T,U_\alpha, \alpha \in \Psi}$ is a closed connected reductive subgroup with root system $\Psi$.
The following result appears as \cite[Theorem 5]{Wei}, although the language and the proof seem slightly different.

\begin{lemma}
\label{reductive_overgroups_of_fixed_torus}
The closed connected $\sigma$-stable reductive overgroups of $S$ are in bijection with $p$-closed subsets of $\Phi$ which are $\sigma w$-stable.
\end{lemma}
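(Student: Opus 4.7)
My plan has two steps. First, I invoke the standard structure theory of reductive groups (\cite[Section 13]{MT}): the assignment $\Psi \mapsto K_\Psi := \langle T, U_\alpha : \alpha \in \Psi \rangle$ is a bijection between $p$-closed subsets $\Psi \subseteq \Phi$ and closed connected reductive subgroups of $X$ containing $T$. Conjugating by $g$ then yields a bijection $\Psi \mapsto K_\Psi^g$ between $p$-closed subsets of $\Phi$ and closed connected reductive overgroups of $S = T^g$ in $X$ (without yet imposing $\sigma$-stability). So the entire content of the lemma lies in identifying which $\Psi$ produce $\sigma$-stable overgroups.

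Second, I translate the $\sigma$-stability of $K_\Psi^g$ into a condition on $\Psi$. Let $\sigma^{\ast}$ denote the permutation of $\Phi$ induced by $\sigma$ (coming from its action on the character group of $T$), so that $\sigma(U_\alpha) = U_{\sigma^{\ast}(\alpha)}$ and hence $\sigma(K_\Psi) = K_{\sigma^{\ast}(\Psi)}$. Set $h := g^\sigma g^{-1}$; by construction $h \in N_X(T)$, with image $w \in W$, and conjugation by $h$ permutes root subgroups via the natural action of $w$ on $\Phi$. A direct computation gives
\[
\sigma(K_\Psi^g) \;=\; \sigma(K_\Psi)^{\sigma(g)} \;=\; K_{\sigma^{\ast}(\Psi)}^{hg},
\]
so $K_\Psi^g$ is $\sigma$-stable if and only if $K_{\sigma^{\ast}(\Psi)}^{h} = K_\Psi$. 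Using once more the bijection between $p$-closed subsets and reductive overgroups of $T$, this reduces to a set-theoretic identity in $\Phi$ expressing that $\Psi$ is preserved by the natural action of the coset element $\sigma w$ on $\Phi$.

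Combining both steps, $\Psi \mapsto K_\Psi^g$ restricts to the desired bijection between $\sigma w$-stable $p$-closed subsets of $\Phi$ and closed connected $\sigma$-stable reductive overgroups of $S$.

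The main obstacle is purely bookkeeping: one must be careful about left versus right actions (i.e.\ whether conjugation by $h$ sends $U_\alpha$ to $U_{w(\alpha)}$ or $U_{w^{-1}(\alpha)}$) and about the precise action of $\sigma^\ast$, in order to confirm that the resulting invariance condition matches ``$\sigma w$-stability'' in the sense of the action of $\langle \sigma \rangle \ltimes W$ on $\Phi$. All such conventional choices yield equivalent formulations, so no conceptual difficulty arises beyond this bookkeeping.
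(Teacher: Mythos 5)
Your proposal is correct and matches the paper's own argument: both rest on the bijection $\Psi \mapsto \gen{T, U_\alpha,\ \alpha\in\Psi}$ between $p$-closed subsets and closed connected reductive overgroups of $T$ (via \cite[Theorem 13.6]{MT}), transported by conjugation with $g$, and both then translate $\sigma$-stability of $K_\Psi^g$ into $\sigma w$-stability of $\Psi$ by the same computation on root subgroups (the paper writes it as $(U_\alpha^g)^{\sigma}=(U_{\alpha\sigma w})^g$). The left/right-action bookkeeping you flag is harmless, since a finite subset of $\Phi$ is stable under $\sigma w$ if and only if it is stable under its inverse.
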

\begin{proof}
If $H$ is a closed connected $\sigma$-stable reductive overgroup of $S$, set $K:=H^{g^{-1}}$. Then $K=\gen{T, U_\alpha, \alpha \in \Psi}$ for some (unique) $p$-closed subset $\Psi$ of $\Phi$. Moreover, $\sigma g^\sigma g^{-1}$ normalizes $K$ and $T$, hence $\sigma w$ fixes $\Psi$. %since $g^\sigma g^{-1}$ must permute the root subgroups of $K$ with respect to $T$, which are precisely those of $\Psi$.
Conversely, assume $\Psi$ is $\sigma w$-stable and $p$-closed; then $\gen{T, U_\alpha, \alpha \in \Psi}^g$ is a closed connected reductive overgroup of $S$ (see \cite[Theorem 13.6]{MT}). For $\alpha \in \Psi$ we have $(U_\alpha^g)^{\sigma} = U_\alpha ^{\sigma g^\sigma}=(U_\alpha ^{\sigma g^\sigma g^{-1}})^g=(U_{\alpha \sigma w})^g$. Since $\Psi$ is $\sigma w$-stable, we deduce that $\gen{T, U_\alpha, \alpha \in \Psi}^g$ is $\sigma$-stable.
%by the discussion preceding Lemma \ref{bijection_conjugacy_classes_W_Psi} we see that $H=K^g$, where $K=\gen{T, U_\alpha, \alpha \in \Psi}$ for some (unique) $p$-closed subset $\Psi$ of $\Phi$. We observed that $g^\sigma g^{-1}$ normalizes $K$ and $T$, hence $w$ fixes $\Psi$. Conversely, assume $\Psi$ is $w$-stable and $p$-closed; then $\gen{T, U_\alpha, \alpha \in \Psi}^g$ is a closed connected reductive overgroup of $S$ (see \cite[Theorem 13.6]{MT}). Moreover for $\alpha \in \Psi$ we have $(U_\alpha^g)^{\sigma} = U_\alpha ^{\sigma g^\sigma}=U_\alpha ^{g^\sigma}=(U_{\alpha w})^g$. Since $\Psi$ is $w$-stable, we deduce that $\gen{T, U_\alpha, \alpha \in \Psi}^g$ is $\sigma$-stable.
\end{proof}

%See \cite[Theorem 5]{Wei} for a more general statement, considering the case in which $\sigma$ does not necessarily act trivially on $\Phi$. This does not present serious changes: one replaces $w$ by $\sigma w$ in the statement above, and uses exactly the same argument.

Of course, if $\sigma$ acts trivially on $\Phi$, then a subset of $\Phi$ is $\sigma w$-stable if and only if it is $w$-stable.

At this point we divide the discussion between exceptional and classical groups.

\section{Exceptional groups}
\label{subsection_exceptional_groups}

In this section we will prove Theorem \ref{theorem_1.5_restatement} for simple exceptional groups, which implies Theorem \ref{main_theorem_absolute_constant_2}(1) for these groups, and we will prove Theorem \ref{main_theorem_absolute_constant_2}(3) for $G_2(3^a)$.

We choose $X$ of adjoint type. 
%For the last statement, we are not really using Theorem \ref{only_maximal_tori_matter}. Indeed, clearly all elements of $\cap_{x\in X_q}\ca T(x)\neq \emptyset$ contribute to $1-\P(X_q,X_q$.
%We will define a subset $A$ of $(X_q)'$ and, using the methods introduced in this section, we will prove that if $X_q\neq G_2(3^a)$ then $\P^*_{\text{inv}}(X_q,A)$ tends to $1$. It will follow from Lemma \ref{isogeny_types} that the same is true for other isogeny types of groups, and moreover $\P_{\text{inv}}((X_q)',A)$ tends to $1$. In case $X_q=G_2(3^a)$ there is only one isogeny type of groups, hence there is no problem.
%Since $(X_q)'$ has bounded index in $X_q$, it will follow that also $\P_{\text{inv}}((X_q)',A)$ tends to $1$. In case $X_q=G_2(3^a)$ we have $X_q=(X_q)'$, hence there is no problem.
The maximal subgroups of $X_\sigma$ of maximal rank have been classified by Liebeck--Saxl--Seitz \cite{LSS}.% (for low rank groups, there is even a complete list of maximal subgroups: see \cite{Coo}, \cite{Kle}, \cite{Kle2}, \cite{Mal}).
\iffalse
In exceptional groups, we can often exploit a very convenient situation --- when the groups contain ``large'' maximal tori. We isolate this situation in a lemma, since we will apply it several times.

\begin{lemma}
\label{large_maximal_tori}
Assume $x \in X_q$ is such that $\ca T(x)$ consists of a single conjugacy class of maximal tori, with representative $T$, and assume $T$ is not contained in any other member of $\ca M$. Assume now $y \in X_q$ does not belong to any member of $\ca M(x)$. Then $\P_{\emph{inv}}(X_q,\{x,y\})\rightarrow 1$ as $q\rightarrow \infty$.
%In particular, this applies whenever $\gen x= T=T'_{\sigma}$ and $T'$ is maximal among closed connected $\sigma$-stable subgroups of $X$.
\end{lemma}

\begin{proof}
The statement follows immediately from Theorem \ref{only_maximal_tori_matter}(2).%Regarding the second part, assume $\gen x= T=T'_{\sigma}$ and $T'$ is maximal among closed connected $\sigma$-stable subgroups of $X$. The overgroups of $x$ coincide with the overgroups of $T=\gen x$. Assume $x \in M$ with $M \in \ca M$. Write $M=K_{\sigma}$ with $K$ $\sigma$-stable of maximal rank. Then $x$ is contained in a $\sigma$-stable 
\end{proof}
\fi

We can assume that $q$ is sufficiently large in the proof. This implies that every maximal torus $S_\sigma$ of $X_q$ contains regular semisimple elements. (In fact, more is true. By Theorem \ref{almost_all_elements_regular_semisimple}, almost all elements of $X_q$ are regular semisimple. By Theorem \ref{correspondence_tori_weyl_group}, the proportion of elements of $X_q$ lying in $\wt{S_\sigma}$ is bounded away from zero; therefore, almost all elements of $\wt{S_\sigma}$ are regular semisimple.) In particular, it follows that whenever $S_\sigma \leq M^\circ_\sigma$, with $M_\sigma \in \ca M$, then $S\leq M^\circ$. %since if $x$ is regular ss, then it is contained in a maximal torus of $M^\circ$ (being $M^\circ$ connected); but this torus must coincide with $S$, i.e. the only maximal torus containing $x$)

% There is a slight ambiguity in the notation. Namely, so far we have denoted $X_q=X_\sigma$; recall however that the elements in Table \ref{table_exceptional_groups} belong to the derived subgroup $(X_q)'$. This should cause no confusion.

\begin{table}
\small
\centering
% table caption is above the table
\caption{$A_b=\{x_1,x_2\}$ in Theorem \ref{main_theorem_absolute_constant_2} for exceptional groups $G\neq 
 F_4(2^a)$.}
\label{table_exceptional_groups}       % Give a unique label
% For LaTeX tables use
\begin{tabular}{lll}
\hline\noalign{\smallskip}
$G$ & $|x_1|$ & $|x_2|$\\
\noalign{\smallskip}\hline\noalign{\smallskip}
$^2\!B_2(q^2)$  & $\Phi'_8$ &  $\Phi'_8(-q)$ \\
$^2\!G_2(q^2)$  & $\Phi'_{12}$ & $\Phi'_{12}(-q)$ \\
$G_2(q)$, $3\nmid q$ &  $\Phi_3$ & $\Phi_3(-q)$  \\
$^3\!D_4(q)$ & $\Phi_{12}$  & $(q^3+1)(q-1)/(2,q-1)$ \\
%$\Sp_{2m}(q)$, $m=n/2\geq 3$ odd, $q$ odd & $2m$  & $(2m-4)\perp 4$ \\
$^2\!F_4(q^2)$ & $\Phi'_{24}$  & $\Phi'_{24}(-q)$ \\
$F_4(q)$, $q$ odd & $\Phi_{12}$  & $\Phi_8$ \\
$E_6(q)$ & $\Phi_3\Phi_{12}/(3,q-1)$ & $\Phi_1\Phi_2\Phi_8/\delta$ \\
$^2\!E_6(q)$ & $\Phi_6\Phi_{12}/(3,q+1)$ & $\Phi_1\Phi_2\Phi_8/\delta'$ \\
$E_7(q)$ & $\Phi_2 \Phi_{18}/(2,q-1)$ & $\Phi_1 \Phi_9/(2,q-1)$  \\
$E_8(q)$ & $\Phi_{30}$ & $\Phi_{30}(-q)$ \\
\noalign{\smallskip}\hline
\end{tabular}
\end{table}

We define $A_b$ as the set of elements of $(X_q)'$ appearing in Table \ref{table_exceptional_groups}. We assume that $X_q \neq G_2(2), ^2\!G_2(3), ^2\!F_4(2)$. In the table, the case $F_4(q)$ with $q$ even is missing. In this case the set $A_b$ has size $6$, hence for aesthetic reasons we have not included it. We will treat this case in detail in Subsection \ref{subsection_F4}.

Each element in Table \ref{table_exceptional_groups} is regular semisimple. The existence of these elements follows from the general theory of the structure of maximal tori, cf. \cite[Section 25]{MT}. 
% ; in cases $E_6(q)$, $^2\!E_6(q)$ and $E_7(q)$ the order is adjusted so that indeed the elements belong to the derived subgroup.
For the element $x_2$ in case $G=E_6(q)$, $\delta=(3,q-1)(4,q-1)$. For the element $x_2$ in case $G=^2\hspace{-0.5em} E_6(q)$, $\delta'=(3,q+1)(4,q+1)$.

We write $\Phi_n=\Phi_n(q)$ for the $n$-th cyclotomic polynomial evaluated at $q$. Moreover $\Phi'_8=\Phi'_8(q)=q^2+\sqrt 2 q + 1$, $\Phi'_{12}=\Phi'_{12}(q)=q^2+\sqrt 3 q +1$, $\Phi'_{24}=\Phi'_{24}(q)=q^4+\sqrt 2 q^3+q^2+\sqrt 2 q +1$ (this notation is taken from \cite{GM2}). We will refer to \cite[Table 6]{GM2} and \cite[Table 1]{GM} for the overgroups of many elements in Table \ref{table_exceptional_groups}. We remark that the aforementioned tables from \cite{GM2} and \cite{GM} rely mostly on \cite{Wei}.

%\textbf{(i)}
\subsection{Some twisted groups, and $E_8(q)$}
In many cases we can exploit a very convenient situation. Indeed, consider the groups $^2\!B_2(q^2), ^2\!G_2(q^2), ^3\!D_4(q), ^2\!F_4(q^2)$ and $E_8(q)$. Then by \cite[Table 6]{GM2} we see that the element $x_1$ lies only in one maximal subgroup of $X_q$, namely $\N_{X_q}(S_\sigma)$, where $S_\sigma$ is the unique maximal torus of $X_q$ containing $x_1$. Since $q$ is large, $S_\sigma$ contains regular semisimple elements, and in particular $\N_{X_q}(S_\sigma)=\N_{X_q}(S)$. (We note that, in fact, $\N_{X_q}(S_\sigma)=\N_{X_q}(S)$ holds under the weaker hypothesis that $S_\sigma$ is \textit{nondegenerate}; see \cite[Section 3.6]{Car} for this notion.)
%$S_\sigma$ is nondegenerate and in particular $\N_{X_q}(S_\sigma)=\N_{X_q}(S)$ (we already observed this in the discussion preceding Theorem \ref{correspondence_tori_weyl_group}).
The connected component of $\N_X(S)$ is $S$, since $S$ has finite index in its normalizer. By definition, we deduce that $\ca T(x)$ contains only the conjugates of $S_\sigma$.

Then, in order to prove Theorem \ref{theorem_1.5_restatement} in these cases, we just need to show that the element $x_2$ does not belong to any conjugate of $\N_{X_q}(S_\sigma)$. This is easily done by order considerations.

\iffalse
There are several cases in which we just need to apply Lemma \ref{large_maximal_tori}. Indeed, in cases $^2\!B_2(q^2), ^2\!G_2(q^2), ^3\!D_4(q), ^2\!F_4(q^2)$ and $E_8(q)$, the element $x_1$ lies only in one maximal subgroup, namely $\N_{X_q}(T)$, where $T$ is the maximal torus of $X_q$ containing $x_1$. This can be seen from \cite[Table 6]{GM2}.\footnote{This table applies results from \cite{Wei} and, in case $^2\!G_2(q)$, from \cite{Coo} and \cite{Kle}. Note however that we need to care only of subgroups of maximal rank, and having \cite{LSS} at hand, it is in general possible to see easily the overgroups of maximal rank.} Let us now prove that the hypotheses of Lemma \ref{large_maximal_tori} are satisfied.%\eilidh{I am convinced by this argument. I don't see why twisting would make any difference here.}

%Now we note that $\N_{X_q}(T)$ contains a unique maximal torus of $X_q$, namely $T$. %Assume $T=T'_{\sigma}$, where $T'$ is a $\sigma$-stable maximal torus of $X$. Assume $T'\leq \N_{X_q}(T)$, where $T'=T''_{\sigma}$ and $T''$ is a $\sigma$-stable maximal torus of $X$. Since $q$ is large, $T'$ contains a regular semisimple element $x$. Then $T''\leq \C_X(T')^{\circ} \leq \C_X(x)^{\circ}$. Since $x$ is regular semisimple, the latter group is a maximal torus, hence we must have $T''=\C_X(T')^{\circ}$
Assume $T=T'_{\sigma}$, where $T'$ is a $\sigma$-stable maximal torus of $X$. %Since $q$ is large, $T$ contains a regular semisimple element $x$, hence $T'\leq \C_X(T)^{\circ}\leq \C_X(x)^{\circ}$. Since $\C_X(x)^{\circ}$ is a maximal torus, we deduce that the inequalities are equalities, hence $T'=\C_X(T)^{\circ}$. %\daniele{is $C_X(T)$ connected?}
Since $q$ is large, $T$ is nondegenerate and in particular $\N_{X_q}(T)=\N_{X_q}(T')$ (we already observed this in the discussion preceding Theorem \ref{only_maximal_tori_matter}).
%\footnote{This is not true in general. It may even happen that $T'_{\sigma}=1$: think to diagonal matrices in $\SL_n(2)$. The condition $T'=\C_X(T'_{\sigma})^{\circ}$, which excludes these pathologies, is called nondegeneracy: see \cite[Section 3.6]{Car}.} In other words, $\N_{X_q}(T)$ is obtained as the fixed points of $\N_X(T')$.
The connected component of $\N_X(T')$ is $T'$, since $T'$ has finite index in its normalizer. By definition, we deduce that $\ca T(x)$ contains only the conjugates of $T$.%\daniele{here a priori there is a little thing to adjust: the subgroup might be obtained as the fixed points of two different subgroups, and connected components may differ. but the maximal tori contained in them must be the same...}

Then Lemma \ref{large_maximal_tori} applies --- we just need to check that the element $x_2$ does not belong to any conjugate of $\N_{X_q}(T)$. This is easily done by order considerations.
\fi

\subsection{$E_6(q)$ and $^2\!E_6(q)$}
\label{sub_e6}
We write $E_6(q)=^+\hspace{-0.5em}E_6(q)$ and $^2\!E_6(q)=^-\hspace{-0.5em}E_6(q)$. Consider $^\varepsilon \! E_6(q)$ with $\varepsilon \in \{+,-\}$. By \cite[Table 1]{GM}, $x_1$ is contained only in $(^3\!D_4(q)\times (q^2+\varepsilon q+1)).3$ (among the maximal subgroups of $X_\sigma$). The order of $x_2$ is $(q^4+1)(q^2-1)/\delta_\varepsilon$, where $\delta_\varepsilon = (3,q-\varepsilon 1)h_\varepsilon$ and $h_\varepsilon=(4,q-\varepsilon 1)$. For $\varepsilon=-$, $x_2$ is contained in a maximal subgroup $M=h_\varepsilon.(\POm_{10}^\varepsilon(q)\times (q-\varepsilon)/h_\varepsilon).h_\varepsilon$; and for $\varepsilon=+$, $x_2$ is contained in a parabolic subgroup with Levi complement of type $D_5$ (cf. \cite[Table 5.1]{LSS}). By order considerations we see that if $\varepsilon=+$ then $\ca M(x_2)$ contains only parabolics of type $D_5$, while if $\varepsilon=-$ then $\ca M(x_2)$ contains the conjugates of $M$, and parabolics with Levi complement of type $^2\!D_4$. %(See e.g. \cite[Table 1.1]{DM} for the structure of maximal tori in $^3\!D_4(q)$.)
% to see this, use that the order of $x_2$ is divided by a primitive prime divisor of $q^8-1$ (note that $\SU_n(q)$ is divided by such a prime only for $n\geq 8$).
Using the knowledge of maximal tori of $^\varepsilon \!E_6(q)$ (see \cite{DF}), we deduce by order considerations that $\ca T(x_1)\cap \ca T(x_2)=\emptyset$, which proves Theorem \ref{theorem_1.5_restatement} in these cases.
In Subsections \ref{subsection_G2_not_3}--%, \ref{subsection_G2_power_3}, \ref{subsection_E7},
\ref{subsection_F4}, we employ the notation of Subsection \ref{subsection_reductive_max_rank}.

%\textbf{(iii a)}
\subsection{$G_2(q)$ with $3\nmid q$}
\label{subsection_G2_not_3}
Let $G=G_2(q)$ with $q\geq 3$. We immediately recall some facts regarding maximal tori of $G_2(q)$ that we will use also in Subsection \ref{subsection_G2_power_3}. We have $W=W(G_2)\cong D_{12}$, hence by the general theory (see \cite[Section 25]{MT}) there are six $G_2(q)$-classes of maximal tori, with representatives $T_1, \ldots, T_6$, and with orders $q^2-1, q^2-1, (q-1)^2, (q+1)^2, q^2+q+1, q^2-q+1$, respectively. We assume $T_i=(T_{w_i})_\sigma$, where $w_1$ is a reflection in a short root, $w_2$ is a reflection in a long root, $w_3=1$, $w_4=-1$, $|w_5|=3$, $|w_6|=6$. For $i=1, \ldots, 6$, we will write $\Delta_i$ instead of $\Delta_{T_i}$.

By Theorem \ref{correspondence_tori_weyl_group}, the proportion of elements lying in $\wt{T_1}\cup \wt{T_2}$ is equal to $O(1/q)$ plus the proportion of noncentral involutions of $D_{12}$, which is $1/2$. Consequently, the proportion of elements lying in $\cup_{i=3}^6\wt{T_i}$ is $1/2+O(1/q)$. By Theorem \ref{almost_all_connected_components_maximal_rank}, the same estimates hold if we replace each $\wt{T_i}$ by $\Delta_i$. %for the proportion of elements in $\Delta_1\cup \Delta_2$ and $\cup_{i=3}^6\Delta_i$.

Assume now $3\nmid q$. By \cite[Table 5.1]{LSS} and by order considerations, $\ca M(x_1)$ contains only the conjugates of $\SL_3(q).2$, and $\ca M(x_2)$ contains only the conjugates of $\SU_3(q).2$. In order to prove Theorem \ref{theorem_1.5_restatement} in this case, we need to show that these two subgroups do not contain a common maximal torus (up to conjugacy). By order considerations, if there exists a common torus of $\SL_3(q).2$ and $\SU_3(q).2$, then it must be $T_1=(T_{w_1})_\sigma$ or $T_2=(T_{w_2})_\sigma$. For $i=1,2$, fix $g_i\in X=G_2$ such that $T_{w_i}=T^{g_i}$ (and $g_i^\sigma g_i^{-1}$ maps to $w_i\in W$). By Lemma \ref{reductive_overgroups_of_fixed_torus}, the closed connected reductive subgroups of $G_2$ containing $T_{w_i}$ are precisely the subgroups $K(\Psi)^{g_i}$, where $K(\Psi)=\gen{T, U_\alpha, \alpha \in \Psi}$ and $\Psi$ is $p$-closed and $w_i$-stable. Since $3\nmid q$, by \cite[Theorem 13.14]{MT} we deduce that every $p$-closed subset of $\Phi$ is closed; in particular there is only one $p$-closed subset $\Psi$ of type $A_2$: the set of all long roots. Note that $W_\Psi=\N_W(W(\Psi))/W(\Psi)\cong C_2$, hence by Lemma \ref{bijection_conjugacy_classes_W_Psi} there are two corresponding $G_2(q)$-classes. Now $w_2\in W(\Psi)$, while $w_1\not\in W(\Psi)$; then by Lemma \ref{bijection_conjugacy_classes_W_Psi} $K(\Psi)^{g_2}$ is $G_2(q)$-conjugate to $K(\Psi)$, and $K(\Psi)_\sigma \cong \SL_3(q)$, while $K(\Psi)^{g_1}_\sigma \cong \SU_3(q)$. This implies that $\SL_3(q).2$ and $\SU_3(q).2$ do not contain a common maximal torus (up to conjugacy), and Theorem \ref{theorem_1.5_restatement} follows in this case.

%\textbf{(iii b)} 
\subsection{$G_2(q)$ with $3\mid q$}
\label{subsection_G2_power_3} We keep the notation from the beginning of Subsection \ref{subsection_G2_not_3}. Let $G=G_2(q)$. We want to prove $\P_{\text{inv}}(G,G)=1/2+O(1/q)$, namely, Theorem \ref{main_theorem_absolute_constant_2}(3) in this case. %Let $Y$ be the set of elements of $G$ which are regular semisimple, which do not belong to subgroups not in $\ca M$, and which do not belong to $\Gamma(G)$ (recall this notation from Theorem \ref{only_maximal_tori_matter}).
We will prove the following more precise statement, which we will use in Section \ref{section_proof_theorem_1.2}. We note that $x\in G$ contributes to $\P_{\text{inv}}(G,G)$ if and only if $\P_{\text{inv}}(G,x)>0$.

\begin{theorem} Let $G=G_2(q)$ with $3\mid q$.
\label{theoremg2order}
\begin{enumerate}
    \item[(i)] $\P_{\emph{inv}}(G,x)\geq 1/6+O(1/q)$ for a proportion of elements $x\in G$ of the form $1/2+O(1/q)$.
    \item[(ii)] $\P_{\emph{inv}}(G,x)=0$ for a proportion of elements $x\in G$ of the form $1/2+O(1/q)$.
\end{enumerate}
\end{theorem}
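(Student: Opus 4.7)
The plan is to partition the elements of $G = G_2(q)$ according to which of the six $G$-classes of maximal tori $T_1, \ldots, T_6$ contains their semisimple part. Since $W(G_2) = D_{12}$ has centralizer orders $|\C_W(w_i)| = 4, 4, 12, 12, 6, 6$ for $i=1, \ldots, 6$, Theorem \ref{correspondence_tori_weyl_group} applied to singletons gives $|\Delta_i|/|G| = 1/|\C_W(w_i)| + O(1/q)$, so the ``bad half'' $\Delta_1 \cup \Delta_2$ has density $1/2 + O(1/q)$ and the ``good half'' $\Delta_3 \cup \Delta_4 \cup \Delta_5 \cup \Delta_6$ also has density $1/2 + O(1/q)$.

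The decisive feature of characteristic $3$ is that the short-root $A_2$ subsystem of $G_2$ is $p$-closed in addition to the long-root one; by Lemma \ref{reductive_overgroups_of_fixed_torus} this doubles the number of $G$-classes of $\SL_3(q)$ and $\SU_3(q)$ maximal subgroups of maximal rank. Applying Lemma \ref{bijection_conjugacy_classes_W_Psi}, one tabulates the torus classes inside each maximal-rank maximal subgroup: $\SL_3(q)$ long contains $\{T_2, T_3, T_5\}$, $\SL_3(q)$ short contains $\{T_1, T_3, T_5\}$, $\SU_3(q)$ long contains $\{T_1, T_4, T_6\}$, $\SU_3(q)$ short contains $\{T_2, T_4, T_6\}$; the parabolics $P_a, P_b$ contain $\{T_2, T_3\}$ and $\{T_1, T_3\}$ respectively; and $(\SL_2(q) \circ \SL_2(q)).2$ contains $\{T_1, T_2, T_3, T_4\}$.

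For part (2) I fix $y \in \Delta_1$, the case $\Delta_2$ being symmetric under swapping short and long. Unioning the above gives $\ca T(y) = \{T_1, \ldots, T_6\}$, so $\P(W, \sigma, \Omega) = 1$ and Theorem \ref{only_maximal_tori_matter} immediately yields $\P_{\emph{inv}}(G, y) = O(1/q)$. To upgrade this to the exact identity $\P_{\emph{inv}}(G, y) = 0$, the plan is to check directly that every $z \in G$ shares a maximal subgroup with $y$. For regular semisimple $z$ the tabulation furnishes the common overgroup. For $z = z_s z_u$ not regular semisimple, the possible connected centralizers in $G_2$ are $T$, $\SL_2 \times T_1$ of short or long root type, $(\SL_2 \circ \SL_2)$, and $G_2$ itself, so $z$ lies in a $G$-conjugate of $P_b$ (for $z_s$ of short type, or for $z$ unipotent via a Borel of $P_b$) or of $(\SL_2 \circ \SL_2).2$ (for $z_s$ of long type, using the orthogonal $A_1 \times \tilde A_1$ containing the root underlying $z_u$); both of these subgroups contain conjugates of $T_1$, and hence of $y$.

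For part (1) I fix $y \in \Delta_j$ with $j \in \{3, 4, 5, 6\}$. The tabulation yields $\ca T(y)$ missing exactly $\{T_6\}, \{T_5\}, \{T_4, T_6\}, \{T_3, T_5\}$ respectively, whence $\P(W, \sigma, \Omega(j)) = 5/6, 5/6, 3/4, 3/4$. Theorem \ref{only_maximal_tori_matter}, combined with $\P^*_{\emph{inv}} = \P_{\emph{inv}}$ (valid because $G_2$ is simultaneously simply connected and adjoint and $G_2(q)$ is perfect), then gives $\P_{\emph{inv}}(G, y) \in \{1/6, 1/6, 1/4, 1/4\} + O(1/q)$, in each case at least $1/6 + O(1/q)$. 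The main obstacle is the exactness in part (2): Theorem \ref{only_maximal_tori_matter} carries $O(1/q)$ slack and cannot by itself deliver equality with zero; overcoming this requires the direct centralizer case analysis above, which is tractable because of the very small catalogue of reductive maximal-rank subgroup types available in $G_2$.
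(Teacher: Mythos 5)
Your overall route is the paper's: split $G=G_2(q)$ by maximal-torus class via Theorem \ref{correspondence_tori_weyl_group}, use the $3$-closed short-root $A_2$ (and the graph automorphism) to get the two classes each of $\SL_3(q).2$ and $\SU_3(q).2$, and read everything off a tabulation of which torus classes lie in which maximal-rank maximal subgroups. Your part (1) is correct and is essentially the paper's argument in different bookkeeping: the paper exhibits the pairings $\Delta_3$--$\Delta_6$, $\Delta_4$--$\Delta_5$, $\Delta_5$--$\Delta_6$ directly, while you extract the same information through Theorem \ref{only_maximal_tori_matter}; your tabulated containments check out.

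The gap is in part (2), in the case $z=z_sz_u$ with $z_u\neq 1$ and $\C_G(z_s)$ of short type $\tilde A_1T_1$: your claim that such $z$ lies in a conjugate of $P_b$ fails when the one-dimensional central torus of $\C_G(z_s)$ is nonsplit, i.e.\ $\C_G(z_s)\cong \SL_2(q)\times C_{q+1}$ with the $\SL_2$ short (take, e.g., $z_s$ a generator of the $C_{q+1}$ factor and $z_u\neq 1$ unipotent in the $\SL_2(q)$). The maximal tori of $G$ containing $z_s$ are exactly the maximal tori of this centralizer, of orders $q^2-1$ and $(q+1)^2$; the twisting element of the $(q^2-1)$-torus is the reflection in the \emph{long} root orthogonal to the short root, so these tori have classes $T_2$ and $T_4$. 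If $z$ lay in a conjugate of $P_b=QL$, then $z_s$, being a $p'$-element, would be conjugate into the Levi $L$ (complements to $Q$ in $Q\gen{z_s}$ are conjugate), hence into a torus of class $T_1$ or $T_3$ --- a contradiction; so such $z$ lies in no conjugate of $P_b$ (indeed in no parabolic). The conclusion survives, but by the paper's route, which you should adopt for this case (and its long/short mirror for $\Delta_2$): whenever $\C_G(z_s)$ is not a maximal torus it contains a central involution (e.g.\ $\alpha^\vee(-1)$, or $-I$ of the $\SL_2(q)$ factor, $q$ being odd), hence $\C_G(z_s)$, and with it $z$, lies in a conjugate of $C=(\SL_2(q)\circ\SL_2(q)).2$, which contains tori of both classes $T_1$ and $T_2$; the parabolic is needed only for unipotent $z$, and the subgroups $H_i$, $K_i$ only for the odd-order toral centralizers $T_5$, $T_6$.
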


 %We use notation as in Subsection \ref{subsection_G2_power_3}. In that subsection, we showed that every element of $\Delta_3$ (resp. $\Delta_4$) invariably generates with every element of $\Delta_6$ (resp. $\Delta_5$). The proportion of elements belonging to $\Delta_i$ for some $i=3,\ldots, 6$ is $1/2+O(1/q)$. Note finally that for every $i$, $|\Delta_i|/|G|\geq 1/12+O(1/q)$.
\begin{proof}

There is an automorphism $\gamma$ of $G_2(q)$ which induces a graph automorphism of order two on the Dynkin diagram, exchanging long and short roots. The set $\Psi'$ of short roots is $3$-closed (cf. \cite[Proposition 13.15]{MT}). There are two conjugacy classes of subgroups $\SL_3(q).2$, with representatives $H_1$ and $H_2$, and two conjugacy classes of subgroups $\SU_3(q).2$, with representatives $K_1$ and $K_2$. 
%this is true by liebeck-saxl-seitz. I don't immediately see a straightforward argument; one probably has to use Lemma 1.3 in that paper. Note that the fact that two subsets $\Psi$ and $\Psi'$ are not $W$-conjugate does not imply a priori that the subgroups $\gen{T,U_\alpha, \alpha \in \Psi}$ and $\gen{T,U_\alpha, \alpha \in \Psi'}$ are not $X_q$-conjugate.
We have $H_1^\gamma = H_2$ and $K_1^\gamma=K_2$. Moreover $\gamma$ exchanges the classes of $T_1$ and $T_2$. Up to changing indices, $T_i$ is contained in a conjugate of $H_i$ and $K_i$. What is more, the only overgroups of $T_5$ (resp. $T_6$) are conjugates of $H_1$ and $H_2$ (resp. conjugates of $K_1$ and $K_2$).

Therefore, by definition of $\Delta$, every element of $\Delta_3$ (resp. $\Delta_4$, resp. $\Delta_5$) invariably generates with every element of $\Delta_6$ (resp. $\Delta_5$, resp. $\Delta_6$).  
%Therefore there are no conjugates of $T_3$ and $T_6$ with a common overgroup in $\ca M$ 
%Therefore, there are no conjugates of $T_3$ and $T_6$ with a common overgroup in $\ca M$; and there are no conjugates of $T_4$ and $T_5$ with a common overgroup in $\ca M$. By the definition of $\Delta$ and by Theorem \ref{semisimple_in_stable_torus}, every element of $\Delta_3$ (resp. $\Delta_4$) invariably generates with every element of $\Delta_6$ (resp. $\Delta_5$). %usual argument: otherwise they are in the connected component of a subgroup of maximal rank. then also the maximal tori are.
We observed that the proportion of elements belonging to $\cup_{j=3}^6\Delta_j$ is $1/2+O(1/q)$. Moreover, for $j\in \{5,6\}$, we have $|\Delta_j|/|G|= 1/6+O(1/q)$. Therefore item (i) is proved.

We move to (ii). We want to show that
\begin{itemize}
    \item[$(\star)$] for every $x\in G$, $x$ belongs to a maximal subgroup containing a conjugate of $T_1$, and to a maximal subgroup containing a conjugate of $T_2$.
\end{itemize}
This implies that all elements $y$ lying in $\wt{T_1}\cup \wt{T_2}$ are such that $\P_{\text{inv}}(G,y)=0$. We observed that these elements have proportion $1/2+O(1/q)$, hence in order to prove (ii) we only need to prove $(\star)$.

It is sufficient to focus on $i=1$, since the two tori are exchanged by an automorphism of $G$. Representatives of the conjugacy classes of maximal subgroups containing $T_1$ are the following:
\[
    \{P, H_1, K_1, C\}.
\]
Here $P$ is a parabolic subgroup with respect to the short root of a base, and $C\cong (\SL_2(q)\circ \SL_2(q)).2$ is the centralizer of an involution in $G_2(q)$ (cf. \cite[Theorem A]{Kle}; recall that $G$ contains a unique conjugacy class of involutions). %We now claim that for every $x \in G_2(q)$, $\mathcal M(x)$ contains a member of \eqref{equation_g2_overgorups}. This implies that $(T_{w_1})_\sigma \in \ca T(x)$ hence, by Theorem \ref{only_maximal_tori_matter}(3), that $\P_{\text{inv}}(G,G)$ is bounded away from $1$ absolutely.

Let $x\in G$. If $x$ is unipotent, then $x$ is contained in both conjugacy classes of maximal parabolic subgroups. Assume then $x=su$, with $1 \neq s$ semisimple, $u$ unipotent, and $[s,u]=1$. Then $x \in \C_G(s)<G$. If $\C_G(s)$ is a maximal torus of even order, it is contained in a conjugate of $C$. The remaining classes of maximal tori have representatives $T_5$ (contained in $H_1$) and $T_6$ (contained in $K_1$). Examining \cite[Table II, p. 41]{Kle}, we see that all other possibilities for $\C_G(s)$ contain a central involution, hence are contained in a conjugate of $C$. Then $(\star)$ is proved and we are done.
\end{proof}

%Again by \cite{GM2} or \cite{LSS}, the members of $\ca M(x_1)$ are the conjugates of $\SL_3(q).2$ (actually, there are possibly two conjugacy classes of such subgroups). Moreover the members of $\ca M(x_2)$ are the conjugates of $\SU_3(q).2$ (possibly two classes). Note that $\SL_3(q)$ and $\SU_3(q)$ contain maximal tori of order $q^2-1$. However it is the case that these tori are not conjugate in $G_2(q)$ (cf. \cite[Section 15.2]{Asc2}). This implies that $\ca T(x_1)$ and $\ca T(x_2)$ are disjoint, and the result follows.

%\textbf{(iv)}
\subsection{$E_7(q)$}
\label{subsection_E7}
By \cite[Table 6]{GM2} and \cite[Table 1]{GM} we see that $x_1$ is contained only in a maximal subgroup $^2\!E_6(q)_{\text{sc}}.D_{q+1}$ of $X_\sigma$, and $x_2$ is contained in two (conjugate) parabolics $P$ and $P'$ of type $E_6$, and in the normalizer of a common Levi complement $L$. Our aim is to show that $\mathcal T(x_1)\cap \mathcal T(x_2)=\emptyset$.

\begin{claim}
\label{claim}
Assume $g\in X=E_7$ and assume $g^\sigma g^{-1}\in \emph N_X(T)$ maps to $w\in W$. Assume $\Psi$ and $\Psi'$ are two $p$-closed subsets of $\Phi$ of type $E_6$. If $w\in \emph N_W(W(\Psi))\cap \emph N_W(W(\Psi'))$, then either $w\in W(\Psi)\cap W(\Psi')$ or $w\notin W(\Psi)\cup W(\Psi')$.
%If $w$ fixes two $\text{char}(k)$-closed subsets $\Psi$ and $\Psi'$ of $\Phi$ of type $E_6$, then $K(\Psi)^g$ and $K(\Psi')^g$ are $E_7(q)$-conjugate.
\end{claim}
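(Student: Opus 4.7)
The plan is to reformulate the claim as a statement about the action of $W = W(E_7)$ on the $56$ weights of its minuscule representation $V(\omega_7)$, and then derive the conclusion from the branching of this representation to $W(E_6)$.

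First I would record the general setup. Any subsystem $\Psi \subseteq \Phi$ of type $E_6$ spans a hyperplane $V_\Psi := \mathbb{R}\Psi$, so $V_\Psi^\perp$ is a line. Because $W(E_6)$ does not contain $-1$ while $W(E_7)$ does, one has $\N_W(W(\Psi)) = W(\Psi) \sqcup (-1)\cdot W(\Psi)$, and crucially $w \in W(\Psi)$ if and only if $w$ acts trivially on $V_\Psi^\perp$. (Since $E_7$ is simply-laced, a $p$-closed subset of type $E_6$ in $\Phi$ is a genuine root subsystem of type $E_6$; all such are $W$-conjugate, and there are exactly $28$ of them.)

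Next I would identify the union of the $28$ lines $V_\Psi^\perp$ with the $\mathbb{R}$-span of the $56$ weights of $V(\omega_7)$: these weights form a single $W$-orbit, the stabilizer of a weight $v_\Psi \in V_\Psi^\perp$ equals $W(\Psi)$, and each line $V_\Psi^\perp$ contains exactly two weights $\pm v_\Psi$, accounting for all $28\cdot 2 = 56$ weights. Fixing $\Psi$ and looking at the $W(\Psi) \cong W(E_6)$-orbits on these $56$ weights, they decompose as $\{v_\Psi\} \sqcup \{-v_\Psi\} \sqcup O_1 \sqcup O_2$, where $O_1$ and $O_2$ are two $27$-element orbits coming from the classical branching $V(\omega_7)|_{E_6} = \mathbf{1} \oplus \mathbf{1} \oplus \mathbf{27} \oplus \overline{\mathbf{27}}$. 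Since $-1 \in W$ commutes with $W(\Psi)$ and has no fixed points on the $56$ weights, and since $|O_i| = 27$ is odd, $-1$ cannot stabilize $O_1$ setwise, and so $-1$ swaps $\{O_1, O_2\}$; that is, $O_2 = -O_1$.

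With this in place the claim is immediate. Suppose, for contradiction, that $w \in \N_W(W(\Psi)) \cap \N_W(W(\Psi'))$ lies in $W(\Psi) \setminus W(\Psi')$. Then $w$ fixes $V_\Psi^\perp$ pointwise and acts as $-1$ on $V_{\Psi'}^\perp$, so $w\cdot v_{\Psi'} = -v_{\Psi'}$ for any nonzero $v_{\Psi'} \in V_{\Psi'}^\perp$. Since $\Psi \neq \Psi'$, we have $v_{\Psi'} \notin \{\pm v_\Psi\}$, so $v_{\Psi'}$ lies in one of the two $27$-element $W(\Psi)$-orbits, say $O_1$; then $w\cdot v_{\Psi'} \in O_1$ (because $w \in W(\Psi)$ preserves each orbit), yet also $w\cdot v_{\Psi'} = -v_{\Psi'} \in -O_1 = O_2$, a contradiction. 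The case $w \in W(\Psi') \setminus W(\Psi)$ is symmetric. The main technical input is the identification in the second paragraph together with the branching rule to $E_6$; both are classical and can alternatively be checked by an explicit computation in Bourbaki coordinates.
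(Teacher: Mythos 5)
Your argument is correct, and it takes a genuinely different route from the paper. The paper works inside the finite group $W=W(E_7)=\langle -1\rangle\times W^+$ with $W^+\cong \mathrm{Sp}_6(2)\cong \mathrm{SO}_7(2)$: it identifies $W(\Psi)$ and $\mathrm N_W(W(\Psi))$ with stabilizers of nondegenerate hyperplanes of the $7$-dimensional orthogonal $\mathbf F_2$-space, and reduces the claim to the general fact that $\Omega(W_1)\cap \mathrm{SO}(W_2)\leq \Omega(W_2)$ for nondegenerate hyperplanes $W_1,W_2$ in characteristic $2$, proved via the parity of $\dim \mathrm C_V(g)$. You instead stay in the reflection representation and use the $56$ weights of the minuscule module $V(\omega_7)$: membership in $W(\Psi)$ versus $\mathrm N_W(W(\Psi))\setminus W(\Psi)$ is detected by whether $w$ acts as $+1$ or $-1$ on the line $V_\Psi^\perp$, and the contradiction comes from the branching $\mathbf{56}=\mathbf 1\oplus\mathbf 1\oplus\mathbf{27}\oplus\overline{\mathbf{27}}$ together with the oddness of $27$, which forces $-1$ to swap the two $27$-orbits. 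Amusingly, both proofs ultimately rest on a parity argument. What each buys: the paper's route produces a reusable lemma about orthogonal groups over $\mathbf F_2$ in arbitrary dimension, at the cost of invoking the identification $W^+\cong\mathrm{Sp}_6(2)$ and the description of $W(E_6)$ inside it; yours is purely root-theoretic, needs only classical facts (transitivity of $W$ on minuscule weights, the stabilizer of a weight being the corresponding $W(E_6)$, conjugacy and count of the $28$ subsystems of type $E_6$, $-1\in W(E_7)$ but $-1\notin W(E_6)$), and yields the clean structural statements $\mathrm N_W(W(\Psi))=W(\Psi)\sqcup(-1)W(\Psi)$ and ``$w\in W(\Psi)$ iff $w$ fixes $V_\Psi^\perp$ pointwise''. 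The one step you should spell out is the latter ``if'' direction (equivalently, that an element of $\mathrm N_W(W(\Psi'))\setminus W(\Psi')$ acts as $-1$, not merely nontrivially, on $V_{\Psi'}^\perp$): the normalizer preserves the fixed line of $W(\Psi')$, acts on it by $\pm1$, and the case $+1$ is excluded because the stabilizer in $W$ of a nonzero vector of $V_{\Psi'}^\perp$ is generated by the reflections fixing it (Steinberg), hence equals $W(\Psi')$; with that sentence added, the proof is complete.
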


We first observe that Claim \ref{claim} implies $\mathcal T(x_1)\cap \mathcal T(x_2)=\emptyset$. Consider a maximal torus $S$ of $X_\sigma$; assume $S=(T_w)_\sigma$, where $T_w=T^g$ and $g^\sigma g^{-1}$ maps to $w\in W$. By Lemma \ref{reductive_overgroups_of_fixed_torus}, the closed connected reductive subgroups of $E_7$ containing $T_w$ are precisely the subgroups $K(\Psi)^{g}$, where $K(\Psi)=\gen{T, U_\alpha, \alpha \in \Psi}$ and $\Psi$ is $p$-closed and $w$-stable (i.e., $w\in \N_W(W(\Psi))$). Then Claim \ref{claim}, together with Lemma \ref{bijection_conjugacy_classes_W_Psi}, implies that $S=(T_w)_\sigma$ cannot be contained in both a maximal subgroup of type $^2\!E_6(q)_{\text{sc}}D_{q+1}$ and a Levi complement of type $E_6$, so that $S \notin \mathcal T(x_1)\cap \mathcal T(x_2)$ and (since $S$ was arbitrary) $\mathcal T(x_1)\cap \mathcal T(x_2)=\emptyset$.

In order to prove Claim \ref{claim}, we recall that $W=\gen{x}\times W^+$, where $|x|=2$ and $W^+\cong \Sp_6(2)$ is the ``rotation subgroup'', consisting of the elements of $W$ with determinant $1$ in the action on $\mathbf R^7$. %se ti chiede la referenza c'è quel libro di humprhreys...
We will view the elements of $W$ as pairs, according to this decomposition. If $\Psi$ is a subset of type $E_6$, then $W(\Psi)\cong \SO^-_6(2)$. Clearly we cannot have $W(\Psi)\leq W^+$, since $W(\Psi)$ contains reflections. Let $K$ be the unique subgroup of $W(\Psi)$ of index $2$, isomorphic to $\Omega^-_6(2)$. Then $K\leq W^+$. Let $H\cong \SO^-_6(2)$ be the normalizer of $K$ in $W^+$; we have $H=K\rtimes \gen{r}$, where $r$ is a reflection in a nonsingular vector (for the orthogonal geometry on $\F_2^6$). We have $W(\Psi)=\gen{(x,r), K}$ and $\N_W(W(\Psi))=\gen x \times H$. Now if $\Psi'$ is another subset of type $E_6$, we have $\Psi'=\Psi^g$ with $g \in W^+$, and consequently $W(\Psi')=\gen{(x,r^g), K^g}$ and $\N_W(W(\Psi'))=\gen x \times H^g$. %By Lemmas \ref{bijection_conjugacy_classes_W_Psi} and \ref{reductive_overgroups_of_fixed_torus}, in order to prove Claim \ref{claim} it is sufficient to show that, for every $\Psi$ and $\Psi'$, if $w\in W$ is such that $w\in \N_W(W(\Psi))\cap \N_W(W(\Psi'))=\gen x \times (H\cap H^g)$, then either $w\in W(\Psi)\cap W(\Psi')$, or $w\notin W(\Psi)\cup W(\Psi')$. This is equivalent to the following condition:
We see that Claim \ref{claim} is equivalent to the following condition:
\begin{itemize}
    \item[($\star$)] Fix $\Psi$ as above. Then, for every $g\in W^+$, $W(\Psi)\cap \N_W(W(\Psi^g)) \leq W(\Psi^g)$.
\end{itemize}
It is easy to see that ($\star$) is equivalent to
\begin{itemize}
    \item[($\star\star$)] Fix $K\leq W^+$ and $H\leq W^+$ as above. Then, for every $g\in W^+$, $K\cap H^g\leq K^g$.
\end{itemize}
%just observe that, if $w=(z,y)\in \N_W(W(\Psi))\cap \N_W(W(\Psi^g))$, then first of all $y\in H\cap H^g$. Moreover, if $z=1$, then $w=(1,y)\in W(\Psi)$ if and only $y\in K$; while if $z=x$, then $w=(x,y)\in W(\Psi)$ if and only if $y\notin K$.  
Here the condition in $(\star\star)$ holds in general, in the following sense. Assume $q$ is even, and recall that $\Sp_{2m}(q)\cong \SO_{2m+1}(q)$ (see Remark \ref{rem: convention} for further details concerning this isomorphism). Denote by $V$ the $(2m+1)$-dimensional orthogonal module. Then $(\star\star)$ is a particular case of the following lemma.

\begin{lemma}
Assume $W$ and $W'$ are nondegenerate hyperplanes of $V$ (not necessarily of the same sign). Then $\Omega(W)\cap \SOT(W')\leq \Omega(W')$.
\end{lemma}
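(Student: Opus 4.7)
The plan is to use the Dickson invariant as a substitute for the determinant. Recall that for a nondegenerate quadratic space $U$ of even dimension over $\F_q$ with $q$ even, the Dickson invariant is the surjective homomorphism
\[
D_U\colon \SO(U)\to \F_2,\qquad g\mapsto \dim((g-1)U)\bmod 2,
\]
whose kernel is precisely $\Omega(U)$. The idea is to show that for any $g\in\SO(W)\cap\SO(W')$, one has $D_W(g)=D_{W'}(g)$, because both invariants compute $\dim((g-1)V)\bmod 2$ intrinsically.

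First I would collect two facts about $V$. Since $V$ has odd dimension and characteristic $2$, the polar form of the quadratic form on $V$ is degenerate with a one-dimensional radical $\langle e_0\rangle$, and a hyperplane of $V$ is nondegenerate if and only if it is a complement to $\langle e_0\rangle$. Hence both $W$ and $W'$ are complements to $\langle e_0\rangle$. Moreover every element of $\SO(V)$ fixes $e_0$ pointwise, since $\langle e_0\rangle$ is characteristic and the only square root of $1$ in characteristic $2$ is $1$.

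The main step is then a short calculation. Take $g\in\SO(W)\cap\SO(W')$. From $V=W\oplus\langle e_0\rangle=W'\oplus\langle e_0\rangle$ together with $(g-1)e_0=0$, I would deduce
\[
(g-1)V\;=\;(g-1)W\;=\;(g-1)W',
\]
all three subspaces having equal dimension. Consequently $D_W(g)=D_{W'}(g)$. In particular, if $g\in\Omega(W)$ then $D_W(g)=0$, so $D_{W'}(g)=0$, and hence $g\in\Omega(W')$, as required.

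The only nontrivial ingredient is the characterization of the Dickson invariant by $D_U(g)=\dim((g-1)U)\bmod 2$ in even characteristic; this is a standard fact, which I would simply cite (rather than reverify that it is a homomorphism with kernel $\Omega(U)$). The main conceptual point, and arguably the only obstacle, is recognizing that the rank of $g-1$ can be read off from the action on all of $V$ thanks to the pointwise fixing of the radical, which is precisely what makes the invariants on $W$ and on $W'$ agree.
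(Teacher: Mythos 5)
Your proof is correct and takes essentially the same route as the paper: the paper characterizes $\Omega(W)$ by the parity of $\dim \mathrm{C}_W(g)$ (which, since $\dim W=2m$ is even, is just the complementary formulation of your Dickson invariant $\dim((g-1)W)\bmod 2$) and likewise uses that $g$ acts trivially on $V^{\perp}$, so the invariant equals $\dim \mathrm{C}_V(g)-1$, independent of the chosen hyperplane. The only cosmetic difference is working with the rank of $g-1$ rather than the dimension of the fixed space, which carry the same parity information.
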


\begin{proof}
Recall that $\Omega(W)$ can be characterized as the subset of $\SO(W)$ consisting of the elements $g$ such that $\text{dim}\,\C_W(g)$ is even (cf. \cite[p. 77]{Wil}). We have $V=W\perp V^\perp$, and $g$ acts trivially on $V^\perp$, therefore $\text{dim}\,\C_W(g)=\text{dim}\,\C_V(g)-1$, which is independent of $W$. This proves the lemma.
\end{proof}

Claim \ref{claim} is proved and Theorem \ref{theorem_1.5_restatement} follows in this case.

%$W\cong C_2 \times \Sp_6(2)$. Assume $\Psi$ is a subset of $\Phi$ of type $E_6$ (they are all conjugate under $W$). Then $W(\Psi)$ embeds in $W$ as a subgroup of $\Sp_6(2)$ isomorphic to $\SO^-_6(2)$. By Lemma \ref{bijection_conjugacy_classes_W_Psi}, two $\sigma$-stable subgroups $K(\Psi)^x$ and $K(\Psi)^y$ are $E_7(q)$-conjugate if and only $x^\sigma x^{-1}$ and $y^\sigma y^{-1}$ map to conjugate elements in $W_\Psi=\N_W(\Psi)/W(\Psi)$. In our case $\N_W(\Psi) = C_2\times W(\Psi)$, hence $W_\Psi$ is cyclic of order $2$. In particular, whenever $w$ fixes some $\Psi$, we see that that $K(\Psi)^g_\sigma$ is of type $E_6$ or $^2\!E_6$ according to whether the image of $g^\sigma g^{-1}$ in $\N_W(\Psi)$ (i.e., $w$) belongs to $W(\Psi)$ or not, i.e, whether $w$ maps trivially onto the $C_2$ factor or not. But note that this condition is independent of the chosen $\Psi$. Therefore the claim is proved.

%\textbf{(v)}
\subsection{$F_4(q)$}
\label{subsection_F4}
For exceptional groups, we proved Theorem \ref{theorem_1.5_restatement} in all cases except for $F_4(q)$, which is handled in this subsection. 

We first fix some notation taken from \cite{Law}. Let $\mathbf R^4$ be equipped with the usual orthonormal basis $e_1, \ldots, e_4$. We may take $\Phi \subseteq \mathbf R^4$ with set of positive roots
\[
\Phi^+=\{e_i\pm e_j, 1\leq i<j\leq 4\}\cup \{e_i, 1\leq i \leq 4\}\cup \{(e_1\pm e_2\pm e_3\pm e_4)/2\}
\]
and base
\[
\Sigma=\{e_2-e_3, e_3-e_4, e_4, (e_1-e_2-e_3-e_4)/2\}.
\]
As in \cite{Law}, we will write $1$ in place of $e_1$, $1-2$ in place of $e_1-e_2$, $+---$ in place of $(e_1-e_2-e_3-e_4)/2$, etc. The corresponding reflections in the Weyl group will be denoted by $w_1, w_{1-2}, w_{+---}$, etc. In \cite{Law} the complete list of maximal tori of $F_4(q)$ is given. In particular, for each $(\delta, \delta')\in \{+,-\}^2$, there are two conjugacy classes of maximal tori of order $(q^3 + \delta 1)(q + \delta' 1)$; we let $T_{\delta, \delta'}^i$, $i=1,2$, be representatives for the two classes (so for instance $T_{+,-}^1$ is a representative of a class of tori of order $(q^3+1)(q-1)$). Assume $T_{\delta, \delta'}^i=(T_{w})_\sigma$ with $w=w_{\delta, \delta'}^i$. %(recall $T_w$ is any representative of the corresponding class).
With notation as in \cite[pp. 93--96]{Law}, we may choose
\begin{align*}
&w_{+,+}^1=w^{(13)}=w_3w_{2-3}w_{1-2}w_4 && w_{+,+}^2=w_{(20)}=w_1w_2w_4w_{+-+-}\\
&w_{+,-}^1=w^{(15)}=w_4w_{3-4}w_{2-3} && w_{+,-}^2=w_{(13)}=w_4w_{3-4}w_{+--+}\\
&w_{-,+}^1=w^{(14)}=w_1w_{3-4}w_{2-3} && w_{-,+}^2=w_{(15)}=w_{1-2}w_4w_{++--}\\
&w_{-,-}^1=w^{(12)}=w_{3-4}w_{2-3} && w_{-,-}^2=w_{(7)}=w_4w_{+---}
\end{align*}
%$w_{+,+}^1=w^{(13)}=w_3w_{2-3}w_{1-2}w_4$, $w_{+,+}^2=w_{(20)}=w_1w_2w_4w_{+-+-}$, $w_{+,-}^1=w^{(15)}=w_4w_{3-4}w_{2-3}$,
%$w_{+,-}^2=w_{(13)}=w_4w_{3-4}w_{+--+}$,
%$w_{-,+}^1=w^{(14)}=w_1w_{3-4}w_{2-3}$, $w_{-,+}^2=w_{(15)}=w_{1-2}w_4w_{++--}$, $w_{-,-}^1=w^{(12)}=w_{3-4}w_{2-3}$,
%$w_{-,-}^2=w_{(7)}=w_4w_{+---}$.
Here composition is right-to-left; this however makes no difference, because in a Weyl group every element is conjugate to its inverse, cf. \cite[Corollary p. 45]{Car}. We are now ready to begin the proof of Theorem \ref{theorem_1.5_restatement} in this case. We divide the cases $q$ even and $q$ odd.

\textbf{(a)} Assume $q$ is odd. By \cite[Table 6]{GM2} we have that $x_1$ is contained only in a subgroup $^3\!D_4(q).3$; and by \cite[Table 1]{GM} $x_2$ is contained only in a subgroup $2.\Omega_9(q)$. We need to show that $\ca T(x_1)\cap \ca T(x_2)=\emptyset$. %$2.\Omega_9(q)$ and $^3\!D_4(q).3$ do not contain a common maximal torus (up to conjugation).
By order inspection, the only possibilities for $T(x_1)\cap \ca T(x_2)$ are the eight tori $T_{\delta, \delta'}^i$. By our choice (see \cite[pp. 94--95]{Law}) the maximal tori of type $1$ (i.e., the tori $T_{\delta, \delta'}^1$) are contained in $2.\Omega_9(q)$. This subgroup is obtained as the fixed points of a connected reductive subgroup of $F_4$ of type $B_4$. In order to conclude the proof, we need to show that none of the tori $T_{\delta, \delta'}^1$ belongs to a conjugate of $^3\!D_4(q).3$. %The argument is the same for all tori: let us choose $(\delta, \delta')=(+,+)$.
Fix $(\delta,\delta')$, and fix $g\in X$ such that $T_{w_{\delta,\delta'}^1}=T^g$. There is a unique $p$-closed subset $\Psi$ of $\Phi$ of type $D_4$, namely the set of all long roots (the set of all short roots is only $2$-closed). Of course $\Psi$ is fixed by every element of $W$. Correspondingly, by Lemma \ref{reductive_overgroups_of_fixed_torus}, $T_{w_{\delta,\delta'}^1}$ has a unique connected reductive overgroup of type $D_4$, namely $\gen{T, U_\alpha, \alpha \in \Psi}^g$. The fixed points of such a subgroup is of type $D_4(q)$ or $^2\!D_4(q)$. Indeed, this is true for every maximal torus of $B_4$. It follows that $T_{\delta,\delta'}^1$ is contained in $D_4(q)$ or $^2\!D_4(q)$, but not in $^3\!D_4(q)$. This concludes the proof in case $q$ is odd.

\textbf{(b)} Assume $q$ is even. There is an automorphism $\gamma$ of $F_4(q)$ which induces a graph automorphism of order two on the Dynkin diagram, sending $2-3$ to $+---$ and $3-4$ to $4$. In this case, there are two conjugacy classes of maximal subgroups isomorphic to $\Om_9(q)$: we pick representatives $B^1_4$ and $B^2_4$ for them. Similarly, there are two conjugacy classes of maximal subgroups isomorphic to $^3\!D_4(q).3$, $\POm_8^+(q).S_3$, $P$, $Q$: we pick representatives $^3\!D^i_4$, $D^i_4$, $P^i$, $Q^i$, $i =1,2$, in the respective cases. Here $P^1$ (resp. $P^2$) denotes a parabolic subgroup of type $B_3$ (resp. $C_3$); $Q^1$ (resp. $Q^2$) denotes a parabolic subgroup of type $A_2\times \tilde{A_1}$ (resp. $\tilde{A_2}\times A_1$), where $\tilde{A_i}$ denotes a subset consisting of short roots. (The reason why there are two classes of reductive subgroups as above is that there are subsets of $\Phi$ of type $C_4$ and $D_4$ which are $2$-closed; see \cite[Proposition 3.15]{MT}.) We see that all the pairs of classes above are fused by $\gamma$, i.e., $(B^1_4)^\gamma = B^2_4$, and similarly for the others. There are also maximal subgroups $e.(\PSL_3^{\varepsilon}(q)\times \PSL_3^{\varepsilon}(q)).e.2$, with $\varepsilon \in \{+,-\}$ (one class for each sign; here $\PSL_3^+(q)=\PSL_3(q)$, $\PSL_3^-(q)=\PSU_3(q)$ and $e=(3,q-\varepsilon 1)$). We let $R_\varepsilon$ be representatives of these classes.

We observe that for every $(\delta, \delta')\in \{+,-\}^2$, $\gamma$ exchanges the classes of $T_{\delta, \delta'}^1$ and $T_{\delta, \delta'}^2$. This can be seen as follows. In \cite[Table 1]{Gut}, the action of $\gamma$ on $\Phi$ is computed (see also (2.15) of the same paper). In particular we have
\begin{align*}
&(w^1_{+,+})^\gamma = w_{3+4}w_{+---}w_2w_{3-4} \\
&(w^1_{+,-})^\gamma = w_{3-4}w_4w_{+---} \\
&(w^1_{-,+})^\gamma = w_{1+2}w_4w_{+---} \\
&(w^1_{-,-})^\gamma = w_4w_{+---}
\end{align*}
At this point one computes that in each case $(w^1_{\delta,\delta'})^\gamma$ is $W$-conjugate to $w^2_{\delta,\delta'}$. This is immediate if $(\delta, \delta')=(-,-)$. In general, it is sufficient to prove that $(w^1_{\delta,\delta'})^\gamma$ is not conjugate to $w^1_{\delta,\delta'}$. This can be done for instance by showing that $(w^1_{\delta,\delta'})^\gamma$ and $w^1_{\delta,\delta'}$ have different root lengths inside the $(-\delta' 1)$-eigenspace relative to the action on $\mathbf R^4$. It follows that $(T_{\delta, \delta'}^1)^\gamma$ is $F_4(q)$-conjugate to $T_{\delta, \delta'}^2$.

%Indeed, one can compute explicitly that $w_{\delta,\delta'}^1$ is sent by $\gamma$ to a conjugate of $w_{\delta,\delta'}^2$. Alternatively, one may argue as follows. Let us first observe that $R_+$ contains conjugates of $T_{-,+}^i$ and $T_{-,-}^i$ for every $i=1,2$; and similarly $R_-$ contains conjugates of $T_{+,+}^i$ and $T_{+,-}^i$ for every $i=1,2$. These embeddings can be seen explicitly. Now the subgroup $R_{\eps}$ arises as the fixed points of a reductive subgroup of type $A_2\times \tilde{A_2}$ (here $\tilde{A_2}$ denotes subset consisting of short roots). We have that $\gamma$ exchanges the two indecomposable parts, and so exchanges the two factors in the finite group $\PSL_3^{\eps}(q)\times \PSL_3^{\eps}(q)$. This indeed implies that the classes of $T_{\delta, \delta'}^1$ and $T_{\delta, \delta'}^2$ are exchanged.\daniele{find tidier argument}.

Now note that for every $(\delta, \delta')$, each subgroup $B^i_4$, $^3\!D^i_4$, $D^i_4$, $P^i$ and $Q^i$ contains members of at most one class of tori of type $T_{\delta,\delta'}$. %for parabolics --- see precise structure in craven --- you show algebraically that the levi complements contain at most one conjugacy class of subgroups of order $(q\pm 1)(q\pm 1)$.
By the same argument as in item (a), we may choose notation such that for every $(\delta, \delta')$, $T_{\delta, \delta'}^1$ belongs to a conjugate of $B^1_4$, $^3\!D^1_4$ and, possibly, $D^1_4$ (but not $D^2_4$). %since the type $1$ tori belong all to a unique class $B_4(q)$, and the type $2$ tori belong all to a unique class $^3D_4(q)$ --- consequently the same is true for the $1$ tori. moreover the tori of $B_4(q)$ will belong possibly to a unique class $D_4(q)$ inside.
We now want to show that the maximal tori $T_{\delta, \delta'}^1$ can possibly belong only to conjugates of $P^1$ and $Q^1$, but not to conjugates of $P^2$ and $Q^2$. If we prove this, it will automatically follow that the tori $T^2_{\delta, \delta'}$ can belong only to type $2$ subgroups. Note that by the previous considerations, if $T_{\delta, \delta'}^1$ belongs to a conjugate of $P^1$ (resp. $Q^1$), then it does not belong to a conjugate of $P^2$ (resp. $Q^2$). %if $T^1$ belongs to $P^1$ and $P^2=P^1^\gamma$, then $(T^1)^{\gamma^{-1}}$ belongs to $P^1$ --- but note that this torus is conjugate to $T^2$. hence a parabolic would contain both classes of tori, false.

By order considerations, $P^1$ and $P^2$ can contain the tori $T_{-,-}^i$ and $T_{+,-}^i$ for $i=1,2$. Moreover $Q^1$ and $Q^2$ can contain the tori $T_{-,+}^i$ and $T_{-,-}^i$ for $i=1,2$. No other embedding of the tori $T_{\delta, \delta'}^i$ in parabolic subgroups occurs. We see that $w_{-,-}^1=w^{(12)}$ and $w_{+,-}^1=w^{(15)}$ belong to the Weyl subgroup of type $B_3$ corresponding to removing $+---$ from the base $\Sigma$. Therefore %by Lemmas \ref{bijection_conjugacy_classes_W_Psi} and \ref{reductive_overgroups_of_fixed_torus}
we deduce that $T_{-,-}^1$ and $T_{+,-}^1$ belong to Levi complements of $F_4(q)$-conjugates of $P^1$.
%you can use Lemmas \ref{bijection_conjugacy_classes_W_Psi} and \ref{reductive_overgroups_of_fixed_torus} to see that this happens. note that, since $w$ belongs to $W(\Psi)$ in every case, by lemma \ref{bijection_conjugacy_classes_W_Psi} you see that taking fixed points you do get a conjugate of $\gen{T, U_\alpha, \alpha \in \Psi}_\sigma$, which is indeed the Levi complement.
Therefore the case $P$ is done. We move to case $Q$, which is similar. We have $w_{-,+}^1=w^{(14)}$. We see that both $w^{(12)}$ and $w^{(14)}$ belong to Weyl subgroups corresponding to a subset of type $A_2 \times \tilde{A_1}$. Indeed $w^{(12)}$ lies in the natural one corresponding to removing $4$ from the base $\Sigma$; and $w^{(14)}$ lies in the subset in which a base of $A_2$ is $\{2-3,3-4\}$ and a base of $\tilde{A_1}$ is $\{1\}$.
%formally, the subsets of the positive roots you are considering is $\{1,2-3,3-4,2-4\}$. in guterman's paper, at p.5, it gives you all commutator relations; in particular you see that U_1 lies in the centre of the unipotent radical, and [U_{2-3},U_{3-4}]=U_{2-4}. so you really have something like $\tilde{A_1}\times A_2$...
Therefore, we obtain that $T_{-,-}^1$ and $T_{-,+}^1$ belong to Levi complements of $F_4(q)$-conjugates of $Q^1$.

With all the information we have gathered, it is not difficult to complete the proof of Theorem \ref{theorem_1.5_restatement} in this case. With this aim, we choose a generator $x_{+,-}^i$ of the cyclic torus $T_{+,-}^i$, $i=1,2$, and a generator $x_{-,+}^i$ of the cyclic torus $T_{-,+}^i$, $i=1,2$. Moreover we choose elements $x_1$ and $x_2$ as in item (a); in particular $x_1$ belongs only to $^3\!D^1_4$ and $^3\!D^2_4$, and $x_2$ belongs only to $B^1_4$ and $B^2_4$. Our set of elements is therefore
\[
A_b=\{x_1, x_2, x_{+,-}^1, x_{+,-}^2, x_{-,+}^1, x_{-,+}^2\}.
\]
We now want to show that there is not a maximal torus of $F_4(q)$ belonging to an overgroup of all these elements. By order considerations, if a torus belongs to $\mathcal T(x_1)$ and $\mathcal T(x_2)$, then it must be one of the eight tori $T_{\delta,\delta'}^i$. Assume $i=1$: the argument is entirely symmetric and the case $i=2$ is handled in the same way. By our choice of notation, $x_{+,-}^2$ and $x_{-,+}^2$ belong to type $2$ subgroups. However %$\gen{x_{\delta, \delta'}^1}=T_{\delta,\delta'}^1$
$T_{\delta,\delta'}^1$ does not belong to any of these. The only other overgroups of maximal rank of $x_{+,-}^2$ and $x_{-,+}^2$ are, respectively, $R_-$ and $R_+$. However, our torus $T_{\delta,\delta'}^1$ belongs to exactly one of these (depending on the value of $\delta$). Therefore we have shown that the overgroups of our six elements cannot contain a common maximal torus, and the proof of Theorem \ref{theorem_1.5_restatement} (and therefore Theorem \ref{main_theorem_absolute_constant_2}(1)) for exceptional groups is concluded.

\section{Classical groups of bounded rank}

As promised at the beginning of Section \ref{section_bounded rank}, we immediately deal with $G\cong \PSL_2(q)$.

\begin{proof}[Proof of Theorem \ref{main_psl_2}]
Let $G=\PSL_2(q)$. The subgroup structure of $G$ is well known, cf. \cite[Chapter 3.6]{Suz}. Let $d=(2,q-1)$. Let $S_{\pm}$ be the set of elements of $G$ with order strictly larger than $5$ and dividing $(q\pm 1)/d$, and let $S=S_+\cup S_-$. We have $|S_\pm|/|G|=1/2+O(1/q)$. Let $F$ be the set of elements of $G$ lying in subfield subgroups; we have $|F|/|G|=O(1/q^{1/2})$ (we observed this in Remark \ref{remark_value_a}).

Let $A_b=\{x_1, x_2\}$, where $x_1$ has order $(q-1)/d$ and $x_2$ has order $(q+1)/d$. Every element of $S_+$ (resp. $S_-$) invariably generates with $x_1$ (resp. $x_2$). This proves Theorem \ref{main_theorem_absolute_constant_2}. Every element of $S_+ \setminus F$ (resp. $S_-\setminus F$) invariably generates with every element of $S_-$ (resp. $S_+$). This proves Theorem \ref{main_theorem_strengthening_bounded_rank}. Now consider $B=(S_+ \times S_-)\cup (S_- \times S_+)\subseteq G^2$; we have $|B|/|G|^2=1/2+O(1/q)$. Moreover $B\setminus F^2$ consists of invariable generating pairs, which proves Theorem \ref{main_theorem_two_random_elements_bounded_rank}.
\end{proof}
\begin{remark}
If $q$ is a square, every semisimple element of $\text{PGL}_2(q^{1/2})$ of order strictly larger than $5$ belongs to $S_-$. Hence, by Theorem~\ref{almost_all_connected_components_maximal_rank}, almost all elements of $\text{PGL}_2(q^{1/2})$ are contained in $S_-$. Therefore, in the above proof we only need to consider elements belonging to $\PSL_2(q^{1/r})$ with $r$ odd, and in Theorem \ref{main_theorem_strengthening_bounded_rank} we could get an error term of type $O(1/q^{2/3})$; but we do not insist on this.
\end{remark}

In the remainder of this section, we will prove Theorem \ref{theorem_1.5_restatement} for classical groups. Together with Section \ref{subsection_exceptional_groups}, this will conclude the proof of Theorem \ref{theorem_1.5_restatement}, which in turn implies Theorem \ref{main_theorem_absolute_constant_2}(1).

We first make our choice for the type of $X$. Let $k$ be an algebraic closure of a finite field of characteristic $p$ and let $X$ be one of the simple algebraic groups $\SL_n(k)$ where $n\ge 2$, $\Sp_n(k)$ where $n\ge 4$, or $\SO_n(k)$ where $n\ge 7$. We require that if $X=\SO_n(k)$ and $p=2$ then $n$ is even.

Denote by $V= k^n$ the natural module of $X$. Here $\Sp_n(k)$ is the group of isometries of a nondegenerate bilinear alternating form on $V$ ($n$ is even), while $\SO_n(k)$ is the connected component of the isometry group $\GO_n(k)$ of a quadratic form on $V$, with associated nondegenerate bilinear form. %(For $p$ odd, $\SO_n(k)=\GO_n(k)\cap \SL_n(k)$, while for $p=2$, $\SO_n(k)$ is a subgroup of index $2$ of $\GO_n(k)=\GO_n(k)\cap \SL_n(k)$.)
We have $|\GO_n(k):\SO_n(k)|=2$. %this is not true if $n$ is odd and $p=2$, but we excluded this case.
These groups are well defined up to conjugation in $\GL_n(k)$ since all such forms are equivalent (for all this, see for instance \cite[Section 1.2 and Definition 1.15]{MT}).

Let $\sigma : X\rightarrow X$ be a Steinberg morphism, as in \cite[p. 434]{LSe}, such that $X_{\sigma}$ is one of the following finite groups:
\[
X_{\sigma}=\SL_n(q), \SU_n(q), \Sp_n(q) \, (n \, \text{even}), \SO^{\varepsilon}_{n}(q)\, (q \, \text{odd}), \Omega^{\pm}_n(q) \, (q, n \, \text{even})
\]
where $\varepsilon=\pm$ for $n$ even and $\varepsilon=\circ$ for $n$ odd, where by convention $\SO^\circ_n(q) = \SO_n(q)$. Specifically, $\sigma=\thickbar{\sigma}\tau$, where $\thickbar \sigma$ is a Frobenius morphism corresponding to the field automorphism $\alpha \mapsto \alpha^q$ of $k$ (for $q$ a power of $p$), and $\tau=1$, or $X=\SO_n(k)$ and $\tau$ is conjugation by a reflection in a nonsingular vector, or $X=\SL_n(k)$ and $\tau$ is the inverse-transpose map (all this with respect to certain fixed bases).% We specify that we are also contemplating the case $\SO_n(q)$ with $n$ odd (in this case, $q$ is odd). %, otherwise $\SO_n(q)\cong \Sp_{n-1}(q)$). %We will define a subset $A$ of $(X_\sigma)'$ and show that $\P^*_{\text{inv}}(X_\sigma,A)$ tends to $1$.

Except for $\SO^{\varepsilon}_n(q)$, which has a derived subgroup $\Omega^{\varepsilon}_n(q)$ of index $2$, and except for finitely many other cases, the group $X_\sigma$ is perfect. %(by Theorem \ref{main_theorem_isogeny_types}, we just need to know this in order to define a subset $A_b$ of $(X_\sigma)'$).
%Among our possibilities we have $X_\sigma=\SO^{\pm}_n(q)$, which has a derived subgroup $\Omega^{\pm}_n(q)$ of index $2$. In all other cases $X_\sigma$ is perfect (and quasisimple). However, this consideration is not relevant in view of Lemma \ref{isogeny_types} (except for the fact that we must define $A\subseteq (X_\sigma)')$. 
See \cite[Chapter 2]{KL} for the definition of $\Omega^{\varepsilon}_n(q)$.

\label{subsection_classical_groups}
\subsection{Subgroups of maximal rank in classical groups}
 %We will prove the statement for quasisimple groups $G$, rather than from their simple quotients $G/\Z(G)$. This makes no difference, since every maximal subgroup of $G$ is contained in $\Z(G)$, hence every property concerning generation is the same for $G$ and $G/\Z(G)$.
%One complication is that we will not be in position to apply Lemma \ref{large_maximal_tori}. Indeed, in classical groups subgroups of maximal rank often contain several maximal tori. On the other hand, we can ``see'' very well the classical groups through their action on the natural module: and this is a big advantage.\daniele{move this comment later}% This is an advantage, and will enable us to prove that the constant in Theorem \ref{main_theorem_bounded_away_zero} is best possible in each family --- which we did not do for exceptional groups.
We need to understand the subgroups of maximal rank in $X_\sigma$. %It is known that they are just stabilizers of decompositions and extension field subgroups, with one exception in symplectic groups in even characteristic. We extract a proof of this fact from \cite{LSe}. We begin with algebraic groups and then descend to finite groups.
The proof of the following theorem is largely taken from \cite{LSe}, with additional claims from \cite[Section 13]{MT}. We prefer to give a proof, since we have not found it in the literature. % and, moreover, the result does not rely on the most difficult parts of \cite{LSe}.

\begin{theorem}
\label{subgroups_maximal_rank_algebraic}
Let $X$ and $\sigma$ be as above. Let $M$ be a $\sigma$-stable closed subgroup of $X$ of maximal rank. Then, $M$ is contained in a $\sigma$-stable subgroup of the following types.
\begin{enumerate}
    \item[(i)] Stabilizer in $X$ of a nonzero proper subspace of $V$. If $X=\SpT_n(k)$ or $\SOT_n(k)$, the space is totally singular or nondegenerate. If it is nondegenerate, it can be chosen of even dimension.
    \item[(ii)] In case $X=\SLT_n(k)$, stabilizer in $X$ of a pair of proper subspaces $U$ and $W$ such that $\dimT U + \dimT W = \dimT V$, $\dimT U \neq \dimT W$ and either $U\leq W$ or $U\cap W =0$.
    \item[(iii)] Stabilizer in $X$ of a decomposition $V=V_1\oplus \cdots \oplus V_t$ with $t\geq 2$. If $X=\SpT_n(k)$ or $\SOT_n(k)$, the spaces $V_i$ are isometric, and either pairwise orthogonal and nondegenerate of even dimension, or $t=2$ and the spaces are totally singular.
    \item[(iv)] $p=2$, $X=\SpT_n(k)$ and $M\leq \emph N_X(\SOT_n(k))=\GO_n(k)$. %note that $H=\SOT_n(k)$ has index $2$ in its normalizer; for this you use for instance that $N(H)/H\cong \N_W(W(\Psi)/W(\Psi)=C_2$ (for this isomorphism see for instance the paper by Liebeck-Saxl-Seitz --- or subsection 3.4 in this paper. Here $\Psi$ is the set of all short roots inside the $C_n$ root system; this is $2$-closed. $W(\Psi)=W(D_n)$ has index $2$ in $W(C_n)$. Now we can immediately see what this normalizer is --- it is precisely $\GO_n(k)$..
\end{enumerate}
\end{theorem}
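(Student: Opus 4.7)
The plan is to split the argument according to the Borel--Tits theorem (\cite[Corollaire 3.9]{BT}): either $M$ is contained in a $\sigma$-stable parabolic subgroup $P$, or $M^\circ$ is reductive. In the first case, $P$ is the stabilizer of a flag of totally singular subspaces when $X$ is symplectic or orthogonal, and of an arbitrary flag when $X=\SL_n(k)$; taking the stabilizer in $X$ of any nonzero proper term of the flag lands in (1), except that in $\SL_n(k)$ with a non-trivial $\tau$ action the flag $\{U_i\}$ need not be $\sigma$-stable on its own, and one must replace it by the pair $\{U, U^\perp\}$ where $U$ is a $\thickbar\sigma$-stable member of minimal dimension; since $\tau$ acts as inverse-transpose this pair is $\sigma$-stable and of the shape described in (2) (the case $U\leq W$ or $U\cap W=0$ being dictated by whether $\dim U+\dim W^\tau$ reconstructs $V$ in the direct-sum or flag pattern).

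When $M^\circ$ is reductive, by \cite[Theorem 13.15]{MT} we can write $M^\circ = T\cdot K$ with $T$ a maximal torus and $K$ generated by root subgroups $U_\alpha$ for $\alpha$ in a closed subsystem $\Psi\subseteq \Phi$, so that $M^\circ$ is a subsystem subgroup. I will walk through the possibilities for $\Psi$ using the Borel--de Siebenthal classification recorded in \cite[Section 13.2]{MT} (the subsystems obtained by removing a node from the extended Dynkin diagram). In each case the resulting subgroup acts on $V$ as an explicitly known subsystem: for $X=\SL_n(k)$ the maximal possibilities are $\SL_a\times \SL_b$ (with $a+b=n$) and wreath-type $\SL_a \wr S_t$; for $\Sp_n(k)$ they are $\Sp_a\times \Sp_b$, $\GL_{n/2}$, and $\Sp_{n/t}\wr S_t$; and analogously for $\SO_n(k)$. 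In every instance the subgroup either stabilizes a proper nondegenerate (even-dimensional) subspace, or preserves an orthogonal decomposition with isometric summands, or (for $\SL_n$) stabilizes a direct-sum decomposition $V=U\oplus W$, or is the stabilizer of a pair of maximal totally singular subspaces intersecting trivially. Passing to $M\leq N_X(M^\circ)$ keeps us inside the stabilizer of the associated geometric object; closing off the orbit of $\sigma$ on this object may force us to take a pair of subspaces, yielding case (2), or to pass from $U$ to $\{U,U^\perp\}$ in the decomposition, yielding case (3).

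The only remaining anomaly is the exceptional isogeny $\Sp_{2m}(k)\cong \SO_{2m+1}(k)$ in characteristic $2$: when $X=\Sp_n(k)$ with $p=2$, the root subsystem of type $D_{n/2}$ (or $B_{n/2}$, identified via the graph isogeny) is $2$-closed but not closed, and produces a connected maximal-rank subgroup $\SO_n(k)<\Sp_n(k)$ which does not appear as the stabilizer of any subspace or decomposition; this is exactly case (4), and the normalizer is $\GO_n(k)$ as stated.

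The main obstacle will be verifying, case by case through the Borel--de Siebenthal list, that every reductive subsystem subgroup falls within one of the four geometric types, especially when $\sigma$ involves a graph automorphism: one has to check that the $\sigma$-stable geometric object (subspace, pair, or decomposition) can always be chosen to be the full object fixed by $M$, and not merely a coarser substructure. The characteristic $2$ orthogonal/symplectic bookkeeping (even-dimensionality of nondegenerate subspaces, and exclusion of nonsingular-vector stabilizers which fail to have maximal rank) also deserves care but follows from the standard dimension count of a maximal torus in $\SO_n$ versus $\Sp_n$.
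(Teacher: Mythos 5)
Your plan is viable in outline (it is close in spirit to the Liebeck--Seitz analysis), but as written it has a genuine gap: the entire reductive branch is deferred rather than proved. Saying that $M^\circ$ is a subsystem subgroup and that one can "walk through" the Borel--de Siebenthal list is where the real work starts, not where it ends. Two specific problems arise there. First, a non-maximal $\sigma$-stable subsystem subgroup is contained in some maximal one, but not in an obviously \emph{$\sigma$-stable} maximal one; producing a $\sigma$-stable overgroup of type (1)--(3) when $\sigma$ is twisted (unitary $\SL_n$, $\sigma$ involving a reflection for $\SO_n$, the distinction between the two classes of totally singular decompositions, the isometry and even-dimensionality conditions in (3)) is exactly the content of the theorem, and "closing off the orbit of $\sigma$ on the geometric object" is not an argument. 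Second, your parabolic branch in the twisted $\SL_n$ case is not quite right as stated: a $\sigma$-stable flag can pair a subspace with a partner of the \emph{same} dimension $n/2$, and such pairs do not satisfy the condition $\dim U\neq\dim W$ of case (2); they must instead be routed to case (1) (if the two subspaces coincide) or to case (3) with $t=2$ (if they are complementary), so the trichotomy (1)/(2)/(3) needs to be re-derived, not asserted.

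For comparison, the paper avoids the enumeration altogether and never invokes Borel--Tits in this proof. Its dichotomy is whether $M$ fixes a proper nonzero $\sigma$-stable subspace. If it does, a minimality argument with $W\cap W^\perp$ and the set of singular vectors gives case (1) directly (with the char-$2$ nonsingular $1$-space and odd-dimensional nondegenerate cases eliminated by a rank count). If it does not, the proof of \cite[Lemma 3.2]{LSe} gives either case (3) or that $M$ and $M^\circ$ act homogeneously; and here is the key trick you are missing: since $M^\circ$ contains a maximal torus, whose restriction to $V$ is multiplicity-free, homogeneous forces \emph{irreducible}. Then $M^\circ$ is reductive, Schur's lemma gives $\Z(M^\circ)\leq \Z(X)$, so $M^\circ$ is a semisimple subsystem subgroup, and \cite[Theorems 13.12, 13.14, 13.15]{MT} show the only irreducible possibility is $\SO_n(k)<\Sp_n(k)$ in characteristic $2$, i.e.\ case (4). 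The twisted $\SL_n$ case is handled by the embedding $g\mapsto\mathrm{diag}(g,g^{-T})$ into $\SO_{2n}(k)$ and \cite[Lemma 3.7]{LSe}, which is where case (2) genuinely originates. If you want to salvage your route, you would have to carry out the case-by-case $\sigma$-stability check you flagged as the "main obstacle"; the paper's reduction to the irreducible case is precisely what makes that check unnecessary.
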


\begin{proof}
We assume for the first part of the proof that we are not in case $X=\SL_n(k)$ with $\sigma$ involving the inverse-transpose map. In particular $\sigma$ can be regarded as a semilinear map of $V$. %We let $M$ be a $\sigma$-stable closed subgroup of $X$ of maximal rank, and we show that $M$ is contained in a $\sigma$-stable subgroup as in items (i)--(iv). This will prove the statement.

Assume first $M$ fixes a proper nonzero $\sigma$-stable subspace $W$, and choose it to be of minimal dimension. If there is a form, then $M$ fixes also $W^{\perp}$, which is $\sigma$-stable (cf. \cite[Proposition 2.5]{LSe}). Then $M$ fixes $W\cap W^{\perp}$, which is $\sigma$-stable, hence by minimality $W$ is either nondegenerate or totally isotropic. If the space is nondegenerate of odd dimension $\ell$, the only possibility is $X=\SO_n(k)$ and $M^{\circ}\leq \SO_\ell(k)\times \SO_{n-\ell}(k)$. If $n$ is odd, then $M$ stabilizes also a nondegenerate space of even dimension $n-\ell$. If $n$ is even, instead, the stabilizer of $W$ has rank $n/2-1$, hence it is not of maximal rank. Assume now $W$ is totally isotropic. Then $M$ fixes also the set of singular vectors of $W$, which is $\sigma$-stable, hence by minimality either $W$ is totally singular, or $X=\SO_n(k)$, $p=2$, $n$ is even and $W$ is a nonsingular $1$-space. In the latter case, however, the stabilizer is isomorphic to $\Sp_{n-2}(k)$, %cf. Malle-Test, proof of Proposition 12.13 
which is not of maximal rank. In particular, if $M$ fixes a proper nonzero $\sigma$-stable subspace, we are in case (i) of the statement.

Then we assume that $M$ does not fix any proper nonzero $\sigma$-stable subspace of $V$. Let $H:=M^{\circ}$ be the connected component of $M$. %It is easy to see that $\sigma$ permutes the $M$-stable subspaces of $V$. It follows that $V$ is completely reducible as $kM$-module.
The proof of \cite[Lemma 3.2]{LSe} shows that either both $M$ and $H$ act homogeneously, or we are in case (iii) of the statement. The parity requirement in (iii) comes from the following reason: if $X=\SO_n(k)$ and the decomposition $V=V_1\perp \cdots \perp V_t$ is isometric, then the connected component of the stabilizer of the decomposition is $\SO(V_1)\times \cdots \times \SO(V_t)$, which has maximal rank only if $V_i$ has even dimension.

Therefore assume $M$ and $H$ act homogeneously. %Let $H:=M^{\circ}$ be the connected component of $M$. By Clifford's theorem, $H$ acts completely reducibly.
By assumption $M$, hence $H$, contains a maximal torus $S$. Using the explicit description of maximal tori in classical groups, we note that as a $kS$-module $V$ is the sum of $1$-dimensional pairwise nonisomorphic modules. In particular, it follows that both $M$ and $H$ act irreducibly.

Since $H$ is connected and it acts faithfully and irreducibly on $V$, it follows that $H$ is reductive (cf. \cite[Proposition 15.1]{MT}). Then $H=[H,H]\Z(H)^{\circ}$. %cf. Malle-Test corollary 8.22
It follows by Schur's lemma that $\Z(H) \leq \Z(X)$, which is a finite group, hence $\Z(H)^{\circ} \leq \Z(X)^{\circ}=1$. In particular, $H=[H,H]$ is semisimple. %malle-test prop. 6.20
In \cite[Chapter 13]{MT}, $H$ is called a subsystem subgroup of $X$. One checks easily that the examples in \cite[Theorem 13.12]{MT} give (well recognizable) reducible subgroups. %it is worth explaining what happens especially in one case. Consider $SO_4(k)$. If you see in Liebeck-Seitz, p. 429, and also prop 2.2, you have an embedding $Sp_2(k)\otimes Sp_2(k) \leq SO_4(k)$. This is certainly of maximal rank and acts irreducibly --- however I think they are just the same group! I.e. you do not get a proper subgroup. Consider now $Sp_4(k)$. Again by the same references you see that you have an embedding $SO_2(k)\otimes Sp_2(k)\leq Sp_4(q)$. Again this acts irreducibly. If char(k)=2 then by LS prop 2.2 (iv) you see that this is in fact preserves a quadratic form --- hence it is inside $SO_4(k)$ and you get the embedding at item (iv). And if char(k) is odd? This group $SO_2(k)\otimes Sp_2(k)$ acts imprimitively (with two blocks of dimension two), since $SO_2(k)$ does --- this you see by the definition of the group. Indeed for char k odd the connected component of $GO_2(k)$ --- that we call $SO_2(k)$ ---is the torus, and NOT the matrices with determinant $1$!! But the torus acts reducibly. Therefore the connected component acts reducibly: brilliant.
In \cite[Proposition 13.15]{MT}, item (1) corresponds to item (iv) in this theorem, and %item (2) is a reducible example.
item (2) does not arise by assumption (if $X=\SO_n(k)$ and $p=2$ then $n$ is even).
By \cite[Theorem 13.14]{MT}, there are no other possibilities for $H$. This concludes the proof, except when $X=\SL_n(k)$ and $\sigma$ involves the inverse-transpose map (the previous proof works also if $\sigma$ is the identity).

Let us consider the remaining case. As in the proof of \cite[Lemma 3.7]{LSe}, we view $\SL_n(k)$ as a subgroup of $\SO_{2n}(k)$: we may decompose the orthogonal module as $E\oplus F$, in such a way that the embedding of $\SL_n(k)$ is given by $g \mapsto \text{diag}(g,g^{-T})$. %Note that the connected component $H$ of $M$ is a normal $\sigma$-stable subgroup of $M$ (it is stable under $\gamma$ by the definition of $\gamma$ in this interpretation of $\SL_n(F)$; and it is table under $\tau$ because $\tau$ is a map of the algebraic group $\SL_n(F)$, hence of the algebraic group $M$).
It follows from the proof of \cite[Lemma 3.7]{LSe} that either we are in cases (i), (ii) or (iii) of the statement, or $M$ and $M^\circ$ act homogeneously on $E$. Then we may proceed exactly as in the first part of the proof (once $M^\circ$ was shown to be homogeneous, the morphism $\sigma$ was not used anymore). %you only use that the representation of $\SL_n(k)$ on $E$ is faithful and rational, and that a maximal torus acts on $E$ with nonisomorphic $1$-dimenisional modules.
\end{proof}

\subsection{Descending to finite groups}
%We can now descend to finite groups. %As already observed, the finite groups we obtain are $X_{\sigma}=\SL_n(q)$, $\SU_n(q)$, $\Sp_n(q)$, $\SO^{\pm}_n(q)$ for $q$ odd and $\Omega^{\pm}_n(q)$ for $q$ even.
Aschbacher \cite{Asc} described the core-free maximal subgroups of the finite almost simple classical groups, dividing them into nine classes, denoted by $\ca C_1, \ldots, \ca C_8, \ca S$. We refer the reader to \cite{KL} for a detailed description of the first eight classes. (Although in the present paper this does not make any difference, we remark that classes in \cite{Asc} and classes in \cite{KL} differ slightly; we take \cite{KL} as a reference, modulo the change explained in Convention \ref{convention_aschbacher}, below.)

Consider the following groups:
\begin{equation}
\label{eq:groups}
\GL_n(q), \GU_n(q), \Sp_n(q) \, (n \, \text{even}), \SO^{\varepsilon}_{n}(q)\, (q \, \text{odd}), \Omega^{\pm}_n(q) \, (q, n \, \text{even})
\end{equation}
%(We remark that $\text{Cl}_n(q)$ is an unusual notation; however it will be used only in this subsection and it should not cause any confusion). 
with $\varepsilon \in \{+,-,\circ\}$. Let $H$ be a group as in \eqref{eq:groups}. %Up to finitely many pairs $(n,q)$, $H'/\Z(H')$ is a nonabelian simple group.
We will refer to a group $G$ with $H'\leq G \leq H$ as a \textit{finite classical group}.

Let $G$ be a finite classical group with $G'$ quasisimple. Note that if $M$ is a maximal subgroup of $G$ not containing $G'$, then $\Z(G) \leq M$, and in particular $M$ corresponds to a core-free maximal subgroup of the almost simple group $G/\Z(G)$, so Aschbacher's description applies to $M$.

For a finite classical group $G$ with $G'$ quasisimple, set $u=2$ if $G'=\SU_n(q)$ and $u=1$ otherwise. We give here a rough description of the maximal subgroups of $G$ of classes $\ca C_1, \ca C_2, \ca C_3$. Maximal subgroups from class $\ca C_1$ are subspace stabilizers; maximal subgroups from class $\ca C_2$ stabilize suitable direct sum decompositions of the natural module, so they are subgroups of $\GL_\ell(q^u)\wr S_{n/\ell} < \GL_n(q^u)$ for some $\ell< n$; and maximal subgroups from class $\ca C_3$ preserve an extension field structure on the natural module, so they are subgroups of $\GL_{n/b}(q^{ub})\rtimes \text{Gal}(\F_{q^{ub}}/\F_{q^{u}}) < \GL_n(q^u)$ for some prime $b$.

It is convenient for us to adopt the following convention throughout.

\begin{convention}
\label{convention_aschbacher}
When $q$ and $n$ are even, we consider the maximal subgroups $\SO_n^\pm(q) < \Sp_n(q)$ as members of Aschbacher's class $\ca C_1$.
\end{convention}

\begin{remark}\label{rem: convention}
Convention \ref{convention_aschbacher} is justified by the following discussion. When $q$ is even, $\Sp_{2m}(q) \cong \SO_{2m+1}(q)=\mathrm O_{2m+1}(q)$, the group of isometries of a nonsingular quadratic form $Q$ on a $(2m+1)$-dimensional space $V$. (Here, by nonsingular we mean that there are no nonzero vectors $v$ of $V^{\perp}$ such that $Q(v)=0$.) Under these assumptions, it turns out that $V^{\perp}$ is a $1$-dimensional subspace of $V$. Since $q$ is even and $Q$ does not vanish on $V^\perp$, it follows that $\SO_{2m+1}(q)$ centralizes $V^\perp$. Through this identification, the subgroups $\SO^{\pm}_{2m}(q)$ correspond to stabilizers of nondegenerate hyperplanes of $V$ (i.e., complements of $V^{\perp}$) of plus or minus type.
\end{remark}

In the following statement, $\text{Cl}_n(q)$ denotes a group as in \eqref{eq:groups} (this is unusual notation; however it will not be used elsewhere and it should not cause any confusion). When we write in brackets ``class $\ca C_i$'', we mean that the subgroup $M_\sigma$ under consideration is contained in a maximal subgroup of $X_\sigma$ of class $\ca C_i$.

\begin{theorem}
\label{classes_c1_c2_c3}
Let $X$, $\sigma$ and $M$ be as in Theorem \ref{subgroups_maximal_rank_algebraic}. Then, $M_{\sigma}$ is contained in a maximal subgroup of $X_{\sigma}$ from classes $\ca C_1$, $\ca C_2$ or $\ca C_3$. %except when $p=2$, $X_{\sigma}=\SpT_n(q)$ and $M_{\sigma}\leq \SOT^{\pm}_n(q)=\N_{X_\sigma}(\Omega^\pm_n(q))$. %note that $\N(SO_n(k))_\sigma=SO^\pm_n(q)$, and recall $(SO_n(k))_\sigma=\Omega^\pm_n(q)$. you can see this also using Lemma 1.2 in Liebeck-Saxl-Seitz

Now assume that $M$ is contained in the stabilizer of a decomposition $V=V_1\oplus \cdots \oplus V_t$ as in Theorem \ref{subgroups_maximal_rank_algebraic}(iii).
\begin{enumerate}
    \item If $X_{\sigma}=\SLT_n(q)$ or the decomposition is nondegenerate, then either $M^{\circ}_{\sigma}\leq \emph{Cl}_\ell(q)^t \cap X_{\sigma}$ (class $\ca C_2$), or $M^{\circ}_{\sigma}\leq \emph{Cl}_{n/b}(q^b)\cap X_{\sigma}$ (class $\ca C_3$), or $X_{\sigma}=\SUT_n(q)$ with $n$ even and $M^{\circ}_{\sigma}\leq \GLT_{n/2}(q^2)\cap X_\sigma$ (class $\ca C_2$). In case $X_{\sigma}$ is orthogonal, $\ell$ and $n/b$ must be even.
    \item If the decomposition is totally singular, then $X=\SpT_n(k)$ or $\SOT_n(k)$, $n$ is even, and either $M^{\circ}_{\sigma}\leq \GLT_{n/2}(q)$ (class $\ca C_2$), or $M^{\circ}_{\sigma}\leq \GUT_{n/2}(q)$ (class $\ca C_3$). %I make a remark. Using Kleid-Liebeck  lemma 4.1.9 --- see also description 4 p. 30 ---  I see that $GL_n(q)$ is contained in $SO_n(q)$ for $q$ odd, and in $\Omega_n(q)$ for $q$ even. However I don't immediately see why the whole $\GU_{n/2}(q)$ should be contained in $SO_n(q)$ and $\Omega_n(q)$. However this does follow from the discussion in Liebeck-Seitz, beginning of p. 449 --- since clearly $M=GL_{n/2}(k)$ is contained in $\SO_n(q)$, you'll have $M_\sigma$ inside $X_\sigma$...
\end{enumerate}

\end{theorem}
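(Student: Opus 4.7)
The plan is to apply Theorem \ref{subgroups_maximal_rank_algebraic} to enclose $M$ in a $\sigma$-stable subgroup $H$ of $X$ of one of the four enumerated types, and in each case identify explicitly a maximal subgroup of $X_\sigma$ from class $\ca C_1$, $\ca C_2$ or $\ca C_3$ containing $H_\sigma$. The refined description of $M^\circ_\sigma$ in parts (1)--(2) then emerges by taking $\sigma$-fixed points of the connected component $H^\circ$ and analysing how $\sigma$ permutes the direct summands.

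For types (1), (2) and (4) of Theorem \ref{subgroups_maximal_rank_algebraic} the conclusion is $\ca C_1$, and the argument is immediate. In type (1), $H$ stabilizes a proper nonzero subspace (totally singular or nondegenerate of even dimension in the symplectic and orthogonal cases), so $H_\sigma$ sits in the $X_\sigma$-stabilizer of that subspace, a maximal $\ca C_1$ subgroup. In type (2), where $X=\SL_n(k)$, the inequality $\dim U\ne \dim W$ ensures that either a proper subspace is stabilized (when $U\leq W$), or a direct sum with summands of unequal dimension is stabilized (when $U\cap W=0$); both yield $\ca C_1$. In type (4), Convention \ref{convention_aschbacher} directly identifies $H_\sigma\leq \GO_n(q)\cap \Sp_n(q)$ as a $\ca C_1$ subgroup.

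Type (3) is the substantive case and yields the refined parts (1) and (2). Take $H$ to be the full stabilizer in $X$ of the decomposition $V=V_1\oplus\cdots\oplus V_t$, so that $H^\circ=\prod_i \text{Cl}(V_i)$ in the isometric case and $H^\circ\cong \GL(V_1)$ (acting on $V_2$ as the dual) in the totally singular case with $t=2$. Because $H$ is $\sigma$-stable, $\sigma$ permutes $\{V_1,\ldots,V_t\}$, and analysing this orbit structure gives the dichotomy of the statement. If $\sigma$ fixes each $V_i$, then $H^\circ_\sigma=\text{Cl}_\ell(q)^t$, contained in a class-$\ca C_2$ subgroup. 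If $\sigma$ acts transitively with a single orbit of length $b=t$, then $H^\circ_\sigma=\text{Cl}_\ell(q^b)=\text{Cl}_{n/b}(q^b)$, contained in a class-$\ca C_3$ subgroup; a mixed orbit structure can always be coarsened to land in one of these (or in $\ca C_1$, if the coarser decomposition has unequal summands). The totally singular case with $t=2$ is handled identically: $\sigma$ either fixes both $V_i$ (giving $\GL_{n/2}(q)$ in $\ca C_2$) or swaps them (giving $\GU_{n/2}(q)$ in $\ca C_3$, as the standard extension-field embedding of the unitary group into the symplectic or orthogonal group).

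The principal obstacle is the unitary case $X_\sigma=\SU_n(q)$, where $\sigma=\thickbar{\sigma}\tau$ incorporates the inverse-transpose map $\tau$; here $\sigma$ permutes the $V_i$ only $\tau$-semilinearly, and this is the source of the third option in part (1), namely $M^\circ_\sigma\leq \GL_{n/2}(q^2)\cap \SU_n(q)$. To handle this one follows the approach of \cite[Lemma 3.7]{LSe} used in the proof of Theorem \ref{subgroups_maximal_rank_algebraic}, realising $\SL_n(k)\hookrightarrow \SO_{2n}(k)$ to linearize the $\sigma$-action before applying the orbit analysis above. A final bookkeeping step verifies the parity constraints recorded in the statement: when $X_\sigma$ is orthogonal, $\ell$ and $n/b$ must be even, and this is inherited from the parity condition in Theorem \ref{subgroups_maximal_rank_algebraic}(3) that ensures $H^\circ$ has maximal rank.
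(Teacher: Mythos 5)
Your proposal is correct and follows essentially the same route as the paper: reduce via Theorem \ref{subgroups_maximal_rank_algebraic}, dispose of types (1), (2), (4) as class $\ca C_1$, and for type (3) note that $M^\circ$ lies in the product $\prod_i \mathrm{Cl}(V_i)$ fixing each summand and then analyse how $\sigma$ permutes the $V_i$ (with the inverse-transpose/unitary case linearized as in \cite[Lemma 3.7]{LSe}), which is exactly the descent the paper invokes by citing the arguments of \cite[Section 4]{LSe}. Your write-up merely makes explicit the orbit analysis that the paper delegates to that reference, so no genuinely different method or gap is involved.
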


\begin{proof}

Note that if $M$ stabilizes $V=V_1\oplus \cdots \oplus V_t$ as in Theorem \ref{subgroups_maximal_rank_algebraic}(iii), then $M^\circ$ fixes each $V_i$ (indeed the subgroup stabilizing each $V_i$ is a closed subgroup of finite index of $M$, hence contains the connected component).
%it is closed since it is $M$ interseted with $\prod_i I(V_i)$, where $I(V_i)$ is the isometry group on $V_i$. The latter is clearly closed in $I(V)$; therefore by def of induced topology the intersection is closed in $M$.
What is more, if $X=\SO_n(k)$ and the $V_i$ are nondegenerate then $M^\circ \leq \SO(V_1)\times \cdots \times \SO(V_t)$. Keeping in mind this observation, the proof of the theorem follows from the arguments in \cite[Section 4]{LSe}, together with Theorem \ref{subgroups_maximal_rank_algebraic}. 
%In (1), the parity requirement for orthogonal groups is because $H$ is of maximal rank.
%We make one remark. In case $X_{\sigma}$ is not unitary, $\sigma$ is induced by a semilinear transformation of the natural module. Exploiting this fact, one can give a unified proof which works for all cases (this is given also in \cite[Section 27]{MT}). However, when $\sigma$ is unitary, one needs a different approach in order to understand how $\sigma$ acts on the subgroups as in Theorem \ref{subgroups_maximal_rank_algebraic}(3).
\end{proof}

Before we define the set $A_b$ from Theorem \ref{theorem_1.5_restatement}, for the reader's convenience we recall some facts about maximal tori in classical groups. We begin with some observations regarding conjugacy classes in wreath products, which we will use also later in the paper (see Remark \ref{rem:weyl_bm}).

\begin{remark}
\label{rem:conjugacy_classes_wreath_products}
Let $H$ be a finite group and let $G=H\wr S_m$, where $(x_1, \ldots, x_m)^\tau = (x_{1\tau^{-1}}, \ldots, x_{m\tau^{-1}})$ for $(x_1, \ldots, x_m)\in H^m$ and $\tau \in S_m$. It is well known and easily seen that conjugacy classes in $G$ are in bijection with  partitions of $m$, where each part is labelled by a conjugacy class of $H$ (see, for example, \cite[Section 4.2]{james_kerber}).

Moreover, it follows from a straightforward calculation that, given an element $x=(x_1, \ldots, x_m)\tau \in H\wr S_m$, with $(x_1, \ldots, x_m)\in H$ and $\tau \in S_m$, one can identify its class as follows. Let $c_1, \ldots, c_t$ be the cycles of $\tau$; assume that $c_i=(i_1, \ldots, i_{\ell_i})$, so in particular $m=\sum_i^t \ell_i$. Then, the class of $x$ corresponds to the labelled partition of $m$ with parts $p_1, \ldots, p_t$, of length $\ell_1, \ldots, \ell_t$ respectively, where the part $p_i$ is labelled by the $H$-class containing $\prod_{j=1}^{\ell_{i}} x_j$. 

Consider now the special case $H=C_2$, so $G=C_2\wr S_m\cong W(B_m)$. In particular, conjugacy classes of $G$ are in bijection with partitions of $m$, where each part is equipped with a sign $\pm$ (we will refer to these as \textit{signed partitions}).

Let $K$ be the subgroup of $G$ consisting of the elements $(x_1, \ldots, x_m)\tau$ with $x_i=\pm$ and $\prod_{i=1}^m x_i = +$; in particular, $|G:K|=2$ and $K\cong W(D_m)$. The $W(B_m)$-classes of elements of $K$ are in bijection with signed partitions of $m$ where the product of the signs is $+$. Moreover, for $x\in K$, it follows from an easy computation that $x^K\neq x^G$ if and only if $x^G$ corresponds to a signed partition in which all parts have even length and have plus sign.
\end{remark}

\subsection{Maximal tori in classical groups}
\label{subsec:maximal_tori}
The conjugacy classes of maximal tori in finite classical groups have an interpretation in terms of (signed) partitions. We quickly recall some facts. For more details, see for instance \cite[Section 5]{FG2}. In each of the cases below, $\sigma$ denotes the defining Steinberg endomorphism.

\textbf{(i)} Let $G=\SL_n(q)$. We have $W(A_{n-1})\cong S_n$, so conjugacy classes of maximal tori in $G$ are in bijection with partitions of $n$. The torus $(T_w)_\sigma$ corresponding to a partition $w=(a_1, \ldots, a_t)$ of $n$ fixes a decomposition $V=V_1\oplus \cdots \oplus V_t$, acting irreducibly on the $a_i$-th dimensional space $V_i$ for every $i$. 

\textbf{(ii)} Let $G=\SU_n(q)$. As in (i), conjugacy classes of maximal tori in $G$ are in bijection with partitions of $n$.  The torus $(T_w)_\sigma$ corresponding to a partition $w=(a_1, \ldots, a_t)$ of $n$ fixes a decomposition $V=V_1\perp \cdots \perp V_t$, where $V_i$ is nondegenerate and of dimension $a_i$. If $a_i$ is odd then $(T_w)_\sigma$ acts irreducibly on $V_i$; if $a_i$ is even then $(T_w)_\sigma$ fixes $V_i=A_i\oplus B_i$, where $A_i$ and $B_i$ are totally singular (of dimension $a_i/2$), and $(T_w)_\sigma$ acts irreducibly on both. 

\textbf{(iii)} Let $G=\Sp_{2m}(q)$. Since $W(C_m) \cong C_2 \wr S_m$, by Remark \ref{rem:conjugacy_classes_wreath_products} conjugacy classes of maximal tori in $G$ are in bijection with signed partitions of $m$. The torus $(T_w)_\sigma$ corresponding to a signed partition $w=(a_1^{\varepsilon_1}, \ldots, a_t^{\varepsilon_t})$ of $m$, with $\varepsilon_i \in \{+,-\}$, fixes a decomposition $V=V_1\perp \cdots \perp V_t$, where $V_i$ is nondegenerate and of dimension $2a_i$. If $\varepsilon_i=-$ then $(T_w)_\sigma$ acts irreducibly on $V_i$, while if $\varepsilon_i=+$ it acts irreducibly on two complementary totally singular subspaces.

\textbf{(iv)} Let $G=\SO_{2m+1}(q)$. Since $W(B_m) \cong W(C_m)$, the conjugacy classes of maximal tori of $G$ are in bijection with signed partitions of $m$. The torus $(T_w)_\sigma$ corresponding to a signed partition $w=(a_1^{\varepsilon_1}, \ldots, a_t^{\varepsilon_t})$ of $m$, with $\varepsilon_i \in \{+,-\}$, fixes the decomposition $V=V_1\perp V_2$, where $\mathrm{dim}(V_1)=1$, $(T_w)_\sigma$ centralizes $V_1$, and acts on $V_2$ as explained for symplectic groups in (iii). (We note that this discussion holds also for $q$ even, recalling Remark \ref{rem: convention}.)

\textbf{(v)} Let $G=\SO^+_{2m}(q)$ ($q$ odd) or $G=\Omega^+_{2m}(q)$ ($q$ even). Then the conjugacy classes of maximal tori in $G$ are in bijection with the conjugacy classes of $W(D_m)$. Recall from Remark \ref{rem:conjugacy_classes_wreath_products} that a $W(B_m)$-class splits into two $W(D_m)$-classes if and only if it corresponds to a signed partition where all parts have even length and plus sign. Let $w$ be such a signed partition, and let $T_1$ and $T_2$ be corresponding non-conjugate maximal tori of $G$. Consider an embedding $G\leq \SO_{2m+1}(q)=S$, where $\F_q^{2m+1} = V= V_1\perp V_2$, $\mathrm{dim}(V_1)=1$, and $G\leq  \mathrm O(V_2)\cong \mathrm O^+_{2m}(q)$, centralizing $V_1$ (see Remark \ref{rem: convention} for the case $q$ even, where we have $V_1=V^\perp$). Then, $T_1$ and $T_2$ are maximal tori of $S$, corresponding to the same signed partition $w$, hence they are conjugate in $S$. More precisely, it is easy to see that they must be conjugate by an element $(-1,g)$, where $g\in \mathrm O(V_2)\cong \mathrm O^+_{2m}(q)$ (and $-1=1$ if $q$ is even). In particular, despite being non-conjugate in $G$, $T_1$ and $T_2$ are conjugate in $\mathrm O^+_{2m}(q)$, and therefore act on the orthogonal module in the same way. Therefore, for many purposes it is sufficient to look at the underlying signed partition of a maximal torus of $G$. Accordingly, we will write $(T_w)_\sigma$ to denote any representative of a conjugacy class of maximal tori in $G$ corresponding to a $W(D_m)$-class whose underlying signed partition is $w$. With this convention, the torus $(T_w)_\sigma$ acts as explained in (iii). (Recall by Remark \ref{rem:conjugacy_classes_wreath_products} that $w=(a_1^{\varepsilon_1}, \ldots, a_t^{\varepsilon_t})$ with $\prod_{i=1}^t \varepsilon_i =+$.)

\textbf{(vi)} Let $G=\SO^-_{2m}(q)$ ($q$ odd) or $G=\Omega^-_{2m}(q)$ ($q$ even). It turns out that the conjugacy classes of maximal tori of $G$ are in bijection with the $W(D_m)$-classes in the nontrivial coset of $W(D_m)$ in $W(B_m)$. These are clearly $W(B_m)$-classes; in particular, the conjugacy classes of maximal tori in $G$ are in bijection with signed partitions $w=(a_1^{\varepsilon_1}, \ldots, a_t^{\varepsilon_t})$ with $\prod_{i=1}^t \varepsilon_i =-$, and $(T_w)_\sigma$ acts as explained in (iii).

% In orthogonal groups of even dimension $2m$, the same holds; just recall that in orthogonal groups of plus (resp. minus) type, the product of the signs of the cycles of $w$ must be $+$ (resp. $-$). In orthogonal groups of odd dimension $2m+1$, a maximal torus $(T_w)_\sigma$ corresponding to a signed partition $w=(a_1^{\varepsilon_1}, \ldots, a_t^{\varepsilon_t})$ of $m$ fixes $V=U\perp W$, centralizing the nondegenerate $1$-space $U$, and acting on $W$ as explained for orthogonal groups in even dimension (the type of $U$ is determined by the sign of $W$, i.e., by the product of the signs of $w$).

\subsection{Definition of the set $A_b$}
\label{subsection_definition_elements_classical}
Assume our finite classical group has natural module $V$ of dimension $n\geq 2$, defined over the field with $q^u$ elements, with $u=2$ in case of unitary groups, and $u=1$ otherwise. In view of Theorem \ref{main_psl_2}, we can also assume $n\geq 3$ (although this will not be relevant for the argument). In light of various isomorphisms between groups of small rank (see, for example,  \cite[Proposition 2.9.1]{KL}), we assume $n\geq 3$ for unitary groups, $n\geq 4$ for symplectic groups and $n\geq 7$ for orthogonal groups. % (see \cite[Section 2.9]{KL}).
Recall also that, for orthogonal groups, if $n$ is odd we have $q$ odd. % (this is justified by the isomorphism $\GO_n(q)\cong \Sp_{n-1}(q)$ when $q$ is even).

We define $A_b$ as the set of elements appearing in Table \ref{table_bounded_rank}. %However the notation in Table \ref{table_bounded_rank} is ambiguous, and we need to explain it. In order to do this, we recall that 
%Now we can explain the notation in Table \ref{table_bounded_rank}.
More precisely, each element $x\in A_b$ is semisimple, and the corresponding entry in the table denotes the conjugacy class of a maximal torus containing $x$. (For the element $x_1$ in $\Omega^+_{2m}(q)$ with $m\geq 4$ even, see the convention in (v) of Subsection \ref{subsec:maximal_tori}.)

Of course, the torus alone does not determine uniquely the element (not even its order).

We require that the element $x$ has the following order on each irreducible fixed space. %The choice can be made such that, if $q$ is large, then $x$ is regular semisimple, and in fact separable, i.e., it has distinct eigenvalues on the natural module (this is not necessarily true for a regular semisimple element in orthogonal groups).
For convenience we will identify the spaces with the corresponding parts of the partition, as explained above.

We deal separately with linear and unitary groups, as we need to take care of the determinant. In $\SL_n(q)$, $x_1$ has order $(q^n-1)/(q-1)$, and $x_2$ has order $q^{n-1}-1$. In $\SU_n(q)$, $x_1$ has order $(q^n+1)/(q+1)$ for $n$ odd, and order $(q^n-1)/(q+1)$ for $n$ even; while $x_2$ has order $q^{n-1}-1$ for $n$ odd, and order $q^{n-1}+1$ for $n$ even. 

%If for the moment we do not consider the determinant, we require that if $x\in \SL_n(q)$, then $x$ has order $q^a-1$ on each $(a)$. If $x \in \SU_n(q)$, then $x$ has order $q^a+1$ on each $(a)$ with $a$ odd, and order $q^a-1$ on each $(a)$ with $a$ even.
If $x\in \Sp_{2m}(q)$, then $x$ has order $q^a+1$ on each $(a^-)$, and order $q^a-1$ on each $(a^+)$. If $x\in \Omega^{\pm}_{2m}(q)$, then $x$ has order $(q^a+1)/(2,q-1)$ on each $(a^-)$, and order $(q^a-1)/(2,q-1)$ on each $(a^+)$. If $x\in \Omega_{2m+1}(q)$ the same holds; recall that $x$ also centralizes a $1$-space. %recall by e.g. neumann-praeger p. 580 that if q is odd then there are no irreducible matrices in $\GO_{2m}(q)\setminus \SO_{2m}(q)$.

%With these choices, in case of linear and unitary groups the element might not have determinant $1$. This causes no problem: it is sufficient to adjust the order on a certain irreducible module, that we take of smallest possible dimension.

We impose two further conditions. The element $x_3$ in $\Sp_4(q)$ with $q$ even acts as $(1^-,1^-)$. Accordingly, we require $x_3=(g,g^2)$ with $g$ of order $q+1$. Similarly, the element $x_3$ in $\Omega^+_8(q)$ acts as $(2^-, 2^-)$, and we have $x_3=(g,g^2)$ with $g$ of order $(q^2+1)/(2,q-1)$.  %note that, incidentally, the order of $g^2$ on the module is as in the previous paragraph, i.e., $(q^2+1)/(2,q-1)$. this is because $q^2+1$ cannot be congruent to 0 mod 4.

With these choices, it is not difficult to check that, if $q$ is sufficiently large ($q\geq 11$), then every $x\in A_b$ is \textit{separable}, i.e., it has distinct eigenvalues on the natural module of the ambient algebraic group $X$. In Lemma \ref{order_x_to_the_f} below we will prove a little more.

\begin{table}
\small
\centering
% table caption is above the table
\caption{$A_b=\{x_1,x_2,x_3,x_4\}$ in Theorem \ref{main_theorem_absolute_constant_2} for classical groups.}
\label{table_bounded_rank}       % Give a unique label
% For LaTeX tables use
\begin{tabular}{lllll}
\hline\noalign{\smallskip}
$G$ & $x_1$ & $x_2$ & $x_3$ & $x_4$ \\
\noalign{\smallskip}\hline\noalign{\smallskip}
$\SL_n(q)$, $n\geq 2$ & $n$ &  $n-1,1$ & & \\
%$\SU_n(q)$, $n$ odd & $n$ & $(n-1,1)$ & & \\
$\SU_n(q)$, $n\geq 3$ &  $n$ & $n-1,1$ & &  \\
$\Om_{2m+1}(q)$, $m\geq 3$, $q$ odd & $m^-$  & $m^+$ & & \\
$\Om_{2m}^-(q)$, $m\geq 4$ & $m^-$  & $(m-1)^-,1^+$ & & \\
$\Sp_{2m}(q)$, $m\geq 2$ even, $q$ odd & $m^-$  & $(m-1)^-,1^+$ & & \\
$\Sp_{2m}(q)$, $m\geq 3$ odd, $q$ odd & $m^-$ & $(m-1)^-,1^-$ & $m^+$ & \\
$\Sp_{2m}(q)$, $m\geq 2$, $q$ even & $m^-$ & $(m-1)^-,1^+$ & $(m-1)^-,1^-$ & $m^+$ \\
$\Om_{2m}^+(q)$, $m\geq 5$ odd & $m^+$ & $(m-1)^-,1^-$ & & \\
%$\Omega_{2m}^+(q)$, $m$ even, $q$ odd & $2m^+$ & $J_3\perp 1 \perp (2m-4)^-$ & $(2m-2)^-\perp 2^-$ \\
%$\Omega_{2m}^+(q)$, $m$ even, $q$ even & $(2m-2)^-\perp 2^-$ & $(2m-4)^-\perp 2^-\perp 2^+$ & $(2m-4)^- \perp 4^-$  & $2m^+$ \\
$\Omega_{2m}^+(q)$, $m\geq 4$ even & $m^+$ & $(m-1)^-,1^-$  & $(m-2)^-,2^-$ & $(m-2)^-,1^-,1^+$ \\
\noalign{\smallskip}\hline
\end{tabular}
\end{table}

\subsection{Focusing on $\ca C_1$ and $\ca C_3$} By Theorem \ref{classes_c1_c2_c3}, $\ca M$ consists of members from Aschbacher's classes $\ca C_1, \ca C_2, \ca C_3$ (recall Convention \ref{convention_aschbacher}). We will show that, in fact, we can focus on overgroups of $x\in A_b$ from classes $\ca C_1$ and $\ca C_3$ (see Lemma \ref{classical_groups_we can_use_m_con} and the comments following it), which is a quite useful fact.

\begin{lemma}
\label{order_x_to_the_f}
Let $f$ be a positive integer. If $q\geq 11 f$ and $x \in A_b$, then $x^f$ is separable.
\end{lemma}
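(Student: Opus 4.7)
The plan is to verify directly, for each $x \in A_b$ listed in Table \ref{table_bounded_rank}, that the characteristic polynomial of $x^f$ on the natural module $V$ has pairwise distinct roots over an algebraic closure. Each such $x$ is semisimple and preserves an explicit decomposition $V = V_1 \oplus \cdots \oplus V_t$ (orthogonal when $X$ preserves a form), acting on each $V_i$ with the order prescribed in Subsection \ref{subsection_definition_elements_classical}. On a block of dimension $a_i$ (type $A$) or $2a_i$ (classical form types), the eigenvalues of $x$ on $V_i \otimes \overline{k}$ form a single Frobenius orbit $\{\mu_i^{q^j} : 0 \leq j < a_i\}$, possibly augmented by the inverse orbit, where $|\mu_i|$ is one of the explicit values $(q^{a_i} \pm 1)/d$ with $d \leq 2$, read off from the table.

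I first establish intra-block distinctness. The eigenvalues of $x^f$ on $V_i$ are $\{\mu_i^{f q^j}\}_j$, together with their inverses when the block carries a form, and these are pairwise distinct iff $|\mu_i|$ divides none of the integers $f(q^j - q^k)$, $f(q^j + q^k)$ for $0 \leq k < j < a_i$, nor (in the self-dual case) $2 f q^j$. Each such integer is nonzero and has absolute value at most $2 f q^{a_i - 1}$, which is strictly smaller than $|\mu_i|$ once $q \geq 10 f$ (since $|\mu_i| \geq (q^{a_i}-1)/2$ and $\gcd(|\mu_i|, q) = 1$); so no such divisibility can hold. For the inter-block step, a coincidence $\mu_i^{\pm f q^s} = \mu_j^{\pm f q^t}$ yields an element $\zeta = \mu_i^{\pm q^s} \mu_j^{\mp q^t}$ of order dividing $f$. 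Conjugating by suitable powers of Frobenius and eliminating the $\mu_j$-factor using $|\mu_j|$ gives a relation forcing $|\mu_i|$ to divide an integer whose size is bounded by a low-degree polynomial in $f$ and $q^{\min(a_i,\,a_j)}$, and the explicit orders in Table \ref{table_bounded_rank} rule this out whenever $a_i \neq a_j$ (which covers every pair of distinct parts in the partitions of $A_b$).

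The only rows of Table \ref{table_bounded_rank} with repeated parts are $x_3$ in $\Sp_4(q)$, $q$ even (two $1^-$'s with $x_3 = (g, g^2)$, $|g| = q + 1$), $x_3$ in $\Omega_8^+(q)$ (two $2^-$'s with $x_3 = (g, g^2)$, $|g| = (q^2 + 1)/(2, q - 1)$), and the two opposite-sign one-dimensional blocks appearing in $x_4$ for $\Omega_{2m}^+(q)$ with $m$ even (orders $(q \pm 1)/(2, q - 1)$). In each of these the orders on the two blocks differ by at most a factor of $2$ or by the sign of $\varepsilon$, and the bound $q \geq 10 f$ beats the relevant $\gcd$ to prevent any $f$-th-power collision. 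The main obstacle, and the reason for a generous bound $q \geq 10 f$ rather than something sharper, is the inter-block estimate when two of the $\mu_i$ lie in overlapping extension fields of $\F_q$; the explicit $q$-cyclotomic form of the orders in the table provides exactly the arithmetic independence needed to close the argument uniformly across all rows.
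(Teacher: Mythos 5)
Your overall architecture mirrors the paper's proof: intra-block distinctness (the paper's step that $x^f$ still acts irreducibly on each torus-irreducible block) plus inter-block distinctness of eigenvalues. The problem is the quantitative core of your intra-block step. You assert that every block order $|\mu_i|$ is of the form $(q^{a_i}\pm 1)/d$ with $d\leq 2$, hence $|\mu_i|\geq (q^{a_i}-1)/2$, and you then kill all coincidences by the size comparison $2fq^{a_i-1}<|\mu_i|$. This is false precisely for the elements $x_1$ of $\SL_n(q)$ and $\SU_n(q)$: there the orders are $(q^n-1)/(q-1)$, $(q^n+1)/(q+1)$ and $(q^n-1)/(q+1)$, which are of size roughly $q^{n-1}$, not $q^n/2$. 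For these rows the inequality $2fq^{n-1}<|\mu_i|$ fails even for $f=1$ (since $(q^n-1)/(q-1)<2q^{n-1}$ once $q\geq 3$), so magnitude alone cannot rule out a relation such as $|\mu_i|\mid f(q^{j-k}-1)$ with $j-k=n-1$; when $f$ is as large as $q/10$ the right-hand side is of order $q^n/10$, well above $|\mu_i|$. The statement is still true, but proving it in these cases requires the arithmetic structure, not size: a coincidence forces $|\mu_i|$ to divide $f(q^i-1)$ (or $f(q^i+1)$ in the unitary case) with $i<n$, and then one uses $\gcd\bigl((q^n-1)/(q-1),\,q^i-1\bigr)\leq q^{\gcd(n,i)}-1\leq q^{n/2}-1$ together with $f\leq q/10$ (small $n$ checked directly). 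This is in effect what the paper does: it observes that a coincidence would force the eigenvalue into a proper subfield, whence its order divides a gcd that is too small; a purely archimedean bound does not close these cases.

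Two smaller points. First, your inter-block step is only a sketch ("eliminating the $\mu_j$-factor ... an integer whose size is bounded by a low-degree polynomial") — the workable version is simply to raise the coincidence to the power $|\mu_i|$ or $|\mu_j|$ and compare, and again the linear/unitary orders need gcd input rather than size when the block dimensions are close. Second, your list of rows with repeated parts is incomplete: besides $x_3$ in $\SpT_4(q)$ ($q$ even), $x_3$ in $\Omega_8^+(q)$ and $x_4$ in $\Omega_{2m}^+(q)$, the partitions $(1^-,1^+)$ occur for $x_2$ in $\SpT_4(q)$ (both parities of $q$); these cases are easy (the common eigenvalue would have order dividing $\gcd(q+1,q-1)\leq 2$), but they should be accounted for.
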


\begin{proof}
%The proof is straightforward.
Let $x\in A_b$. As observed at the end of Subsection \ref{subsection_definition_elements_classical}, since $q\geq 11$ we have that $x$ is separable, hence contained in a unique maximal torus, which we denote by $T$. We need to prove two things:

\begin{itemize}
    \item[(i)] if $W\leq V$ is irreducible for $T$, then $W$ is irreducible for $x^f$, and
    \item[(ii)] if $W_1$ and $W_2$ are distinct irreducible modules for $T$, then $x^f$ has distinct eigenvalues on $W_1$ and $W_2$. 
\end{itemize}

(Note that, since $x\in T$ is separable, any two distinct irreducible $T$-modules $W_1$ and $W_2$ are not isomorphic.) Items (i) and (ii) imply that $x^f$ has the same fixed spaces as the torus $T$, and therefore $x^f$ is separable. The argument is essentially the same in all cases.

For item (i), we show that if $q\geq 11 f$ then the order of $x^f$ is large enough to imply that $x^f$ acts irreducibly on $W$. For instance, assume the torus $T$ acts irreducibly on a nondegenerate module $W$ of dimension $2a$ (and assume we are not in the unitary case). Then by our choices the order of $x$ on $W$ is larger than $(q^a+1)/2$. Then the order of $x^f$ is larger than $(q^a+1)/2f$, which is strictly larger than $q^{a-1}+1$; in particular $x$ does not fix any proper nondegenerate submodule of $W$. What is more, if $x^f$ fixes a proper totally singular submodule $U$, then this has dimension $\ell \leq a$. If $\ell < a$, the same argument as above gives a contradiction, and if $\ell=a$, then the order of $x^f$ would divide both $q^a-1$ and $q^a+1$, hence it would divide $2$, which is false. %above we only said that the order is larger than $(q^a+1)/2$, but we also know it divides $q^a+1$
In unitary groups, the argument is the same; and in case $W$ is totally singular, the argument is similar (we note also that the dimension of any $\gen{x^f}$-submodule of $W$ divides the dimension of $W$).

For item (ii), we first observe that if $W_1$ and $W_2$ have different dimensions, the claim is obvious. There are cases that can be checked separately, namely $x_2$ in $\SL_2(q)$, $x_2$ in $\SU_3(q)$, $x_3$ in $\Sp_4(q)$ with $q$ even, and $x_3$ in $\Omega_8^+(q)$. In all other cases, if $W_1$ and $W_2$ have equal dimension, say $a$, then they are totally singular and $T$ acts irreducibly on both, with $W_1 \oplus W_2$ nondegenerate. Let us assume we are not in the unitary case. If $x$ has eigenvalues $\{\lambda, \lambda^q, \ldots, \lambda^{q^{a-1}}\}$ on $W_1$, then it has eigenvalues $\{\lambda^{-1}, \lambda^{-q}, \ldots, \lambda^{-q^{a-1}}\}$ on $W_2$. By our choices, $|\lambda| \geq (q^a-1)/2$. Since, by (i), $x^f$ acts irreducibly on both $W_1$ and $W_2$, the eigenvalues of $x^f$ on $W_1$ and $W_2$ either coincide (as sets) or are disjoint. Assume for a contradiction that they coincide. In particular, $\lambda^f= \lambda^{-fq^{i}}$ for some $0\leq i\leq a-1$. Then $|\lambda|$ divides $f(q^i+1) \leq f(q^{a-1}+1)$, which contradicts $q\geq 11f$. The unitary case is similar (in this case the eigenvalues of $x$ on $W_2$ are the $q$-th powers of the inverses of the eigenvalues on $W_1$).%this is a slightly boring and annoying proof, but it should work well...
\end{proof}

\begin{lemma}
\label{classical_groups_we can_use_m_con}
Assume $q\geq 11n^4$, take $x\in A_b$ and let $T$ be the corresponding maximal torus in $X_\sigma$. Then for $M_\sigma\in \ca M(x)$ we have $x\in T\leq M_\sigma ^\circ$. In particular, $\ca T(x)$ consists of the maximal tori contained in some overgroup of $x$ belonging to $\ca M_{\text{con}}$. %(these are described in Theorem \ref{classes_c1_c2_c3}).
\end{lemma}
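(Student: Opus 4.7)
The plan is to show that some small power $x^f$ of $x$ is separable, lies in $M_\sigma^\circ$, and shares its unique maximal torus of $X$ with $x$; from this it will follow that $T \leq M_\sigma^\circ$.

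First, I would write $M_\sigma = K_\sigma$ for a $\sigma$-stable closed subgroup $K$ of $X$ of maximal rank, given by one of the cases of Theorem \ref{subgroups_maximal_rank_algebraic}, and set $f := |K_\sigma : K^\circ_\sigma|$. Since $K^\circ \trianglelefteq K$, the Lang--Steinberg theorem gives $K_\sigma / K^\circ_\sigma \hookrightarrow K/K^\circ$, and inspecting the four cases of Theorem \ref{subgroups_maximal_rank_algebraic}, one obtains $|K : K^\circ| \leq 2$ in cases (1), (2), (4), while in case (3) the component group lies in $S_t$ (up to a small extension) for a decomposition into $t \leq n/2$ blocks. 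Consequently $x^f \in K^\circ_\sigma$; and by a case analysis on the partition shape of $x$ prescribed in Table \ref{table_bounded_rank}, the image of $x$ in the wreath-product quotient $S_t$ has order at most $n$, so the relevant power we need to take is bounded by a polynomial in $n$, and in particular $f \leq n^4$ under the standing hypothesis.

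Second, I would apply Lemma \ref{order_x_to_the_f}: since $q \geq 10 n^4 \geq 10 f$, the element $x^f$ is separable, hence regular semisimple, and so lies in a unique maximal torus $S := C_X(x^f)^\circ$ of $X$. As $x^f$ is semisimple and lies in the connected $\sigma$-stable reductive group $K^\circ$, Theorem \ref{semisimple_in_stable_torus} places $x^f$ in a $\sigma$-stable maximal torus $S'$ of $K^\circ$; because $S'$ is also a maximal torus of $X$, the uniqueness forces $S' = S \leq K^\circ$.

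Finally, $x$ itself is separable (Lemma \ref{order_x_to_the_f} with $f=1$, since $q \geq 10$), so it is contained in the unique maximal torus $T = C_X(x)^\circ$ of $X$; and $x^f \in T$ forces $C_X(x^f)^\circ = T$ by the same uniqueness. Combining the two identifications gives $T = S \leq K^\circ$, so after taking $\sigma$-fixed points, $x \in T \leq K^\circ_\sigma = M_\sigma^\circ$, proving the first assertion. The claim about $\ca T(x)$ is then immediate from the definition of $\ca M_{\text{con}}$, since any maximal torus of $X_\sigma$ containing $x$ lies in some $M_\sigma \in \ca M(x)$ and hence, by what we just proved, in $M_\sigma^\circ$.

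The main obstacle is the bound on $f$ in case (3) of Theorem \ref{subgroups_maximal_rank_algebraic}: in principle $|K : K^\circ|$ can be as large as $t! \cdot 2^t$, which exceeds $n^4$ for large $n$. To get around this one must exploit the rigid cycle structure of the $x \in A_b$ recorded in Table \ref{table_bounded_rank}: since $x$ is separable with a single (or very few) large irreducible block(s), its image in a wreath action $\GL_\ell(q^u) \wr S_t$ has cyclic permutation part of order dividing $n$, so that we only ever need to raise $x$ to a power bounded by a small polynomial in $n$ to land in $M_\sigma^\circ$.
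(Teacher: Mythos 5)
Your proposal is correct and follows essentially the same route as the paper's proof: pass to a power $x^f$ with $f\leq n^4$ lying in $M_\sigma^\circ$ (using that each $x\in A_b$ has at most four irreducible constituents, so its image in the block-permutation quotient is small, and that extension field subgroups have component index at most $n$), then apply Lemma \ref{order_x_to_the_f} to see that $x^f$ is separable, hence regular semisimple, so its unique maximal torus $T$ is contained in $M_\sigma^\circ$. The only slip is the intermediate claim that the induced permutation has order at most $n$ — with up to four cycles one only gets an order bound of $n^4$ in general — but since you also state and use exactly this weaker bound $f\leq n^4$, which is all the hypothesis $q\geq 10n^4$ requires, nothing breaks.
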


\begin{proof}
By Theorems \ref{subgroups_maximal_rank_algebraic} and \ref{classes_c1_c2_c3}, and by the fact that $x$ is separable, %this is used in order to handle subspace stabilizers: in orthogonal groups a regular semisimple element might not be separable, and might lie outside the connected component. since x is separable instead this does not happen. also in $\SO^\pm < \Sp$ the connected component is $\Omega^\pm$, so no problem --- at worst you will raise to the second power.
we see that the result is easily established unless $M_\sigma$ preserves a direct sum decomposition or is an extension field subgroup.

Note that $x$ fixes at most four irreducible spaces on the natural module. In particular, if $x\in M_\sigma\leq \GL_{n/t}(q^u)\wr S_t$, then $x$ induces a permutation in $S_t$ having at most four cycles, and therefore having order at most $n^4$. It follows that $x^f\in M^\circ_\sigma$ for some $f\leq n^4$. If $x\in M_\sigma$ and $M_\sigma$ is an extension field subgroup, then the index of $M^\circ_\sigma$ in $M_\sigma$ is at most $n$. %consider $\GO_{n/b}(q^b).b$ in orthogonal groups; here $b<n$. Then $x^f\in H:=\GO_{n/b}\cap \Omega_n$ for $f\leq n/2$. Now $\Omega{n/b}$ has index at most $2$ in $H$ --- indeed it has index at most $4$ in the full orthogonal group $\GO_{n/b}$, but $H$ cannot be the full orthogonal group.
In particular, $x^f\in  M^\circ_\sigma$ for some $f\leq n$.

Since $q\geq 11n^4 \geq 11f$, by Lemma \ref{order_x_to_the_f} $x^f$ is separable, and in particular regular semisimple. Therefore, %By Theorem \ref{semisimple_in_stable_torus},
the maximal torus corresponding to $x^f$, which is $T$, is contained in $M^\circ_\sigma$.
%For the first case, assume $x\in \GL_k(q)\wr S_t$ with $n=kt$ and $t>1$. By definition, $x$ has at most $4$ irreducible modules on $V$. In particular, $x$ induces a permutation $\tau$ of $\{1, \ldots, t\}$ with at most $4$ cycles. Then $\ell := |\tau| \leq n^4$, and $x^\ell \in M^\circ_\sigma$. By assumption, $q \geq 10n^4 \geq 10\ell$. By Lemma \ref{order_x_to_the_f}, $x^\ell$ is separable: in particular it is regular semisimple and its maximal torus, which is $T$, is contained in $M^\circ_\sigma$. In case $M$ is an extension field subgroup, we have that $|M_\sigma : M^\circ_\sigma|$ divides $n$, hence the same argument applies.
\end{proof}

The conclusion to Lemmas~\ref{order_x_to_the_f} and \ref{classical_groups_we can_use_m_con} hold under slightly weaker conditions on $q$, but this is not required for the proof of Theorem \ref{theorem_1.5_restatement}, where we assume $q\geq Cr^r$. %(indeed we could prove Lemma \ref{order_x_to_the_f} with $10$ replaced by an unknown constant, and assume $q\gg r^r$ in Lemma \ref{classical_groups_we can_use_m_con}).

A very useful consequence of Lemma \ref{classical_groups_we can_use_m_con} is that, for each element of $A_b$, we just need to determine the overgroups of its maximal torus contained in the Aschbacher's classes $\ca C_1$ and $\ca C_3$; in class $\ca C_3$ we only have to consider the linear subgroup $\GL_{n/b}(q^{ub}) < \GL_n(q^u)$. %(As usual there is also the subgroup $\SO^\pm_{2m}(q) < \Sp_{2m}(q)$ for even $q$; here the fixed points of the connected component is $\Omega^\pm_{2m}(q)$.)
We will now determine such overgroups.

\subsection{Overgroups of the elements of $A_b$}
\label{subsection_overgroups_of_elements_classical_groups}

The overgroups from class $\ca C_1$ (except for $\SO_n^\pm (q) < \Sp_n(q)$) are easily determined by looking at Table \ref{table_bounded_rank}, since the elements are separable. We now deal with the remaining case. We have $M_\sigma=\SO^{\pm}_{2m}(q) < \Sp_{2m}(q)\cong \SO_{2m+1}(q)$ with $q$ even,
and we have  $M^\circ_\sigma=\Omega^\pm_{2m}(q)$. %In the following lemma, we allow also the case of $q$ odd, as the argument is the same.

\begin{lemma}
\label{stabilizers_hyperplanes_maximal torus}
Let $q$ be even, let $\sigma$ be the defining Steinberg endomorphism of $G=\mathrm{SO}_{2m+1}(q)$, and let $w$ be a signed partition of $m$ in which the product of the signs is $\varepsilon$, with $\varepsilon \in \{+,-\}$. Then, the associated maximal torus $(T_w)_\sigma$ of $G$ is contained in a subgroup $\SOT^{\varepsilon}_{2m}(q)$. If moreover $(T_w)_\sigma$ contains regular semisimple elements, then $(T_w)_\sigma$ is not contained in any subgroup $\SOT^{-\varepsilon}_{2m}(q)$.
\end{lemma}

\begin{proof}
Observe that $(T_w)_\sigma$ stabilizes a non-degenerate hyperplane $W$ of $\varepsilon$ sign (see \cite[Proposition 2.5.11]{KL}), so it is contained in a subgroup $\SO(W) \cong \SO^{\varepsilon}_{2m}(q)$. %through the identification $\Sp_{2m}(q)\cong \SO_{2m+1}(q)$, the subgroup corresponds to stabilizers of nondegenerate hyperplanes.
Moreover, if $(T_w)_\sigma$ contains regular semisimple elements, then $(T_w)_\sigma$ fixes only one nondegenerate hyperplane, and the last part of the statement follows.
\end{proof}

In the last part of the statement of Lemma \ref{stabilizers_hyperplanes_maximal torus}, it is necessary to assume that $(T_w)_\sigma$ contains regular semisimple elements. For instance, for $q=2$ the torus corresponding to $w=(1^+, \ldots, 1^+)$ is trivial -- hence contained in every subgroup of $G$.

We now prove two lemmas concerning extension field subgroups. %Although we will apply it only to separable elements, we state it in a slightly more general form.

%\begin{lemma}
%\label{extension_field_separable_dimension}
%Assume $x \in \GLT_{n/b}(q^b) < \GLT_n(q)$ for some prime $b$ dividing $n$. Let $B$ be a block in the rational canonical decomposition of $x$ over $\F_q$, i.e., $B=C\otimes J$, where $C$ is an $m\times m$ companion matrix of an irreducible polynomial, and $J$ is a standard $\ell \times \ell$ unipotent Jordan block (hence $\dim B = m\ell$). Then, either $b$ divides $m$, or $B$ occurs with multiplicity divided by $b$ in the rational canonical form of $x$.

%In particular, if $x$ is separable then each (irreducible) nonzero space fixed by $x$ has dimension divided by $b$ over $\F_q$.
%\end{lemma}

%\begin{proof}
%Consider a block $B'=C'\otimes J'$ in the rational canonical decomposition of $x$ over $\F_{q^b}$, with $C'$ $m' \times m'$ companion matrix of an irreducible polinomial (over $\F_{q^b}$), and $J'$ standard Jordan block of dimension $\ell'$. Since $b$ is prime, in the action over $\F_q$ either $C'$ acts irreducibly on a $bm'$-space, or splits as a direct sum of $b$ isomorphic modules of dimension $m'$. Accordingly, the block $B'$ can be viewed either as a block $B=C\otimes J'$, where $C$ has dimension $bm'$, or as a direct sum of $b$ copies of the same block $B=C\otimes J'$, where $C$ has dimension $m'$. The first part of the statement is proved.

%If $x$ is separable, then $J'$ must be the $1\times 1$ identity matrix, and moreover the second possibility of the previous paragraph cannot occur, since otherwise the characteristic polynomial of $x$ would have repeated factors.
%\end{proof}

\begin{lemma}
\label{extension_field_separable_dimension}
Assume $x \in \GLT_{n/b}(q^b) < \GLT_n(q)$ for some prime $b$ dividing $n$. Assume $x$ is separable over $\F_q$. Then $b$ divides the dimension of each irreducible constituent of the natural $\F_q[\gen x]$-module.
\end{lemma}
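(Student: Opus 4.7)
The plan is to use the fact that the $\F_{q^b}$-structure on $V$ is recorded by scalar multiplication by elements of $\F_{q^b}$, and that this commutes with $x$.

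First I would decompose $V$ as an $\F_q[\gen x]$-module into its irreducible constituents $V = W_1 \oplus \cdots \oplus W_s$. Each $W_j$ is a cyclic $\F_q[\gen x]$-module whose annihilator is an irreducible factor $f_j(X)$ of the characteristic polynomial of $x$ over $\F_q$, with $\deg f_j = \dim_{\F_q} W_j$. Since $x$ is separable, the characteristic polynomial has distinct roots in $\overline{\F_q}$; in particular the $f_j$ are pairwise distinct, so the modules $W_1, \ldots, W_s$ are pairwise non-isomorphic.

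Next I would observe that because $x$ is $\F_{q^b}$-linear, for every $\alpha \in \F_{q^b}$ the scalar multiplication $m_\alpha \colon V \to V$ commutes with $x$, hence is an endomorphism of the $\F_q[\gen x]$-module $V$. Since the $W_j$ are pairwise non-isomorphic, any $\F_q[\gen x]$-endomorphism of $V$ preserves each $W_j$ (the isotypic components are singletons, and $\op{Hom}_{\F_q[\gen x]}(W_i,W_j) = 0$ for $i \neq j$ by Schur's lemma). Therefore $m_\alpha(W_j) \subseteq W_j$ for every $\alpha \in \F_{q^b}$, meaning that $W_j$ is an $\F_{q^b}$-subspace of $V$. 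Consequently $b = \dim_{\F_q}\F_{q^b}$ divides $\dim_{\F_q} W_j$, which is what we wanted.

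There is essentially no obstacle beyond verifying that separability yields pairwise non-isomorphic constituents, which is built into the fact that each $f_j$ appears in the characteristic polynomial with multiplicity one; the rest is a direct application of Schur's lemma.
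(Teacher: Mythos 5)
Your proof is correct, but it runs in the opposite direction from the paper's. The paper decomposes $V$ into irreducible $\F_{q^b}[\gen x]$-submodules: if $U$ is such a module of $\F_{q^b}$-dimension $a$, then over $\F_q$ it has dimension $ab$ and $x$ acts homogeneously on it (its eigenvalues form a single $\F_q$-Galois orbit); separability forbids repeated constituents, so $U$ stays irreducible over $\F_q$, and every $\F_q$-constituent arises this way, hence has dimension divisible by $b$. You instead decompose over $\F_q$ from the start, use separability to get a squarefree characteristic polynomial and therefore a multiplicity-free semisimple decomposition, and then observe that multiplication by scalars of $\F_{q^b}$ gives $\F_q[\gen x]$-endomorphisms which, by Schur's lemma, must preserve each constituent, so each constituent is an $\F_{q^b}$-subspace and its $\F_q$-dimension is divisible by $b$. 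Both arguments are short and hinge on separability in the same essential way (no two isomorphic constituents). Your route yields the slightly stronger structural fact that each $\F_q$-irreducible constituent is itself an $\F_{q^b}$-subspace; the paper's route makes explicit that the $\F_q$-constituents are exactly the $\F_{q^b}$-irreducibles viewed over $\F_q$, which is the formulation implicitly used later when matching (signed) partitions over $\F_q$ with those over $\F_{q^b}$ to identify the maximal tori lying in extension field subgroups.
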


\begin{proof}
Assume $U$ is an irreducible space for $x$ over $\F_{q^b}$ of dimension $a$. Then $U$ has dimension $ab$ over $\F_q$. Moreover, $x$ acts homogeneously on $U$ over $\F_q$. Since $x$ is separable, it follows that $x$ acts irreducibly on $U$ over $\F_q$. The lemma follows.
\end{proof}

Now we aim to describe the maximal tori contained in extension field subgroups of maximal rank. In the case of unitary groups, recall that the subgroups of type $\GU_{n/b}(q^b) < \GU_n(q)$ only arise when $b$ is odd (cf. \cite[Section 4.3]{KL}), and that for $m$ odd $\GU_m(q)$ embeds in $\SO^-_{2m}(q)$, while for $m$ even it embeds in $\SO^+_{2m}(q)$. Also note that, as observed in Theorem \ref{classes_c1_c2_c3}, the extension field subgroups of type $\O_{n/b}(q^b) < \O^\varepsilon_n(q)$ with $n/b$ odd and $\varepsilon \in \{+,-,\circ\}$ are not of maximal rank, so we will not consider them.

As in (v) of Subsection \ref{subsec:maximal_tori}, for $G=\SO^+_{2m}(q)$ or $\Omega^+_{2m}(q)$ we denote by $(T_w)_\sigma$ any representative of a conjugacy class of maximal tori corresponding to a $W(D_m)$-class whose underlying signed partition is $w$.

%Consulting \cite{KL}, we see that the only maximal subgroup of class $\ca C_3$ we have not considered are relative to the embeddings $\O_m(q^2)<\O^{\pm}_{2m}(q)$ with $m$ odd, or $\O_{n/b}(q^b) < \O_n(q)$ with $n$ odd. Indeed, these subgroups are not of maximal rank

\begin{lemma} Let $b$ be prime, and in each of the cases below, denote by $\sigma$ the defining Steinberg endomorphism.
\label{maximal_tori_extension_field_subgroup}
\begin{enumerate}
    \item The subgroup $\GLT_{n/b}(q^b)\cap \SLT_n(q) < \SLT_n(q)$ or $\GUT_{n/b}(q^b)\cap \SUT_n(q) < \SUT_n(q)$ contains a conjugate of $(T_w)_\sigma$, for every partition $w$ of $n$ in which each part has length divisible by $b$.
        \item Assume that $2m/b$ is even. The subgroup $\SpT_{2m/b}(q^b) < \SpT_{2m}(q)$ or \\ $\SOT^{\pm}_{2m/b}(q^b) < \SOT^{\pm}_{2m}(q)$ ($q$ odd) or $\Omega^{\pm}_{2m/b}(q^b) < \Omega^{\pm}_{2m}(q)$ ($q$ even) contains a conjugate of $(T_w)_\sigma$, for every signed partition $w$ of $m$ in which all parts have length divisible by $b$.
    \item The subgroup $\GUT_m(q) < \SpT_{2m}(q)$ or $\GUT_m(q) < \SOT^{\pm}_{2m}(q)$ ($q$ odd) or \\ $\GUT_m(q) < \Omega^{\pm}_{2m}(q)$ ($q$ even) contains a conjugate of $(T_w)_\sigma$, for every signed partition $w$ of $m$ in which odd parts have minus sign and even parts have plus sign.
\end{enumerate}
If moreover $(T_w)_\sigma$ contains separable elements, then each embedding of $(T_w)_\sigma$ in extension field subgroups of maximal rank has been listed in (1), (2), (3).
\end{lemma}

\begin{proof}
Let $H$ be an extension field subgroup as in the statement. Each maximal
torus of the ambient group contained in $H$ is a maximal torus of $H$, hence it admits a description in terms of its Weyl group. It is then easy to establish
the lemma with the help of Lemma \ref{extension_field_separable_dimension}  (see \cite[Section 4.3]{KL} for a detailed
description of the various embeddings).

Let us give the details of (1) for $\SL_n(q)$. Let $H=\GL_{n/b}(q^b)\cap \SL_n(q) < \SL_n(q)$, let $w$ be a partition of $n$ and let $(T_w)_\sigma$ be the corresponding maximal torus of $\SL_n(q)$. If all parts of $w$ have length divisible by $b$, then it is easy to see that $(T_w)_\sigma$ is contained in a subgroup conjugate to $H$.

Assume, on the other hand, that $(T_w)_\sigma$ is contained in a subgroup conjugate to $H$, and assume that $(T_w)_\sigma$ contains a separable element $g$; we want to show that all parts of $w$ have length divisible by $b$. Since $g\in H$, we see by Lemma \ref{extension_field_separable_dimension} that $b$ divides the dimension of each irreducible $\F_q[\langle x \rangle]$-submodule of $\F_q^n$. In particular, $g$ is contained in a torus $(T_{w'})_\sigma$ of $\SL_n(q)$, where all parts of $w'$ have length divisible by $b$. Since $g$ is separable and belongs to both $(T_w)_\sigma$ and $(T_{w'})_\sigma$, we deduce that $w=w'$, as required.
\end{proof}

\subsection{Proof of Theorem \ref{main_theorem_absolute_constant_2}}
\label{subsection_proof_classical_bounded_rank}

We are now ready to prove Theorem \ref{theorem_1.5_restatement} (hence Theorem \ref{main_theorem_absolute_constant_2}) for classical groups of bounded rank.

%Proving Theorem \ref{theorem_1.5_restatement} (hence Theorem \ref{main_theorem_absolute_constant_2} for classical groups of bounded rank) is now just a matter of checking. Indeed, thanks to Lemmas \ref{classical_groups_we can_use_m_con}, \ref{maximal_tori_extension_field_subgroup} and \ref{stabilizers_hyperplanes_maximal torus}, we know all the members of $\ca T(x)$ for every element $x\in A_b$ (the case of class $\ca C_1$ is easily understood, since $x$ is separable). %Therefore, we just need to apply Theorem \ref{only_maximal_tori_matter}(2) to the set $A_b=\{x_1, x_2, x_3, x_4\}$.

Let us prove the statement for unitary groups, for symplectic groups, and for $\Omega^+_{2m}(q)$ with $m$ even. The other cases are dealt with similarly. (In fact, we are omitting the proof of the easiest cases.) %we are leaving out precisely the cases which are easiest to check.

Since $q\geq Cr^r$ for some large constant $C$, it is easy to see that each maximal torus of $X_q$ contains separable elements, so Lemmas \ref{stabilizers_hyperplanes_maximal torus} and \ref{maximal_tori_extension_field_subgroup} apply.

For $x\in X_\sigma$, we denote by $\ca M_{\text{con}}(x)$ the members of $\ca M_{\text{con}}$ containing $x$ (this notation is used only here). Moreover, referring to the defining Steinberg endomorphism, we denote by $N^\pm_\ell$ the fixed points of the connected component of the stabilizer of a nondegenerate subspace of sign $\pm$ and of dimension $\ell$, and by $P_\ell$ the stabilizer of a totally singular $\ell$-space. Finally, in the following discussion we identify each torus with the corresponding (signed) partition; hence a partition can be contained in a subgroup.

\textbf{(i)} $\SU_n(q)$. If $n$ is odd, $\ca M_{\text{con}}(x_1)$ consists only of (unitary) extension field subgroups of type $\GU_{n/b}(q^b)$ ($b$ odd). On the other hand $\ca M_{\text{con}}(x_2)$ consists of $P_{(n-1)/2}$ and $N_1$. Therefore $\ca T(x_2)$ contains only the partitions with a $1$-cycle. By Lemma \ref{maximal_tori_extension_field_subgroup}, none of these belongs to $\ca T(x_1)$, hence $\ca T(x_1)\cap \ca T(x_2)=\emptyset$. Assume now $n$ is even. Again $\ca T(x_2)$ contains the partitions with a $1$-cycle. Moreover $\ca M_{\text{con}}(x_1)$ consists of unitary extension field subgroups, and $P_{n/2}$. Then $\ca T(x_1)\cap \ca T(x_2)=\emptyset$ (note that $P_{n/2}$ contains the partitions with all even parts).

\textbf{(ii)} $\Sp_{2m}(q)$. If $m$ is even and $q$ is odd, then $\ca M_{\text{con}}(x_1)$ contains every $\Sp_{2m/b}(q^b)$, but not $\GU_m(q)$ (by Lemma \ref{maximal_tori_extension_field_subgroup}). On the other hand $\ca M_{\text{con}}(x_2)$ consists only of $P_1$ and $N_2$. Hence $\ca T(x_2)$ contains the signed partitions with a $1$-cycle. By Lemma \ref{maximal_tori_extension_field_subgroup}, none of these partitions belongs to $\ca T(x_1)$, hence $\ca T(x_1)\cap \ca T(x_2)=\emptyset$.

If $m$ is odd and $q$ is odd, the difference is that $\ca M_{\text{con}}(x_1)$ contains $\GU_m(q)$, and as before every $\Sp_{2m/b}(q^b)$ (here $b$ is odd). On the other hand $\ca M_{\text{con}}(x_3)$ contains $P_m$, every $\Sp_{2m/b}(q^b)$, and does not contain $\GU_m(q)$. Moreover $\ca M_{\text{con}}(x_2)$ contains only $N_2$. Then $\ca T(x_2)$ contains signed partition with a $1$-cycle. But $(1^-, \ldots)$ is not contained in $\ca T(x_3)$, and $(1^+, \ldots)$ is not contained in $\ca T(x_1)$ (note that a partition contained in $P_m$ has all positive cycles). Hence $\ca T(x_1)\cap \ca T(x_2)\cap \ca T(x_3)=\emptyset$.

Assume now $q$ is even. %Note that in this case the subgroups $\SO^{\pm}_{2m}(q)$ ``incorporate'' other subgroups of maximal rank.
Note that $\GU_m(q)$ is contained in $\Omega^+_{2m}(q)$ or in $\Omega^-_{2m}(q)$ according to whether $m$ is even or odd (cf. \cite[Section 4.3]{KL}). Moreover, by Lemma \ref{stabilizers_hyperplanes_maximal torus} we deduce that a partition contained in $P_m$ is contained in $\Omega^+_{2m}(q)$. As a consequence we can ignore $\GU_m(q)$ and $P_m$. The result follows with arguments as above. Indeed, $\ca T(x_2)\cap \ca T(x_3)$ consists of the signed partitions $w$ with a $1$-cycle. If the product of the signs of $w$ is $1$ (resp. $-1$), then $w$ does not belong to $\ca T(x_1)$ (resp. $\ca T(x_4)$). Therefore $\ca T(x_1)\cap \ca T(x_2)\cap \ca T(x_3)\cap \ca T(x_4)=\emptyset$.
%But $w=(1^+, \ldots)$ does not belong to $\ca T(x_1)$, and $w=(1^-, \ldots)$ does not belong to $\ca T(x_4)$.

\textbf{(iii)} $\Omega_{2m}^+(q)$ with $m$ even. As in (v) of Subsection \ref{subsec:maximal_tori}, we only look at the underlying signed partition of a $W(D_m)$-class (although, in fact, all the $W(D_m)$-classes that will be mentioned are $W(B_m)$-classes). We see that $\ca T(x_4)$ contains $(1^+, \ldots)$, $(1^-, \ldots)$, $(2^-, \ldots)$. Now $\ca M_{\text{con}}(x_3)$ consists of $N_4^-$ and subgroups of type $\Omega^+_m(q^2)$. Of the three subpartitions listed above, only $(2^-, \ldots)$ can belong to $\Omega^+_m(q^2)$. Therefore, $\ca T(x_3)\cap \ca T(x_4)$ contains $(1^+, 1^-, \ldots)$ and $(2^-, \ldots)$. At this point we note that $\ca M_{\text{con}}(x_1)$ contains $P_m$, $\GU_m(q)$ and every $\Omega^+_{2m/b}(q^b)$. Then $\ca T(x_1) \cap \ca T(x_3)\cap \ca T(x_4)$ consists of the partitions contained in $\Omega^+_m(q^2)$ and of type $(2^-, \ldots)$. We observe that none of these belongs to $\ca T(x_2)$, and the proof is concluded.

We summarize the fact that the proof of Theorem \ref{main_theorem_absolute_constant_2}(1) is complete.

\begin{theorem}
\label{t:1.5_bounded_rank}
    The conclusion to Theorem \ref{main_theorem_absolute_constant_2}(1) holds.
\end{theorem}

\begin{proof}
For $G\cong \PSL_2(q)$, see Theorem \ref{main_psl_2}. In the other cases, the statement follows from Theorem \ref{theorem_1.5_restatement}, which was proved in Section \ref{subsection_exceptional_groups} for exceptional groups, and in this section for classical groups. 
\end{proof}

\section{Proof of Theorems \ref{main_theorem_strengthening_bounded_rank} and \ref{main_theorem_two_random_elements_bounded_rank}}
\label{section_proof_theorem_1.2}

This section is devoted to proving Theorems \ref{main_theorem_strengthening_bounded_rank} and \ref{main_theorem_two_random_elements_bounded_rank}, which, together with Theorem \ref{theoremg2order}(i), imply Theorem \ref{main_theorem_bounded_away_zero} for groups of Lie type of bounded rank. By Theorem \ref{main_psl_2}, we may assume $r\geq 2$, where $r$ denotes the rank of the ambient simple algebraic group $X$.

%In that subsection, we showed (for instance) that every element of $\Delta_3$ invariably generates with every element of $\Delta_6$. By Theorems \ref{almost_all_connected_components_maximal_rank} and \ref{correspondence_tori_weyl_group}, the elements of $\Delta_3$ (resp. $\Delta_6$) have proportion $1/12+O(1/q)$ %T_3 is obtained by twisting with the identity 
%(resp. $1/6+O(1/q)$). %T_6 is obtained by twisting with an elt of order 6, which has centralizer of order $6$.
%Therefore two random elements of $G_2(3^a)$ generate invariably with probability at least $2/(12\cdot 6) + O(1/q)=1/36 + O(1/q)$. In fact, using similar arguments one easily shows that the probability is of the form $1/9+O(1/q)$ (in Theorem \ref{theorem_elaboration} we will give a formula which allows to compute precisely the probability in general).  %since $T_3$ is connected to $T_6$, $T_4$ to $T_5$, and $T_5$ to $T_6$ --- and nothing else.

Let $X$ and $\sigma$ be chosen as in Sections \ref{subsection_exceptional_groups} and \ref{subsection_classical_groups}. As we did for Theorem \ref{main_theorem_absolute_constant_2}, it is convenient for us to prove a slightly different version of Theorem~\ref{main_theorem_strengthening_bounded_rank}. In the following proof, for a subset $Y=\{x_1, \ldots, x_t\}$ of $X_q$, we will say that $Y$ invariably generates at least $(X_q)'$ if for every $g_1, \ldots, g_t \in X_q$, every maximal overgroup of $\gen{x_1^{g_1}, \ldots, x_t^{g_t}}$ in $X_q$ contains $(X_q)'$. Moreover, set
\[
\alpha=\alpha(X)= \begin{cases*}
1/|W(E_8)| & if $X$ is exceptional \\
1/(4r) & if $X$ is classical.
\end{cases*}
\]

%contains $(X_q)'$ for every $g_1, \ldots, g_t \in X_q$.

\begin{theorem}
\label{main_theorem_strengthening_isogeny_types}
Assume $r\geq 2$ and $q\geq Cr^r$ for some large constant $C$. Assume $X_q\not\cong G_2(q)$ when $3\mid q$.
%$\alpha=1/|W(E_8)|$ if $X$ is exceptional, and $\alpha=1/4r$ if $X$ is classical.
Then $\P^*_{\emph{inv}}(X_q,x)\geq \alpha + O(r^r/q)$ for a proportion of elements $x\in X_q$ of the form $1-O(r^r/q)$. 
%as $q\rightarrow \infty$ almost all elements $y\in X_q$ satisfy 
%and a subset $Y\subseteq X_q$ such that
%\begin{itemize}
    %\item[(i)] $|Y|/|X_q| \rightarrow 1$ as $q\rightarrow \infty$, and
    %\item[(ii)] for every $y \in Y$, $\P^*_{\emph{inv}}(X_q,y)\geq \bar \delta(n)$.
%\end{itemize}
%If $X_q\neq G_2(3^a)$, then $Y$ can be chosen so that $|Y|/|X_q|$ tends to $1$.
\end{theorem}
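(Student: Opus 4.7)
The plan is to take $Y = \Delta$, so that $|Y|/|X(q)| = 1 - O(r^r/q)$ by Theorem \ref{almost_all_connected_components_maximal_rank}; we will verify the bound for every $y \in Y$. Fix such a $y$. Let $A_b$ be the set supplied by Theorem \ref{theorem_1.5_restatement}, so $\bigcap_{x \in A_b} \ca T(x) = \emptyset$. Theorem \ref{theorem_consequence_proof} then produces some $x^* = x^*(y) \in A_b$ such that $\{x^*, y\}$ invariably generates at least $X(q)'$. Every element of $A_b$ is regular semisimple, so the maximal torus of $x^*$ in $X(q)$ has the form $(T_{w_*})_\sigma$ for a well-defined $W$-conjugacy class $w_* \in W$.

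The key step is a \emph{torus transfer lemma}: for every $x \in \Delta_{T_{w_*}}$, the pair $\{x, y\}$ also invariably generates at least $X(q)'$. Suppose instead that for some $g_1, g_2 \in X(q)$ and some maximal $M \leq X(q)$ not containing $X(q)'$ we have $\gen{x^{g_1}, y^{g_2}} \leq M$. Since $\Delta$ is conjugation-invariant, both $x^{g_1}$ and $y^{g_2}$ lie in $\Delta$, which forces $M \in \ca M$ and both elements to lie in $M^\circ_\sigma$. In particular the maximal torus of $x^{g_1}$ in $X$ is some $X(q)$-conjugate $(T_{w_*})^c$ of $T_{w_*}$ contained in $M^\circ$. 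Setting $M' := M^{c^{-1}}$, we deduce $T_{w_*} \leq (M')^\circ$, hence $x^* \in M'$; simultaneously $y^{g_2 c^{-1}} \in M'$, so $\gen{x^*, y^{g_2 c^{-1}}} \leq M'$, contradicting the invariable generation of $\{x^*, y\}$.

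With the lemma in hand,
\[
\P^*_{\emph{inv}}(X(q), y) \geq \frac{|\Delta_{T_{w_*}}|}{|X(q)|} = \frac{1}{|C_W(\sigma w_*)|} + \frac{O(r^r)}{q},
\]
where the equality combines Theorem \ref{correspondence_tori_weyl_group} with $|X(q) \setminus \Delta| = O(r^r)|X(q)|/q$. It remains to check $|C_W(\sigma w_*)| \leq 1/\alpha$ uniformly in which $x^* \in A_b$ Theorem \ref{theorem_consequence_proof} returns. For exceptional groups this is immediate from $|C_W(\sigma w_*)| \leq |W| \leq |W(E_8)| = 1/\alpha$. For classical groups a direct case check against Table \ref{table_bounded_rank} suffices: in every entry the (signed) partition $w_*$ has centralizer of order at most $4r$ in the appropriate Weyl group, even after twisting by $\sigma$. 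For instance an $n$-cycle in $S_n$ has centralizer of order $n \leq 2r$, an $(m^{\pm})$-cycle in $W(B_m)$ has centralizer of order $2m = 2r$, and the signed partitions of the form $((m-1)^\pm, 1^\pm)$ or $((m-2)^-, 1^-, 1^+)$ have centralizer of order at most $4(m-1) < 4r$, using in the orthogonal cases that the $W(B_m)$-centralizer contains an odd-sign element and therefore halves upon passing to $W(D_m)$.

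The only substantive step is the torus transfer lemma; it leverages the fact that invariable generation of a pair depends only on the $X(q)$-conjugacy classes of its entries, together with the defining property of $\Delta$ that every maximal overgroup not containing $X(q)'$ lies in $\ca M$ and contains the element in its connected component. Everything else is bookkeeping with Weyl group centralizers, and the main obstacle is simply making sure the uniform bound $|C_W(\sigma w_*)| \leq 1/\alpha$ on the right-hand side survives the worst case in $A_b$.
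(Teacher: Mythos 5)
Your proposal is correct and follows essentially the same route as the paper: fix $y\in\Delta$, extract $x^*\in A_b$ via Theorems \ref{theorem_1.5_restatement} and \ref{theorem_consequence_proof}, transfer invariable generation from $x^*$ to all of $\Delta_{T_{w_*}}$ (your "torus transfer lemma" is exactly the paper's one-line appeal to the definition of $\Delta_S$ together with Theorem \ref{semisimple_in_stable_torus}), and then bound $|\Delta_{T_{w_*}}|/|X(q)|$ by $\P(W,\sigma,w_*)+O(r^r/q)\geq\alpha+O(r^r/q)$ via Theorems \ref{almost_all_connected_components_maximal_rank} and \ref{correspondence_tori_weyl_group}. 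Your centralizer-order case check ($1/|C_W(\sigma w_*)|\geq 1/4r$ for the classical entries, with the worst case $(2^-,2^-)$ in $W(D_4)$) is just an explicit version of the paper's "check easily" remark, so the two arguments coincide.
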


\begin{proof}
Let $A_b$ be the set of elements of $(X_q)'$ listed in Tables~\ref{table_exceptional_groups} and~\ref{table_bounded_rank}. Let $x\in \Delta$, recalling the definition of $\Delta$ in Notation~\ref{notation}(x). By Remark \ref{theorem_consequence_proof} and Theorem \ref{theorem_1.5_restatement}, we deduce that $x$ invariably generates at least $(X_q)'$ with some $y\in A_b$. By our choice of the set $A_b$ in the various cases, $y$ is regular semisimple: let $S=(T_w)_\sigma$, with $w\in W$, be its maximal torus in $X_q$.

By the choice of $y$, and by the definition of $\Delta_S$, we see that $\{x,z\}$ invariably generates at least $(X_q)'$ for every $z \in \Delta_S$.

%We claim that $\{y,z\}$ invariably generates at least $(X_q)'$ for every $z \in \Delta_S$. Assume not; then, by the definition of $\Delta$ we must have $\gen{y^{g_1}, z^{g_2}} \leq M$ for some $g_1,g_2\in X_q$ and some $M\in \ca M_{\text{con}}$. Now $z^{g_2}$ is regular semisimple; in particular its maximal torus in $X_q$ is contained in $M$. But this torus is $X_q$-conjugate to $S$, which contradicts the fact that $\{y,x\}$ invariably generates at least $(X_q)'$.

%Then $y \in T_1$ and $z \in T_2$ for maximal tori $T_1$ and $T_2$ contained in $M$ and $M^g$, respectively. Now $z$ is regular semisimple, and its maximal torus is conjugate to the maximal torus of $x$. In particular $T_2=S^h$ for $h\in X_q$ and $x^h \in T_2\leq M^g$. This implies that $\{x,y\}$ does not invariably generates at least $(X_q)'$, which is a contradiction.

In order to conclude the proof, we only need to establish an appropriate lower bound on the proportions $|\Delta|/|X_q|$ and $|\Delta_S|/|X_q|$. We know that $|\Delta|/|X_q|=1-O(r^r/q)$ by Theorem \ref{almost_all_connected_components_maximal_rank}. Moreover $|\Delta_S|/|X_q|=\P(W,\sigma,w)+O(r^r/q)$ by Theorems \ref{almost_all_connected_components_maximal_rank} and \ref{correspondence_tori_weyl_group}. Clearly $\P(W,\sigma,w)\geq 1/|W|$, which is at least $1/|W(E_8)|$ for exceptional groups. For classical groups, one can check that $\P(W,\sigma,w)\geq 1/(4r)$ ,
%(noting that we may draw a random element of $W(C_m)$ or $W(D_m)$ in two steps: first draw a random permutation $\pi$ of $S_m$, and then assign a sign to each cycle of $\pi$ at random, with the obvious restriction in $W(D_m)$.) %; note that in the statement we may take $c/r=\text{min}\{4/r, 1/N\}$, where $N=|W(E_8)|$.
which is attained for $(2^-,2^-)$ in $W(D_4)$ and for $(1^-, 1^-)$ in $W(C_2)$ (see Remark \ref{rem:weyl_bm}, below). %This bound is attained for $(1^-,1^-)$ in $W(B_2)$;
%The bound is attained for $(2^-,2^-)$ in $W(D_4)$. Another case which gets close to the bound is $((m-1)^\pm, 1^\pm)$ in $W(C_m)$ for $m\geq 3$; the corresponding class has proportion $1/4(m-1)$ in $W(C_m)$.
The proof is concluded.
\end{proof}

We note that, by the same proof, with more care one can improve the value of $\alpha$ in case $X$ is exceptional.

\begin{remark}
\label{rem:weyl_bm}

We make some observations regarding the conjugacy class of a random element in wreath products which, in particular, readily imply the bound $\P(W,\sigma,w)\geq 1/(4r)$ stated in the last paragraph of the proof of Theorem \ref{main_theorem_strengthening_isogeny_types}.

Let $H$ be a finite group and let $G=H\wr S_m$. Recall by Remark \ref{rem:conjugacy_classes_wreath_products} that conjugacy classes in $G$ are in bijection with partitions of $m$ where each part is labelled by a conjugacy class of $H$. Now, a random element of $H\wr S_m$ is clearly given by a random permutation $\tau\in S_m$ and a random element $(x_1, \ldots, x_m)\in H^m$. It follows immediately from the first paragraph of Remark \ref{rem:conjugacy_classes_wreath_products} that the conjugacy class of a random element of $H\wr S_m$ can be computed in two steps: first draw a random permutation $\tau$ of $S_m$, and then label each cycle of $\tau$ by a conjugacy class $C$ of $H$ with probability $|C|/|H|$ (this gives naturally a labelled partition, just by viewing cycles as parts). 

In particular, in the case $H=C_2$, so $G=C_2\wr S_m\cong W(B_m)$, the conjugacy class of a random element of $G$ can be computed by drawing a random permutation $\tau$ of $S_m$, and equipping each cycle of $\tau$ with a sign $\pm$ with equal probability. For example, if $C$ is the class corresponding to $(m^+)$, then $|C|/|G|=1/(2m)$.

Consider now the subgroup $K\cong W(D_m)$ of $G\cong W(B_m)$ consisting of the elements $(x_1, \ldots, x_m)\tau$ with $x_i=\pm$ and $\prod_{i=1}^m x_i = +$. Then, the $G$-classes of elements of $K$ are in bijection with signed partitions of $m$ where the product of the signs is $+$. For a $K$-class $D$ whose $G$-class is $C$, we clearly have $|D|/|K| = \delta |C|/|G|$ where $\delta=2$ if $D=C$ and $\delta=1$ otherwise. For example, if $m$ is even and if $D$ is a $K$-class whose $G$-class $C$ corresponds to $(m^+)$, then  $|D|/|K|=1/(2m)$.

% In particular, the $G$-class of a random element of $K$ can be computed by drawing a random permutation $\tau$ of $S_m$, and equipping each cycle of $\tau$ with a sign $\pm$ with equal probability, with the restriction that the product of the signs is $+$. In the case where the $G$-class splits into two classes, each of the classes will occur with equal probability (recall that this happens if and only if the underlying signed partition has all parts of even length and of plus sign). For example, if $m$ is even and if $C$ is a $K$-class whose $G$-class $D$ corresponds to $(m^+)$, then $|D|/|K|=1/m$ and $|C|/|K|=1/(2m)$.

Finally, the $K$-classes of elements in $G\setminus K$ are $G$-stable, and are in bijection with signed partitions of $m$ where the product of the signs is $-$. 

From this discussion, we can easily deduce the bound $\P(W,\sigma,w) \geq 1/(4r)$, stated in the last paragraph of the proof of Theorem \ref{main_theorem_strengthening_isogeny_types}, which is attained for $(2^-,2^-)$ in $W(D_4)$ and for $(1^-, 1^-)$ in $W(C_2)$. Note that there are other cases close to this bound; for instance, $((m-1)^\pm, 1^\pm)$ in $W(C_m)$ for $m\geq 3$ gives probability $1/(4(m-1))=1/(4(r-1))$.

\end{remark}

We now deduce Theorem \ref{main_theorem_strengthening_bounded_rank} from Theorem \ref{main_theorem_strengthening_isogeny_types}.
\begin{proof}[Proof of Theorem \ref{main_theorem_strengthening_bounded_rank}]
%Next we show that Theorem \ref{main_theorem_strengthening_bounded_rank} follows.
By Theorem \ref{main_psl_2}, we may assume $r\geq 2$. With notation as in the proof of Theorem \ref{main_theorem_strengthening_isogeny_types}, we have that for every $x\in \Delta':=\Delta\cap (X_q)'$, $x$ invariably generates $(X_q)'$ with every element of $\Delta'_S:=\Delta_S\cap (X_q)'$. %for this last part, assume not; then up to conjugation you are inside a maximal subgroup $M$ of $(X_q)'$. Our choices of $S$ are explicit, so you can easily check that $M$ is contained in a subgroup of $X_q$ of maximal rank, which is a contradiction. (The only thing you must make sure is that $M$ is not contained ONLY in maximal subgroups of $X_q$ containing $(X_q)'$. I don't even know if any such subgroup $M$ exists; probably not; but I don't see a formal argument. In our cases use a direct check. 
Therefore we only need to establish an appropriate lower bound on $|\Delta'|/|(X_q)'|$ and $|\Delta'_S|/|(X_q)'|$. By our choice of the type of $X$, the index of $(X_q)'$ in $X_q$ is bounded (it is at most $3$), hence $|\Delta'|/|(X_q)'|=1-O(r^r/q)$. Moreover, with reasoning similar to that in Lemma \ref{isogeny_types}, and using Theorem \ref{correspondence_tori_weyl_group}, we see that \[|\cup_{g\in X_q}(S\cap (X_q)')^g|/|(X_q)'| \geq \P(W,\sigma, w)+O(1/q),\] %let C be the set of the conjugate of the torus of the finite simply connected $G$. Since $Z_\sigma$ is contained in every maximal torus, we have $|C|/|G|=|C^\pi|/|(X_q)'|$. On the other hand by Theorem \ref{correspondence_tori_weyl_group} we have $|C|/|G|=|L|/|X_q|+O(1/q)$, where $L$ is the set of conjugates of the torus of $X_q$. Certainly $C^\pi \leq L\cap (X_q)'$ (is there a general argument to see that equality holds?); therefore $|L\cap (X_q)'| \geq |C|/|G| = |L|/|X_q|+O(1/q)$. I would say that asymptotic equality holds, but I don't see a formal argument. Note also that we could also just pay a factor $|X_q|/|(X_q)'|\leq 3$, so there's really no problem (I haven't done this so that we do not need to change the value of $\alpha$...)
whence \[|\Delta'_S|/|(X_q)'| \geq \alpha +O(r^r/q).\]
This concludes the proof.
%Recall also Lemma \ref{isogeny_types}. %Again by Lemma \ref{isogeny_types}, we see that Theorem \ref{main_theorem_strengthening_isogeny_types} holds for the other isogeny types of groups; the argument above allows to pass to the derived subgroup for every isogeny type.
\end{proof}

\begin{proof}[Proof of Theorem~\ref{main_theorem_two_random_elements_bounded_rank}]
For the group $G=G_2(3^a)$, Theorem \ref{main_theorem_two_random_elements_bounded_rank} follows from Theorem \ref{theoremg2order}(i). %(see also Subsection \ref{subsection_elaboration})
The remaining cases follow from Theorem~\ref{main_theorem_strengthening_bounded_rank}.
\end{proof}

\subsection{An elaboration on Theorem \ref{main_theorem_two_random_elements_bounded_rank}}
\label{subsection_elaboration}
%It is certainly desirable to obtain sharp bounds in Theorem \ref{main_theorem_two_random_elements_bounded_rank}. In this paper we do not pursue this goal. However, we now show how our methods can be applied in order to do this.
We obtain here an asymptotic for the probability in Theorem \ref{main_theorem_two_random_elements_bounded_rank}. For simplicity, we take $X$ of simply connected type (so $X_\sigma=X_q$ is quasisimple). Let $\{T_1, \ldots, T_\ell\}$ be a set of representatives for the $X_q$-conjugacy classes of maximal tori of $X_q$. Write $T_i=(T_{w_i})_\sigma$ with $w_i \in W$, as we did in Subsection \ref{subsection_weyl_group_tori}. Define a relation $\sim$ on $\{1, \ldots, \ell\}$ as follows. If $1 \leq i,j \leq \ell$, then $i \sim j$ if there are no conjugates of $T_i$ and $T_j$ with a common overgroup in $\ca M_{\text{con}}$.

\begin{theorem}
\label{theorem_elaboration} Assume $r\geq 2$. Let $x_1,x_2 \in X_q$ be chosen uniformly at random. Then,
\[
\P(\gen{x_1,x_2}_I=X_q)=\sum_{\substack{(i,j)\\i\sim j}} \P(W,\sigma,w_i)\P(W,\sigma, w_j) + \frac{d(r)}{q}
\]
for some function $d(r)$.
\end{theorem}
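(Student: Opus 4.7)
The plan is to reduce the computation to a sum over pairs of torus classes, and then invoke the Weyl-group dictionary from Theorem \ref{correspondence_tori_weyl_group}. For $x \in \Delta$, the element $x$ is regular semisimple and so lies in a unique maximal torus $S(x) = \C_X(x)^\circ_\sigma$ of $X(q)$; assign to $x$ the index $\tau(x) \in \{1, \ldots, \ell\}$ such that $S(x)$ is $X(q)$-conjugate to $T_{\tau(x)}$.

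First I would argue that, for $x_1, x_2 \in \Delta$, the pair $\{x_1,x_2\}$ invariably generates $X(q)$ if and only if $\tau(x_1) \sim \tau(x_2)$. Unwinding the definition of $\ca T(x)$ and using that for $x \in \Delta$ every member $M_\sigma \in \ca M(x)$ contains $x$ in its connected component, hence also contains the unique maximal torus $S(x)$, one sees that $\ca T(x)$ consists precisely of those maximal tori of $X(q)$ that are contained in some $M^\circ_\sigma \in \ca M_{\text{con}}$ with $S(x) \leq M^\circ$. Therefore $\ca T(x_1) \cap \ca T(x_2) \neq \emptyset$ is equivalent to the existence of some $M \in \ca M_{\text{con}}$ containing $X(q)$-conjugates of $T_{\tau(x_1)}$ and $T_{\tau(x_2)}$, i.e., to $\tau(x_1) \not\sim \tau(x_2)$. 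Since $X(q)$ is perfect (as $X$ is of simply connected type and $X(q)$ is quasisimple), the argument of Theorems \ref{only_maximal_tori_matter} and \ref{theorem_consequence_proof} then gives the stated equivalence.

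Next I would compute the probability by conditioning on $\Delta$. By Theorem \ref{almost_all_connected_components_maximal_rank}, $|X(q) \setminus \Delta|/|X(q)| = O(r^r/q)$, so pairs with at least one element outside $\Delta$ contribute $O(r^r/q)$, and
\[
\P(\gen{x_1,x_2}_I = X(q)) = \sum_{(i,j)\,:\,i\sim j} \frac{|\wt{T_i} \cap \Delta|}{|X(q)|} \cdot \frac{|\wt{T_j} \cap \Delta|}{|X(q)|} + O\!\left(\frac{r^r}{q}\right).
\]
Theorem \ref{correspondence_tori_weyl_group}, combined with the same bound on $|\Delta^c|$, yields $|\wt{T_i} \cap \Delta|/|X(q)| = \P(W, \sigma, w_i) + O(r^r/q)$.

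Finally, expanding each product and collecting error terms gives a total residual of the form $d(r)/q$ for some function $d$: the number $\ell$ of $X(q)$-conjugacy classes of maximal tori is bounded by the number of conjugacy classes of $\gen{\sigma} \ltimes W$, hence in terms of $r$, so all error contributions can be absorbed. There is no substantial obstacle here; the result falls out of the infrastructure of Sections \ref{subsection_maximal_rank}--\ref{subsection_weyl_group_tori}. The one point that requires care is the identification of $\ca T(x)$ for $x \in \Delta$, but the definition of $\Delta$ was engineered precisely so that $\ca T(x)$ is determined by the class $\tau(x)$, making $\sim$ the correct combinatorial invariant.
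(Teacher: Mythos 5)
Your overall strategy—restrict to $\Delta$, decompose it into the $\Delta_{T_i}=\wt{T_i}\cap\Delta$, and convert torus proportions via Theorems \ref{almost_all_connected_components_maximal_rank} and \ref{correspondence_tori_weyl_group}—is exactly the paper's, and the counting at the end is fine. The genuine gap is in your identification of the pairwise criterion: the claimed equivalence between $\ca T(x_1)\cap\ca T(x_2)\neq\emptyset$ and $\tau(x_1)\not\sim\tau(x_2)$ is false, and the direction you actually need (that $\tau(x_1)\sim\tau(x_2)$ forces the pair to invariably generate) is the one that breaks. Since $\ca M(x)$ contains \emph{all} $X(q)$-conjugates of the maximal-rank overgroups of $x$, the set $\ca T(x)$ is closed under conjugation; so if some third torus class $T_k$ lies in one member of $\ca M_{\text{con}}$ containing a conjugate of $S(x_1)$ and in a \emph{different} member containing a conjugate of $S(x_2)$, then every conjugate of $T_k$ lies in $\ca T(x_1)\cap\ca T(x_2)$, even though no single member of $\ca M_{\text{con}}$ contains conjugates of both $S(x_1)$ and $S(x_2)$. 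Concretely, in $\Sp_4(q)$ with $q$ odd take $x_1$ in a torus of type $(2^-)$ and $x_2$ in a torus of type $(1^-,1^-)$: the class $(2^+)$ lies in $\Sp_2(q^2)$ (an overgroup of $(2^-)$) and in $\GU_2(q)$ (an overgroup of $(1^-,1^-)$), so $\ca T(x_1)\cap\ca T(x_2)\neq\emptyset$, yet $(2^-)\sim(1^-,1^-)$ because $\Sp_2(q^2)$ is the only member of $\ca M_{\text{con}}$ containing a $(2^-)$-torus and it contains no $(1^-,1^-)$-torus; such pairs do invariably generate. Note also that Theorem \ref{theorem_consequence_proof} concerns a random $y\in\Delta$ against a set $A$, not the elements of $A$ against each other, so it cannot convert nonemptiness of $\ca T(x_1)\cap\ca T(x_2)$ into failure of generation.

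The repair is to drop the detour through $\ca T(x_1)\cap\ca T(x_2)$ and argue directly, which is what the paper does: if $x_1^{g_1},x_2^{g_2}$ lie in a common maximal subgroup $M$, then since $x_1,x_2\in\Delta$ and $X(q)$ is perfect we must have $M=K_\sigma\in\ca M$ with $x_i^{g_i}\in K^\circ_\sigma$, and Theorem \ref{semisimple_in_stable_torus} forces $S(x_1)^{g_1},S(x_2)^{g_2}\leq K^\circ$, i.e. $\tau(x_1)\not\sim\tau(x_2)$; conversely, if $\tau(x_1)\not\sim\tau(x_2)$, a member of $\ca M_{\text{con}}$ containing conjugates of both tori yields conjugates of $x_1,x_2$ inside a common proper subgroup. (Equivalently, the correct torus-theoretic condition is that $S(x_2)$ is $X(q)$-conjugate to a member of $\ca T(x_1)$, not that $\ca T(x_1)$ and $\ca T(x_2)$ intersect.) With this replacement, your conditioning on $\Delta$, the estimates $|\Delta_{T_i}|/|X(q)|=\P(W,\sigma,w_i)+O(r^r/q)$, and the fact that the number $\ell$ of torus classes is bounded in terms of $r$ give the stated formula.
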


\begin{proof}
%Let $Y:=Y_{X_q}$ be as in Theorem \ref{theorem_consequence_proof}. %; that is, $Y$ is the set of regular semisimple elements of $X_q$ which do not lie in subgroups not belonging to $\ca M$, and which do not lie in $\Gamma(X_q)$.
Note that $\Delta=\cup_{i=1}^\ell \Delta_{T_i}$, a disjoint union. By the definition of $\Delta$ and by Lemma \ref{semisimple_in_stable_torus}, whenever $i\sim j$ every element of $\Delta_{T_i}$ invariably generates $X_q$ with every element of $\Delta_{T_j}$. Clearly this is not true if $i\not\sim j$. By Theorems \ref{almost_all_connected_components_maximal_rank} and \ref{correspondence_tori_weyl_group}, we have $|\Delta_{T_i}|/|X_q|=\P(W,\sigma, w_i)+O(r^r/q)$. %Note that $\Delta_i \cap \Delta_j =\emptyset$ for $i\neq j$. since a regular semisimple element is in a unique maximal torus.
The statement follows. %note that $\ell$ is bounded in terms of $r$.
\end{proof}

%More care is needed if one wants to improve the error term $d(r)/q^a$. However 
\begin{remark}
We have a nice and rather explicit expression for the main term of $\P(\gen{x_1,x_2}_I=X_q)$, which one should be able to estimate with accuracy (for all exceptional groups it should be possible to compute the exact value). %In Section \ref{section_proof_theorem_1.2}, we used this argument to compute that for $G_2(3^a)$ the main term is $1/9$. A very easy case is $\SL_2(q)$, where the main term is $1/2$.
For instance, with notation as in Subsections \ref{subsection_G2_not_3} and \ref{subsection_G2_power_3}, in case $G=G_2(3^a)$ we have $3\sim 6$, $4\sim 5$ and $5\sim 6$. %3 not connected to 5 because of $SL_3(q)$; 4 not connected to 6 because of $\SU_3(q)$; 3 not connected to 4 because of the centralizer of an involution.
One deduces easily that $\P(\gen{x_1,x_2}_I=G)=1/9+O(1/q)$. A very easy case is $\SL_2(q)$, where the probability is $1/2+O(1/q)$ (for the error term, see the proof of Theorem \ref{main_psl_2} at the beginning of Section \ref{subsection_classical_groups}).

Note, also, that this is quite an unusual way to address a problem of random generation. Indeed, in these problems one usually proves that there is a small chance to be trapped in a maximal subgroup -- and, as a consequence, there is a large probability to generate. Here, on the other hand, we are directly \textit{exhibiting} many pairs of elements which (invariably) generate, which is a sort of opposite approach. In the language of Subsection \ref{subsection_context}, we are exhibiting large complete bipartite subgraphs of the graph $\Lambda_e(X_q)$.
\end{remark}

\section{A lower bound on $|A_b|$}
\label{section_lower_bound_|A|}
% We are not going to address the problem of showing that the value of $|X|$ in Table \ref{main_table} is best possible \textit{in each case} (and indeed we already observed in Remark \ref{we_could_have_chosen_three_elements} that in one case it can be improved). However, here we show that there are cases in which we do need four elements.
We show that there are cases in which we need $|A_b|\geq 4$ in Theorem \ref{main_theorem_absolute_constant_2} (note that we used a set of size at least $5$ only in the case $G=F_4(2^a)$).

\begin{lemma}
\label{four_elements_might_be_needed}
Assume $q$ is even and $m\geq 2$. Let $G=\emph{PSp}_{2m}(q)=\SpT_{2m}(q)$, and let $Y$ be a subset of $G$ of size $3$. Then, $\P_{\emph{inv}}(G,Y)\leq 1-1/2^mm! +O(1/q)$.
\end{lemma}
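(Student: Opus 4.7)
The plan is to apply Theorem \ref{only_maximal_tori_matter} (with $A=Y$) and reduce the bound to a combinatorial non-emptiness statement in the Weyl group $W=W(C_m)$, which has order $2^m m!$. That theorem gives
\[
1-\P_{\text{inv}}(G,Y)=\P(W,\sigma,\Omega)+O(1/q),
\]
where $\Omega$ indexes the $G$-conjugacy classes of tori lying in $\bigcap_{i=1}^{3}\ca T(y_i)$. Since every Weyl conjugacy class has size at least one, any non-empty $\Omega$ contributes at least $1/(2^m m!)$ to the right-hand side, so it suffices to prove that $\bigcap_{i=1}^{3}\ca T(y_i)\neq\emptyset$ for every $Y=\{y_1,y_2,y_3\}\subseteq G$.

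For this, the key input is Lemma \ref{stabilizers_hyperplanes_maximal torus}: using the isomorphism $\Sp_{2m}(q)\cong\SO_{2m+1}(q)$ available in characteristic $2$, the maximal subgroup $\O^{\varepsilon}_{2m}(q)<G$ contains representatives of all (and only) the torus classes indexed by signed partitions of sign $\varepsilon$, and these two subgroups between them exhaust all maximal tori of $G$ up to conjugacy. Every regular semisimple element of $G$ is therefore conjugate into $\O^{+}_{2m}(q)$ or $\O^{-}_{2m}(q)$; assign to each $y_i$ an associated sign $\varepsilon_i\in\{+,-\}$. By pigeonhole at least two of them coincide, say $\varepsilon_1=\varepsilon_2=\varepsilon$, so the entire set of sign-$\varepsilon$ torus classes is contained in $\ca T(y_1)\cap\ca T(y_2)$. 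When also $\varepsilon_3=\varepsilon$ we are done, so the residual task is to produce a sign-$\varepsilon$ torus inside $\ca T(y_3)$ in the case $\varepsilon_3=-\varepsilon$, via an overgroup of $y_3$ other than $\O^{-\varepsilon}_{2m}(q)$.

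Two sub-cases suffice. If the signed partition $c_3$ of $y_3$ has at least two parts $(a_1^{\delta_1},\dots,a_t^{\delta_t})$, then $y_3$'s torus lies in the stabilizer of the associated nondegenerate orthogonal decomposition $V=V_{2a_1}\perp\cdots\perp V_{2a_t}$, and by Theorem \ref{classes_c1_c2_c3}(1) the torus set of this stabilizer realizes every combination of signs on its blocks, in particular yielding total sign $\varepsilon$. If $c_3=(m^{-\varepsilon})$ is irreducible, pick any prime $b\mid m$ (which exists since $m\geq 2$) and use the extension field overgroup $\Sp_{2m/b}(q^b)<G$: by Lemma \ref{maximal_tori_extension_field_subgroup}(2) its torus classes include both $(m^+)$ and $(m^-)$, exactly one of which has sign $\varepsilon$. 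Non-regular-semisimple $y_i$ only enlarge $\ca T(y_i)$ and, moreover, form an $O(1/q)$ proportion of $G$ by Theorem \ref{almost_all_elements_regular_semisimple}, so they are absorbed in the error term. The main obstacle is this case analysis: ensuring the decomposition stabilizer or the extension field subgroup always produces a torus of the required total sign, which reduces to elementary checks on signed partitions of $m$.
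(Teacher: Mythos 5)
Your reduction to the claim that $\bigcap_{i=1}^{3}\ca T(y_i)\neq\emptyset$ is correct and matches the paper, and your pigeonhole argument (two of the $y_i$ lie in orthogonal subgroups of the same sign $\varepsilon$, so all sign-$\varepsilon$ torus classes lie in $\ca T(y_1)\cap\ca T(y_2)$, and one then hunts for a single sign-$\varepsilon$ torus in $\ca T(y_3)$ via a subspace stabilizer or an extension field subgroup) is a genuinely different, and for semisimple elements essentially sound, organization of the case analysis; the appeal to Theorem \ref{classes_c1_c2_c3}(1) is not quite the right citation for "the stabilizer of a nondegenerate subspace contains tori of both total signs", but that fact is elementary.

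The genuine gap is your treatment of elements that are not (regular) semisimple. The lemma quantifies over \emph{every} $3$-element subset $Y$, so the $y_i$ are adversarially chosen — for instance all three could be unipotent — and your entire sign/signed-partition bookkeeping is undefined for such elements. The closing sentence, that non-regular-semisimple $y_i$ "only enlarge $\ca T(y_i)$" and "form an $O(1/q)$ proportion of $G$, so they are absorbed in the error term", does not repair this: the $O(1/q)$ error term in the statement refers to the random element $y$ drawn to test invariable generation, not to the fixed set $Y$, so bad choices of $Y$ cannot be discarded probabilistically; and there is no monotonicity statement comparing $\ca T$ of a non-semisimple element with that of some semisimple element, so "only enlarge" is unsupported. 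This is precisely where the paper's proof does real work for arbitrary elements: it splits according to whether the $y_i$ act reducibly or irreducibly on the natural module (a reducible element always lies in a genuine subspace stabilizer, so the all-plus torus, or the torus $(1^+,\ldots,1^+,1^-)$, can be placed in the relevant $\ca T(y_i)$), and in the residual case it invokes Dye's theorem that \emph{every} element of $\Sp_{2m}(q)$ with $q$ even lies in some $\SO^{\pm}_{2m}(q)$, which is what supplies a sign for a completely arbitrary $y_3$. To complete your argument you would need an input of this kind (Dye for all elements, plus an argument covering non-semisimple $y_3$ whose only relevant overgroups are $P_m$ and one orthogonal subgroup), rather than the probabilistic absorption you propose.
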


\begin{proof}
Note that $2^m m! = |W(C_m)|$. Let $y_1, y_2, y_3$ be elements of $G$; we claim that $\Omega:=\cap \ca T(y_i)\neq \emptyset$. Assume we prove the claim, and assume $S \in \Omega$. Then, the elements lying in $\wt S$ contribute to $1-\P_{\text{inv}}(G,Y)$. By Theorem \ref{correspondence_tori_weyl_group}, the proportion of elements lying in $\wt S$ is at least $1/|W(C_m)|+O(1/q)$. Therefore it is sufficient to prove the claim.

Note that the maximal torus corresponding to $w=(1^+, \ldots, 1^+)$ is contained in every maximal subgroup from class $\ca C_1$. Therefore, if the $y_i$ all act reducibly then this torus belongs to $\Omega$.

Now note that the torus corresponding to $w=(1^+, \ldots, 1^+, 1^-)$ is contained in $\SO^-_{2m}(q)$, and in every subspace stabilizer except for the stabilizer of a totally singular $m$-space. Therefore, if none 
of the $y_i$ acts as $(m^+)$ (i.e., irreducibly on two complementary totally singular subspaces), then this torus belongs to $\Omega$.

By the previous two paragraphs, we may assume that $y_1$ acts irreducibly and that $y_2$ acts as $(m^+)$. Note that both $y_1$ and $y_2$ belong to $\Sp_{2m/b}(q^b)$ for every prime divisor $b$ of $m$. Assume now $y_3$ lies in $\SO^{\varepsilon}_{2m}(q)$, with $\varepsilon \in \{+,-\}$ (it is well known that every element belongs to such a subgroup; cf. \cite{Dye}). Now observe that $\Sp_{2m/b}(q^b)$ and $\SO^{\varepsilon}_{2m}(q)$ contain a common maximal torus: the maximal torus corresponding to $w=(m^{\varepsilon})$, for instance. This concludes the proof.
\end{proof}

Next, we show that for groups of bounded rank we cannot have $|A_b|=1$ in Theorem \ref{main_theorem_absolute_constant_2}.%We prove this for quasisimple groups. The result for simple groups follows immediately, since the centre is contained in every maximal subgroup. In the statement below $X'_{\sigma}$ denotes the derived subgroup of $X_{\sigma}$.

\begin{lemma}
\label{classical_bounded_rank_must_have_at_least_two_elements}
Let $X$ be a simple linear algebraic group, $\sigma$ a Steinberg morphism, and $x \in X_{\sigma}$. Then, $x$ is contained in a subgroup of $X_{\sigma}=X_q$ of maximal rank. In particular, $\P^*_{\emph{inv}}(X_q,x)\leq 1 - 1/|W| + O(1/q)$.
\end{lemma}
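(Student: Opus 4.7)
The plan is to show that $\ca T(x) \neq \emptyset$ for every $x \in X_\sigma$, since then the desired bound will follow directly from Theorem \ref{only_maximal_tori_matter}. This reduces to producing a proper $\sigma$-stable closed subgroup of $X$ of maximal rank containing $x$.

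Use the Jordan decomposition $x = su = us$, with $s, u \in X_\sigma$, $s$ semisimple and $u$ unipotent. If $s \notin \Z(X)$, then $L := \C_X(s)^\circ$ is a proper, $\sigma$-stable, connected reductive subgroup of $X$ of maximal rank (proper by dimension, $\sigma$-stable because $\sigma(s)=s$); by the classical fact that unipotent elements in the centralizer of a semisimple element of an algebraic group lie in the connected component, $u \in L$, so $x = su \in L$. If $s \in \Z(X)$ and $u = 1$, then $x = s$ is semisimple and Theorem \ref{semisimple_in_stable_torus} (applied with $H = X$) places it in a $\sigma$-stable maximal torus $T$, and $\N_X(T)$ is a proper $\sigma$-stable subgroup of maximal rank containing $x$. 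If $s \in \Z(X)$ and $u \neq 1$, then $u$ is a nontrivial $p$-element of $X_\sigma$, hence lies in a Sylow $p$-subgroup of $X_\sigma$; any such has the form $U_\sigma$ with $U$ the unipotent radical of a $\sigma$-stable Borel $B$ of $X$. Since $\Z(X)$ is contained in every maximal torus of $X$, hence in $B$, we obtain $x = su \in B$, a proper $\sigma$-stable parabolic subgroup of maximal rank.

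In all cases $x$ lies in $L_\sigma$ for some proper $\sigma$-stable subgroup $L$ of maximal rank, so $x$ is contained in some maximal subgroup $M$ of $X_\sigma$ containing a maximal torus; such an $M$ belongs to $\ca M$. A maximal torus $T$ of $M^\circ$ then lies in $\ca T(x)$; writing $T = (T_w)_\sigma$ with $w \in W$, the set $\Omega$ produced by Theorem \ref{only_maximal_tori_matter} applied to $A = \{x\}$ contains the class of $w$. Hence $\P(W,\sigma,\Omega) \geq 1/|\C_W(\sigma w)| \geq 1/|W|$, and Theorem \ref{only_maximal_tori_matter} yields
\[
1-\P^*_{\text{inv}}(X(q),x) \geq 1/|W|+O(r^r/q) = 1/|W|+O(1/q),
\]
since $X$, and thus $r$, is fixed. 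The only really delicate step is constructing a $\sigma$-stable subgroup of maximal rank containing $x$ when $s$ is central and $u$ is nontrivial; the Sylow $p$-argument above handles precisely this case.
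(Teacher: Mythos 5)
Your proof is correct and follows essentially the same route as the paper's: Jordan decomposition $x=su$, the centralizer of $s$ (a $\sigma$-stable subgroup of maximal rank) when $s$ is noncentral, a Borel/parabolic obtained via Sylow $p$-subgroups when $s$ is central, and then the torus--Weyl group correspondence for the probability bound. The only differences are cosmetic: the paper works with the full centralizer $\C_X(s)$ rather than $\C_X(s)^\circ$ (so the classical fact that unipotent elements of $\C_X(s)$ lie in the identity component is not needed, since ``maximal rank'' does not require connectedness), and it deduces the bound directly from Theorem \ref{correspondence_tori_weyl_group} rather than from Theorem \ref{only_maximal_tori_matter}, thereby avoiding the latter's hypothesis $r\geq 2$.
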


\begin{proof}
Write $x=us$ for the Jordan decomposition into the unipotent part $u$ and the semisimple part $s$. Every parabolic subgroup of $X_\sigma$ contains a Sylow $p$-subgroup of $X_\sigma$; hence $u$ belongs to a conjugate of every parabolic subgroup. Moreover $\text Z(X_\sigma)$ is contained in every parabolic of $X_\sigma$. %Let $s\in Z(X_\sigma)$; then $s$ is semisimple. in particular $s$ lies in a maximal torus of $X$. Since $s$ is central and all maximal tori of $X$ are conjugate, it follows that $s$ belongs to all maximal tori of $X$. Now a parabolic of $X_\sigma$ contains a torus of $X_\sigma$; in particular $s$ belongs to every parabolic.
Therefore if $s\in \text Z(X_\sigma)$ we have that $x$ is contained in a parabolic of $X_\sigma$. Assume then $s\notin \text Z(X_\sigma)$. Then $x \in \C_X(s)<X$, which is $\sigma$-stable. By Lemma \ref{semisimple_in_stable_torus}, $s$ is contained in a $\sigma$-stable maximal torus $T$ of $X$, hence $T\leq \C_X(s)$ and $\C_X(s)$ is of maximal rank. The first part of the statement is proved. The last part follows from Theorem \ref{correspondence_tori_weyl_group} and the fact that, if $S\in \ca T(x)$, then the elements of $\wt S$ contribute to $1-\P^*_{\text{inv}}(X_q,x)$. %if you take $X$ of simply connected type you can remove the *; or if you want use Lemma \ref{isogeny_types}.
\end{proof}

\section{Groups of Lie type of large rank}
\label{section_large_rank}
In this final section we prove Theorem \ref{main_theorem_bounded_away_zero} and Theorem \ref{main_theorem_absolute_constant_2} for groups of Lie type of large rank. We work with quasisimple groups $G$ rather than the simple quotients $G/\Z(G)$ (this makes no difference, since $\Z(G)$ is contained in every maximal subgroup of $G$). Specifically, consider the following groups:
\begin{equation}
\label{eq:quasi_quasi}
G=\SL_n(q), \SU_n(q), \Sp_n(q), \Omega^\varepsilon_n(q)
\end{equation}
with $\varepsilon\in\{+,-,\circ\}$. We will denote by $V$ the natural $n$-dimensional module for $G$. We may assume that $n$ is large.

%Let $G=G_n(q)$ be a classical quasisimple group of Lie rank $n$ defined over the field with $q$ elements, except in case $G=\SU_n(q)$, which is defined over the field with $q^2$ elements. Let $V$ be the natural module for $G$. In this section we let $n \rightarrow \infty$. We will prove the results for the quasisimple group $G$, rather than for its simple quotient $G/\text{Z}(G)$, as the proof is identical in both cases.

Recall that, in the bounded rank case, for classical groups we could focus on Aschbacher's classes $\ca C_1, \ca C_2, \ca C_3$ (Theorem \ref{classes_c1_c2_c3}). %In \cite{FG3}, with different proof, it is shown that the same happens in the large rank case. The proof however is much more complicated, since it uses the Classification in order to deal with class $\ca S$.
In the large rank case, the same happens. If $G$ is a finite quasisimple classical group, denote by $\ca M'=\ca M'(G)$ the set of all maximal subgroups of $G$ contained in one of the classes $\ca C_1, \ca C_2$ or $\ca C_3$ (recall Convention \ref{convention_aschbacher}).

\begin{theorem}
\label{large_rank_classes_c1_c2_c3}
\cite[Theorem 7.7]{FG3} Let $G$ be a finite quasisimple classical group of untwisted Lie rank $r$ defined over $\F_q$. For $r$ sufficiently large, the proportion of elements of $G$ which lie in subgroups not belonging to $\ca M'$ is $O(q^{-r/3})$.
\end{theorem}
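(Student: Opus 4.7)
The plan is to decompose the complement $\mathcal{M}\setminus \mathcal{M}'$ according to Aschbacher's classification and bound the contribution of each of the classes $\mathcal{C}_4,\mathcal{C}_5,\mathcal{C}_6,\mathcal{C}_7,\mathcal{C}_8$ and $\mathcal{S}$ separately, showing that each gives a proportion $O(q^{-r/3})$. Recall Convention \ref{convention_aschbacher}, which already absorbs the subgroups $\mathrm{SO}^{\pm}_n(q)<\mathrm{Sp}_n(q)$ into $\mathcal{C}_1$, so these cause no trouble. For every class, the basic strategy is the union bound
\[
\frac{|\wt M|}{|G|}\leq \frac{|M|}{|\mathrm{N}_G(M)|}=\frac{|M|}{|G|}\cdot |M^G|,
\]
combined with the fact that a uniformly random element of $G$ is regular semisimple with probability $1-O(1/q)$ (Theorem \ref{almost_all_elements_regular_semisimple}); for a regular semisimple $y\in M$ the conjugacy class $y^G$ has size $|G|/|T|$ for some maximal torus $T\leq M$, and so counting regular semisimple classes of $M$ inside $G$ gives the sharper estimate $|\wt M\cap \Delta|/|G|\leq k(M)/|T|$, where $k(M)$ is the number of $G$-classes of such $y$ supported in $M$.

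Next I would verify the bounds class by class. For $\mathcal{C}_4$ and $\mathcal{C}_7$ (tensor products and tensor inductions) $|M|=q^{O(\sqrt{r})}$, compared with $|G|=q^{\Theta(r^2)}$, so the contribution is $q^{-\Theta(r^2)}$. For $\mathcal{C}_6$ (normalizers of extraspecial subgroups) one has $|M|\leq q^{O(r)}$ and only finitely many conjugacy classes, yielding a contribution $q^{-\Theta(r^2)}$. For $\mathcal{C}_8$ the index $|G:M|$ is at least $q^{\Omega(r)}$ in every case not already subsumed into $\mathcal{C}_1$. The only delicate geometric class is $\mathcal{C}_5$: here $M\cong \mathrm{Cl}_n(q_0)$ with $q=q_0^b$ and $|M|/|G|^{1/b}$ is controlled. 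The argument of \cite[Lemma 3.7]{FG}, carried out uniformly in $b$ (dominated by $b=2$) gives a proportion $O(q^{-r/2})=O(q^{-r/3})$.

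The hard part is the class $\mathcal{S}$. Here I would invoke Liebeck's bound $|M|\leq q^{3n}$ for every $M\in \mathcal{S}$ together with a uniform bound on the number of conjugacy classes of such subgroups (see \cite[Theorem 1.3]{LMSh} or the classical-group sharpening in \cite[Theorem 1.2]{GLT}, which is polynomial in $r$). Using the regular semisimple reduction as above, the contribution from a single $M\in\mathcal{S}$ is at most $|M|^2/|G|\leq q^{6n-\Theta(n^2)}$, which is $q^{-\Theta(r^2)}$ for $r$ large; summing over the polynomial-in-$r$ number of $G$-classes of $\mathcal{S}$-subgroups still gives $O(q^{-r/3})$. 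I expect the main obstacle to lie precisely here, in extracting a bound on the number of conjugacy classes of $\mathcal{S}$-subgroups that is small enough to beat the factor $|M|^2$ uniformly as the rank grows; this is exactly the work carried out in \cite{FG3}, and in fact the sharper exponent $r/3$ (rather than, say, $r/2$) comes from optimizing the subfield estimate for $\mathcal{C}_5$ against the $\mathcal{S}$-count. Finally, summing over all six classes produces the stated bound $O(q^{-r/3})$.
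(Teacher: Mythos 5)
First, note that the paper does not prove this statement at all: it is quoted verbatim from \cite[Theorem 7.7]{FG3}, so any blind argument is necessarily being measured against the original Fulman--Guralnick proof, which is a substantial piece of work. Your outline does not come close to replacing it, and it contains concrete errors. The displayed union bound is vacuous: since the subgroups under consideration are maximal and self-normalizing (or nearly so), $|\wt M|/|G|\leq |M|/\N_G(M)|$ is essentially the bound $1$, so it cannot drive any of the subsequent estimates; the only usable reduction you mention is the regular semisimple one, proportion at most $k(M)/(q-1)^r$ where $k(M)$ counts the relevant classes, and then the entire difficulty is to bound $k(M)$ and the number of $G$-classes of such subgroups uniformly in the rank -- which is precisely what you do not do. Your order estimates are also wrong as stated: a $\ca C_4$ subgroup such as $\Sp_2(q)\otimes \Sp_{n/2}(q)<\Sp_n(q)$ has order $q^{\Theta(r^2)}$, not $q^{O(\sqrt r)}$, so these classes cannot be dismissed on order grounds alone; similarly the bound ``$|M|^2/|G|$'' for a single $\ca S$-subgroup has no justification (the number of conjugates of $M$ is $|G:M|$, not at most $|M|$), and replacing it by the correct quantity forces you back to fixed-point-ratio or class-counting estimates for almost simple irreducible subgroups, i.e., to the actual content of \cite{FG3} (Liebeck's $q^{3n}$ bound by itself only gives $k(M)\leq |M|\leq q^{3n}=q^{O(r)}$, which does not beat $(q-1)^{-r}$).

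You acknowledge this yourself when you write that the hard part ``is exactly the work carried out in \cite{FG3}''; but that is the whole theorem, not a loose end. The exponent $r/3$ likewise does not emerge from your sketch: your $\ca C_5$ estimate gives $O(q^{-r(1-1/b)})$ at best and your treatment of the remaining classes gives nothing quantitative, so the specific bound $O(q^{-r/3})$ is simply asserted. As it stands, the proposal is an outline of a plausible strategy (Aschbacher decomposition plus regular semisimple class counting, which is indeed the spirit of Fulman--Guralnick), but with the key quantitative inputs either missing or incorrectly estimated; the honest course here is to do as the paper does and cite \cite[Theorem 7.7]{FG3}.
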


%Recall Aschbacher's classification of the maximal subgroups of the finite classical groups \cite{Asc}. Aschbacher divides the maximal subgroups into nine families, usually denoted by $\mathcal C_1, \ldots, \mathcal C_9$ (the latter class is often denoted by $\mathcal S$). Let $\mathcal M$ denote the set of maximal subgroups of $G$ belonging to the classes $\ca C_i$, $i=1,2,3$, with one exception: if $G=\Sp_{2n}(q)$ and $q$ is even, we include the subgroups $\O_{2n}^{\pm}(q)$. It is proved in \cite[Theorem 7.7]{FG3} that the proportion of elements lying in maximal subgroups not belonging to $\ca M$ goes to zero as $n\rightarrow \infty$. Now for every $x \in G$, denote by $\wt{\mathscr M}_1(x)$ the union of all the conjugates of all the maximal overgroups of $x$ belonging to $\mathcal M$. It follows that, for every subset $X$ of $G$,
%\[
%\frac{|\bigcap_{x \in X} \wt{\mathscr M}(x)|}{|G|}\sim \frac{|\bigcap_{x \in X} \wt{\mathscr M}_1(x)|}{|G|}
%\]
%as $n\rightarrow \infty$.
\subsection{General case}
\label{subsection_general_large_rank}
In large rank, if $G$ is not symplectic in even characteristic and not orthogonal in odd dimension, we can easily deduce Theorems \ref{main_theorem_bounded_away_zero} and \ref{main_theorem_absolute_constant_2}(2) from known results.

\begin{theorem}\label{Al_1_theorems_1_5} The conclusions to Theorems \ref{main_theorem_bounded_away_zero} and \ref{main_theorem_absolute_constant_2}(2) hold for the groups $\PSL_n(q)$, $\mathrm{PSU}_n(q)$, $\PSp_{2m}(q)$ with $q$ odd, and $\POm_{2m}^\pm(q)$.
\end{theorem}
\begin{proof}
In view of Theorem \ref{main_theorem_strengthening_bounded_rank}, we may assume that $n$ and $m$ are large. We will prove Theorem \ref{main_theorem_absolute_constant_2}(2) (in these cases) with $|A_\ell|=1$, which, for $n$ or $m$ large, clearly implies Theorem \ref{main_theorem_bounded_away_zero}.

We work with the corresponding groups $G$ as in \eqref{eq:quasi_quasi}. Let $x\in G$ be the element defined in \cite[Table II]{GK}, and set $A_\ell=\{x\}$. By the proof of \cite[Proposition 4.1]{GK} it follows that $x$ is contained in no irreducible maximal subgroup of $G$. Next, let $\Omega$ be the set of integers which occur as the dimension of a proper nonzero subspace of $V$ fixed by $x$. By \cite[Table II]{GK}, we see that $\Omega$ has very small size (bounded absolutely from above), and $\Omega$ contains only integers $\ell$ such that both $\ell$ and $n-\ell$ are comparable to $n$ (up to constants). By \cite[Theorems 2.2, 2.3, 2.4, 2.5]{FG4}, 
\[
\frac{|\mathscr M(x)|}{|G|}=O(n^{-0.005}).
\]
We deduce by Lemma \ref{equivalence_inv_gen_tildas} that $\P_{\text{inv}}(G,x)=1-O(n^{-0.005})$, which concludes the proof.
% Since $|A_\ell|=1$, we have also proved Theorem \ref{main_theorem_bounded_away_zero} for these cases.
\end{proof}

Now we need to deal with the remaining cases. We devote one subsection to each. The difference, in these cases, is that every element belongs to maximal subgroups whose union of conjugates is large.

\subsection{Orthogonal groups in odd dimension} Here we assume $G=\Omega_{2m+1}(q)$ with $q$ odd. Let $R^+$ and $R^-$ denote the union of the stabilizers of hyperplanes of plus and minus sign, respectively, and let $P_1$ denote the union of the stabilizers of singular $1$-spaces. It is well known and easy that every element of $G$ has eigenvalue $1$ on the natural module; in particular, $G=R^+\cup R^- \cup P_1$. 
%recall that, if am element fixes a nondegenerate hyperplane, then the type of the orthogonal complement --- i.e., square or nonsquare discriminant --- is determined by the sign of the hyperplane (and by $n$ and $q$); see \cite[Propostitions 2.5.10 and 2.5.11]{KL}.
We can now establish Theorem \ref{main_theorem_bounded_away_zero} in this special case.

\begin{theorem}\label{orthogonal_odd_theorem_1}
The conclusion to Theorem \ref{main_theorem_bounded_away_zero} holds in the case $G=\POm_{2m+1}(q)$ with $q$ odd.
\end{theorem}
\begin{proof}
In view of Theorem \ref{main_theorem_strengthening_bounded_rank}, we may assume that $m$ is large. Let $x \in G=\Omega_{2m+1}(q)$ be as in \cite[Table II]{GK}. By the proof of \cite[Proposition 4.1]{GK} it follows that the only maximal overgroup of $x$ is the stabilizer of a nondegenerate hyperplane of minus type. It follows from \cite[Theorem 9.26]{FG2} that the proportion of elements of $G$ lying in $R^-$ is bounded away from $1$ absolutely (it is at most $0.93$ for $n$ sufficiently large). We conclude by Lemma \ref{equivalence_inv_gen_tildas}.
\end{proof}

 We will see in Lemma \ref{orthogonal_odd_|A|_at_least_2} that $\P_{\text{inv}}(G,x)$ remains bounded away from $1$ for every $x\in G$. Next we want to prove Theorem \ref{main_theorem_absolute_constant_2} in this case. We have a strong dichotomy between the cases $q$ fixed and $q\rightarrow \infty$.

Recall that, in orthogonal groups, regular semisimple elements might have eigenvalues of multiplicity greater than $1$ (i.e., they need not be separable). We recall a lemma which specifies how this can happen, whose proof is straightforward and left to the reader.

\begin{lemma}
\label{orthogonal_odd_dimension_regular_semisimple}
Assume $g \in G$ is regular semisimple. Then $g$ centralizes a nondegenerate $1$-space, and $\emph{dim} \, \emph C_V(g)=1$. Moreover, either $g$ fixes no other nondegenerate $1$-space, or it acts as $-1$ on a nondegenerate $2$-space, and fixes no other nondegenerate $1$-space.
\end{lemma}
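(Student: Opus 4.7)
The plan is to pass to an algebraic closure $k$ of $\F_q$ and analyse the eigenspace decomposition of $g$ on $V\otimes k$. Since $g$ is semisimple and preserves the nondegenerate orthogonal form, one has the standard decomposition
\[
V\otimes k \;=\; V_1 \perp V_{-1} \perp \bigoplus_{\{\lambda,\lambda^{-1}\}} \bigl(V_\lambda \oplus V_{\lambda^{-1}}\bigr),
\]
with $V_{\pm 1}$ nondegenerate and, for $\lambda \neq \pm 1$, each $V_\lambda$ totally singular and paired nondegenerately with $V_{\lambda^{-1}}$. Writing $a = \dim V_1$, $b = \dim V_{-1}$, and $d_\lambda = \dim V_\lambda$, the first two claims of the lemma will reduce to showing $a = 1$: indeed $C_V(g)$ descends from $V_1$ (which is Galois-stable) and is automatically nondegenerate.

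Next I would extract the numerical constraints from the regular semisimple hypothesis. The connected centralizer of $g$ in $X = \SO_{2m+1}(k)$ equals $\SO(V_1) \times \SO(V_{-1}) \times \prod_{\{\lambda,\lambda^{-1}\}} \GL(V_\lambda)$; demanding that it be a maximal torus (of dimension $m$) gives
\[
\binom{a}{2} + \binom{b}{2} + \sum d_\lambda^2 \;=\; m,
\]
which, combined with the dimension identity $a + b + 2\sum d_\lambda = 2m+1$, rearranges to
\[
(a-1)^2 + (b-1)^2 + 2\sum (d_\lambda^2 - d_\lambda) \;=\; 1.
\]
Since $d_\lambda \geq 1$ for every $\lambda$, this forces $d_\lambda = 1$ for all $\lambda$ and $(a-1)^2 + (b-1)^2 = 1$, so $(a,b) \in \{(0,1),(1,0),(1,2),(2,1)\}$.

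Finally, since $g \in \SO$ one has $\det g = (-1)^b = 1$, which rules out the cases with $b$ odd and leaves $(a,b) \in \{(1,0),(1,2)\}$. In particular $\dim C_V(g) = a = 1$ and $C_V(g) = V_1$ is nondegenerate, giving the main assertions. For the ``moreover'' clause, any nondegenerate $1$-space fixed by $g$ is spanned by an eigenvector and so lies in some $V_\mu$: the totally singular $V_\lambda$ with $\lambda \neq \pm 1$ contribute no such $1$-space, $V_1$ contributes only itself, and when $b = 2$ the space $V_{-1}$ is precisely the nondegenerate $2$-space on which $g$ acts as $-1$, which accounts for every remaining nondegenerate $1$-space fixed by $g$. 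No real conceptual difficulty arises; the main care is in deriving the centralizer-dimension formula and in exploiting the parity constraint from $\det g$.
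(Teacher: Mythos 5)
Your proof is correct, but it takes a genuinely different route from the paper. The paper stays at the level of the finite group: it uses the characterization of regular semisimple elements as those commuting with no nontrivial unipotent element of $G$, decomposes $V$ into nondegenerate irreducible $\langle g\rangle$-modules over $\F_q$ (using $\det g=1$ and the odd dimension to produce a centralized nondegenerate $1$-space), and then rules out a fixed-point space of dimension $\geq 2$, or a $(-1)$-eigenspace of dimension $\geq 3$, by observing that either would force $g$ to commute with a subgroup $\Omega_3(q)\cong \PSL_2(q)$, which contains unipotent elements. Your argument instead works in the algebraic group $\SO_{2m+1}(k)$: you use the standard eigenspace decomposition over $k$, the formula $C_X(g)^{\circ}\cong \SO(V_1)\times\SO(V_{-1})\times\prod_\lambda\GL(V_\lambda)$, and the fact that regularity forces this centralizer to have dimension $m$, which together with the dimension count and $\det g=(-1)^b=1$ yields $(a,b)\in\{(1,0),(1,2)\}$ and $d_\lambda=1$ for all $\lambda\neq\pm1$ in one stroke; the ``moreover'' clause then falls out since a nondegenerate fixed line must be a $\pm1$-eigenline. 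What your approach buys is a complete, systematic classification of the eigenvalue multiplicities (in particular you get for free that $g$ is separable away from the $(-1)$-eigenspace), at the cost of invoking the algebraic-group centralizer structure; the paper's argument is more elementary and self-contained, matching the characterization of regularity it states. One small point to tidy: the descent from $k$ to $\F_q$ (equality of eigenspace dimensions and nondegeneracy of the restricted form) should be noted for $V_{-1}$ as well as for $V_1$, since the nondegenerate $2$-space in the conclusion is the $\F_q$-rational $(-1)$-eigenspace; this is routine (the rank of $g+1$ and the radical of the form are unchanged under field extension), but it is where the $k$-statement becomes the $\F_q$-statement the lemma asserts.
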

\iffalse
\begin{proof}
We observe first that any semisimple element $g\in G$ centralizes a nondegenerate $1$-space. Indeed, $V$ decomposes as a perpendicular direct sum of nondegenerate irreducible $\gen g$-submodules, which have either dimension $1$, or have even dimension (and $g$ has determinant $1$ on any space of even dimension). %for this see neumann-praeger, derangements and eigenvalue-free elements, p. 580. 
Since $V$ has odd dimension, it follows that $g$ fixes a nondegenerate subspace $U$ of $V$ of dimension $1$. In the case $g$ has eigenvalue $-1$ on $U$, it follows that $g$ has eigenvalues $1$ and $-1$ on the orthogonal complement $U^{\perp}$, so $g$ centralizes a nondegenerate $1$-space.

Recall that regular semisimple elements can be characterized as those elements which do not commute in $G$ with any non-trivial unipotent element.
%see guralnick-lubeck, thm 4.1. we may also use malle-test, thm 14.2 and coroll 14.10. indeed, if $s$ is not regular semisimple, then $\C_G(s)^\circ$ contains root subgroups. moreover it is reductive and $\sigma$-stable, hence the fixed points contain unipotent element (by the general theory of root groups; you will have the orbits of $\sigma$...). on the other hand, if $s$ is regular semisimple, then it doesn't centralize any unipotent (see the proof of guralnick-lubeck thm 4.1; if $s$ centralizes a nontrivial unipotent then it centralizes also a root group; this is false by malle-test coroll 14.10)
We now prove that if $g$ is regular semisimple, then $g$ does not centralize any nondegenerate space $W$ of dimension $2$. Indeed, assume it does. Applying the same argument of the previous paragraph to the orthogonal complement of $W$, we get that $g$ centralizes on $V$ a nondegenerate space of dimension $3$. It follows that $g$ commutes with a subgroup of $G$ isomorphic to $\Omega_3(q)$ (cf. \cite[Lemma 4.1.1(ii)]{KL}). This is a contradiction since $\Omega_3(q)\cong \PSL_2(q)$ contains nontrivial unipotent elements.

It is now easy to conclude that $g$ cannot centralize any $2$-space. Moreover, $g$ cannot have a $-1$-eigenspace of dimension $3$, otherwise again it would commute with a subgroup $\Omega_3(q)$. Putting together all the information, the statement is proved.
\end{proof}
\fi

We next prove that, if $q$ is large, with high probability an element is separable.

\begin{theorem}
\label{orthogonal_odd_dimension_hyperplanes}
Assume $m\geq 3$. The proportion of elements of $G=\Omega_{2m+1}(q)$ which are separable is larger than $1-6/q$. These elements fix only one nondegenerate hyperplane.
\end{theorem}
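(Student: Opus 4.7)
The plan is to partition the non-separable elements into two explicitly describable classes and bound the proportion of each.

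By Lemma \ref{orthogonal_odd_dimension_regular_semisimple}, a regular semisimple element $g \in G$ is separable unless its $(-1)$-eigenspace is a nondegenerate $2$-space. Hence the set $NS$ of non-separable elements is contained in $A \cup B$, where $A$ is the set of non-regular-semisimple elements and $B$ is the set of regular semisimple elements with a $(-1)$-eigenspace of dimension exactly $2$.

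First I would bound $|A|/|G|$ by Theorem \ref{almost_all_elements_regular_semisimple}, extracting the explicit constant from the Guralnick--L\"ubeck estimates \cite{GL} for odd-dimensional orthogonal groups. This yields $|A|/|G| \leq c_1/q$ with $c_1$ small.

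Next I would bound $|B|/|G|$ by counting pairs $(g,W)$, where $W$ ranges over nondegenerate $2$-subspaces of $V$ and $g|_W = -I_W$; each element of $B$ contributes exactly one such pair (with $W$ its $(-1)$-eigenspace). For a fixed nondegenerate $2$-space $W$ of sign $\epsilon \in \{+,-\}$, the elements of $O(V)$ with $g|_W = -I_W$ are of the form $-I_W \oplus h$ with $h \in O(W^\perp) \cong O_{2m-1}(q)$; the conditions $\det(g) = 1$ and $\text{spinor norm}(g) = 1$ cut out a single coset of $\Omega(W^\perp)$, of size $|\Omega_{2m-1}(q)|$. The number of nondegenerate $2$-spaces of sign $\epsilon$ is, by orbit-stabilizer applied to the $O(V)$-action, equal to $|O(V)|/(|O_2^\epsilon(q)|\cdot |O_{2m-1}(q)|) = |G|/\bigl((q-\epsilon)\,|\Omega_{2m-1}(q)|\bigr)$. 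Summing,
\[
|B|/|G| \;\leq\; \frac{1}{q-1} + \frac{1}{q+1} \;=\; \frac{2q}{q^2-1} \;\leq\; \frac{3}{q}.
\]
Combining with the bound on $|A|$ and choosing $c_1 \leq 3$ (which the explicit \cite{GL} estimate provides), we obtain $|NS|/|G| \leq 6/q$.

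For the final claim, let $g$ be separable. A nondegenerate hyperplane fixed by $g$ corresponds via orthogonal complement to a $g$-invariant nondegenerate $1$-subspace, i.e.\ to a one-dimensional eigenspace of $g$ that is nondegenerate. For $\lambda \neq \pm 1$, the $\lambda$-eigenspace pairs hyperbolically with the $\lambda^{-1}$-eigenspace under the bilinear form, hence consists of singular vectors and is not nondegenerate. The $(-1)$-eigenspace is trivial: by Lemma \ref{orthogonal_odd_dimension_regular_semisimple} combined with the parity observation that a regular semisimple element cannot have both a nondegenerate $(+1)$-eigenspace of dimension $1$ and a nondegenerate $(-1)$-eigenspace of dimension $1$ (the remaining odd-dimensional complement could not decompose into pairs of eigenspaces for $\lambda \neq \pm 1$), the $(-1)$-eigenspace of a separable $g$ has dimension $0$. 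Thus the $(+1)$-eigenspace is the unique nondegenerate one-dimensional eigenspace and determines the unique nondegenerate hyperplane fixed by $g$.

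The main obstacle is the extraction of the explicit constant $c_1 \leq 3$ from \cite{GL}; the combinatorial count giving the $3/q$ bound on $|B|/|G|$ is clean, and the eigenspace analysis for the second statement is a routine elaboration of Lemma \ref{orthogonal_odd_dimension_regular_semisimple}.
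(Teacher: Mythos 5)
Your proof is correct and is essentially the paper's own argument: the same splitting of non-separable elements into non-regular-semisimple ones (bounded via Guralnick--L\"{u}beck, whose explicit estimate $2/(q-1)+2/(q-1)^2$ from \cite[Theorem 2.3]{GL} supplies the constant you defer, and suffices since the statement is vacuous for $q\leq 5$) and regular semisimple elements acting as $-1$ on a nondegenerate $2$-space, counted exactly as in the paper by fixing the $2$-space, observing that the orthogonal complement contributes a single coset of $\Omega_{2m-1}(q)$, and summing over $2$-spaces by orbit--stabilizer, with the uniqueness of the fixed hyperplane likewise deduced from Lemma \ref{orthogonal_odd_dimension_regular_semisimple}. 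The only blemish is a harmless factor of $2$: since $|\mathrm{O}_{2m+1}(q)|=4|\Omega_{2m+1}(q)|$, the number of nondegenerate $2$-spaces of sign $\epsilon$ is $|G|/\bigl(2(q-\epsilon)|\Omega_{2m-1}(q)|\bigr)$, half your figure, which only makes your upper bound on $|B|$ more generous and leaves the conclusion intact.
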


\begin{proof}
%I assume $m\geq 3$ for the estimates to the regular semisimple elts by guralnick-lubeck.
By Lemma \ref{orthogonal_odd_dimension_regular_semisimple}, a separable element fixes only one nondegenerate $1$-space, hence only one nondegenerate hyperplane.
%since if $g$ fixes $W_1$ and $W_2$, then it also fixes the $1$-spaces $W_1^\perp$ and $W_2^\perp$. hence $W_1^\perp=W_2^\perp$. taking again $^\perp$ implies $W_1=W_2$.
Therefore we only need to prove the first part of the statement.

%By Theorem \ref{almost_all_elements_regular_semisimple}, the proportion of regular semisimple elements is $1-O(1/q)$ (the statement for $\SO_{2m+1}(q)$ clearly implies the statement for $\Omega_{2m+1}(q)$). Hence, by the previous lemma, we just need to prove that the proportion of elements which %are regular semisimple and which
By \cite[Theorem 2.3]{GL}, the proportion of regular semisimple elements in $G$ is at least $1-2/(q-1)-2/(q-1)^2$. Since $2/(q-1)+2/(q-1)^2+1/(q-1)\leq 6/q$, by Lemma \ref{orthogonal_odd_dimension_regular_semisimple} we just need to prove that the proportion of elements which act as $-1$ on a nondegenerate $2$-space $W$ is at most $1/(q-1)$ (note that a regular semisimple element cannot have equivalent modules of dimension at least $2$). % see e.g. the book by Fulman-Neumann-Praeger, thm 3.2.1 --- and note that a semisimple element is cyclic if and only if it is separable.
Let $E$ be the set of such elements.

For fixed $W$, there are at most $|\Omega_{2m-1}(q)|$ choices for the element; indeed, for fixed $W$, it is determined whether the restriction of the element to $W^\perp$ belongs to $\Omega(W^\perp)$ or to $\SO(W^\perp) \setminus \Omega(W^\perp)$.
%see also kleidman-liebeck, prop 2.5.13
Now we have to sum over all possible $W$'s. Since $2(q-1)=|\GO_2^+(q)|<|\GO_2^-(q)|=2(q+1)$, there are at most $2|\GO_{2m+1}(q)|/|\GO^+_2(q)| |\GO_{2m-1}(q)|$ choices. In particular, we deduce that
\[
|E| \leq \frac{2 |\Omega_{2m-1}(q)|\cdot|\GO_{2m+1}(q)|}{ |\GO^+_2(q)|\cdot |\GO_{2m-1}(q)|} = \frac{2|\Omega_{2m+1}(q)|}{|\GO^+_2(q)|}=\frac{|\Omega_{2m+1}(q)|}{q-1}. 
\]
The proof is finished.
\end{proof}

\iffalse
We prove a lemma similar to Lemma \ref{only_one_hyperplane}.

\begin{lemma}
Assume $g \in G$ is semisimple. If $\dim C_V(g)=1$, then $g$ fixes only one nondegenerate hyperplane of $V$, namely $[V,g]$.
\end{lemma}

\begin{proof}
Any semisimple element $g$ of $G$ fixes a nondegenerate hyperplane. Indeed, $V$ decomposes as a perpendicular direct sum of non-degenerate $\gen g$-submodules, which have either dimension $1$, or have even dimension. Since $V$ has odd dimension, it follows that $g$ fixes a non-degenerate subspace of dimension $1$, and therefore fixes its orthogonal complement, which is a nondegenerate hyperplane.

Now assume $g$ fixes the decomposition $W\perp U$, where $\dim U=1$. Since $C_V(g)=1$, we have that $[V,g]$ is a hyperplane of $V$. If $g$ acts trivially on $U$, then $[V,g] = [W,g] \leq W$, from which 
\end{proof}
\fi

\begin{theorem}\label{odd_orthogonal_theorem_5}
The conclusion to Theorem~\ref{main_theorem_absolute_constant_2}(2) holds for the case $G=\POm_{2m+1}(q)$ with $q$ odd.
\end{theorem}
\begin{proof}
Let $x=x_1$ be as in \cite[Table II]{GK}, and let $x_2$ act on the space as $(m\oplus m)\perp 1$ and having order $(q^m-1)/2$. %(cf. \cite[Lemma 4.1.9(3)]{KL}).
Set $A_\ell=\{x_1, x_2\}$.

We claim that $x_2$ does not lie in maximal subgroups from classes $\ca C_2$ and $\ca C_3$. If this is true, then by Theorem \ref{large_rank_classes_c1_c2_c3} and \cite[Theorem 2.5]{FG4}, the proportion of elements of $G$ lying in conjugates of overgroups of both $x_1$ and $x_2$ is $|R^+\cap R^-|/|G| + O(n^{-0.005})$. By Theorem \ref{orthogonal_odd_dimension_hyperplanes}, $|R^+\cap R^-|/|G|\leq 6/q$, therefore $\P_{\text{inv}}(G,A_\ell) \geq 1-6/q+O(n^{-0.005})$ by Lemma \ref{equivalence_inv_gen_tildas}.

Therefore it suffices to prove the claim. Assume first $x_2 \in \GL_{n/b}(q^b)\rtimes C_b$ for some prime $b$ dividing $n$. If $m$ is large then $(q^m-1)/2b > q^{m/2}-1$. In particular, $x_2^b\in \GL_{n/b}(q^b)$ fixes only $m$-spaces, and only one $1$-space, which is impossible. %This contradicts Lemma \ref{extension_field_separable_dimension}.

Assume now $x_2 \in \GL_k(q)\wr S_t$ with $n=kt$ and $t>1$. The element $x_2$ induces a permutation $\pi$ of $S_t$ having at most $3$ cycles; hence the order of $\pi$, say $\ell$, is at most $n^3$. Then $x_2^{\ell} \in \GL_k(q)^t$. Again, for $m$ large $x_2^{\ell}$ fixes $m$-spaces and a $1$-space. Provided $n>3$, this is impossible for an element of $\GL_k(q)^t$, and the proof is finished. %(We have not hesitated to take $n$ large in the last two paragraphs. One can use primitive prime divisors (cf. \cite{GPPS}) in order to make the argument work also for small $n$ --- however there was no reason for doing that.)
\end{proof}

Next we want to prove Theorem \ref{main_theorem_absolute_constant_2}(3) in this case.
%The last case to consider is $m\rightarrow \infty$ and $q$ fixed. 
The key fact is that, for $q$ fixed, the proportion of regular semisimple elements acting as $-1$ on a nondegenerate $2$-space is bounded away from zero. We recall an important result. 

\begin{theorem}
\label{proportion_separable_in_omega}
In the limit as $m \rightarrow \infty$, the proportion of elements of $\Om_{2m+1}(q)$ which are separable is equal to the proportion of elements of $\SOT_{2m+1}(q)\setminus \Omega_{2m+1}(q)$ which are separable. This proportion is at least $0.348$ for $m$ sufficiently large.
\end{theorem}

\begin{proof}
See \cite[Theorems 7.19 and 7.24]{FG2}.
\end{proof}

\begin{theorem}
\label{odd_dimension_orthogonal_-1_on_a_two_space}
If $m$ is sufficiently large, the proportion of elements of $G$ which are regular semisimple, and which act as $-1$ on a nondegenerate $2$-space of plus type, is at least $1/6q$. These elements fix hyperplanes of both signs, and fix a singular $1$-space.
\end{theorem}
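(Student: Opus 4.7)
My plan is to count directly the set $E$ of regular semisimple elements $g \in G$ whose $(-1)$-eigenspace is a nondegenerate $2$-space of plus type. By Lemma \ref{orthogonal_odd_dimension_regular_semisimple}, each such $g$ determines uniquely a plus-type nondegenerate $2$-space $W = W(g)$; the orthogonal decomposition $V = W \perp U$ has $\dim U = 2m-1$, and $g = (-\mathrm{Id}_W, h)$ for $h := g|_U \in \SO(U) \cong \SO_{2m-1}(q)$. The condition $g \in \Omega_{2m+1}(q)$ becomes a spinor norm condition: since the spinor norm of $-\mathrm{Id}_W$ equals the discriminant of $W$ modulo squares, namely $-1$ for plus type, $h$ is forced to lie in a specific coset of $\Omega_{2m-1}(q)$ in $\SO_{2m-1}(q)$ (trivial if $q \equiv 1 \pmod 4$, nontrivial otherwise). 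Moreover, the requirement that $g$ be regular semisimple with $(-1)$-eigenspace \emph{exactly} $W$ is equivalent to $h$ being separable: the identity $\det h = (-1)^{m_{-1}(h)}$ combined with $\det h = 1$ forces a separable $h \in \SO(U)$ to have no $-1$-eigenvalue, and then the odd dimension of $U$ forces a simple $+1$-eigenvalue.

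I would then invoke Theorem \ref{proportion_separable_in_omega} applied to $\SO_{2m-1}(q)$: for $m$ sufficiently large the proportion of separable elements in each coset of $\Omega_{2m-1}(q)$ is at least $0.348$, so for fixed plus-type $W$ the number of valid $h$ is at least $0.348 \cdot |\Omega_{2m-1}(q)|$. The number of plus-type nondegenerate $2$-spaces in $V$ is $|\GO_{2m+1}(q)|/(|\GO_2^+(q)|\cdot|\GO_{2m-1}(q)|)$ by transitivity of the $\GO$-action; using $|\GO_2^+(q)|=2(q-1)$ and $|\GO_n(q)|=4|\Omega_n(q)|$ for odd $n$, the count telescopes to
\[
\frac{|E|}{|G|}\geq \frac{0.348}{2(q-1)}\geq \frac{1}{6q},
\]
the final inequality being valid thanks to the margin $0.348>1/3$.

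For the closing assertion, $W(g)$ is a hyperbolic plane and hence contains precisely two singular $1$-spaces, both fixed by $g$. The $g$-stable nondegenerate $1$-spaces of $V$ are the $q-1$ nondegenerate lines of $W(g)$ together with the $1$-dimensional $+1$-eigenspace of $g$; their orthogonal complements in $V$ are the $g$-stable nondegenerate hyperplanes. Among the $q-1$ nondegenerate lines of $W(g)$, exactly $(q-1)/2$ carry square and $(q-1)/2$ non-square discriminants, so the corresponding hyperplanes of $V$ have discriminants differing by a non-square factor and therefore carry opposite types. The main technical subtlety is the spinor norm bookkeeping that identifies the correct coset of $\Omega_{2m-1}(q)$; once that is settled, the combinatorial count and arithmetic are entirely routine.
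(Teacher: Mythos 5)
Your proposal is correct and follows essentially the same route as the paper: fix a plus-type nondegenerate $2$-space $W$, count the elements on $W^{\perp}$ via the separability proportion (including its statement about the nontrivial coset of $\Omega_{2m-1}(q)$ in $\SO_{2m-1}(q)$), multiply by the number of plus-type $2$-spaces, and conclude with the same telescoping and the margin $0.348>1/3$; the final assertion is likewise read off from the lines of $W$. Your extra bookkeeping (the spinor-norm identification of the coset, and the determinant argument showing a separable element of $\SO_{2m-1}(q)$ has no $-1$-eigenvalue, so the resulting element is regular semisimple with $-1$-eigenspace exactly $W$) is accurate and simply makes explicit what the paper leaves implicit.
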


\begin{proof}
The last statement is clear, since a $2$-space of plus type contains a singular $1$-space, and contains nondegenerate $1$-spaces of square and non-square discriminant. %see Kleidman_liebeck prop 2.5.10 and prop 2.5.11
Therefore we only need to prove the first part of the statement. The proof is similar (although opposite in spirit) to Theorem \ref{orthogonal_odd_dimension_hyperplanes}. Let $E$ be the set of regular semisimple elements which act as $-1$ on a nondegenerate $2$-space of plus type $W$. For fixed $W$, by Theorem \ref{proportion_separable_in_omega} there at least $|\Omega_{2m-1}(q)|/3$ choices for the element on $W^\perp$. Then we have to sum through all $W$'s.  %(by Lemma \ref{orthogonal_odd_dimension_regular_semisimple}, a regular semisimple element does not have eigenvalue $-1$ on $W^{\perp}$).
We have
\[
|E| \geq \frac{|\Omega_{2m-1}(q)|}{3}\frac{|\GO_{2m+1}(q)|}{ |\GO^+_2(q)| \cdot |\GO_{2m-1}(q)|} = \frac{|\Omega_{2m+1}(q)|}{3|\GO^+_2(q)|}\geq \frac{|\Omega_{2m+1}(q)|}{6q},
\]
which concludes the proof. (Conceptually, there is nothing special here in considering a $2$-space: the same argument applies to elements acting as $-1$ on a space of bounded dimension.)
\end{proof}

\begin{theorem}\label{odd_orthogonal_theorem_52}
The conclusion to Theorem~\ref{main_theorem_absolute_constant_2}(3) holds for the case $G=\POm_{2m+1}(q)$ with $q$ odd.
\end{theorem}
\begin{proof}
%At this point it is easy to deduce the upper bound on $\P_{\text{inv}}(G,G)$ in Theorem \ref{main_theorem_absolute_constant_2}. Indeed,
We already observed that $G=R^+\cup R^- \cup P_1$. By Theorem \ref{odd_dimension_orthogonal_-1_on_a_two_space}, we have $|R^+\cap R^-\cap P_1|/|G|\geq 1/6q$ for sufficiently large $m$, hence $\P_{\text{inv}}(G,G)\leq 1-1/6q$ by Lemma \ref{equivalence_inv_gen_tildas}, which concludes the proof.
\end{proof}

We finally observe that we cannot have $|A_\ell|=1$ in Theorem \ref{main_theorem_absolute_constant_2}(2) (not even for $q\rightarrow \infty$).
\begin{lemma}
\label{orthogonal_odd_|A|_at_least_2}
Let $\varepsilon \in \{+,-\}$. Then, the proportions $|R^\varepsilon|/|G|$ and $|P_1|/|G|$ are at least $\delta_1$ for some absolute constant $\delta_1>0$. In particular, for every $x\in G$, $\P_{\emph{inv}}(G,x)\leq 1-\delta_1$.
\end{lemma}

\begin{proof}
If we prove the first part of the statement, the last part will follow from Lemma \ref{equivalence_inv_gen_tildas} and the fact that $G=R^+\cup R^-\cup P_1$. Therefore we only need to prove the first part. For $q$ fixed, we proved a stronger statement in Theorem \ref{odd_dimension_orthogonal_-1_on_a_two_space}. Next we deal with large $q$. By Theorem \ref{orthogonal_odd_dimension_hyperplanes}, the proportion of separable elements in $G$ is $1-O(1/q)$. If a separable element $g$ fixes a nondegenerate hyperplane $W$, then the maximal torus of $g$ is contained in the stabilizer of $W$ (indeed in a subgroup $\SO(W)$ of the stabilizer). %In particular, the proportion of elements in $R^{\pm 1}$ (resp. $P_1$) is $O(1/q)$ plus the proportion of elements belonging to maximal tori in $R^\pm$ (resp. $P_1$). By Theorem \ref{correspondence_tori_weyl_group}, in the case $R^\pm$ this is $O(1/q)$ plus the proportion of elements of the Weyl group whose product of sign is $\pm$,  
%because a separable element fixes only one hyperplane
The same is certainly true for the stabilizer of a singular $1$-space, since this is obtained as the fixed points of a connected subgroup of the algebraic group. Therefore, using Theorem \ref{correspondence_tori_weyl_group}, %(and reasoning as in the proof of Theorem \ref{only_maximal_tori_matter}),
we see that the proportion of elements belonging to $R^\varepsilon$ (resp. $P_1$) is equal to $O(1/q)$ plus the proportion of elements of the Weyl group $W(B_m)\cong C_2\wr S_m$ corresponding to maximal tori fixing a nondegenerate hyperplane of $\varepsilon$ sign (resp. a singular $1$-space). For a nondegenerate hyperplane of $\varepsilon$ sign, these are the elements $(x_1, \ldots, x_m)\tau$ with $\tau \in S_m$, $x_i=\pm$ and $\prod x_i = \varepsilon$ (see Remark \ref{rem:conjugacy_classes_wreath_products}, which explains how to identify the class of an element), so their proportion is exactly $1/2$. For a singular $1$-space, these are the elements $(x_1, \ldots, x_m)\tau$ with $j\tau = j$ and $x_j=+$ for some $j$, and their proportion is at least $(1-1/e)/2$ for sufficiently large $m$; this follows from the fact that, for random $(x_1, \ldots, x_m)\tau$, the probability that $\tau$ fixes a point tends to $1-1/e$ as $m \rightarrow \infty$, and the corresponding entry in $(x_1, \ldots, x_m)$ is $\pm$ with equal probability.
\end{proof}

\subsection{Symplectic groups in even characteristic}
\label{subsection_symplectic_even}
The strategy, and the arguments, are often similar to those of the previous subsection. %(perhaps slightly easier since we do not need to divide between regular semisimple and separable elements).

As in Remark~\ref{rem: convention}, we view $G=\Sp_{2m}(q)\cong \SO_{2m+1}(q)$, with subgroups $\SO^{\pm}_{2m}(q)$ corresponding to stabilizers of nondegenerate hyperplanes of $V=\F_q^{2m+1}$ (i.e., complements of $V^{\perp}$) of plus or minus type. %By Witt's lemma, $G$ acts transitively on the set of subgroups fixing a hyperplane of given sign.
%Note that $G$ acts trivially on $V^{\perp}$, since $q$ is even and $Q$ does not vanish on $V^{\perp}$.

As in the previous subsection, we denote by $R^+$ and $R^-$ the union of the stabilizers of hyperplanes of plus and minus sign, respectively. It is well known that $G=R^+\cup R^-$ (cf. \cite{Dye}). We will see in Lemma \ref{symplectic_even_|A|_at_least_2} that, also in this case, $\P_{\text{inv}}(G,x)$ is bounded away from $1$ for every $x\in G$. We can prove Theorem \ref{main_theorem_bounded_away_zero} in this case.

\begin{theorem}\label{symplectic_even_theorem_1}
The conclusion to Theorem \ref{main_theorem_bounded_away_zero} holds in the case $G=\mathrm{PSp}_{2m}(q)$ with $q$ even.
\end{theorem}
\begin{proof}
In view of Theorem \ref{main_theorem_strengthening_bounded_rank}, we may assume that $m$ is large. Let $x \in G=\Sp_{2m}(q)$ be as in \cite[Table II]{GK}. The same argument given for the other classical groups in Subsection \ref{subsection_general_large_rank} applies, except that $x$ stabilizes a unique nondegenerate hyperplane of plus or minus type. Therefore
\[
\frac{|\mathscr M(x)|}{|G|}=\frac{|R^\pm|}{|G|}+O(n^{-0.005}).
\]
Since $|R^\pm|/|G|$ is bounded away from $1$ by \cite[Theorem 9.15]{FG2} (it is at most $0.86$ for $n$ sufficiently large), the statement follows from Lemma \ref{equivalence_inv_gen_tildas}.
\end{proof}

We have now completed the proof of Theorem~\ref{main_theorem_bounded_away_zero}, and we summarize this fact  here.

\begin{proof}[Proof of Theorem \ref{main_theorem_bounded_away_zero}]
For $G=A_n$, see Theorem~\ref{alternating_theorems_1_5}. For $G=G_2(3^a)$, see Theorem~\ref{theoremg2order}(i). For the remaining groups of Lie type of bounded rank, we proved the stronger Theorem \ref{main_theorem_strengthening_bounded_rank} in Section \ref{section_proof_theorem_1.2}. For groups of Lie type of large rank, see Theorems ~\ref{Al_1_theorems_1_5}, \ref{orthogonal_odd_theorem_1} and \ref{symplectic_even_theorem_1}.
% the large rank cases $G=\PSL_n(q)$, $\PSU_n(q)$, $\PSp_{2m}(q)$ with $q$ odd, and $\POm_{2m}^\pm(q)$, see Theorem~\ref{Al_1_theorems_1_5}. The remaining large rank cases are $G=\PSp_{2m}(q)$ with $q$ even, which we proved in Theorem~\ref{symplectic_even_theorem_1}, and $G=\POm_{2m+1}(q)$ with $q$ odd, which we proved in Theorem~\ref{orthogonal_odd_theorem_1}. 
\end{proof}

\begin{theorem}\label{even_symplectic_theorem_5}
The conclusion to Theorem \ref{main_theorem_absolute_constant_2}(2) holds in the case $G=\PSp_{2m}(q)$ with $q$ even.
\end{theorem}

\begin{proof} Let $x=x_1\in G=\Sp_{2m}(q)$ be as in Table \cite[Table II]{GK}. In case $m$ is odd, for convenience we modify $x_1$ as follows. If $m\equiv 1$ mod $4$, we choose $x_1$ acting on the symplectic module as $(m-1)/2 \perp (m+3)/2 \perp (m-1)$; and if $m\equiv 3$ mod $4$ we choose $x_1$ acting as $(m+1)/2 \perp (m-3)/2 \perp (m+1)$. We let $x_1$ have order $q^b+1$ on each block of dimension $2b$. Similarly to the proof of Theorem \ref{odd_orthogonal_theorem_5}, we can easily prove that $x_1$ does not belong to subgroups of classes $\ca C_2$ and $\ca C_3$ if $m$ is large. (Subgroups of class $\ca C_3$ are ruled out since the element has nondegenerate irreducible modules whose dimensions differ by $2$; recall Lemma \ref{maximal_tori_extension_field_subgroup}.)
%recall also that $\GU_m(q)$ is not maximal for even $q$.
In this way, our element $x_1$ belongs to $\SO^-_{2m}(q)$ in all cases, both for $m$ even and $m$ odd.

Next, define $x_2 \in G$ as follows: if $m$ is odd, it acts as $(m-1)\perp (m+1)$; if $m \equiv 0$ mod $4$, it acts as $(m-2)\perp (m+2)$; if $m \equiv 2$ mod $4$, it acts as $(m-4) \perp (m+4)$. Assume moreover $x_2$ has order $q^b+1$ on each block of dimension $2b$. Except for stabilizers of subspaces, the only maximal overgroup of $x_2$ is a conjugate of $\SO_{2m}^+(q)$ (see \cite[Lemma 6.2]{BH}; in fact, a simpler argument applies since we only need to consider classes $\ca C_2$ and $\ca C_3$). 

Set now $A_\ell=\{x_1, x_2\}$. By Theorem \ref{large_rank_classes_c1_c2_c3} and \cite[Theorem 2.4]{FG4}, the proportion of elements lying in conjugates of overgroups of both $x_1$ and $x_2$ is $|R^+ \cap R^-|/|G| + O(n^{-0.005})$. 

By \cite[Theorem 2.3]{GL}, the proportion of regular semisimple elements in $G$ is at least $1-6/q$. A regular semisimple element does not have eigenvalue $1$ on the symplectic module (or, in other words, centralizes only $V^\perp$ on the orthogonal module $V$). It follows that a regular semisimple element $g$ fixes only one nondegenerate hyperplane, namely $[g,V]$.
%since if $g$ fixes $W$ then $V=W\perp V^\perp$, hence $[g,V]=[g,W]\leq W$; but $[g,V]$ is a hyperplane, hence $[g,V]=W$.
Then $|R^+ \cap R^-|/|G|\leq 6/q$, which shows that $\P_{\text{inv}}(G,A_\ell)\geq 1-6/q+O(n^{-0.005})$. %(Alternatively, reason as in Lemma \ref{symplectic_even_|A|_at_least_2} to see that the proportion of $R^\pm$ tends to $1/2$; since $G=R^+\cup R^-$, the intersection must be negligible.)
\end{proof}

\iffalse
We first prove two lemmas.

%\begin{lemma} Let $g \in G$. Then $[V,g] \cap V^{\perp}=0$.
%\end{lemma}

%\begin{proof}
%Assume $vg-v \in V^{\perp}$. In particular $0=(vg-v,v)=(vg,v)$. Then $Q(vg-v)=Q(vg)+Q(v)=2Q(v)=0$. Since $Q$ is nonzero on $V^{\perp} \setminus \{0\}$, it follows $vg-v=0$.
%\end{proof}

\begin{lemma}
\label{only_one_hyperplane}
Let $g \in G$. Then, $[V,g]$ is contained in every non-degenerate hyperplane fixed by $g$.
%\daniele{In \cite[Proof of Lemma 3.18]{GK}, it is stated that $[V,g]$ is the intersection of all hyperplanes fixed by $g$ (at least in case $|g|=2$).I'm not quite able to see it.}\eilidh{Something to do with the fact that if $|g|=2$ then $[V,g]$ is contained in the $-1$ eigenspace of $g$.}
In particular, if $\emph{dim} \, \emph C_V(g)=1$ then $g$ fixes only one hyperplane, namely $[V,g]$.
\end{lemma}

\begin{proof}
Assume $g$ fixes the nondegenerate hyperplane $W$. Then $V=W \perp V^{\perp}$, and so $[V,g]=[W,g]\perp [V^{\perp},g]=[W,g] \leq W$, since $g$ acts trivially on $V^{\perp}$. The last statement follows from $\dim V=\dim [V,g] + \dim \C_V(g)$.
\end{proof}

\begin{corollary}
\label{regular_semisimple_only_one_nondegenerate_hyperplane}
If $g \in G$ is regular semisimple, then $g$ fixes only one nondegenerate hyperlane.
\end{corollary}

\begin{proof}
A regular semisimple element has $1$-dimensional fixed point space (on the orthogonal module). Then the statement follows from the previous lemma. %(One can also use maximal tori, specifically Lemma \ref{stabilizers_hyperplanes_maximal torus}, but the proof here is more direct.)
\end{proof}
\fi

Next we prove Theorem \ref{main_theorem_absolute_constant_2}(3) in this case, which is the final remaining step in the proof of all the main results. 

Let $G=\Sp_{2m}(q)$ with $m\geq 2$ and $q$ even. We first observe that, if $g \in G$ is semisimple and centralizes a $2$-space, then $g$ fixes hyperplanes of both signs. In the case $q$ odd, we could exploit the discriminant to see this; here we use a different argument.

\begin{lemma}
\label{both_sign_hyperplanes}
Assume $g \in G$ is semisimple and $\emph{dim} \, \emph C_V(g) \geq 2$ on the orthogonal module. Then $g$ fixes nondegenerate hyperplanes of both signs.
\end{lemma}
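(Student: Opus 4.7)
The plan is to use the orthogonal decomposition $V = U \oplus [V,g]$ with $U = C_V(g)$, and reduce the question to exhibiting quadratic refinements of opposite Arf invariant on a symplectic space. The first step is structural: since $g$ is semisimple and preserves the bilinear form $B$, this decomposition is orthogonal; since $G$ acts trivially on the $1$-dimensional radical $V^\perp$, one has $V^\perp \subseteq U$, so $B$ restricts nondegenerately to $[V,g]$. Being alternating, $B|_{[V,g]}$ forces $\dim[V,g]$ to be even and hence $\dim U$ to be odd. Thus the hypothesis $\dim U \geq 2$ automatically upgrades to $\dim U \geq 3$, making $U/V^\perp$ a nondegenerate symplectic space of even dimension $\geq 2$.

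Next I would parametrize the $g$-invariant nondegenerate hyperplanes of $V$. A hyperplane $H = \ker \phi$ is $g$-stable iff $\phi$ is $g$-fixed in $V^*$, i.e.\ iff $\phi|_{[V,g]} = 0$, so $[V,g] \subseteq H$ and $H = H_0 \oplus [V,g]$ with $H_0 = H \cap U$ a hyperplane of $U$. Nondegeneracy of $H$ is equivalent to $V^\perp \not\subseteq H_0$, i.e.\ to $H_0$ being a linear complement of $V^\perp$ in $U$. Since $g$ acts trivially on $U$, every such $H_0$ is admissible. The projection $H_0 \to U/V^\perp$ is an isomorphism carrying $B$, so $Q|_{H_0}$ pulls back to a quadratic refinement of the symplectic form on $U/V^\perp$. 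A dimension count matches $q^{\dim U - 1}$ complements with the same number of refinements (the latter forming an affine space under $(U/V^\perp)^*$), and a one-line injectivity check using $\bar Q_{H_0}(\bar u) - \bar Q_{H_0'}(\bar u) = \lambda(\bar u)^2 Q(e_0)$ identifies this assignment with a bijection onto all refinements.

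Finally, the key input is the classical fact that over $\F_q$ with $q$ even, the quadratic refinements of a nondegenerate symplectic form on an even-dimensional space of dimension $\geq 2$ split into plus-type and minus-type according to the Arf invariant, and both types are realized. Hence among the $H_0$'s both types arise; and since $H = H_0 \perp [V,g]$ is an orthogonal sum of even-dimensional nondegenerate quadratic spaces and $[V,g]$ has a fixed type, the sign of $H$ (the product of the summand signs) also takes both values. The two points requiring care are the parity argument forcing $\dim U \geq 3$ and the Arf-invariant dichotomy; the latter, while classical, is the only substantive input and might warrant an explicit reference.
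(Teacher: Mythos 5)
Your argument is correct, but it takes a genuinely different route from the paper's. The paper argues by local surgery on a single fixed hyperplane: starting from any $g$-invariant nondegenerate hyperplane $W$, it uses $\dim \mathrm{C}_V(g)\geq 2$ to find a fixed vector $e\in W$, perturbs by the radical vector $v$ to make $e$ singular, then (using semisimplicity) finds a fixed hyperbolic pair $\langle e,f\rangle\leq W$ and replaces it by the anisotropic fixed $2$-space $\langle e+v, f+\xi v\rangle$ with $X^2+X+\xi^2$ irreducible, flipping the sign of $W$ while keeping it $g$-invariant. You instead classify: decompose $V=\mathrm{C}_V(g)\perp[V,g]$, observe that the $g$-invariant nondegenerate hyperplanes containing $[V,g]$ are exactly $H_0\perp[V,g]$ with $H_0$ a complement of $V^\perp$ in $\mathrm{C}_V(g)$, transport $Q|_{H_0}$ to quadratic refinements of the symplectic form on $\mathrm{C}_V(g)/V^\perp$, and check that the assignment $H_0\mapsto \bar Q_{H_0}$ is a bijection onto all refinements (the computation $Q(h+\lambda(h)v)=Q(h)+\lambda(h)^2Q(v)$), so both Arf types occur; the parity observation that $\dim[V,g]$ is even, forcing $\dim \mathrm{C}_V(g)\geq 3$, is exactly what makes the refinement space have dimension at least $2$. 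Both proofs ultimately rest on the same mechanism (adding multiples of $v$ changes $Q$-values by squares), but yours is global and yields more — a complete description of the $g$-invariant nondegenerate hyperplanes and their type distribution — while the paper's is shorter and entirely hands-on.

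One small imprecision: your claim that a hyperplane $\ker\phi$ is $g$-stable \emph{iff} $\phi$ is $g$-fixed is not literally true (a $g$-stable hyperplane only gives $\phi\circ g=\lambda\phi$ for some scalar $\lambda$, and $\lambda\neq 1$ can occur, e.g.\ for eigenspace hyperplanes). This is harmless for the lemma, since you only need the construction direction (every complement $H_0$ of $V^\perp$ in $\mathrm{C}_V(g)$ yields a $g$-invariant nondegenerate hyperplane $H_0\perp[V,g]$); and for \emph{nondegenerate} hyperplanes the full characterization does hold, via $V=H\oplus V^\perp$ and triviality of $g$ on $V^\perp$, which gives $[V,g]=[H,g]\leq H$. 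If you keep the classification statement, justify it that way.
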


\begin{proof}
Assume $V^{\perp} = \gen v$. Since every element of $\F_q$ is a square, by rescaling we may assume $Q(v)=1$. Assume now $g$ is semisimple and fixes a nondegenerate hyperplane $W$; we want to show that $g$ fixes also a hyperplane of opposite sign.

Since $V=W \perp V^{\perp}$, by assumption there exists $0 \neq e \in W$ such that $eg=e$. Write $W=\gen e \oplus T$, with $T$ fixed by $g$. Assume first $Q(e) \neq 0$. Consider $e':=Q(e)^{-1/2}e+v$. Clearly $e'g=e'$ and $Q(e')=0$. Moreover, $g$ fixes $W':=\gen{e'} \oplus T$, which is a complement of $V^{\perp}$, i.e., a nondegenerate hyperplane. If $W'$ has opposite sign with respect to $W$, the proof is finished. Hence, replacing $W$ by $W'$ and $e$ by $e'$, we may assume from the beginning that $Q(e)=0$.

Since $g$ is semisimple, $g$ centralizes a nondegenerate $2$-subspace $\gen{e,f}$ of $W$, where $Q(f)=0$ and $(e,f)=1$. Write now $W=\gen{e,f} \perp U$, with $U$ fixed by $g$. Pick $\xi \in \F_q$ such that the polynomial $X^2+X+\xi^2$ is irreducible over $\F_q$. Then set $e':=e+v$, $f':=f+\xi v$ and $W':=\gen{e',f'} \perp U$. A straightforward computation shows that $\gen{e',f'}$ is a nondegenerate anisotropic space, i.e., $Q(x) \neq 0$ for every $0 \neq x \in \gen{e',f'}$ (cf. \cite[p. 26]{KL}). It follows now from \cite[Propositions 2.5.3 and 2.5.11]{KL} that $W'$ has opposite sign with respect to $W$. This concludes the proof.
\end{proof}

\begin{theorem}\cite[Theorem 7.11]{FG2}
The proportion of elements in $\SpT_{2m}(q)$ which are regular semisimple is at least $0.283$ for $m$ sufficiently large.
\end{theorem}

\begin{theorem}
\label{symplectic_q_even_1_on_a_two_space}
If $m$ is sufficiently large, the proportion of elements of $G$ which act (on the symplectic module) as the identity on a nondegenerate $2$-space, and which are regular semisimple on the orthogonal complement, is at least $1/4q^3$. These elements fix nondegenerate hyperplanes of both signs.
\end{theorem}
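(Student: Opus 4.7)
The plan is to count, for each nondegenerate $2$-dimensional subspace $W'$ of the symplectic module $V/V^\perp$, the elements $g\in G$ that act trivially on $W'$ and regular semisimply on $(W')^\perp$ (the orthogonal complement taken inside $V/V^\perp$), and then sum over $W'$. Fix such a $W'$. The stabilizer of $W'$ in $\Sp_{2m}(q)$ has form $\Sp(W')\times \Sp((W')^\perp)$, and so there are exactly $|\Sp_{2m-2}(q)|$ elements acting trivially on $W'$. By \cite[Theorem 7.11]{FG2}, for $m$ sufficiently large, the proportion of regular semisimple elements in $\Sp_{2m-2}(q)$ is at least $0.283$. Hence the number of admissible $g$ attached to a fixed $W'$ is at least $0.283\,|\Sp_{2m-2}(q)|$.

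Next, since $\Sp_{2m}(q)$ acts transitively on nondegenerate $2$-spaces of its natural module with stabilizer $\Sp_2(q)\times \Sp_{2m-2}(q)$, the number of such $W'$ equals $|G|/(|\Sp_2(q)|\,|\Sp_{2m-2}(q)|)$. There is no overcounting: if $g$ belongs to the set for $W'$, then in characteristic two, a regular semisimple element of $\Sp_{2m-2}(q)$ has no nonzero fixed vector on $(W')^\perp$, so the fixed point space of $g$ on $V/V^\perp$ is exactly $W'$, which determines $W'$. Therefore
\[
|E|\geq 0.283\cdot |\Sp_{2m-2}(q)|\cdot \frac{|G|}{|\Sp_2(q)|\,|\Sp_{2m-2}(q)|}=\frac{0.283\,|G|}{q^3-q}\geq \frac{|G|}{4q^3},
\]
since $0.283\cdot 4q^3\geq q^3-q$ for every $q\geq 2$.

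It remains to verify that every $g\in E$ fixes nondegenerate hyperplanes of both signs in the $(2m+1)$-dimensional orthogonal module $V$. Let $W\subseteq V$ be the preimage of $W'$ under $V\to V/V^\perp$; it is a $3$-space containing $V^\perp$. The element $g$ acts trivially on $V^\perp$ (since $G$ does) and trivially on $V/V^\perp$ restricted to $W'$, so $(g-1)(W)\subseteq V^\perp$, giving $(g-1)^2=0$ on $W$; that is, $g|_W$ is unipotent. Since $g$ is semisimple in $G$ (identity on $W'$ plus regular semisimple on $(W')^\perp$), the restriction $g|_W$ is both semisimple and unipotent, hence trivial. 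Thus $\dim \mathrm C_V(g)\geq 3\geq 2$, and Lemma \ref{both_sign_hyperplanes} implies that $g$ fixes nondegenerate hyperplanes of both signs.

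The main obstacle is the bookkeeping between the symplectic and orthogonal representations: in particular, passing from ``$g$ acts as the identity on the symplectic $2$-space $W'$'' to a fixed space of dimension $\geq 2$ on the orthogonal module $V$, which is precisely what the semisimplicity plus the triviality-mod-$V^\perp$ argument above accomplishes.
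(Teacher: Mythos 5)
Your proof is correct and follows essentially the same route as the paper: count pairs $(W',g)$ via transitivity on nondegenerate $2$-spaces, use the $0.283$ lower bound for regular semisimple elements of $\Sp_{2m-2}(q)$ from \cite[Theorem 7.11]{FG2}, divide by $|\Sp_2(q)|\leq q^3$, and invoke Lemma \ref{both_sign_hyperplanes} for the two signs. You also make explicit two points the paper leaves implicit — that the fixed space on the symplectic module is exactly $W'$ (so the pair count really bounds $|E|$ from below) and that semisimplicity forces $g$ to be trivial on the $3$-dimensional preimage of $W'$, giving $\dim \mathrm{C}_V(g)\geq 2$ — both of which are verified correctly.
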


\begin{proof}
The last part of the statement follows from Lemma \ref{both_sign_hyperplanes}. The first part is exactly the same as in Theorem \ref{odd_dimension_orthogonal_-1_on_a_two_space} (one essentially replaces $\Om$ by $\Sp$ throughout, and we use $|\Sp_2(q)|\leq q^3$). 
\end{proof}

% upper bound on $\P_{\text{inv}}(G,G)$ follows.
\begin{theorem}\label{even_symplectic_theorem_52}
The conclusion to Theorem~\ref{main_theorem_absolute_constant_2}(3) holds in the case $G=\PSp_{2m}(q)$ with $q$ even.
\end{theorem}
\begin{proof}
We already recalled that $G=R^+\cup R^-$, and by Theorem \ref{symplectic_q_even_1_on_a_two_space} we have that $|R^+\cap R^-|/|G|\geq 1/4q^3$ for sufficiently large $m$. Therefore $\P_{\text{inv}}(G,G)\leq 1-1/4q^3$ by Lemma \ref{equivalence_inv_gen_tildas}.
\end{proof}

The proof of Theorem~\ref{main_theorem_absolute_constant_2} is now complete, and we summarize it here.

\begin{proof}[Proof of Theorem \ref{main_theorem_absolute_constant_2}]

For part (1), see Theorem \ref{t:1.5_bounded_rank}. For part (2), the case of alternating groups is completed in Theorem~\ref{alternating_theorems_1_5}, and for groups of Lie type see Theorems~\ref{Al_1_theorems_1_5}, \ref{odd_orthogonal_theorem_5} and \ref{even_symplectic_theorem_5}. For part (3), the case $G=G_2(3^a)$ follows from Theorem \ref{theoremg2order}(ii). When $G=\POm_{2m+1}(q)$ with $q$ odd and $m$ large, see Theorem~\ref{odd_orthogonal_theorem_52}. Finally, when $G=\PSp_{2m}(q)$ with $q$ even and $m$ large, see Theorem~\ref{even_symplectic_theorem_52}.
% Theorem~\ref{Al_1_theorems_1_5} covers the cases $G= \PSL_n(q)$, $\PSU_n(q)$, $\PSp_{2m}(q)$ where $q$ is odd, and $\POm_{2m}^\pm(q)$. The remaining cases are $G=\PSp_{2m}(q)$ with $q$ even, whose proof can be found in Theorem~\ref{even_symplectic_theorem_5}, and $G=\POm_{2m+1}(q)$ with $q$ odd, whose proof can be found in Theorem~\ref{odd_orthogonal_theorem_5}.
% Part (3) is proved in Theorem~\ref{theorem_1.5_restatement} in the $G=G_2(3^a)$ case. When $G=\PSp_{2m}(q)$ with $q$ even and $m$ large, see Theorem~\ref{even_symplectic_theorem_52}. Finally, when $G=\POm_{2m+1}(q)$ with $q$ odd and $m$ large, see Theorem~\ref{odd_orthogonal_theorem_52}.
\end{proof}

Thus the proof of all the main results of the paper is complete. We conclude by showing that we cannot have $|A_\ell|=1$ in Theorem \ref{main_theorem_absolute_constant_2} for $G=\PSp_{2m}(q)$ with $q$ even.

\begin{lemma}
\label{symplectic_even_|A|_at_least_2}
Let $\varepsilon \in\{+,-\}$. Then, $|R^\varepsilon|/|G|\geq \delta_2$ for an absolute constant $\delta_2>0$. In particular, for every $x\in G$, $\P_{\emph{inv}}(G,x)\leq 1-\delta_2$.
\end{lemma}

\begin{proof}
The last part follows from the first, Lemma \ref{equivalence_inv_gen_tildas} and $G=R^+\cup R^-$. We now prove the first statement. For $q$ fixed, we proved a stronger statement in Theorem \ref{symplectic_q_even_1_on_a_two_space}. Next we deal with large $q$. By Theorem \ref{almost_all_elements_regular_semisimple}, the proportion of regular semisimple elements in $G$ is $1-O(1/q)$. If a regular semisimple element $g$ fixes a nondegenerate hyperplane $W$, then the maximal torus of $g$ is contained in the stabilizer of $W$. %(indeed in a subgroup $\Omega(W)$ of the stabilizer). this is because a regular semisimple element fixes only one hyperplane.
As in the proof of Lemma \ref{orthogonal_odd_|A|_at_least_2}, we deduce that the proportion of elements belonging to $R^\varepsilon$ is equal to $O(1/q)$ plus the proportion of elements of $W(B_m)$ with product of sign $\varepsilon$, which is $1/2$.
\end{proof}

\bibliography{references}
\bibliographystyle{plain}
\end{document}